\documentclass[11pt, reqno, psamsfonts]{amsart}

\pdfoutput=1
\usepackage{multirow}
\usepackage{tcolorbox}
\usepackage{graphics}
\usepackage[normalem]{ulem}
\usepackage{comment}

\usepackage{amssymb}
\usepackage{amsthm}
\usepackage{amsmath}
\usepackage{amsfonts}
\usepackage{dsfont}
\usepackage{latexsym}
\usepackage[T1]{fontenc}
\usepackage[utf8]{inputenc}
\usepackage[russian, french, english]{babel}

\usepackage{graphicx}
\usepackage{wrapfig, subcaption}
\usepackage[justification=centering, labelfont=bf]{caption}
\usepackage{mathtools}
\usepackage{tikz}
\usepackage[all]{xy}
\usepackage{amsbsy}
\usepackage{enumitem}
\usepackage{setspace}
\usepackage{mathrsfs}
\usetikzlibrary{shapes,snakes}
\usetikzlibrary{arrows.meta}
\usetikzlibrary{decorations.pathmorphing}
\usetikzlibrary{patterns}
\usepackage{float}
\usepackage{array}
\usepackage{multicol}
\usepackage{stmaryrd}
\usepackage{cancel} 
\usepackage{lmodern}
\usepackage{algpseudocode}
\usepackage{algorithm}
\usepackage{upgreek}
\usepackage{titlesec}
\usepackage[spacing=true,kerning=true,babel=true,tracking=true]{microtype}
\usepackage[shortcuts]{extdash}
\usepackage[foot]{amsaddr}
\usepackage[left=1in,right=1in,top=0.95in,bottom=0.95in,bindingoffset=0cm]{geometry}
\usepackage{centernot}
\usepackage{mdframed}
\usepackage{multirow}
\usepackage{hyperref}
\usepackage{soul}
\hypersetup{
    colorlinks=true,
    linkcolor=blue,
    filecolor=magenta,
    urlcolor=blue,
    citecolor=gray,
}
\allowdisplaybreaks

\usepackage{color}

\makeatletter
\let\orgdescriptionlabel\descriptionlabel
\renewcommand*{\descriptionlabel}[1]{%
  \let\orglabel\label
  \let\label\@gobble
  \phantomsection
  \edef\@currentlabel{#1}%
  \let\label\orglabel
  \orgdescriptionlabel{#1}%
}
\makeatother



\title{Sampling Colorings with Fixed Color Class Sizes}
\date{}
\author{\lsstyle Aiya~Kuchukova}
\email{aiya\_kuchukova@gatech.edu}
\author{\lsstyle Will Perkins}
\email{math@willperkins.org}
\author{\lsstyle Xavier~Povill}
\email{xavier.povill@upc.edu}
\address{\textls{\normalfont{}School of Mathematics, Georgia Institute of Technology, Atlanta, GA, USA}}
\address{\textls{\normalfont{}Universitat Politècnica de Catalunya, Barcelona, Spain}}
\thanks{}

\newtheoremstyle{bfnote}%
{}{}%
{\slshape}{}%
{\bfseries}{\bfseries.}%
{ }%
{\thmname{#1}\thmnumber{ #2}\thmnote{ \normalfont{}#3}}

\newtheoremstyle{Claim}%
{}{}%
{\slshape}{}%
{\itshape}{.}%
{ }%
{\thmname{#1}\thmnumber{ #2}\thmnote{ \normalfont{}#3}}

\usepackage{aliascnt}
\newcommand{\newaliastheorem}[2]{%
  \newaliascnt{#1}{theorem}%
  \newtheorem{#1}[#1]{#2}%
  \aliascntresetthe{#1}%
  \expandafter\providecommand\csname #1autorefname\endcsname{#2}%
}

\theoremstyle{bfnote}
\newtheorem{theorem}{Theorem}[section]
\newtheorem*{theorem*}{Theorem}

\newaliastheorem{lemma}{Lemma}
\newaliastheorem{corollary}{Corollary}
\newaliastheorem{proposition}{Proposition}
\newaliastheorem{conjecture}{Conjecture}
\newaliastheorem{fact}{Fact}
\newaliastheorem{observation}{Observation}
\newaliastheorem{consequence}{Consequence}
\newaliastheorem{remark}{Remark}

\newtheorem*{corollary*}{Corollary}

\theoremstyle{definition}
\newaliastheorem{definition}{Definition}

\newtheorem{claim}{Claim}[theorem]

\newtheorem*{example*}{Example}

\makeatletter
\newcommand{\neutralize}[1]{\expandafter\let\csname c@#1\endcsname\count@}
\makeatother

\newenvironment{claimproof}{\noindent$\rhd$\hspace{1em}}{\hfill$\blacktriangleleft$\smallskip}

\newcommand{\R}{\mathbb{R}}
\renewcommand{\C}{\mathbb{C}}

\renewcommand{\P}{\mathbb{P}}
\newcommand{\E}{\mathbb{E}}
\renewcommand{\epsilon}{\varepsilon}
\newcommand{\eps}{\epsilon}
\renewcommand{\phi}{\varphi}

\newcommand{\vl}{\vec{\lambda}}

\newcommand{\abs}[1]{\left\vert#1\right\vert}
\DeclareMathOperator*{\argmax}{arg\,max}

\newcommand{\defeq}{:=}

\DeclareMathOperator{\cov}{Cov}
\DeclareMathOperator{\Cov}{Cov}
\DeclareMathOperator{\var}{Var}
\DeclareMathOperator{\Var}{Var}

\renewcommand{\Re}{\operatorname{Re}}
\renewcommand{\Im}{\operatorname{Im}}

\usepackage{todonotes}

\definecolor{darkpastelgreen}{rgb}{0.01, 0.75, 0.24}

\titleformat{\section}[block]{\scshape\filcenter}{\thesection.}{1ex}{}
\titleformat{\subsection}[block]{\bfseries}{\thesubsection.}{1ex}{}
\titleformat{\subsection}[block]{\bfseries}{\thesubsection.}{1ex}{}
\titleformat{\subsubsection}[runin]{\itshape}{\bfseries\upshape\thesubsubsection.}{1ex}{}[.---]

\titlespacing*{\section}{0pt}{*3}{*1}
\titlespacing*{\subsection}{0pt}{*3}{*1}
\titlespacing*{\subsubsection}{0pt}{*1.5}{*0}

\setlist{topsep=4pt,itemsep=4pt}

\usepackage{thmtools}
\usepackage{thm-restate}

\begin{document}

\maketitle

\begin{abstract}
In 1970 Hajnal and Szemer\'edi proved a conjecture of Erd\"os that for a graph with maximum degree $\Delta$, there exists an equitable $\Delta+1$ coloring; that is a coloring where color class sizes differ by at most $1$. 
In 2007 Kierstand and Kostochka  reproved their result and provided a polynomial-time algorithm which produces such a coloring.
In this paper we study the problem  
of approximately sampling uniformly random equitable colorings. 
A series of works gives polynomial-time sampling algorithms for colorings without the color class constraint, the latest improvement being by Carlson and Vigoda for $q\geq 1.809 \Delta$. 
In this paper we give a polynomial-time sampling algorithm for equitable colorings when $q> 2\Delta$. Moreover, our results extend to colorings with small deviations from equitable (and as a corollary,  establishing their existence). 
The proof uses the framework of the geometry of polynomials for multivariate polynomials, and as a consequence establishes a multivariate local Central Limit Theorem for color class sizes of uniform random colorings. 
\end{abstract}

\section{Introduction}
\subsection{Sampling with Constraints}
Many interesting problems in combinatorics and its applications can be formulated as problems about independent sets, matchings, graph cuts, or colorings with global constraints. Thus, the ability to sample those fundamental objects with  constraints solves a broad range of nuanced questions and strengthens current techniques and tools used for sampling. We begin by providing examples of some fundamental objects that we can sample and how introducing constraints alters the sampling problem.

One example comes from the ferromagnetic Ising model, a statistical physics model of graph cuts and magnets. It is possible to approximately count \cite{jerrum1993polynomial} and sample the states of the model \cite{randall1999sampling} for all parameters and all graphs. However, when restricting the magnetization (number of vertices on one side of the cut/ number of particles with a positive spin), an algorithmic thresholds emerges. 
For the ferromagnetic Ising model with fixed magnetization, Carlson, Davies, Kolla, and Perkins established a computational threshold \cite{carlson2022computational}, meaning that there are regimes of temperature and magnetization for which efficient sampling algorithms exist and regimes in which no sampling is possible unless NP=RP  
(example of dynamical threshold also shown in \cite{kuchukova2025fast}).
Another example of interest comes from \textit{independent sets}, often studied using the hard-core model. When introducing a size constraint on the independent sets, 
Davies and Perkins \cite{davies2023approximately} showed the existence of a  computational threshold;  there exists $\alpha_c$ such that it is possible to sample independent sets of size $\alpha n$ for $\alpha<\alpha_c$ in polynomial time, but no sampling algorithm exists for $\alpha > \alpha_c$ unless NP=RP. Work of Jain, Michelen, Pham, and Vuong \cite{jain2023optimal}, establishes fast mixing of the down-up walk on independent sets of size $k \leq (1 - \delta)\alpha_c$.
 
Placing constraints on the size of the object is natural for statistical physics models, since they differentiate between settings where the number of particles can vary, called \textit{grand canonical ensembles}, versus settings where the number of particles is fixed, called \textit{canonical ensembles}. In practice, there are a lot more results in sampling algorithms for the grand canonical ensemble, since placing an additional constraint adds a layer of complexity. Under additional constraints the behavior of the system can change completely; for example, a well behaved system (like the ferromagnetic Ising model on a random graph) can exhibit `glassy' behavior with the imposition of a constraint on the magnetization~\cite{mezard1987mean}.

Observe that the examples of constraints we have mentioned so far are one-dimensional. In this paper we will explore the instance of placing multiple global constraints on the model. More precisely, we will show that given an integer vector (with some assumptions on it), we can sample colorings in which color class sizes form the requested vector. It is a delicate problem since each of the color class sizes is an independent set of restricted size, and moreover, the union of colors should cover all the vertices of the graph. We introduce the background about sampling colorings as well as known existence results in the next subsection.

\subsection{Sampling Colorings with Multiple Constraints}
Before looking at colorings with constraints, we give background for sampling colorings with no constraints, since this problem is also far from being well understood. The following is a classical conjecture about sampling colorings 
\cite{jerrum1995very, vigoda1999improved}:
\begin{conjecture}
    For the class of graphs of maximum  degree $\Delta$ and $q \ge \Delta +2$, there is a polynomial-time algorithm to approximately sample uniformly random $q$-colorings.
\end{conjecture}

Moreover, it is conjectured that a simple Markov chain, the Glauber dynamics, achieves this.

There has been a lot of progress in the past 30 years towards the resolution of the conjecture. For example, in his seminal paper, Jerrum proved it for $q \geq 2\Delta + 1$ \cite{jerrum1995very}. Another exciting result was by Vigoda, which samples colorings for $q > \frac{11}{6}\Delta$ \cite{vigoda1999improved}. There were also a lot of impressive improvements on the bounds for special families of graphs 
\cite{dyer2003randomly, molloy2002glauber, hayes2005general, martinelli2007fast, efthymiou2018sampling, mossel2010gibbs, hayes2003non, sly2017glauber,chen2023strong}. 
For general graphs, $q$ was further improved to $(11/6 - \eps) \Delta$ by Chen, Delcourt, Moitra, Perarnau, and Postle \cite{chen2019improved}, and recently to $q \geq 1.809\Delta$ by Carlson and Vigoda \cite{carlson2025flip}.

Despite the impressive progress in this direction, the problem presents a considerable challenge and might require many more ideas to decrease the necessary amount of colors down to $\Delta +2$. Thus, it seems important to ask the following questions: How refined are our current tools and techniques? How well can we control the colorings that we can sample? 

We now define the size constraints on color classes we have previously mentioned. 
\begin{definition}[$\vec{n}$-coloring]
    Given a vector $\vec{n} = (n_1, \dots, n_q)$, an $\vec{n}$-coloring is a proper coloring of the vertices of a graph such that there are $n_i$ vertices of color $i$, $\forall i \in [q]$.  
\end{definition}

Let us describe perhaps the simplest case of $\vec{n}$ colorings, equitable colorings. 

\begin{definition}[Equitable colorings]
    An \textit{equitable} $q$-coloring is a proper coloring with $q$ colors such that the sizes of every pair of color classes  differ by at most 1. Note that if $n$ divides $q$, the color class sizes are equal.
\end{definition}

Erd\H{o}s conjectured in 1964 that  any graph of maximum degree $\Delta$ has an equitable $(\Delta + 1)$-coloring. In 1970 Hajnal and Szemer\'edi gave a proof of the conjecture \cite{hajnal1970proof}. Later Kierstead and Kostochka provided a simpler proof, which also gives a polynomial-time algorithm for constructing such a coloring. \cite{kierstead2008short}. 

As a corollary of our main theorem, we establish a polynomial-time algorithm for approximately sampling uniform equitable colorings when $q \ge 2 \Delta$. 

\begin{definition}
Let $\mu$ be the target distribution. An algorithm is said to $\eps$-approximately sample from $\mu$ if the distribution of its output $\nu$ satisfies $\|\nu - \mu \|_{\text{TV}} \leq \eps$.   
\end{definition}

\begin{restatable}[Sampling Equitable Colorings]{theorem}{runtimebalanced}
\label{thm:runtime balanced}
There exists a sampling algorithm that, given $q\geq 2\Delta$ and any $G$ on $n$ vertices from the class of graphs with maximum degree $\Delta$, $\eps$-approximately samples equitable colorings on $G$ with high probability with running time $O\!\left(n^{(q+1)/2}\, \,\log n \,\log(\frac{1}{\eps})^2\right)$. 
\end{restatable}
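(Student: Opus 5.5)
The plan is rejection sampling from a tilted distribution on unconstrained colorings, with the acceptance probability controlled by a multivariate local central limit theorem. Give each color $i$ an activity $\lambda_i>0$ and consider the multivariate partition function
\[
Z_G(\vec\lambda)=\sum_{\sigma}\prod_{i=1}^q \lambda_i^{|\sigma^{-1}(i)|},
\]
where the sum runs over proper $q$-colorings. Let $\mu_{\vec\lambda}$ be the associated Gibbs measure. Under $\mu_{\vec\lambda}$, conditioned on the color class vector $(|\sigma^{-1}(1)|,\dots,|\sigma^{-1}(q)|)=\vec n$, the coloring is uniform among $\vec n$-colorings. For the equitable vector $\vec n$ we take $\vec\lambda=(1,\dots,1)$ by symmetry. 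In the tilted version for near-equitable $\vec n$, we instead choose $\vec\lambda$ so that the mean class vector equals $\vec n$. The algorithm is then:
\begin{enumerate}
\item sample $\sigma$ approximately from $\mu_{\vec\lambda}$;
\item accept if its class vector equals $\vec n$;
\item otherwise repeat, with $O(n^{(q-1)/2}\log(1/\eps))$ repetitions.
\end{enumerate}

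The key steps are as follows.

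\emph{(i) Fast sampling from $\mu_{\vec\lambda}$.} For $q\ge 2\Delta$ (taking $q\geq 2\Delta+1$ or using a flip/path-coupling variant at the boundary) we use Glauber dynamics, which mixes in $O(n\log n\log(1/\eps))$ steps for activities near $1$ by Jerrum-type path coupling. The weighted heat-bath update still contracts because each vertex has at least $q-\Delta$ available colors with comparable weights.

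\emph{(ii) Local CLT.} We show that the class vector $X$, viewed in the $(q-1)$-dimensional lattice (the coordinates sum to $n$), satisfies
\[
\P(X=\vec n)=\frac{(1+o(1))}{(2\pi)^{(q-1)/2}\sqrt{\det\Sigma}}\exp\!\big(-\tfrac12 (\vec n-\E X)^T\Sigma^{-1}(\vec n-\E X)\big),
\]
where the covariance $\Sigma$ has all nontrivial eigenvalues of order $n$. Consequently $\P(X=\vec n)=\Theta(n^{-(q-1)/2})$ for equitable $\vec n$, and the same holds for deviations of order $\sqrt n$.

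\emph{(iii) Runtime.} Multiplying the number of rejection trials by the cost of one sample, and boosting so that the failure probability is $\eps$, gives $n^{(q-1)/2}\cdot n\log n\cdot\log(1/\eps)^2$. This is the claimed $O(n^{(q+1)/2}\log n\log(1/\eps)^2)$. Approximate samples only perturb the accepted distribution by $\eps/\P(X=\vec n)$, so we run each chain to accuracy $\eps n^{-(q-1)/2}$, which costs only an extra log factor.

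The main obstacle is step (ii). Our approach is through the geometry of polynomials. First we establish a zero-free region for $Z_G(\vec\lambda)$ in a polydisc-like neighborhood of the positive reals: for $q\ge 2\Delta$ (perhaps $q>2\Delta$ exactly), $Z_G(\vec\lambda)\neq0$ whenever each $\lambda_i$ lies within a small complex sector around $[1-\delta,1+\delta]$. We would prove this by the standard inductive ratio argument of Barvinok/Liu–Sinclair–Srivastava type, bounding the ratios $Z_{G,v\to i}/Z_{G,v\to j}$ inside a complex region; the greedy slack $q-\Delta\ge\Delta$ is what makes the induction close. Zero-freeness on such a region gives:
\begin{enumerate}
\item analyticity of $\log Z$;
\item cumulant bounds $|\kappa_k|=O(n)$ for mixed cumulants of $X$ in every direction, by Cauchy estimates on $\log Z(e^{\vec t})$;
\item a characteristic-function bound $|\E e^{i\langle \vec t,X\rangle}|\le e^{-c n|\vec t|^2}$ for small $\vec t$.
\end{enumerate}
For $\vec t$ away from $0$ (mod $2\pi$, within the lattice), we separately need $|\E e^{i\langle\vec t,X\rangle}|$ to be exponentially small. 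We get this by exhibiting linearly many disjoint local configurations, such as vertices whose color can be swapped freely given their neighborhood. The lower bound $\Sigma\succeq cn I$ on the sum-zero subspace also comes from these independent local fluctuations, via a conditional variance argument. Fourier inversion then gives the LCLT.
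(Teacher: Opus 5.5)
Your proposal is correct and follows the paper's proof essentially step for step. The shared ingredients are:
\begin{itemize}
\item uniform sampling at $\vec\lambda=\vec 1$ with a known fast sampler (the paper invokes Vigoda's result, which handles $q=2\Delta$ where plain Glauber path coupling does not);
\item a multivariate zero-free polydisc around $\vec 1$ from the Liu--Sinclair--Srivastava ratio induction;
\item Cauchy estimates giving $O(n)$ cumulants and the quadratic expansion of $\log\phi$ near $t=0$;
\item conditioning on the colors outside the stars $\{v\}\cup N(v)$ of a distance-$4$ separated set, which gives both $\Sigma\succeq cnI$ and $|\phi(t)|\le e^{-cn\|t\|^2}$; note that the whole star must be left free, not just $v$, so that any two prescribed colors occur at $v$ with probability bounded below;
\item Fourier inversion plus rejection sampling.
\end{itemize}

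Your hedge ``perhaps $q>2\Delta$'' for zero-freeness is unnecessary. The neighbor $w_k$ you recurse on has degree at most $\Delta-1$ once $u$ is split off, so the marginal bound $\P_{\vec 1}[\sigma(w_k)=c]\le 1/(q-\Delta+1)\le 1/(d_{w_k}+2)$ needed for contraction already holds at $q=2\Delta$ (here $d_{w_k}\le \Delta-1$ counts its unpinned neighbors). Your point that approximate samples inflate the conditional error by a factor $1/\P(\vec X=\vec n)$ is something the paper glosses over. Boosting the per-sample accuracy still fits the stated runtime, because coupling-based mixing bounds have the form $O(n\log(n/\delta))$.
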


In a more general version of the theorem, we show the existence of a sampling algorithm for colorings that we call ``skewed'', establishing their existence as a corollary. 
\begin{theorem}[Sampling Skewed Colorings]
    Fix $q\geq 2\Delta +1$. There exists constant $c = c(\Delta)>0$ small enough, such that given any $G$ on $n$ vertices from the class of graphs with maximum degree $\Delta$ and $\vec{n}$ satisfying $\| \vec n \|_1 = n$ and  $|n_i - \frac{n}{q}|<c n$ for all $i\in[q]$, there is an algorithm that $\eps$-approximately samples $\vec{n}$-colorings on $G$ with high probability with running time $O\!\left(n^{q}\, \,\log n \,\log(\frac{1}{\eps})^2\right)$. 
\end{theorem}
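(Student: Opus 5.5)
Our plan is to reduce sampling $\vec n$-colorings to sampling unconstrained colorings from a multivariate external-field model, followed by rejection. Introduce fugacities $\vec\lambda=(\lambda_1,\dots,\lambda_q)\in\R_{>0}^q$ and the weighted measure $\mu_{\vec\lambda}(\sigma)\propto\prod_{v}\lambda_{\sigma(v)}$ on proper $q$-colorings of $G$, with partition function
\[
Z_G(\vec\lambda)=\sum_{\sigma}\prod_{i=1}^q\lambda_i^{|\sigma^{-1}(i)|}.
\]
Conditioned on the color-class vector $\vec N=(|\sigma^{-1}(1)|,\dots,|\sigma^{-1}(q)|)$ being equal to $\vec n$, $\mu_{\vec\lambda}$ is exactly the uniform distribution on $\vec n$-colorings, whatever $\vec\lambda$ is. The algorithm is therefore:
\begin{enumerate}
\item choose $\vec\lambda$ so that $\E_{\vec\lambda}\vec N\approx\vec n$;
\item run Glauber dynamics for $\mu_{\vec\lambda}$;
\item reject the output unless $\vec N=\vec n$.
\end{enumerate}

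\textbf{Sampling at fixed fugacities.} For $\vec\lambda$ in a small neighborhood of $\vec 1$, say $\max_i\lambda_i/\min_i\lambda_i\le 1+\delta(\Delta)$, the measure $\mu_{\vec\lambda}$ is a weighted list-coloring measure. Jerrum's path-coupling argument for $q\ge 2\Delta+1$ is robust to weights close to uniform. Indeed, the disagreement probability contributed by each neighbor is at most $\frac{\max\lambda}{\sum_{\text{available}}\lambda}\le\frac{1+\delta}{q-\Delta}$, and the contraction $1-\frac{q-2\Delta}{\dots}$ survives for small $\delta$. This gives $O(n\log n\log(1/\eps))$ mixing. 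The hypothesis $q\ge 2\Delta+1$, rather than $2\Delta$, is what lets us keep slack for the tilt.

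\textbf{Tuning $\vec\lambda$.} Write $\lambda_i=e^{t_i}$ with $\sum_i t_i=0$. The map $\vec t\mapsto \frac1n\E_{\vec t}\vec N=\frac1n\nabla_{\vec t}\log Z$ has Jacobian $\frac1n\Cov(\vec N)$ restricted to the hyperplane $\sum_i x_i=0$. At $\vec t=0$, symmetry gives $\E\vec N=(n/q)\vec 1$. If the covariance is uniformly nondegenerate, with eigenvalues of order $n$ on that hyperplane, then by the inverse function theorem the image of a $\delta$-ball contains all $\vec n/n$ with $|n_i-n/q|<cn$ for $c=c(\Delta)$ small. The suitable $\vec t$ is then found approximately, either by estimating means with samples and running a Newton or bisection scheme, or via an efficient approximation of $\log Z$.

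\textbf{Main obstacle: the multivariate local CLT.} Rejection succeeds with probability $\P_{\vec\lambda}(\vec N=\vec n)$, and we must show this is at least of order $n^{-(q-1)/2}$. Sampling repeatedly then yields the stated running time, and positivity gives existence of $\vec n$-colorings as a corollary. For this I would use the geometry of polynomials. We show that $Z_G(\vec\lambda)$ has no zeros in a complex polydisc neighborhood of the tilted real point, uniformly for graphs of maximum degree $\Delta$ and $q\ge 2\Delta+1$. This should follow from a Fernández–Procacci-type cluster expansion, or a contraction or recursion argument, on the list-coloring model with near-uniform complex weights. Zero-freeness yields:
\begin{itemize}
\item analyticity of $\frac1n\log Z$ with bounded cumulants of order $n$;
\item a lower bound on the covariance via a local-perturbation argument, since swapping colors at a single vertex with free neighbors changes $\vec N$ by $e_i-e_j$ independently across far-apart vertices, giving variance of order $n$ in every direction of the hyperplane.
\end{itemize}
Combining these cumulant bounds with the characteristic-function estimates from zero-freeness in the standard way (à la Michelen–Sahasrabudhe, multivariate version) gives a local CLT. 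The characteristic-function estimates must cover both small frequencies and the frequencies away from $0$ on the torus, and the latter again need the local swap moves to break lattice periodicity. The resulting local CLT gives $\P(\vec N=\vec n)=\Theta(n^{-(q-1)/2})$. The zero-free region is the step I expect to be hardest.
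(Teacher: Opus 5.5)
Your outline follows the paper's architecture closely: rejection sampling from the field-tilted measure, and path coupling for Glauber at $q\ge 2\Delta+1$ with near-uniform weights. It also uses $O(n)$ cumulant bounds from zero-freeness. The $\Omega(n)$ covariance lower bound and the bound $|\varphi(t)|\le e^{-cn\|t\|^2}$ on the torus both come from conditioning on everything outside the closed neighbourhoods of a $4$-separated set. Surjectivity of the expectation map comes from the uniform covariance lower bound.

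\textbf{Main gap: zero-freeness.} The step that carries all of this is zero-freeness of $Z_G(\vec\lambda)$ on a polydisc of $n$-independent radius around $\vec 1$, and you only assert it. One of the two routes you name would not work. Cluster-expansion (Fern\'andez--Procacci-type) zero-free regions for colourings are known only for $q\ge C\Delta$ with $C$ a constant well above $2$, so they cannot reach $q=2\Delta+1$.

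What is needed is the Liu--Sinclair--Srivastava recursion rebuilt for $q$ separate fields, which is Section 2 of the paper:
\begin{enumerate}
\item Induct on the number of unpinned vertices of a partially pinned graph.
\item Write $Z_G=\sum_{i\in\Gamma_u}Z^{(i)}_{G,u}$ and prove every ratio $R^{(i,j)}_{G,u}$ has tiny argument, so the sum cannot vanish.
\item Express $R^{(i,j)}_{G,u}$ through the telescoping recurrence over the graphs $G_k^{(i,j)}$.
\item Use that $w_k$ is \emph{nice} in $G_k^{(i,j)}$: all real marginals are at most $1/(d_{w_k}+2)$, which is exactly where $q\ge 2\Delta$ enters. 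This makes $f(x)=-\ln(1-e^x)$ contract on a thin strip, propagating separate budgets $\eps_R,\eps_I$ for $\Re\ln R^{(i,j)}$ and $\Im\ln R^{(i,j)}$.
\end{enumerate}
The genuinely multivariate difficulty is that the field ratio $\ln(\lambda_i/\lambda_j)$ re-enters at every level of the recursion, including through the induction hypothesis applied to $G_k^{(i,j)}$. It must be absorbed by the contraction slack. This is why the paper imposes $|\arg\lambda_i|\le\nu\eps_I/2$ and $|\Re\ln(\lambda_i/\lambda_j)|\le\Delta\eps_I$ with $\Delta\eps_I\ll\eps_R$. Without this lemma, neither the Taylor expansion of $\log\varphi$ for $\|t\|\le An^{-1/2+\eps}$ nor the $O(n)$ cumulant bounds has a foundation.

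\textbf{Secondary gap: locating $\vec\lambda$.} The LCLT gives acceptance probability $\Theta(n^{-(q-1)/2})$ only if $\|\E_{\vec\lambda}\vec N-\vec n\|=O(\sqrt n)$, so $\vec t$ must be within $O(n^{-1/2})$ of the exact root. You do not show that ``Newton or bisection'' or ``an approximation of $\log Z$'' achieves this, and bisection has no meaning in dimension $q-1\ge 2$.

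The paper avoids any search analysis. The Jacobian of $\vec\lambda\mapsto\E_{\vec\lambda}\vec X$ is $\Cov_{\vec\lambda}(\vec X)\operatorname{diag}(\vec\lambda)^{-1}$, which has norm $O(n)$, so the map is $O(n)$-Lipschitz. Hence some point of the grid $\lambda_i=1+k_i/\sqrt n$ (with $\lambda_q=1$) has expectation within $O(\sqrt n)$ of $\vec n$. Rejection sampling is then simply run at all $\Theta(n^{(q-1)/2})$ grid points; this brute force is precisely the source of the $n^q$ in the running time.

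Your iterative idea can be made rigorous and would even improve the runtime. On the whole region, $\frac1n\Cov$ restricted to $\sum_i t_i=0$ has spectrum in a fixed interval $[c_1,C]$, and a few samples estimate $\E\vec N$ to within $O(\sqrt n)$. So the damped update $\vec t\leftarrow\vec t+\eta(\vec n-\hat\mu)/n$ with $\eta\le 1/C$ converges geometrically, at rate $1-\eta c_1$, to an $O(n^{-1/2})$-neighbourhood of the root. As written, however, this step is missing.
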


As a simple corollary we obtain the existence of such colorings. 

\begin{corollary}
Fix $q \geq 2\Delta +1$. Then there exists constant $c>0$ small enough such that for $n$ large enough, and any $\vec{n}$ satisfying $\| \vec{n} \|_1 = n$ and $| n_i- \frac{n}{q} | \le c n$ for all $i\in [q]$,  every graph $G$ of maximum degree  $\Delta$ on $n$ vertices has an $\vec{n}$-coloring.
\end{corollary}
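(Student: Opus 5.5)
The plan is to derive the corollary from the multivariate local central limit theorem for color class sizes that underlies the Skewed Colorings theorem. Existence will follow once we show that, for a uniformly random proper $q$-coloring of $G$, the class-size vector $\vec{X} = (X_1,\dots,X_q)$ takes the value $\vec n$ with positive probability. Since $\|\vec X\|_1 = n$, we work with the first $q-1$ coordinates, a random vector in $\mathbb{Z}^{q-1}$.

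The first step is tilting. For $q \ge 2\Delta+1$, introduce color activities $\vec\lambda = (\lambda_1,\dots,\lambda_q)$ and the weighted measure on proper colorings with weight $\prod_v \lambda_{\sigma(v)}$. We choose $\vec\lambda$ so that $\mathbb{E}_{\vec\lambda}\vec X = \vec n$. At $\vec\lambda = \vec 1$, symmetry gives mean $(n/q,\dots,n/q)$. The map $\log\vec\lambda \mapsto \mathbb{E}_{\vec\lambda}\vec X/n$ is the gradient of the normalized log-partition function. Its Jacobian is the covariance matrix divided by $n$. We will show this matrix is uniformly nondegenerate in a neighborhood $\|\log\vec\lambda\|_\infty \le \delta$, with $\delta$ depending only on $\Delta$ and $q$, restricted to the hyperplane orthogonal to $\vec 1$. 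The inverse function theorem, with uniform constants, then shows that every $\vec n$ with $|n_i - n/q| \le cn$ is attained for some small $c = c(\Delta)$.

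The second step controls the covariance. We use the zero-freeness of the multivariate partition function near $\vec 1$ in a complex neighborhood, which is the geometry-of-polynomials input the paper uses for $q \ge 2\Delta+1$. Zero-freeness gives analyticity of $\log Z/n$ with derivative bounds uniform in $n$, and hence upper bounds on the cumulants of order $O(n)$. For the lower bound we condition on the colors outside a set $S$ of $\Omega(n)$ vertices at pairwise distance at least 3. Because $q \ge \Delta+2$, each vertex in $S$ has at least two available colors, and the vertices of $S$ are conditionally independent. This gives $\mathrm{Var}(\langle \vec u,\vec X\rangle) \ge c' n$ for every unit $\vec u \perp \vec 1$; one needs that pairs of available colors span the relevant directions, which can be arranged by averaging.

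The third step is the local CLT. Combining the cumulant bounds, the covariance nondegeneracy, and zero-freeness on a polydisc-type region (which handles the characteristic function away from $0$), we get $\mathbb{P}_{\vec\lambda}(\vec X = \vec n) = (1+o(1))\,(2\pi)^{-(q-1)/2}\det(\Sigma)^{-1/2}$, where $\Sigma$ is the covariance of the first $q-1$ coordinates, so $\det\Sigma = \Theta(n^{q-1})$. This is $\Theta(n^{-(q-1)/2}) > 0$ for $n$ large. Since the tilted measure and the uniform measure restricted to $\vec n$-colorings agree up to normalization, an $\vec n$-coloring exists. The main obstacle is the characteristic-function estimate away from the origin, namely showing $|\mathbb{E}\, e^{i\langle \vec t,\vec X\rangle}|$ is exponentially small for $\vec t$ bounded away from $2\pi\mathbb{Z}^{q-1}$ and outside the analytic neighborhood. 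I would handle this with the same conditioning-on-a-sparse-set argument: each isolated vertex of $S$ contributes a factor bounded away from $1$ in modulus unless $\vec t$ is near a lattice point.
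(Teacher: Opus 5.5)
Your route is the same as the paper's. The corollary follows because at some $\vec\lambda$ near $\vec 1$ with $\E_{\vec\lambda}\vec X=\vec n$, the local CLT gives $\P_{\vec\lambda}(\vec X=\vec n)=\Theta(n^{-(q-1)/2})>0$. The existence of such a $\vec\lambda$ is \autoref{lemma:psi-surjective}, proved exactly by your strong-monotonicity/inverse-function argument on $\vec 1^{\perp}$, and zero-freeness supplies the $O(n)$ cumulant bounds and the small-$t$ expansion.

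\textbf{The gap.} It sits in the step you yourself flag as the main obstacle, and it comes from your choice of what to resample. Take $S$ at pairwise distance $\ge 3$, condition on $\sigma_{V\setminus S}$, and resample single vertices. Then $v\in S$ contributes the factor $\bigl|\sum_{a\in L_v}p_a e^{it_a}\bigr|$, where $t_q:=0$ and $L_v=[q]\setminus\sigma(N(v))$. This factor equals $1$ whenever $t$ is constant on $L_v$, not only near $2\pi\mathbb{Z}^{q-1}$. For example, take $t=(\tau,0,\dots,0)$ with $\tau$ of order one and color $1\in\sigma(N(v))$. There are conditionings in which every $v\in S$ is of this type, so you get no decay uniformly in the conditioning.

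Your averaging remark does not transfer here. It does rescue the variance lower bound, because the law of total variance produces a sum of conditional variances, and each summand only needs $N(v)$ to avoid $\{s,t\}$ with probability $\Omega(1)$. For the characteristic function, however, the quantity to control is $\E\bigl[\prod_{v\in S}|\varphi_v(t)|\bigr]$. For this product you would need an additional concentration statement: that a linear number of $v\in S$ have good lists with probability $1-o(n^{-(q-1)/2})$.

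\textbf{The fix.} The clean fix, used in \autoref{lemma: characteristic upperbound} and in \autoref{lemma:variance_of_linear_combination_color_class_sizes}, is to resample closed neighborhoods rather than single vertices.
\begin{enumerate}
\item Take $S$ at pairwise distance $\ge 4$ (\autoref{lemma: separation}) and condition only on $T=\{w:\operatorname{dist}(w,S)\ge 2\}$. The blocks $N[v]$, $v\in S$, are then conditionally independent.
\item Uniformly in $\sigma_T$, every proper extension to $N[v]$ has conditional probability $\Omega_{q,\Delta,\vec\lambda}(1)$, since the weights are bounded and there are at most $q^{\Delta+1}$ extensions.
\item Since $q\ge\Delta+2$, you can greedily color the neighbors of $v$ avoiding $k$ and $q$. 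Give $v$ color $k$ in one copy $Q^v$ and color $q$ in an independent copy $R^v$.
\item On the disjoint events $\{Q^v-R^v=e_k\}$, one gets $|\E[e^{i\langle t,Q^v\rangle}\mid\sigma_T]|^2=\E[\cos\langle t,Q^v-R^v\rangle\mid\sigma_T]\le 1-\sum_k\Omega(1)(1-\cos t_k)\le 1-C\|t\|^2$ on $[-\pi,\pi]^{q-1}$.
\item Hence $|\E e^{i\langle t,\vec X\rangle}|\le\exp(-cn\|t\|^2)$, which disposes of the range $\|t\|\ge n^{-1/2+\varepsilon}$.
\end{enumerate}
All constants depend on $\vec\lambda$ only through $\lambda_{\min}$ and $\lambda_{\max}$, so they are uniform over the neighborhood where your tilt lives. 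With this substitution, your argument is complete.
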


To the best of our knowledge, very little is known about the existence of skewed colorings beyond the equitable case.
We conjecture that a bound on each color class size (matching the size of an independent set guaranteed to exist in a graph of max degree $\Delta$) suffices. This conjecture generalizes the theorem of Hajnal and Szemer\'edi.

\begin{conjecture}
    For any graph $G$ with maximum degree $\Delta$ and $\vec{n}$ satisfying $\| \vec{n} \|_1 = n$ and $n_i \leq \left\lfloor \dfrac{n}{\Delta + 1} \right\rfloor$ for all $i \in [q]$, there exists an $\vec{n}$-coloring of $G$. 
\end{conjecture}

Note that the conjecture is false even if we ask for $n_i \leq \left\lceil \dfrac{n}{\Delta + 1} \right\rceil$ (a union of cliques of size $\Delta+1$ with $r$ independent vertices, for $1 \leq r \leq \Delta - 1$, is a counterexample). The conjecture is tight for a union of cliques.  

Since $q \geq 2\Delta$ is a natural barrier, a first step in proving this conjecture might be to prove existence on the stronger condition on color class sizes of $n_i \leq \left\lfloor \dfrac{n}{2\Delta } \right\rfloor$.

We also conjecture that it should be possible to sample colorings that are much more skewed, although the proof would require new ideas.

\begin{conjecture}
    There is a polynomial-time approximate sampling algorithm for uniform $\vec n$-colorings in graphs of maximum degree $\Delta$, for $\vec n$ satisfying $n_i \leq \left\lfloor \dfrac{n}{2\Delta } \right\rfloor$ for all $i \in [q]$
\end{conjecture}

\subsection{Short Preliminaries: Potts and Coloring Models, Univariate Zero-Freeness}
Let us give definitions of some statistical physics models that we use throughout the  paper.

The following model is called anti-ferromagnetic Potts model, it is a generalization of another popular statistical physics model called anti-ferromagnetic Ising model. It differentiates between improper and proper colorings by introducing weights that penalize improper colorings.  
\begin{definition}[Anti-ferromagnetic Potts model]\label{definition: potts penalty}
    Let $w$ be the penalty parameter. We define \textit{the partition function} \[Z_G(w) \defeq \sum_{\sigma \in [q]^V} w^{|m_G(\sigma)|},\] where $m(\sigma)$ is the number of monochromatic edges in the coloring $\sigma$ (not necessarily proper). 
    
    This also defines a natural Gibbs measure: if $X$ is the random (not necessarily proper) coloring, then
    \[ \P_{G,w}[X = \sigma  ] = \frac{w^{m_G(\sigma)}}{Z_G(w)} .\]
\end{definition}

Note that at $w = 0$ the partition function counts the number of proper colorings, and the Gibbs measure is simply a uniform distribution over proper colorings.  
If the number of colors satisfies $q \geq \Delta + 1$, a proper coloring exists, and thus the partition function at $w=0$ is non-zero.
Even though the probability distribution requires $w$ to be non-negative and real, it is possible to work with the partition function for complex $w$. 
Liu, Sinclair, and Srivastava proved that in the complex plane around $w=0$ there exists a zero-free region
\cite{liu2025correlation}.
\begin{theorem}[\cite{liu2025correlation}]
    There exists a $\tau_{\Delta} > 0$ such that the following is true. Let $D_{\Delta}$ be a simply connected region in the complex plane obtained as the union of disks of radius $\tau_{\Delta}$ centered at all points on the segment $[0,1]$. For any graph $G$ of maximum degree at most $\Delta \geq 3$ and integer $q \geq 2\Delta$, $Z_{G,q}(w)\not = 0$ when $w \in D_{\Delta}$. 
\end{theorem}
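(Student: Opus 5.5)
The plan is the standard ``complex marginal'' induction, also used for correlation-decay zero-freeness proofs. I would write $Z_G(w)$ as a telescoping product and show that each factor stays in a small neighbourhood of a positive real interval when $w\in D_\Delta$. Order the vertices $v_1,\dots,v_n$ and let $G_i=G[\{v_1,\dots,v_i\}]$. Then
\[
Z_G(w)=q\prod_{i=2}^{n}\frac{Z_{G_i}(w)}{Z_{G_{i-1}}(w)},
\]
and $Z_{G_i}/Z_{G_{i-1}}$ is a ``partition function ratio'': the expectation under the (complex) Potts measure on $G_{i-1}$ of
\[
\sum_{c\in[q]}\prod_{u\sim v_i}\bigl(1-(1-w)\mathbf 1[\sigma_u=c]\bigr).
\]
So it suffices to show that, for every graph of maximum degree $\Delta$ with any partial pinning (boundary condition) and every $w\in D_\Delta$, the complex marginals $\mu_{G,v}(c)\defeq Z_G^{v\mapsto c}/Z_G$ lie in a fixed region $\mathcal R$. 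This region should be a small complex neighbourhood of the real set of achievable marginals, for instance a $\delta$-neighbourhood of $[0,\tfrac{1}{q-\Delta}]$ together with the constraint $\sum_c\mu(c)=1$. From this it will follow that the ratio above is nonzero.

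I would prove this by induction on the number of unpinned vertices, with a recursion of Weitz/Gamarnik--Katz type. Pin the neighbours $u_1,\dots,u_d$ of $v$ one at a time, and remove $v$ to form $G-v$ with the earlier neighbours' colours pinned. Then the unnormalized weight of $v\mapsto c$ becomes a product over $j$ of factors
\[
1-(1-w)\,\mu_{G-v,u_j}^{(\text{pinned }u_1,\dots,u_{j-1})}(c),
\]
with each factor averaged appropriately over the earlier pinnings. One has to be careful here, because the exact recursion involves ratios of marginals under different pinnings rather than a single product. I would follow the recursion for the ratios $R_c=\mu(c)/\mu(c')$ between two colours, which is a genuine product of at most $\Delta$ terms of the form
\[
\frac{1-(1-w)\mu_j(c)}{1-(1-w)\mu_j(c')}.
\]
The inductive hypothesis places each $\mu_j$ in $\mathcal R$, and the task is to show that the resulting normalized marginal $\mu(c)$ lands back in $\mathcal R$.

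The real case $w\in[0,1]$ supplies the margin. Each factor lies in $[1-\mu_j(c),1]$, so the weight of colour $c$ is at least $\prod_j(1-\mu_j(c))$. With $\mu_j(c)\le \frac{1}{q-\Delta}\le\frac1\Delta$, this bound and the sum over colours give $\mu(c)\le \frac{1}{\sum_{c'}\prod_j(1-\mu_j(c'))}$. A convexity/averaging argument (the sum over colours of $\mu_j(c')$ equals $1$ for each $j$) gives
\[
\sum_{c'}\prod_j\bigl(1-\mu_j(c')\bigr)\ge q-\Delta,
\]
hence $\mu(c)<\frac{1}{q-\Delta}$, with strict slack once $q\ge 2\Delta$ is used to keep every factor $\ge 1-\frac1\Delta$ bounded away from $0$.

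The main obstacle is the complex perturbation: showing that the map is \emph{contracting} in a suitable metric, so that errors of size $\tau_\Delta$ in $w$ and $\delta$ in the inputs do not accumulate with depth. I would try first to work in logarithmic coordinates $\log(1-(1-w)\mu)$. Here $q\ge 2\Delta$ gives $|\mu|\le \frac1\Delta+\delta$, so the logarithm is well defined and Lipschitz, and the imaginary parts add over at most $\Delta$ factors. I would then choose $\delta$ and then $\tau_\Delta$ small enough, depending only on $\Delta$, that the image lies in $\mathcal R$. Uniformity along the whole segment $[0,1]$ follows from compactness, using the real margin, which is uniform in $w\in[0,1]$.
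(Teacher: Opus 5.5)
Your skeleton matches the Liu--Sinclair--Srivastava framework that Section~\ref{section: zero-freeness} transcribes for the multivariate model. That skeleton is induction on the number of unpinned vertices, with the two-colour ratio written as $R^{(c,c')}=\prod_j\frac{1-(1-w)\mu_j(c)}{1-(1-w)\mu_j(c')}$, where $\mu_j$ is a marginal of the $j$-th neighbour in a graph with one fewer unpinned vertex.

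\textbf{The gap.} It sits exactly at the step you flag as the main obstacle: the mechanism you propose does not give contraction.

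To first order the recursion propagates errors linearly. Shrinking $\delta$ and then $\tau_\Delta$ therefore only rescales them, and cannot beat a per-level amplification factor $\ge 1$ over recursion depth of order $n$.

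With your bound $|\mu_j|\lesssim \frac1\Delta$, the derivative of $x\mapsto -\ln(1-\kappa e^{x})$ (with $\kappa=1-w$) is up to $\frac{\mu}{1-\mu}\approx\frac{1}{\Delta-1}$ near $w=0$. A non-root vertex can have $\Delta-1$ unpinned children. So the gain per level is at best exactly $1$, leaving no room for the $O(\tau_\Delta)$ error injected at every level.

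If you bound the $c$- and $c'$-contributions separately, as ``the imaginary parts add over at most $\Delta$ factors'' suggests, you lose a further factor $2$. The argument bound then doubles at each level.

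An absolute region $\mathcal R$ (a $\delta$-neighbourhood of $[0,\frac{1}{q-\Delta}]$) also cannot be re-established from perturbed inputs. The complex outputs are not marginals of any real model. Your averaging step relies on $\sum_{c'}\mu_j(c')=1$, which fails because the $\mu_j$ entering the recursion depend on the colours being compared: they are marginals in different pinned graphs. In the real case you do not need that step anyway, since conditioning on the neighbourhood gives $\mu\le\frac{1}{q-\deg}$ directly.

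\textbf{What is missing.} The induction is closed in Lemmas~\ref{lemma:induction}--\ref{lemma: contraction}, following \cite{liu2025correlation}, by three quantitative ingredients your sketch lacks.

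First, a relative hypothesis. Compare with the real model on the \emph{same} pinned graph (at a nearby real parameter; here $\vec\lambda=\vec 1$). Bound the deviation of $\Re\ln R^{(c,c')}_{G,u}$ from its real value by $d_u\epsilon_R$, and the size of $\Im\ln R^{(c,c')}_{G,u}$ by $d_u\epsilon_I$, with $\epsilon_I\ll\epsilon_R$ and $d_u$ the number of unpinned neighbours.

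Second, niceness. In the recursion graph the neighbour $w_k$ has lost its edge to $u$, so its real marginal is at most $\frac{1}{q-\deg(w_k)}\le\frac{1}{d_{w_k}+2}$ when $q\ge 2\Delta$ (Lemma~\ref{lemma:vk_is_nice}). This gives $\Re f'\le\rho_R\approx\frac{1}{d_{w_k}+1}$. That strictly beats the accumulation over the $d_{w_k}$ children and leaves slack for the additive terms.

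Third, the complex mean-value lemma (Lemma~\ref{lemma: mean}) on a thin rectangle.
\begin{itemize}
\item It propagates real and imaginary errors separately, so the larger real error leaks into the imaginary part only through $\rho_I=O(\tau)$.
\item It bounds $\bigl|\Im\bigl(f(a^{(c)})-f(a^{(c')})\bigr)\bigr|$ by $\rho_R\max\{|\Im a^{(c)}|,|\Im a^{(c')}|\}$ when the signs agree.
\item When the signs differ, it bounds the same quantity by $\rho_R\,\bigl|\Im\ln R^{(c,c')}_{G_k,w_k}\bigr|$, which avoids the factor $2$.
\end{itemize}
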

Using their zero-freeness result and a theorem by Barvinok \cite{barvinok2016combinatorics}, they get a deterministic algorithm for approximately counting proper colorings of a graph $G$ with bounded maximum degree and a sampling algorithm for colorings of $G$. We will explain more on how zero-freeness results can be useful for combinatorial algorithms in the next section.

 We will use a variant of the Potts model with external fields (weights) for each of the colors. Since the model is only defined over proper colorings, we will, for simplicity, refer to it as the coloring model (equivalently it can be defined over all colorings with monochormatic edges having penalty $w=0$). 
\begin{definition}[Coloring model with external fields]\label{def:color_weighted_potts_model}
    The \textit{coloring model} is the model defined through the partition function
    \[Z_G(\lambda_1, \dots, \lambda_q) \defeq \sum_{\substack{\sigma \in [q]^V \\ \sigma \text{ proper}}}  \prod_{i = 1}^q \lambda_i^{|\sigma^{-1}(i)|}\]
    The vector $\vec \lambda \in \mathbb C^q$ is called the \textit{fugacity}, and represents the weight assigned to each of the colors. Observe that for $\vec \lambda = \vec 1$, the partition function simply counts the number of proper colorings. For positive real $\vec \lambda$, the model induces a Gibbs measure on the space of proper colorings, given by
    \[ \P_{G,\vl}(\sigma) \defeq \frac{\prod_{i = 1}^q \lambda_i^{|\sigma^{-1}(i)|}}{Z_G(\lambda_1, \dots, \lambda_q)}. \]
\end{definition}
We will often drop the subscripts of $\P_{G, \vl}$ where there is no confusion. 

As in the previous model, probability distribution requires $\vl$ to be a non-negative real vector, however, we can still analyze the partition function for complex $\vl$, which is explained in the subsection below.

\subsection{Multidimensional Zero-Freeness}
An extremely powerful property of the model in a certain range of regimes is the absence of complex zeros (\textit{zero-freeness}) of the partition function. It can imply properties of the model, like deterministic counting \cite{yao2022polynomial},  Spectral Independence \cite{chen2024spectral}, Central Limit Theorems \cite{michelen2024central}, Strong Spatial Mixing \cite{regts2023absence}, and cluster expansion \cite{scott2005repulsive}, and thus has applications in approximation and sampling algorithms. 

There is a lot of existing work that establishes zero-freeness for different models, and curiously, with different motivations. For example, Lee-Yang theorem shows that, for certain partition functions, their zeros lie on the complex unit circle for any graph \cite{yang1952statistical}. One of the theorem's implications is the absence of phase transitions (absence of drastic changes in the behavior of the model) almost everywhere in the complex plane for the ferromagnetic Ising model. See also more work on the hard-core model \cite{peters2019conjecture}, the Ising model \cite{peters2020location}, \cite{patel2023near}, and the monomer-dimer model \cite{heilmann1972theory}. It is important to note that in most cases one has to assume that a graph has bounded maximum degree $\Delta$, since the zero-freeness radius often depends on $\Delta$. 

Our motivation for proving zero-freeness is getting very precise control over the cumulants (expectation, covariances and more) of the color class sizes. 
Derivatives of $\log Z$ are tightly connected with these cumulants, and thus showing analyticity of $\log Z$ (which is directly implied by zero-freeness of $Z$) lets us control the cumulants. 

For example, zero-freeness implies linear upper bounds on the cumulants of any order. As one of the consequences, we can compute the asymptotics of the determinant of the covariance matrix, which will be important for the runtime of our algorithm. Moreover, analyticity guarantees the convergence of the Taylor expansion, which we apply to the characteristic function of the random vector of color class sizes. This helps us establish a Local Central Limit Theorem (LCLT), a result that guarantees that the Gibbs distribution approaches a normal distribution (in the limit) pointwise. 
We believe that this fact is interesting combinatorially in its own merit. Moreover, we use it for the rejection sampling part of our algorithm, see an overview in Subsection \ref{subsection: exposition CLT} and details in Section \ref{section: LCLT}.  
 
Let us now give an overview of the zero-freeness result for the coloring model (see Definition \ref{def:color_weighted_potts_model}). The approach we are taking is similar to the blueprint developed by Liu, Sinclair, and Srivastava for the Potts model with penalty \cite{liu2025correlation} (see Definition \ref{definition: potts penalty}), however, we need to generalize their result to a multivariate partition function and handle $q$ parameters (one for each color) instead of one. This is done so that each color receives its own variable, which lets us regulate each color class size. Generalizing existing tools to work in multivariate settings gives control over different parameters, which allows for better understanding and efficient algorithms for models with global constraints.

\begin{definition}[Assumptions]\label{intro:assumptions}
Let $\nu = 0.9$, $\eps_I \defeq 10^{-4} \Delta^{-4} $. From now on, we will assume that $\vec{\lambda} \in \C^{q}$ is chosen so that for any $i, j \in [q]$,
    \begin{enumerate}
        \item  $\left \vert \arg \lambda_i \right \vert \leq \nu \varepsilon_I / 2$; \label{assumption:lambda-argument-small}
        \item $|\Re \ln \frac{\lambda_i}{\lambda_j}| \leq \Delta \eps_I$; \label{assumption:lambdas-close-together} 
        \item $\lambda_i \notin (-\infty, 0]$\label{assumption:lambdas-nonzero};
    \end{enumerate}
\end{definition}

\begin{theorem}[(Zero-freeness around $\vec{1}$)]\label{intro-thm:zero-freeness-region}
    For $q \geq 2\Delta$, $\Delta \geq 1$ and graph $G$ with maximum degree $\Delta$, and $\vl \in \C^{q}$ such that the assumptions from Definition \ref{intro:assumptions} hold, then $Z_G(\vl) \neq 0$. 
\end{theorem}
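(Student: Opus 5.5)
We follow the Liu–Sinclair–Srivastava blueprint from \cite{liu2025correlation}, adapted to $q$ fugacities: prove zero-freeness inductively by showing that certain marginal ratios stay in a small complex neighborhood of their values at $\vec\lambda=\vec 1$, using a self-avoiding-walk style recursion. For a graph $G$, a vertex $v$, a set of already-colored (pinned) vertices, and a color $c$, we define
\[
R_{G,v}(c)\defeq \frac{Z_G^{\,v\leftarrow c}(\vl)}{Z_G(\vl)},
\]
with $Z_G^{\,v\leftarrow c}$ the partition function restricted to colorings with $\sigma(v)=c$ (including the weight $\lambda_c$). The key is that $Z_G=\lambda_c Z_{G-v}^{\,\text{no }c\text{ on }N(v)}+\cdots$, so $Z_G\neq 0$ follows by induction on $|V|$ once we know $Z_{G-v}\neq 0$ and that the complex marginals $\P_{G,\vl}[\sigma(v)=c]$ are well defined and close to the real-valued ones. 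Concretely, we write $Z_G = Z_{G-v}\sum_{c}\lambda_c \,\P_{G-v,\vl}[\text{no neighbor of }v\text{ has color }c]$, and then show that each summand has argument within a small angle (of order $\nu\eps_I$ plus $\eps_I$) and modulus comparable to its value at $\vec 1$. The sum of $q$ complex numbers in a narrow cone is nonzero.

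The inductive hypothesis to maintain is that, for every graph of maximum degree $\Delta$, every vertex $u$ with at most $\Delta-1$ uncolored neighbors (after pinning), and every color $c$, the complex marginal $p_u(c)\defeq\P_{\vl}[\sigma(u)=c]$ satisfies $|p_u(c)-\hat p_u(c)|$ small relative to $\hat p_u(c)$, or more conveniently that $p_u(c)\in\hat p_u(c)\cdot(1+B_\eps)$ where $\hat p_u(c)$ is a real positive reference value. The recursion uses the standard telescoping on the neighbors $u_1,\dots,u_d$ of $v$:
\[
\P[\text{no } u_j \text{ has color } c]=\prod_{j=1}^d\bigl(1-\P_{G_j}[\sigma(u_j)=c]\bigr),
\]
where $G_j$ is $G-v$ with $u_1,\dots,u_{j-1}$ conditioned to avoid $c$. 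Each factor satisfies $|\P_{G_j}[\sigma(u_j)=c]|\le$ roughly $\frac{|\lambda_c|}{\sum_{c'\text{ available}}|\lambda_{c'}|}\lesssim \frac{1+O(\Delta\eps_I)}{q-\Delta}\le \frac{1+o(1)}{\Delta}$ since $q\ge 2\Delta$ and at most $\Delta$ colors are blocked; assumption (2) gives the uniform comparability of the $\lambda_i$. Hence each factor $1-p$ lies in a disk away from $0$, and the product of $d\le\Delta$ such factors has controlled argument. Assumption (1) bounds $\arg\lambda_c$, so that after multiplication by $\lambda_c$ all $q$ summands lie in a common half-plane. Assumption (3) is used only to fix the branch of $\ln\lambda_i$.

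The main obstacle is closing the induction with error propagation that does not blow up: the argument of the product over $d\le\Delta$ neighbors could accumulate to $\Delta$ times the single-step error, and that error itself depends on marginals one level deeper. We would handle this as in \cite{liu2025correlation}, by tracking the complex marginal in a contracting potential (e.g.\ $\log$ of the ratio $p_u(c)/p_u(c')$). We would show that one recursion step maps a region of radius $\eps$ around the real point into one of radius at most $\nu\eps$ plus a contribution from the fugacities of order $\eps_I$. This is why $\nu=0.9$ appears and why $\eps_I$ scales like $\Delta^{-4}$. The contraction at $q\ge2\Delta$ comes from the bound $\sum_j|\partial/\partial p_{u_j}|\le \Delta\cdot\frac{1}{q-\Delta}\cdot(1+o(1))\le 1$, sharpened to strict contraction by a $\Delta^{-O(1)}$ margin. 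Checking this derivative bound uniformly over complex $\vl$ in the region of Definition \ref{intro:assumptions} is the step we expect to require the most care.
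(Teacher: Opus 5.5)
Your overall architecture matches the paper: induction on unpinned vertices, complex pseudo-marginals kept close to the real ones at $\vec 1$, and a final small-angle sum. But the contraction step you flag as delicate does not close as you set it up, and it fails exactly at $q=2\Delta$. In your telescoping the auxiliary graph depends on the color: $G_j=G_j^{(c)}$ has $u_1,\dots,u_{j-1}$ forbidden from $c$. Once you pass to the potential $\ln\bigl(p_u(c)/p_u(c')\bigr)$, or normalize to get $p_u(c)$, the products $\prod_j(1-p_j^{(c)})$ and $\prod_j(1-p_j^{(c')})$ involve marginals in unrelated graphs. Your induction hypothesis gives no relation between their errors, so in the worst case the errors add. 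The per-level Lipschitz factor is then about $\sum_j 2\hat p_j/(1-\hat p_j)\le 2(\Delta-1)/(q-\Delta)$. At $q=2\Delta$ this equals $2(\Delta-1)/\Delta\ge 1$ for every $\Delta\ge 2$. Even your single-color bound $\Delta\cdot\frac{1}{q-\Delta}(1+o(1))$ is $1+o(1)$ at $q=2\Delta$. The unspecified ``$\Delta^{-O(1)}$ margin'' can only come from non-root vertices having at most $\Delta-1$ unpinned neighbors, which is of order $1/\Delta$ and cannot absorb a factor of $2$. The recursion depth grows with $n$, so without a genuine contraction the error bounds are not uniform and the induction fails.

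The paper removes the factor $2$ with three ingredients.

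\textbf{Interpolating telescoping.} It telescopes the ratio $Z^{(i)}_{G,u}/Z^{(j)}_{G,u}$ directly through the graphs $G_k^{(i,j)}$. Here the copies of $u$ before position $k$ are pinned to $i$, those after it to $j$, and the $k$-th copy is deleted. Thus $\mathcal P[\sigma(w_k)=i]$ and $\mathcal P[\sigma(w_k)=j]$ live in the \emph{same} graph $G_k$. Take $f(x)=-\ln(1-e^x)$ and the complex mean-value lemma. The real-part error of $f(a^{(i)})-f(a^{(j)})$ is $|C_i\Re\xi_i-C_j\Re\xi_j|+O(\eps_I)$ with real $C_s\in[0,\rho_R]$. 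If $\Re\xi_i$ and $\Re\xi_j$ have the same sign, this is at most $\rho_R\max_s|\Re\xi_s|$. If they have opposite signs, it is at most $\rho_R|\Re\xi_i-\Re\xi_j|$, which is exactly the log-ratio error at $w_k$ in $G_k$ and so is covered by the induction hypothesis. Each neighbor therefore costs one unit of error, not two.

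\textbf{Marginal errors from ratio errors.} The log-marginal errors $\xi_s$ are bounded by the maximal log-ratio error (plus a negligible amount). This uses the convex-combination identity $\mathcal P_{\vec 1}[\sigma(w)=j]/\mathcal P_{\vec\lambda}[\sigma(w)=j]=\sum_i\mathcal P_{\vec 1}[\sigma(w)=i]\,R^{(i,j)}_{w}(\vec\lambda)/R^{(i,j)}_{w}(\vec 1)$.

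\textbf{Degree-weighted potential.} The error bound at $u$ is $d_u\eps_R$. Niceness, $\mathcal P_{\vec 1}[\sigma(w_k)=c]\le 1/(d_{w_k}+2)$, is where $q\ge 2\Delta$ and $\deg_{G_k}(w_k)\le\Delta-1$ enter. It gives $\rho_R\le 1/(d_{w_k}+0.91)$, so each neighbor contributes at most $\eps_R$. Real and imaginary parts are tracked at separate scales, $\eps_I=\eps_R/(100\Delta^2)$, so that the fugacity terms and the $\rho_I|\Im(\cdot)|$ cross terms fit into the remaining slack.

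Note also that $\nu=0.9$ is not a per-level contraction rate. It fixes the argument bound on the $\lambda_i$ and the constant in $C\le 1/(d+\nu)$.
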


The corollary of this result lets us rephrase it in terms of the zero-freeness ball with constant $l_\infty$-radius:

\begin{corollary}\label{intro-corollary:zero-freeness-radius}
    Let $R \in \mathbb R$ such that $0 < R \leq \nu \varepsilon_I / 4 \approx 2.2 \times 10^{-5}  \Delta^{-4}$. If $\vec \lambda \in \mathbb C^q$ is chosen so that $\vert \lambda_i - 1 \vert \leq R$ for every $i \in [q]$, then the assumptions from Definition \ref{intro:assumptions} are satisfied. Hence, for $q \geq 2\Delta$, $\Delta \geq 1$,  and graph $G$ with maximum degree $\Delta$,  $Z_G(\vl) \neq 0$ in a polydisc of radius $R$ around $\vec{1}$.
\end{corollary}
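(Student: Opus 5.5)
The plan is to verify the three conditions of Definition \ref{intro:assumptions} directly from $|\lambda_i - 1| \le R$, and then to invoke Theorem \ref{intro-thm:zero-freeness-region}. Condition (\ref{assumption:lambdas-nonzero}) is immediate. Since $R < 1$, every $\lambda_i$ lies in the open disc of radius $1$ about $1$, which is contained in the right half-plane $\Re z > 0$ and so avoids $(-\infty,0]$. In particular the principal branch of $\ln$ is defined and continuous on this disc, so $\arg$ and $\ln$ below are unambiguous.

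For condition (\ref{assumption:lambda-argument-small}) I would use elementary geometry. If $|\lambda_i - 1| \le R$ with $R<1$, the largest argument occurs at the tangent point, so $|\arg \lambda_i| \le \arcsin R$. The bound $\arcsin R \le R/\sqrt{1-R^2}$ is tiny here, and for $R \le \nu\eps_I/4 \le 2.5\cdot 10^{-5}$ it gives $\arcsin R \le 2R$ comfortably. Hence
\[
|\arg\lambda_i| \le 2R \le \nu\eps_I/2 .
\]
This is the only place where the factor $4$ in the definition of $R$ (versus the $2$ in the assumption) is used, so I would state the inequality $\arcsin R\le 2R$ for $R\le 1/2$ explicitly.

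For condition (\ref{assumption:lambdas-close-together}) I would note that $\Re\ln\lambda = \ln|\lambda|$. Since $1-R\le|\lambda_i|\le 1+R$, we get
\[
|\ln|\lambda_i|| \le -\ln(1-R) \le 2R
\]
for small $R$, again via $-\ln(1-x)\le x/(1-x)\le 2x$ for $x\le 1/2$. Therefore
\[
\left|\Re\ln\frac{\lambda_i}{\lambda_j}\right| = \bigl|\ln|\lambda_i| - \ln|\lambda_j|\bigr| \le 4R \le \nu\eps_I \le \Delta\eps_I ,
\]
using $\nu = 0.9 < 1 \le \Delta$. With all three assumptions verified, Theorem \ref{intro-thm:zero-freeness-region} gives $Z_G(\vl)\neq 0$ for $q\ge 2\Delta$ on the whole polydisc.

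There is no real obstacle. The only care needed is to pin down numerical constants in the elementary inequalities $\arcsin R\le 2R$ and $-\ln(1-R)\le 2R$, which hold well within the regime $R\le 2.5\cdot10^{-5}$. The approximate value $2.2\times10^{-5}\Delta^{-4}$ is just $0.9\cdot 10^{-4}/4$ times $\Delta^{-4}$.
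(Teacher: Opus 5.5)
Your proof is correct and is essentially the paper's own argument for Lemma~\ref{lemma:lambdas-in-ball-satisfy-assumptions}: you use $\Re\lambda_i\ge 1-R>0$, then $\ln(1-R)\le\ln|\lambda_i|\le\ln(1+R)$ with $-\ln(1-R)\le 2R$ for the modulus condition, and a $2R$ bound on $|\arg\lambda_i|$ (the paper gets this from $\tan(\arg\lambda_i)\le R/(1-R)\le 2R$ and $\arctan x\le x$, where you use $\arcsin R\le 2R$), and finally apply the zero-freeness theorem. One small correction to your side remark: with your bounds the factor $4$ is also used in the third condition, since at $\Delta=1$ the inequality $4R\le\Delta\varepsilon_I$ would fail under the weaker hypothesis $R\le\nu\varepsilon_I/2$.
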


The proof can be found in Section \ref{section: zero-freeness}. The main idea behind the proof is the induction argument in terms of the number of unpinned (not yet colored) vertices. For each iteration of the induction, we show that the ratio of the probability of a vertex to be colored $i$ vs be colored $j$ is very close to 1.

We note that in the paper of Liu, Sinclair, and Srivastava \cite{liu2025correlation} the theorem holds generally for colorings they call ``admissible'' and requires fewer colors for triangle-free graphs. It is also mentioned that their result should generalize to list colorings. We expect these generalizations to hold for our model as well. 
Note that we write theorems in a general enough way that an improvement to the zero-freeness region would lead to an immediate improvement on the sampling and existence result. 

\subsection{A Local Central Limit Theorem}\label{subsection: exposition CLT}
As one of our main theorems, we prove a Local Central Limit Theorem (LCLT), which states that the Gibbs distribution of the coloring model approaches a normal distribution pointwise. We also get asymptotic control over the eigenvalues and determinant of the covariance matrix, and thus can lowerbound the probability of sampling an $\vec{n}$-coloring. This is the main ingredient that goes into the rejection sampling.

\begin{restatable}[(Local Central Limit Theorem)]{theorem}{lclt}
\label{thm:lclt}
Let $q\geq 2\Delta$, $\Delta \geq 1$, and $\vl \in \R^q$ such that $\|\vl - \vec{1}\|_\infty\leq R/2$, where $R$ is a zero-freeness radius. 
Let $\vec X  = (X_1, \dots, X_{q-1})$ be the random vector of color class sizes with the last color dropped. We will denote its expected value and covariance matrix as $\mu := \E[\vec X]$ and $\Sigma := \cov(\vec X)$. 
Then, for $\vec{n} \in \mathbb{Z}_{\geq 0}^{q-1}$ such that $\sum n_i \leq n$,
\[\mathbb P_{\vec{\lambda}}(\vec{X}=\vec{n})
= (1+o(1))\frac{1}{(2\pi)^{(q-1)/2}\sqrt{\det\Sigma}}
\exp\left( -\tfrac12 (\vec{n}-\vec{\mu})^\top\Sigma^{-1}(\vec{n}-\vec{\mu}) \right)
+
o\!\left(n^{-(q-1)/2}\right).
\]
\end{restatable}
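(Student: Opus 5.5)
The plan is to prove the LCLT by Fourier inversion on the lattice $\mathbb{Z}^{q-1}$. Write $\lambda_j=e^{s_j}$ and normalise the last color, since only ratios $\lambda_i/\lambda_q$ matter once $\sum_i X_i=n$ is fixed. The characteristic function of $\vec X$ is then
\[
\phi(\vec t)=\frac{Z_G(\lambda_1e^{it_1},\dots,\lambda_{q-1}e^{it_{q-1}},\lambda_q)}{Z_G(\vl)},
\]
and inversion gives
\[
\P(\vec X=\vec n)=(2\pi)^{-(q-1)}\int_{[-\pi,\pi]^{q-1}}\phi(\vec t)e^{-i\langle \vec t,\vec n\rangle}\,d\vec t .
\]
We split the torus into a small box $\|\vec t\|_\infty\le \delta$, with $\delta$ a constant comparable to $R$, and its complement.

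\textbf{Inner region.} For $\|\vl-\vec 1\|_\infty\le R/2$ and $|t_j|\le\delta$ small, the point $(\lambda_je^{it_j})_j$ stays in the polydisc of Corollary~\ref{intro-corollary:zero-freeness-radius}. Hence $\log Z_G$ is analytic there.

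First we show all cumulants are $O(n)$. Zero-freeness by itself only gives analyticity. To obtain linear bounds we use that the zero-free region is uniform over all graphs of maximum degree $\Delta$, which gives a bound $|\log Z_G(\vec z)-\log Z_G(\vl)|=O(n)$ on the polydisc. The cleanest route is the telescoping/induction structure of Theorem~\ref{intro-thm:zero-freeness-region}: $Z_G$ is written as a product of $n$ marginal ratios, each controlled by a constant. Cauchy estimates on a slightly smaller polydisc then give $|\kappa_{\alpha}|\le C^{|\alpha|}|\alpha|!\,n$ for every multi-index cumulant.

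Second, we need the matching lower bound $\Sigma\succeq c\,n I$. Here we pick $\Theta(n)$ vertices that are pairwise at distance at least $3$ (possible since $\Delta$ is bounded). Conditioning on the coloring of all other vertices, the colors of these vertices are independent, and each has at least $q-\Delta\ge\Delta\ge1$ available colors. For $q\ge 2\Delta$ the available set usually contains at least two colors. Choosing, for each pair of colors $i,j$, vertices whose neighborhoods avoid both with probability bounded below, the law of total variance gives $\vec v^\top\Sigma\vec v\ge c n\|\vec v\|^2$. This step needs a careful choice of directions $\vec v$, which we handle by a spanning argument over pairs $(i,q)$ and $(i,j)$.

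Given both bounds, Taylor expansion gives
\[
\log\phi(\vec t)=i\langle\vec t,\mu\rangle-\tfrac12\vec t^\top\Sigma\vec t+O(n\|\vec t\|^3).
\]
On $\|\vec t\|\le n^{-1/2}\log n$ this yields the Gaussian integrand up to a factor $1+o(1)$. On $n^{-1/2}\log n\le\|\vec t\|\le\delta$ we have $|\phi|\le e^{-cn\|\vec t\|^2/2}$, provided $\delta$ is small enough that the cubic term is dominated, and this part contributes $o(n^{-(q-1)/2})$. Completing the Gaussian integral produces the stated main term.

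\textbf{Outer region.} We must show $|\phi(\vec t)|=o(n^{-(q-1)/2})$, in fact $e^{-cn}$, for $\delta\le\|\vec t\|_\infty\le\pi$. Again we condition on all vertices outside a $3$-separated set $S$ with $|S|=\Theta(n)$. Given the outside coloring, $\phi$ factors over $S$ into single-vertex characteristic functions of a random color in the available list $L_v$.

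A vertex contributes a factor at most $1-c(\delta)$ whenever $L_v$ contains two colors $i,j$ with $t_i-t_j\notin 2\pi\mathbb Z$ beyond distance $\delta$, where we set $t_q=0$. We also need a lower bound on the probability that $L_v$ contains a prescribed pair. Since $q\ge2\Delta$ and the neighbors' colors have non-degenerate conditional laws (each neighbor itself has at least two options with constant probability), every fixed pair $\{i,j\}$ is simultaneously available with constant probability. A Chernoff bound over the $\Theta(n)$ separated vertices, which are conditionally independent given a further layer of conditioning, then gives $|\phi(\vec t)|\le e^{-c'n}$.

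We expect this outer-region bound to be the main obstacle, since it requires handling every $\vec t$ uniformly, including $\vec t$ near nonzero lattice points where some $t_i$ coincide modulo $2\pi$. The lower bound on $\Sigma$ is closely related and will be proved with the same local-resampling machinery. Combining the three regions proves Theorem~\ref{thm:lclt}.
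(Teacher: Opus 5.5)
Your proposal is correct and has the same skeleton as the paper's proof. Both use Fourier inversion, a Taylor expansion of $\log\varphi$ near $0$ justified by zero-freeness and $O(n)$ Cauchy bounds on $\log Z$, the bound $\Sigma\succeq cnI$ via the law of total variance over a separated set, and decay of $\varphi$ from conditional independence across that set.

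Where you differ is the treatment of large $t$.
\begin{itemize}
\item The paper splits only at $\|t\|=An^{-1/2+\varepsilon}$ and covers the rest of the torus with a single bound $|\varphi(t)|\le e^{-cn\|t\|^2}$ (Lemma~\ref{lemma: characteristic upperbound}). It takes $S$ $4$-separated, conditions on $T$, and resamples whole blocks $\{v_j\}\cup N(v_j)$. The symmetrization $|\E e^{i\langle t,Q^j\rangle}|^2=\E\cos\langle t,Q^j-R^j\rangle$, applied to the block colorings $v_j\mapsto k$ versus $v_j\mapsto q$ (same neighbors), gives $Q^j-R^j=e_k$ with probability bounded below uniformly in $\sigma_T$. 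So no averaging over the outside coloring is needed.
\item You treat $n^{-1/2}\log n\le\|t\|\le\delta$ by the Taylor expansion plus $\Sigma\succeq cnI$. This is valid once $\delta$ is small relative to the ratio of the eigenvalue constant to the cubic-remainder constant. You treat $\|t\|_\infty\ge\delta$ by single-vertex resampling, which costs an extra step controlling the random lists $L_v$.
\item In exchange, your coordinates $s=\log\vl$ make the Taylor coefficients of $\log\varphi(t)=\log Z(e^{s+it})-\log Z(e^{s})$ literally the cumulants. Your telescoping product also controls the full complex $\log Z$, imaginary part included, whereas Lemma~\ref{lemma: log Z is linear} only really bounds $\log|Z|$.
\end{itemize}

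Three details to tighten.
\begin{itemize}
\item The obstacle you anticipate in the outer region does not occur. Since $t_q=0$ and $t\in[-\pi,\pi]^{q-1}$, the condition $\|t\|_\infty\ge\delta$ gives some $k$ with $|t_k|\in[\delta,\pi]$. The single pair $\{k,q\}$ then already yields a factor at most $1-c(1-\cos\delta)$, and this is exactly the pair the paper uses.
\item With $S$ only $3$-separated, the events $\{k,q\}\subseteq L_v$ are not conditionally independent, because an edge may join $N(v)$ and $N(v')$. You have two fixes.
  \begin{itemize}
  \item Take $S$ $4$-separated and condition on $T$ as in the paper. Then these events are independent with probabilities at least $c_0$, and $\E[(1-c)^N\mid\sigma_T]\le(1-cc_0)^{|S|}$ with no Chernoff bound needed.
  \item Keep distance $3$ and argue by sequential stochastic domination. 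This works because $\P[\{k,q\}\subseteq L_v\mid\sigma_{V\setminus N[v]}]\ge c_0$, while the events for the other $v'\in S$ are measurable with respect to $\sigma_{V\setminus N[v]}$.
  \end{itemize}
\item Replace ``usually contains at least two colors'' by the precise condition $q\ge\Delta+2$, which is automatic for $\Delta\ge2$. For $\Delta=1$, $q=2$ the statement actually fails on a perfect matching, where $X_1\equiv n/2$. The paper's supporting lemmas likewise assume $q\ge\max\{2\Delta,3\}$.
\end{itemize}
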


\begin{restatable}[(Covariance Determinant Asymptotics)]{lemma}{detasymp}\label{det asymp}
    Let $q \geq \max{\{2\Delta, 3\}}$, $\Delta \geq 1$, and $\vl \in \C^q$ such that $|\vl - \vec{1}|\leq R/2$, where $R$ is a zero-freeness radius,
    \[\det(\Sigma) = \Theta(n^{q-1}).\]
\end{restatable}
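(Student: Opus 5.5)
We need both bounds $\det\Sigma = O(n^{q-1})$ and $\det\Sigma=\Omega(n^{q-1})$. We read $\vl$ as real, with $\|\vl-\vec 1\|_\infty\le R/2$, so that $\Sigma$ is a genuine covariance matrix. The plan is to show that every eigenvalue of $\Sigma$ is $\Theta(n)$: an upper bound on the largest eigenvalue and a lower bound on the smallest. The determinant is the product of the $q-1$ eigenvalues, so this gives the claim.

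\textbf{Upper bound.} Write $\lambda_i=e^{t_i}$ and $F(\vec t)=\log Z_G(e^{t_1},\dots,e^{t_{q-1}},\lambda_q)$. Then
\[
\Sigma_{ij}=\frac{\partial^2 F}{\partial t_i\,\partial t_j}.
\]
By Corollary~\ref{intro-corollary:zero-freeness-radius}, $Z_G$ has no zeros on the polydisc of radius $R$ around $\vec 1$. Hence $F$ is analytic on a polydisc of constant radius around the current $\vec t$. To bound its size there, use the product/telescoping structure behind Theorem~\ref{intro-thm:zero-freeness-region}: its inductive proof shows that each ratio of partition functions obtained by pinning or removing one vertex is bounded and zero-free. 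Summing the logarithms of these $n$ ratios gives $|F(\vec t')-F(\vec t)|=O(n)$ on that polydisc. Cauchy's estimates then bound every entry of $\Sigma$ by $O(n)$, so $\lambda_{\max}(\Sigma)=O(n)$. This is the standard step "zero-freeness implies linear cumulants."

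\textbf{Lower bound.} For any unit vector $a\in\R^{q-1}$ we must show $\Var(a\cdot\vec X)=\Omega(n)$. Use the law of total variance with a suitable conditioning.
\begin{itemize}
\item Greedily choose a set $S$ of $\Omega(n/\Delta^2)$ vertices that are pairwise at distance at least $3$; then no vertex has two neighbours in $S$.
\item Condition on the coloring $\sigma$ of $V\setminus S$. The colors of the vertices $v\in S$ are then conditionally independent, and each $v$ is colored from its available list $L_v$.
\item Since $\deg v\le\Delta$ and $q\ge 2\Delta$, we have $|L_v|\ge q-\Delta\ge \Delta$ (and $\ge 2$ when $q\ge 3$). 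Each color in $L_v$ gets probability bounded below, because the $\lambda_i$ are within a constant factor of $1$.
\item Conditionally, $\Var(a\cdot\vec X\mid\sigma)=\sum_{v\in S}\Var(a_{c(v)})$, where $a_q:=0$ and $c(v)$ is the color of $v$. The term for $v$ is $\Omega\bigl(\max_{i,j\in L_v}(a_i-a_j)^2\bigr)$.
\end{itemize}

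The main obstacle is that $L_v$ may miss exactly the colors on which $a$ varies; for instance, $a$ might be concentrated on a single color that is blocked at $v$. To handle this, argue that with constant probability over $\sigma$ a constant fraction of the $v\in S$ have \emph{all} $q$ colors available. This holds when the neighbourhood of $v$ happens to be colored with a single color, or more simply we can use pairs of colors. A cleaner alternative is to use a different conditioning for each pair of colors $i\neq j$. The event that all neighbours of $v$ avoid both $i$ and $j$ has probability bounded below by a constant depending only on $\Delta,q$. To see this, recolor the neighbourhood sequentially: each neighbour has at least $q-\Delta\ge\Delta\ge 2$ choices, with ratios bounded via the zero-freeness/ratio bounds. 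On that event, $v$ chooses between $i$ and $j$ with constant probability, contributing $\Omega((a_i-a_j)^2)$.

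Summing over $v\in S$ gives $\Var(a\cdot\vec X)=\Omega(n)\max_{i,j}(a_i-a_j)^2$, where the maximum includes index $q$ with $a_q=0$. Since $a$ is a unit vector, some $|a_i|\ge (q-1)^{-1/2}$, and comparing with $a_q=0$ shows this maximum is $\Omega(1)$. Hence $\lambda_{\min}(\Sigma)=\Omega(n)$, and together with the upper bound we get $\det\Sigma=\Theta(n^{q-1})$.
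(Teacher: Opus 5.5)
Your proposal is correct. The upper bound is the paper's argument: zero-freeness on the polydisc, then an $O(n)$ bound on $\log Z$, then Cauchy estimates giving $O(n)$ covariance entries and $\lambda_{\max}(\Sigma)=O(n)$. The paper gets the $O(n)$ bound from the explicit lower bound $|Z_G(\vl)|\ge 0.99^n(\lambda_{\min}/\lambda_{\max})^{n\Delta}$ together with $|Z_G|\le q^n(1+R)^n$. Your telescoping over pinnings uses the same input, and it additionally controls the imaginary part of the chosen branch.

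For the lower bound you use a genuinely different decomposition. The paper picks $S$ pairwise at distance at least $4$ and conditions only on the vertices at distance at least $2$ from $S$, so each star $\{v\}\cup N(v)$ is resampled as a block. It then exhibits two colorings of the star that agree on $N(v)$ and differ only at $v$ (the two extreme colors $s,t$). Each has conditional probability bounded below, so the conditional variance is $\Omega(1)$ per star for \emph{every} boundary condition, and no averaging is needed.

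You condition on all of $V\setminus S$, so the blocks are single vertices with conditionally independent categorical colors. The catch is that the conditional variance at $v$ can vanish when $L_v$ misses the extreme colors. You compensate with the averaging step $\E[\Var(a_{c(v)}\mid\sigma)]\ge p_ip_j(a_i-a_j)^2\,\P[i,j\in L_v]$ plus a separate estimate $\P[\sigma(N(v))\cap\{i,j\}=\emptyset]=\Omega(1)$. That estimate is proved by exactly the paper's star resampling, so the two routes are close. Yours needs only an independent set $S$ but two levels of argument; the paper's needs a sparser $S$ but a single conditioning. Also, one conditioning with the maximizing pair $(i,j)$ suffices, so no pair-dependent conditioning is needed.

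Two points to repair in the write-up.
\begin{itemize}
\item Discard the first idea, that a constant fraction of $v\in S$ have all $q$ colors available. A non-isolated vertex never does, and a monochromatic neighborhood still blocks one color.
\item The count ``each neighbour has at least $q-\Delta$ choices'' leaves no room to also avoid $i$ and $j$ at the smallest admissible $q$ (e.g.\ $\Delta=1$, $q=3$, or $\Delta=2$, $q=4$). Instead, condition on everything outside $\{v\}\cup N(v)$. Color the neighbors of $v$ one at a time, each avoiding $i$, $j$ and the colors of its at most $\Delta-1$ already-colored neighbors other than $v$; this is possible since $q\ge\Delta+2$, which follows from $q\ge\max\{2\Delta,3\}$. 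Then give $v$ any color not appearing on $N(v)$. This fixed configuration has conditional probability at least $q^{-(\Delta+1)}(\lambda_{\min}/\lambda_{\max})^{\Delta+1}$ for every boundary condition, and no zero-freeness or ratio bounds are involved.
\end{itemize}
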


Note that if we choose $\vec{\lambda} = \vec{1}$ (corresponding to uniform distribution), by symmetry $\E[X_i] = n/q$. We show that if  $\|\vec{n} - \vec{\mu}\|<C\sqrt{n}$, 
$\exp\left(
-\tfrac12 (\vec{n}-\vec{\mu})^\top\Sigma^{-1}(\vec{n}-\vec{\mu})\right) = \Omega(1)$, and so
\[\pi(\vec{X}=\vec{n})=\Theta\!\left(n^{-(q-1)/2}\right).\]

This is an important component of the proof, since it provides a guarantee for the number of times a rejection sampling step has to be repeated to reach the desired coloring. We explain how that is done in the next subsection. 

The proof of LCLT uses two simple ingredients. The first one is the Taylor expansion of the logarithm of the characteristic function ($\phi \defeq \E[e^{i\langle t,X\rangle}]$) , which can be done in the region where the characteristic function is non-zero (this region is contained in the original zero-freeness region of $Z(\vl)$). It is easy to show that for small $t$, $\phi$ is dominated by the terms corresponding to expectation and covariances (see Subsection \ref{subsection: cluster of char}) For larger $t$ we utilize a lemma which shows an upper bound on characteristic function, thus showing that the contribution of larger $t$ is negligible (see Subsection \ref{subsection: upperbound char}).  

\subsection{Sampling Equitable Colorings}
In this subsection we will explain how to sample equitable colorings and then in the next subsection we explain how to adjust the algorithm slightly to sample skewed colorings.  
We use LCLT as the main ingredient to give us guarantees on the success of rejection sampling. This is a standard tool in sampling (see \cite{jerrum1986random} for example).

By a classical result of Vigoda \cite{vigoda1999improved}, there exists an algorithm that approximately samples colorings (uniformly) in $O(n \,\log n \, \log(1/\eps))$ for $q\geq \frac{11}{6}\Delta$. One can use Glauber dynamics as an example of such algorithm  
(see \cite{blanca2022mixing}, \cite{liu2021coupling} for relevant details on mixing time). 
We use this as the first step in the rejection sampling algorithm.

\begin{restatable}[Rejection Sampling Algorithm]{definition}{rejalg}
\label{def:rej alg}
\, 
    \begin{enumerate}
        \item Approximately sample a uniform coloring (for example by running Glauber dynamics for $T = O_{\eps}(n\log n)$ steps). 
        \item Check if the coloring produced is an $\vec{n}$-coloring. If yes, output the coloring. If no, reject and repeat the first step.
    \end{enumerate}
\end{restatable}

Since LCLT together with bounds on the determinant of the covariance matrix imply that $\pi(\vec{X}=\vec x)=\Theta\!\left(n^{-(q-1)/2}\right)$, it is easy to show that after $O(n^{(q-1)/2} \log(1/\eps))$ iterations of rejection sampling, the probability of the failure of the algorithm is at most $1-\eps$. Hence, the total running time of the algorithm is $O\!\left(n^{(q+1)/2}\, \log(n)\,\log(\frac{1}{\eps})^2\right)$. 
Note that this algorithm actually lets us sample more than just equitable colorings. For any constant $c > 0$ and $\vec n$ such that $\sum_i n_i = n$ and $|n_i - n/q| < c \sqrt{n}$ we know that LCLT and bounds on eigenvalues of the covariance matrix imply that $\pi(\vec{X}=\vec x)=\Theta\!\left(n^{-(q-1)/2}\right)$, and hence the same rejection algorithm works.

\subsection{Sampling Skewed Colorings}\label{subsection: exposition skewed sampling}
It is natural to predict that one could sample skewed colorings ($|n_i - n/q|< cn$ for small $c$) the same way that we sample equitable colorings. However, there are several challenges that appear. One of such challenges is showing that we can find (algorithmically) weights $\vl$ such that they give the right (or close to the right) color class sizes in expectation. For the equitable colorings (and colorings with $O(\sqrt{n})$ deviation from equitable) it was enough to take $\vl = \vec{1}$ (to sample from uniform distribution). To able to sample colorings with a linear deviation from equitable, we need to find $\vl$ such that $\|\E_{\vl}[\vec X] - \vec n\|_{\infty} \leq c\sqrt{n}$. This will allows us sample colorings from Gibbs distribution corresponding to $\vl$ and use LCLT to show that rejection sampling works. Note that the probability distribution we care about only makes sense in the zero-freeness region, thus it is not enough to only find appropriate choice of weights, but also show that they reside within the zero-freeness region. 

To find $\vl$, we discretize our zero-freeness region such that $\lambda_i = 1 + k_i \frac{1}{\sqrt{n}}$ for $i \in [q-1], k_i \in \{-\lfloor R \sqrt n \rfloor, \dots, \lfloor R \sqrt n \rfloor\}$ (we keep $\lambda_q = 1$ for simplicity). We show that the expectation map is $O(n)$-Lipschitz (Subsection \ref{subsection: Lipschitzness of the expectation map}) and since the distance between optimal $\vl$ and one of the candidate $\vl'$ is at most $\frac{1}{\sqrt{n}}$ (in $l_2$), the difference between expectations is at most $\sqrt{n}$ for each color. Hence, we simply run the rejection sampling for each of the candidate $\vl$ until we obtain the $\vec{n}$ colorings. Since there are $\Theta(n^{(q-1)/2})$ candidates, the runtime is $O\!\left(n^q\, \log(n)\,\log(\frac{1}{\eps})^2\right)$. 

To understand what colorings we can sample with this technique, we show that the expectation map (restricted to a certain hyperplane) is surjective on the zero-freeness region (Subsection \ref{subsection: Surjectiveness of the expectation map}), which let's us show that this procedure works for any coloring such that $\sum n_i = n$ and $\|\vec{n} - \frac{n}{q} \vec{1}\|_{\infty} \leq cn$ for $c$ small enough. 

We also need to make sure that we can sample from the Gibbs distribution corresponding to $\vl$. This can be done by Glauber dynamics in $O(n\log n)$ steps. We prove the mixing time for $q\geq 2\Delta +1$ using a standard path coupling technique in Subsection \ref{subsection: Glauber}.

\subsection{Organization}
The paper has the following structure: Section \ref{section: zero-freeness} proves the zero-freeness result for the coloring model, Section \ref{section: LCLT} proves Local Central Limit Theorems for the vector of color class sizes in a random coloring and conclude with the runtime of the rejection sampling algorithm, and Section \ref{section: skewed colorings} shows how to find $\vl$ which give the right expected color class sizes and shows the runtime of Glauber given $\vl$ in the zero-freeness region. 
The Figure \ref{fig: Proof overview} shows the structure of the proof. Note that for sampling equitable colorings only the middle layer of the figure is necessary. 
\begin{figure}[!htbp]
\centering
$$\xymatrix{
 && \text{Existence of appropriate $\vl$}  \ar[drr]  \\
  \text{Zero-freeness} \ar[rr] \ar[urr] && \text{Local Central Limit Theorem}\ar[rr]  && \text{Rejection sampling} \\
  \text{Path coupling} \ar[rr]
 &&\text{Fast mixing of Glauber dynamics} \ar[urr]&&
} $$
\caption{Proof outline}
\label{fig: Proof overview}
\end{figure}
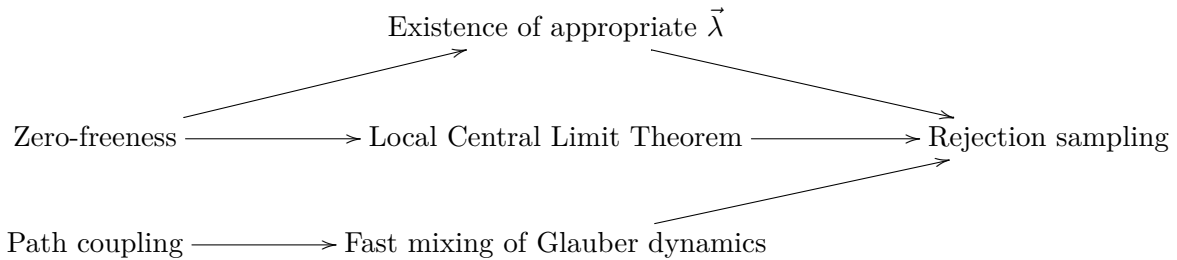

\section{Zero-freeness}\label{section: zero-freeness}
\subsection{Overview}
Recall the partition function and Gibbs measure of the coloring model:
    \[Z_G(\lambda_1, \dots, \lambda_q) \defeq \sum_{\substack{\sigma \in [q]^V \\ \sigma \text{ proper}}}  \prod_{i = 1}^q \lambda_i^{|\sigma^{-1}(i)|}, ~~~~ \P_{G, \vl}(\sigma) \defeq \frac{\prod_{i = 1}^q \lambda_i^{|\sigma^{-1}(i)|}}{Z_G(\lambda_1, \dots, \lambda_q)} \]

For it to have a probabilistic interpretation, we would need $\lambda_i \in \mathbb R^+$, but we can still consider the  partition function as a multivariate polynomial over the complex numbers.

\begin{remark}
    At $\vec \lambda= \vec{1} := (1, \dots, 1)$, the partition function is counting all proper $q$-colorings.
    Since $q \geq 2\Delta$, a proper $q$-coloring always exists, and hence $Z_G(\vec{1}) \not = 0$. 
\end{remark}

The aim of this section is to show that in $\C^q$ there exists a ball $B$ around $\vec{1}$ such that $\forall \vec{\lambda} \in B$, $Z_G(\vec{\lambda}) \not = 0$. 
 
The proof here uses the framework of Liu, Sinclair and Srivastava~\cite{liu2025correlation} for the Potts model, in which a configuration's weight is defined by the number of monochromatic edges of the coloring. 
 
A variable $\omega$ defines the ``penalty'' for the monochromatic edge. Let $m_G(\sigma)$ be the number of monochromatic edges introduced by the coloring $\sigma$. The partition function of the Potts model is a sum over all colorings (including improper), where the weight of a coloring is proportional to $\omega^{m_G(\sigma)}$. In other words, the partition function for their Potts model is
\[Z_G(\omega) \defeq \sum_{\sigma \in [q]^V} \omega^{m_G(\sigma)} \]
Note that the partition function is univariate. In our case the partition function is multivariate since every color has its own weight, thus a part of our work is to lift several key ingredients of the proof to the multivariate case. Note however, that our model does not allow for the monochromatic edges, so the penalty for such edges is simply $\omega = 0$.

Since all the steps had to be adapted for our model, we will provide detailed calculations, and we utilize the structure and definitions of \cite{liu2025correlation} for the acquainted reader to track differences easily. 

\subsection{Main result}
Throughout this section we fix the constants $\nu = 0.9$, $\eps_R \defeq 10^{-2} \Delta^{-2}$, and $\eps_I \defeq 10^{-4} \Delta^{-4}$. We will use $\ln$ to denote the principal branch of the logarithm, which is analytic on $\mathbb C \setminus (-\infty, 0]$ and satisfies $\arg(z) := \Im \ln z \in (-\pi, \pi)$.

We will commonly refer to the following assumptions on $\vec \lambda \in \mathbb C^q$.
\begin{definition}[Assumptions on $\vec \lambda$]\label{assumptions} We say that $\vec{\lambda} \in \mathbb C^q$ is \textit{valid} if, for any $i, j \in [q]$,
\begin{enumerate}
    \item $\lambda_i \notin (-\infty, 0] \subset \mathbb R $ \label{assumption:lambdas-nonzero}
    \item $\left\vert \arg(\lambda_i) \right\vert \leq \nu \eps_I / 2$ \label{assumption:lambda-argument-small}
    \item $|\Re \ln \frac{\lambda_i}{\lambda_j}| \leq \Delta \eps_I$ \label{assumption:lambdas-close-together}
\end{enumerate}
We will denote $\lambda_\text{min} := \min\{1, \, \min_{i \in [q]} \left \vert \lambda_i \right \vert\}$ and $\lambda_{\text{max}} := \max\{1, \, \max_{i \in [q]} \left \vert \lambda_i \right \vert\}$.
\end{definition}

\begin{remark}
    Note that we need assumption~\ref{assumption:lambdas-nonzero} for the quantity in assumption~\ref{assumption:lambda-argument-small} to be well-defined, and then $\left\vert \arg(\lambda_i/\lambda_j) \right\vert \leq \nu \varepsilon_I < \pi$, so assumption~\ref{assumption:lambdas-close-together} is also well-defined. 
\end{remark}

The main result we will prove is the following:
\begin{theorem}[(Zero-freeness around $\vec{1}$)]\label{theorem:zero-freeness}
    Let $\Delta \geq 1$, $q \geq 2\Delta$, and let $G$ be a graph with maximum degree $\Delta$. Assume that $\vl \in \C^{q}$ is valid, in the sense of \autoref{assumptions}. Then, $Z_G(\vl) \neq 0$.
\end{theorem}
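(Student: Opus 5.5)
We follow the Liu--Sinclair--Srivastava scheme and induct on the number of unpinned vertices. For a graph $G$, a set of pinned vertices $\Lambda\subseteq V$ and a partial proper coloring $\tau$ of $\Lambda$, let $Z_G^\tau(\vl)$ be the sum over proper extensions $\sigma$ of $\tau$ to $V$ of $\prod_i \lambda_i^{|\sigma^{-1}(i)\setminus\Lambda|}$. Only unpinned vertices receive weights. Pinned vertices act as list constraints: an unpinned vertex $v$ may use only colors not appearing on its pinned neighbours. Call such a color \emph{available}, and let $L(v)$ be the set of available colors. Since $q\ge 2\Delta$, each $v$ satisfies $|L(v)| \ge q-\Delta \ge \Delta \ge$ (number of unpinned neighbours) $+1$ or close to it.

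For an unpinned $v$ and $i\in L(v)$, let $Z^\tau_{G,v\to i}$ be the restricted partition function in which $v$ is additionally pinned to $i$ but keeps its weight $\lambda_i$. We prove the following statements jointly by induction on the number $m$ of unpinned vertices:
\begin{enumerate}
\item[(a)] $Z^\tau_G(\vl)\ne0$.
\item[(b)] For every unpinned $v$ and $i,j\in L(v)$, the ratio $R_{v}^{ij}:=Z^\tau_{G,v\to i}/Z^\tau_{G,v\to j}$ is well defined and satisfies $|\Re\ln R_v^{ij}|\le \eps_R$ and $|\Im \ln R_v^{ij}|\le \eps_I$. In particular, $R_v^{ij}$ lies near $1$.
\end{enumerate}
For the base case $m=1$ the ratio is exactly $\lambda_i/\lambda_j$. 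Its bounds follow from assumptions \ref{assumption:lambda-argument-small} and \ref{assumption:lambdas-close-together}, which give $|\Im\ln| \le\nu\eps_I$ and $|\Re\ln|\le\Delta\eps_I\le\eps_R$.

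Statement (a) follows from (b) at the same level. Write $Z^\tau_G=\sum_{i\in L(v)}Z^\tau_{G,v\to i}=Z^\tau_{G,v\to j}\sum_i R_v^{ij}$. Each $Z^\tau_{G,v\to j}$ has one fewer unpinned vertex once $\lambda_j$ is factored out, so it is nonzero by induction. The sum is nonzero because all $R^{ij}_v$ lie in a narrow sector around the positive real axis, with argument at most $\eps_I<\pi/2$.

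To prove (b), let $u_1,\dots,u_d$ be the unpinned neighbours of $v$, with $d\le\Delta$. Telescope over $k=1,\dots,d$, as in LSS. Pinning $v$ to $i$ versus $j$ only affects whether $u_k$ may take color $i$ or $j$. Removing $v$ and recoloring neighbours in sequence gives
\[
R_v^{ij}=\frac{\lambda_i}{\lambda_j}\prod_{k=1}^{d}\frac{1-P_k(u_k\to i)}{1-P_k(u_k\to j)},
\]
where $P_k(u_k\to c)=Z_{G_k,u_k\to c}/Z_{G_k}$ is a complex ``marginal'' in a graph $G_k$ with $v$ deleted, $u_1,\dots,u_{k-1}$ suitably constrained, and fewer unpinned vertices. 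By induction, every ratio $R_{u_k}^{ci}$ in $G_k$ is close to $1$. Hence
\[
P_k(u_k\to i)=\Bigl(\sum_{c\in L_k(u_k)}R_{u_k}^{ci}\Bigr)^{-1},
\]
which is approximately $1/|L_k(u_k)|$ with controlled real and imaginary errors. Here $|L_k(u_k)|\ge q-\Delta+1\ge\Delta+1$ in the relevant configurations. Indeed, $u_k$'s degree in $G_k$ drops because $v$ is removed, and this is exactly where $q\ge2\Delta$ enters.

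It remains to bound $\ln\frac{1-P_k(u_k\to i)}{1-P_k(u_k\to j)}$. The real part is at most $O\bigl(|P_k(u_k\to i)-P_k(u_k\to j)|\bigr)$, which is $O(\eps_R/\Delta)$. The imaginary part is at most $O(\eps_I/\Delta)$ times a contraction factor. Summing over $d\le\Delta$ terms and adding the external-field contribution, which is at most $\nu\eps_I$ in imaginary part and $\Delta\eps_I$ in real part, closes the induction.

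\textbf{Main obstacle.} The main obstacle is the imaginary part. The field $\lambda_i/\lambda_j$ alone already uses $\nu\eps_I=0.9\eps_I$ of the $\eps_I$ budget, so the neighbour contributions must total at most $0.1\eps_I$. The plan is to exploit cancellation. Since $P_k(u_k\to i)$ and $P_k(u_k\to j)$ are approximately equal, the imaginary part of the logarithm of their ratio is of second order: a product of a real deviation of size $O(\eps_R)$ and an imaginary deviation of size $O(\eps_I)$, divided by $|L|^2\gtrsim\Delta^2$. Summed over $\Delta$ neighbours this gives $O(\eps_R\eps_I/\Delta)\ll0.1\eps_I$, using $\eps_R=10^{-2}\Delta^{-2}$. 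The real-part bound should follow similarly, with a first-order estimate of size about $\Delta\cdot\eps_R/\Delta^2$ plus $\Delta\eps_I$, comfortably below $\eps_R$. The choice $\eps_I=10^{-4}\Delta^{-4}=\eps_R^2$ is compatible with these bounds. Carrying out these elementary complex estimates carefully, as in LSS but with color-dependent $\lambda$'s entering each marginal, is the bulk of the work.
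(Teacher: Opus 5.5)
Your induction hypothesis (b) is false, so the induction cannot be run as stated. The ratios $R_v^{ij}$ need not be close to $1$ in modulus, even at the valid point $\vl=\vec 1$. Take $\Delta\ge 2$, $q=2\Delta$, and let $v$ have a single neighbour $w$ whose other $\Delta-1$ neighbours are pinned to distinct colours $j,c_2,\dots,c_{\Delta-1}$, all different from $i$. Then $Z_{G,v\to i}(\vec 1)=q-\Delta$ and $Z_{G,v\to j}(\vec 1)=q-\Delta+1$. Hence $|\Re\ln R_v^{ij}|=\ln(1+1/\Delta)\ge 1/(\Delta+1)$, far above $\varepsilon_R=10^{-2}\Delta^{-2}$. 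For the same reason the marginals are not $\approx 1/|L_k(u_k)|$, and $P_k(u_k\to i)-P_k(u_k\to j)$ is not $O(\varepsilon_R/\Delta)$: when $j$ is bad for $u_k$, one of them is $0$ and the other is of order $1/\Delta$.

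The missing idea, which the paper takes from Liu--Sinclair--Srivastava, is to control the real part only relative to the real reference point $\vec 1$. The hypothesis there is $|\Re\ln R^{(i,j)}_{G,u}(\vl)-\ln R^{(i,j)}_{G,u}(\vec 1)|\le d_u\varepsilon_R+|\Re\ln(\lambda_i/\lambda_j)|$. The true marginals at $\vec 1$ are never assumed to be near-uniform. One only uses the upper bound $\mathcal P_{G_k,\vec 1}[\sigma(w_k)=c]\le 1/(d_{w_k}+2)$, i.e. niceness of $w_k$ in $G_k$, which is where $q\ge 2\Delta$ actually enters. That bound places $\ln\mathcal P_{G_k,\vl}[\sigma(w_k)=c]$ and $\ln\mathcal P_{G_k,\vec 1}[\sigma(w_k)=c]$ in a strip $\{\Re z<-\zeta,\ |\Im z|<\tau\}$. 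On that strip the complex mean value lemma for $f(x)=-\ln(1-e^x)$ gives a contraction factor $C\in[0,1/(d_{w_k}+\nu)]$. The real part then closes by splitting the neighbours into $A(i)\cap A(j)$, $A(i)\cap B(j)$ and $B(i)\cap A(j)$. In the first case a same-sign/opposite-sign argument feeds back the inductive bound on $R^{(i,j)}_{G_k,w_k}$ relative to $\vec 1$.

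Your mechanism for the imaginary part also fails, because the claimed second-order cancellation does not exist. Write $p_c=|P_k(u_k\to c)|$ and $\alpha_c=\arg P_k(u_k\to c)$. To first order, $\Im\ln\frac{1-P_k(u_k\to i)}{1-P_k(u_k\to j)}\approx-\frac{p_i\alpha_i}{1-p_i}+\frac{p_j\alpha_j}{1-p_j}$. Even when $p_i=p_j=p$, this equals $-\frac{p}{1-p}\arg R^{ij}_{u_k}$, which is genuinely first order in $\varepsilon_I$. For example, if $u_k$ has all its other neighbours pinned, then $R^{ij}_{u_k}=\lambda_i/\lambda_j$, whose argument can be as large as $\nu\varepsilon_I$. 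So each neighbour can cost about $\varepsilon_I/\Delta$, not $\varepsilon_R\varepsilon_I/\Delta^2$. Honest bookkeeping over $d\le\Delta$ neighbours plus the field then gives up to roughly $(\nu+d/\Delta)\varepsilon_I$, which overshoots a fixed budget of $\varepsilon_I$.

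The paper avoids this by letting the budget grow with the number of unpinned neighbours: $|\Im\ln R^{(i,j)}_{G,u}(\vl)|\le d_u\varepsilon_I+|\Im\ln(\lambda_i/\lambda_j)|$. Each neighbour then contributes at most $\rho_R(d_{w_k}+\nu)\varepsilon_I\le\varepsilon_I$, because $\rho_R\le 1/(d_{w_k}+0.91)$ and $\nu=0.9<0.91$. The resulting total angle $(\Delta+\nu)\varepsilon_I$ is still small enough for the sector argument that yields $Z\neq 0$. Your telescoping identity and your deduction of (a) from an argument bound are fine and match the paper; it is the quantitative hypothesis (b) that has to be replaced.
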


The assumptions from \autoref{assumptions} are expressed in terms of the logarithm of the ratio between components of $\vec \lambda$. That may not be practical for some applications, but it can be shown that any $\vec \lambda \in \mathbb C^q$ sufficiently close to $\vec 1$ (for example, in $\ell_\infty$ distance) satisfies those.
\begin{lemma}\label{lemma:lambdas-in-ball-satisfy-assumptions}
    Let $R := \nu \varepsilon_I / 4$. Then, any $\vec \lambda \in \mathbb C^q$ such that $\vert \lambda_i - 1 \vert \leq R$ for every $i \in [q]$ is valid, in the sense of \autoref{assumptions}.
\end{lemma}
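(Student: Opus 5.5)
The plan is to check the three conditions of \autoref{assumptions} one at a time, using only elementary estimates for the principal logarithm near $1$. Throughout, write $\lambda_i = 1 + z_i$ with $|z_i| \le R = \nu\varepsilon_I/4$. Since $\varepsilon_I = 10^{-4}\Delta^{-4} \le 10^{-4}$, we have $R \le 2.25\cdot 10^{-5}$, which is far below $1$. This smallness is what makes every step work.

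\textbf{Condition (1).} We have $\Re \lambda_i \ge 1 - |z_i| \ge 1 - R > 0$. Hence $\lambda_i$ lies in the open right half-plane, and in particular $\lambda_i \notin (-\infty,0]$. This also ensures that $\ln \lambda_i$ is the principal branch evaluated in a disc around $1$ on which it is analytic.

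\textbf{Condition (2).} The point $\lambda_i$ lies in the closed disc of radius $R$ about $1$. Its argument therefore satisfies $|\arg \lambda_i| \le \arcsin R$. For $R \le 1/2$ we have $\arcsin R \le \frac{\pi}{3}\cdot 2R \cdot\frac{1}{1}$, and more sharply $\arcsin R \le R/\sqrt{1-R^2}$, which is $\le 2R$ for small $R$. It follows that
\[
|\arg\lambda_i| \le 2R = \nu\varepsilon_I/2,
\]
exactly as required. The factor $1/4$ in the definition of $R$ is chosen precisely to absorb this factor of $2$.

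\textbf{Condition (3).} Here $\Re \ln(\lambda_i/\lambda_j) = \ln|\lambda_i| - \ln|\lambda_j|$. Since $1-R \le |\lambda_k| \le 1+R$, we get
\[
\bigl|\Re\ln(\lambda_i/\lambda_j)\bigr| \le \ln\frac{1+R}{1-R} \le \frac{2R}{1-R} \le 3R.
\]
Note that $\Re\ln(\lambda_i/\lambda_j)$ does not depend on the branch issue, because the real part of any logarithm is $\ln|\cdot|$. By the remark after \autoref{assumptions}, the principal log of the ratio is well defined anyway. Finally, $3R = \tfrac34\nu\varepsilon_I < \varepsilon_I \le \Delta\varepsilon_I$, since $\Delta \ge 1$ and $\nu = 0.9$.

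None of these steps poses a real obstacle. The only point needing care is the constant in condition (2), where the bound must land exactly at $\nu\varepsilon_I/2$. This is why the sharp estimate $\arcsin R \le 2R$ for small $R$ is used rather than a cruder bound.
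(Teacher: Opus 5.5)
Your proof is correct and follows essentially the same route as the paper: you check the three conditions directly with elementary estimates near $1$. The only cosmetic differences are that you bound $|\arg\lambda_i|$ by $\arcsin R$ rather than by $\arctan\frac{R}{1-R}$, and for condition (3) you bound $\ln\frac{1+R}{1-R}$ directly, whereas the paper bounds each $|\Re\ln\lambda_i|$ by $\Delta\varepsilon_I/2$ and then uses the triangle inequality.
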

Hence, we may derive the following corollary from \autoref{theorem:zero-freeness}: 
\begin{corollary}[(Zero-freeness radius)]\label{corollary:zero-freeness-radius}
      Let $\Delta \geq 1$, $q \geq 2\Delta$, and let $G$ be a graph with maximum degree $\Delta$. Then, for any $\vec \lambda \in B_{\ell_{\infty}}(\vec 1, R)$, where $R = \nu \varepsilon_I / 4$, we have that $Z_G(\vl) \neq 0$. 
\end{corollary}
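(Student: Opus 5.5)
The corollary follows by combining \autoref{lemma:lambdas-in-ball-satisfy-assumptions} with \autoref{theorem:zero-freeness}. Given $\vec\lambda\in B_{\ell_\infty}(\vec 1,R)$, the lemma says that $\vec\lambda$ is valid in the sense of \autoref{assumptions}. The theorem then gives $Z_G(\vl)\neq 0$ for every graph $G$ of maximum degree $\Delta$ with $q\ge 2\Delta$. So the real work is the lemma, and I would prove it as follows.

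\textbf{Non-positivity.} Set $R=\nu\varepsilon_I/4$, which is far below $1$. If $|\lambda_i-1|\le R$, then $\Re\lambda_i\ge 1-R>0$, so $\lambda_i\notin(-\infty,0]$. This is assumption~\ref{assumption:lambdas-nonzero}.

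\textbf{Argument bound.} Write $\lambda_i=1+z$ with $|z|\le R$. Then
\[
|\arg\lambda_i| \le \arcsin\bigl(|z|\bigr) \le \frac{R}{\sqrt{1-R^2}} \le 2R = \frac{\nu\varepsilon_I}{2}.
\]
This is assumption~\ref{assumption:lambda-argument-small}. Alternatively, one can use $|\Im\ln(1+z)|\le|\ln(1+z)|\le |z|/(1-|z|)$.

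\textbf{Closeness of moduli.} We have $\Re\ln\frac{\lambda_i}{\lambda_j}=\ln|\lambda_i|-\ln|\lambda_j|$, and each $|\lambda_k|$ lies in $[1-R,1+R]$. Hence
\[
\left|\Re\ln\frac{\lambda_i}{\lambda_j}\right| \le \ln\frac{1+R}{1-R} \le \frac{2R}{1-R} \le 3R = \frac{3\nu\varepsilon_I}{4} < \varepsilon_I \le \Delta\varepsilon_I,
\]
using $\Delta\ge1$. This is assumption~\ref{assumption:lambdas-close-together}. As noted in the remark after \autoref{assumptions}, $\ln(\lambda_i/\lambda_j)$ is well-defined here because $|\arg\lambda_i-\arg\lambda_j|\le\nu\varepsilon_I<\pi$.

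There is no real obstacle in this argument. The only care needed is to use elementary inequalities for the principal logarithm that hold once $R<1/2$, which is true with a huge margin. All the depth of the corollary lies in \autoref{theorem:zero-freeness}, which is taken as given.
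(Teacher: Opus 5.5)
Your proposal is correct and matches the paper's proof. Like the paper, you obtain the corollary by combining \autoref{lemma:lambdas-in-ball-satisfy-assumptions} with \autoref{theorem:zero-freeness} and check the three validity assumptions by elementary estimates. The differences are cosmetic: you bound the argument via $\arcsin$ rather than $\arctan$, and you bound $|\Re\ln(\lambda_i/\lambda_j)|$ directly by $\ln\frac{1+R}{1-R}$ instead of bounding each $|\Re\ln\lambda_i|$ by $\Delta\varepsilon_I/2$.
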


\begin{proof}[Proof of \autoref{lemma:lambdas-in-ball-satisfy-assumptions}]
    Let $i \in [q]$. Note that $\vert \lambda_i - 1 \vert \leq R$ implies that $\left\vert \Re\{\lambda_i - 1 \} \right \vert \leq R$, so $\Re \lambda_i \geq 1 - R > 0$.

    For assumption~\ref{assumption:lambdas-close-together}, notice that it is enough to show that $ \vert \Re \ln \lambda_i \vert \leq \Delta \varepsilon_I / 2$ for all $i \in [q]$. From the hypothesis on $\vec \lambda$, we have that $1 - R \leq \vert \lambda_i \vert \leq 1 + R$, so
    \begin{equation*}
        \Re \ln \lambda_i = \ln \vert \lambda_i \vert \leq \ln(1 + R) \leq R \leq \frac{\nu \varepsilon_I}{4} < \frac{\Delta \varepsilon_I}{2}
    \end{equation*}
    On the other hand,
    \begin{equation*}
        \Re \ln \lambda_i \geq \ln(1 - R) \geq -2R > -\frac{\Delta \varepsilon_I}{2}
    \end{equation*}
    where we have used that $\ln(1 - x) \geq -2x$ for $0 \leq x \leq 1/2$, which can be easily seen by comparing derivatives.

    For assumption~\ref{assumption:lambda-argument-small}, since the region $\{z \in \mathbb C \, : \, \vert z - 1 \vert \leq c \}$ is symmetric with respect to the real axis, we may assume without loss of generality that $\arg(\lambda_i) \in [0, \pi]$. Then,
    \begin{align*}
        \tan \left( \arg(\lambda_i) \right) \leq \frac{R}{1-R} \leq 2R
    \end{align*}
    where we used that $R < 1/2$. Finally, since $\arctan x$ is increasing on $x$ and $x \geq \arctan x \geq 0$ for all $x \geq 0$, we conclude that
    \begin{equation*}
       \arg(\lambda_i) \leq \arctan 2R \leq 2R \leq \frac{\nu \varepsilon_I}{2}. 
    \end{equation*}
\end{proof}
\begin{remark}
    The constants used in the proof of \autoref{theorem:zero-freeness} are not particularly optimized, so the zero-freeness radius is susceptible to improvement.
\end{remark}

\begin{remark}\label{remark:existence-branch-for-logZ}
    According to \autoref{corollary:zero-freeness-radius}, $Z_G(\vec \lambda)$ is zero-free within the simply-connected region $B = B_{\ell_\infty}(\vec 1, R) \subseteq \mathbb C^q$. Hence, there exists a branch of the logarithm such that $\log Z_G(\vec \lambda)$ is well-defined and analytic for $\vec \lambda \in B$ (i.e. an analytic $f:B \longrightarrow \mathbb C$ satisfying $e^{f(\vec \lambda)} = Z_G(\vec \lambda)$). In the proofs in this section we are always able to take the principal branch of the logarithm, but that will not be the case for the results in the next section.  
\end{remark}

The inductive proof not only shows that $Z_G(\vec \lambda) \neq 0$, but actually gives a lower bound on its norm (which, despite being exponentially decreasing in $n$, will be of use later in bounding $\big \vert \log Z_G(\vec \lambda) \big\vert $).

\begin{corollary}\label{corollary:lower-bound-norm-partition-function}
    Let $\Delta \geq 1$, $q \geq 2 \Delta$, and let $G$ be a graph with $n$ vertices and maximum degree $\Delta$. Let $\vec \lambda \in \mathbb C^q$ be valid, according to \autoref{assumptions}. Then,
    $$
        \left \vert Z_G(\vec \lambda) \right \vert \geq 0.99^n\left( \frac{\lambda_{\text{min}}}{\lambda_{\text{max}}} \right)^{n\Delta},
    $$
    where $\lambda_\text{min} := \min\{1,\, \min_{i \in [q]} \left \vert \lambda_i \right \vert\}$ and $\lambda_\text{max} := \max\{1,\, \max_{i \in [q]}\left\vert \lambda_i \right\vert\}$.
\end{corollary}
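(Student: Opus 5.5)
We plan to prove the corollary by telescoping $Z_G(\vec\lambda)$ into one-vertex ratios, the same way the inductive proof of \autoref{theorem:zero-freeness} is organized. Order the vertices $v_1,\dots,v_n$ and let $G_k$ denote the graph in which $v_{k+1},\dots,v_n$ have been removed, so that $Z_{G_n}=Z_G$ and $Z_{G_0}=1$. Then
\[
Z_G(\vec\lambda)=\prod_{k=1}^{n}\frac{Z_{G_k}(\vec\lambda)}{Z_{G_{k-1}}(\vec\lambda)} .
\]
Each denominator is nonzero by \autoref{theorem:zero-freeness}, since subgraphs also have maximum degree at most $\Delta$. It therefore suffices to lower bound each factor by $0.99\,(\lambda_{\min}/\lambda_{\max})^{\Delta}$ in modulus.

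Adding the vertex $v=v_k$ to $G_{k-1}$ gives
\[
\frac{Z_{G_k}}{Z_{G_{k-1}}}=\sum_{c\in[q]}\lambda_c\,\frac{Z_{G_{k-1}}^{N(v)\not\ni c}}{Z_{G_{k-1}}},
\]
where $Z^{N(v)\not\ni c}$ sums over colorings in which no neighbour of $v$ receives $c$. The induction in the proof of \autoref{theorem:zero-freeness} controls marginal ratios of the form $\P[\sigma(u)=i]/\P[\sigma(u)=j]$ under pinnings; we expect it to show that these "complex marginals" have arguments of order $\varepsilon_I$ and moduli within a factor close to $1$ of each other. Using these, we would write each summand as a product of at most $\Delta$ conditional factors, one per neighbour of $v$, in which the neighbour avoids $c$. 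Each such factor has small argument and real part bounded below. This is where the factor $(\lambda_{\min}/\lambda_{\max})^{\Delta}$ enters: a neighbour's conditional probability of avoiding $c$ is comparable, after the $\lambda$-reweighting, to at least a $(q-\Delta)/q$ share, up to one factor of $\lambda_{\min}/\lambda_{\max}$ per neighbour.

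Since all $q$ summands then point in nearly the same direction, the modulus of their sum is at least a constant (say $\cos$ of a small angle, $\ge 0.99$) times the sum of their moduli. Using $q\ge 2\Delta$, we would then check that at least a constant fraction of colours give summands bounded below, so that the sum of the moduli is $\gtrsim (\lambda_{\min}/\lambda_{\max})^{\Delta}$. An alternative route is to bound the real part of $\lambda_c Z^{N(v)\not\ni c}/Z_{G_{k-1}}$ directly using the assumption $|\arg\lambda_i|\le \nu\varepsilon_I/2$.

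The main obstacle is extracting a clean per-vertex lower bound from the inductive hypotheses of \autoref{theorem:zero-freeness}, whose precise form we have not seen. The first thing we would try is to reuse its invariant directly, namely that ratios of marginals are $\varepsilon_R$-close in real log-part and $\varepsilon_I$-close in argument. From that invariant we would argue that the contribution of colour $c$ is, up to a factor $e^{O(\Delta\varepsilon_R)}$, equal to $\lambda_c$ times a positive-like product over the neighbours. We would then absorb all $\varepsilon$-losses into the constant $0.99$.
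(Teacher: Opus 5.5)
\paragraph{Overall.} Your telescoping plan can be completed, but it takes a genuinely different route from the paper's.

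\paragraph{The paper's route.} The paper gets the corollary by taking $\ell=n$ in \autoref{corollary:zero-freeness-partially-colored-graph}, i.e.\ item~\ref{induction-item:zero-freeness} of \autoref{lemma:induction}. It splits $Z_G=\sum_{i\in\Gamma_u}Z^{(i)}_{G,u}$. It then uses only argument alignment (item~\ref{induction-item:imag-part} with \autoref{lemma:angles}) to get $|Z_G|\ge 0.99\min_i|Z^{(i)}_{G,u}|$. Next it pins $u$ via $Z^{(i)}_{G,u}=\lambda_i^{-(\deg u-1)}Z_{G'}$ and recurses down to a single coloring. In that argument the factor $(\lambda_{\min}/\lambda_{\max})^{n\Delta}$ is pure bookkeeping for the pinned copies.

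\paragraph{What your route needs, and where it is in the paper.} Your vertex-deletion route needs more: argument and modulus control of each avoidance factor $1-\mathcal P_{H,\vec\lambda}[\sigma(w_m)=c]$. Here $H$ is $G_{k-1}$ with $c$-pinned leaves attached to $w_1,\dots,w_{m-1}$. This control is available.
\begin{itemize}
\item Since $v$ is absent, $w_m$ has degree at most $\Delta-1$ in $H$, so it is nice by the proof of \autoref{lemma:vk_is_nice}.
\item Then \eqref{eq:contraction_imag_part_only_one_term} gives $|\arg(1-\mathcal P_{H,\vec\lambda})|\le\varepsilon_I$.
\item The argument behind \eqref{eq:proof_induction_first_and_second_terms} gives $|1-\mathcal P_{H,\vec\lambda}|\ge e^{-\varepsilon_R}(1-\mathcal P_{H,\vec 1})$.
\end{itemize}
Multiply over the neighbours, align the $q$ summands, and sum over $c$. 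This bounds each telescoping factor below by
\[
\cos(\cdot)\,e^{-\Delta\varepsilon_R}\min_c|\lambda_c|\;\E_{\vec 1}[\#\text{available colours of }v].
\]
The product of the last terms telescopes to $Z_G(\vec 1)$. Since $\Delta\varepsilon_R\le 0.01$, you get $|Z_G(\vec\lambda)|\ge(0.99\min_i|\lambda_i|)^nZ_G(\vec 1)$.

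\paragraph{What each route buys.} Your bound is stronger than the corollary, since it compares $|Z_G(\vec\lambda)|$ with the number of colorings rather than with one coloring's weight. It implies the corollary because $\lambda_{\min}\le1\le\lambda_{\max}$ and $Z_G(\vec 1)\ge1$. The cost is that it uses the contraction estimates, whereas the paper needs only argument alignment built into the induction.

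\paragraph{Corrections to your sketch.}
\begin{itemize}
\item No per-neighbour factor $\lambda_{\min}/\lambda_{\max}$ comes from ``$\lambda$-reweighting''. For complex $\vec\lambda$ such weight comparisons are unavailable because there is no positivity. The comparison must go through the $\vec 1$-marginals, at cost only $e^{\pm\varepsilon_R}$.
\item The invariant says marginal ratios are close to their values at $\vec 1$. It does not say the moduli are close to each other.
\item Your alternative of bounding real parts from $|\arg\lambda_i|\le\nu\varepsilon_I/2$ alone fails. These arguments accumulate over the $n$ vertices in each term of $Z$.
\end{itemize}
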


\subsection{Partial colorings and recursive structure}
We will prove \autoref{theorem:zero-freeness} with a slightly stronger assumption; some vertices of the graph may already be ``pinned'' to a color. That will allow us to proceed by induction on the number of unpinned vertices of the graph. We will formalize this idea through the following definition:
\begin{definition}[Partially-$q$-colored graph]
    Let $q \geq 1$. A \textit{partially-$q$-colored graph} $(G, \tau)$ is a graph $G$ together with a partial $q$-coloring $\tau : V(G) \longrightarrow[q] \cup \{\ast\}$. The partial coloring needs to be \textit{proper}, that is, for every edge $uv \in E(G)$ either $\tau(u) \neq \tau(v)$ or $\tau(u) = \tau(v) = \ast$ need to hold. Vertices $v \in V(G)$ such that $\tau(v) = \ast$ are said to be \textit{unpinned}. Vertices such that $\tau(v) = c \in [q]$ are said to be \textit{pinned to color $c$}. We additionally require that $\deg_G(v) = 1$ for all pinned $v$.
\end{definition}

\begin{remark}
    We may extend our model to a partially-colored graph $(G, \tau)$ by restricting the partition function to sum over the colorings that agree with $\tau$ on the pinned vertices. That is, we define
    \begin{equation*}
        Z_{G, \tau}(\vec \lambda) \defeq \sum_{\substack{\sigma \in [q]^{V(G)} \\ \sigma(v) = \tau(v) \text{ for pinned $v$}}} \prod_{v \in V(G)} \lambda_{\sigma(v)}
    \end{equation*}
    We will use $\mathcal C_{G, \tau}$ to denote the set of proper $q$-colorings of $G$ which agree with $\tau$ on the pinned vertices.
\end{remark}

We will also introduce the following operation, which from a partially-$q$-colored graph $(G, \tau)$ generates another partially-$q$-colored graph $(\tilde G, \tilde \tau)$ which has one unpinned vertex fewer:
\begin{definition}[Pinning operation]
    Let $v$ be a vertex in $(G, \tau)$ with degree $d$. To \textit{pin vertex $v$ to color $c$} means that we have constructed a new graph $\tilde G$ in which we replaced the vertex $v$ with $d$ copies $v_1, \dots, v_{d}$ (each of them joined to a different neighbor of $v$), and which is equipped with a partial coloring $\tilde \tau$ defined so that $\tilde \tau(v_i) = c$ for all $i \in [d]$ and $\tilde \tau(w) = \tau(w)$ for all other $w \in V(\tilde G) \setminus \{v_1, \dots, v_d\} = V(G) \setminus \{v\}$.
\end{definition}
\begin{remark}
    Note that, after applying the previous operation, the newly-pinned vertices have all degree 1, as required in the definition of partially-colored graph.
\end{remark}

Since we are working with a model that only allows proper colorings, we will never want to pin a vertex to a color which is already present in its neighborhood. Thus, following the notation from \cite{liu2025correlation}, we will classify the colors as either \textit{good} or \textit{bad} (for a given vertex):
\begin{definition}[Good color, bad color]
    A $c \in [q]$ is a \textit{good color for vertex $v$} if it is different from all colors to which the neighbors of $v$ are pinned to. Otherwise, it is called a \textit{bad color}. The set of good colors of a vertex $v$ will be denoted as $\Gamma_v$, while the set of bad colors will be denoted as $B_v$. 
\end{definition}

We will denote $(G, \tau)$ as $G$, despite the abuse of notation, whenever it is clear from context that we are referring to a partially-colored graph and there is no ambiguity with respect to the underlying partial coloring $\tau$. That extends as well to $Z_G$ (in place of $Z_{G, \tau}$) and $\mathcal C_G$ (in place of $\mathcal C_{G, \tau}$). 

Next, we develop the definitions needed to frame the zero-freeness problem recursively, in terms of graphs which have fewer unpinned vertices.

\begin{definition}[Restricted partition function, marginal pseudo-probability, marginal ratio]
    Let $G$ be a partially-$q$-colored graph. Let $\vec \lambda$ be a vector of formal variables. For a given unpinned vertex $v \in V(G)$ and a color $i \in [q]$, we define
    $$
        Z_{G,v}^{(i)}(\vl) := \sum_{\substack{\sigma \in \mathcal C_G \\ \sigma(v) = i}} \prod_{v \in V(G)} \lambda_{\sigma(v)}
    $$ 
    which corresponds to the restriction of the partition function to the colorings in which $v$ has color $i$.

    Let $\Delta \geq 1$ be the maximum degree of the graph $G$, and assume that $q \geq \Delta + 1$. Then, we define the \textit{marginal pseudo-probability}
    $$
        \mathcal P_{G, \vl}[\sigma(v) = i] := \frac{Z_{G,v}^{(i)}(\vl)}{Z_{G}(\vl)}
    $$
    and, given another color $j \in \Gamma_v$, we define the \textit{marginal ratio}
    $$
        R^{(i,j)}_{G,v} := \frac{Z_{G,v}^{(i)}(\vl)}{Z_{G,v}^{(j)}(\vl)}.
    $$
\end{definition} 
\begin{remark}
    All the expressions above are to be interpreted as quotients of polynomials of the vector of formal variables $\vec \lambda$. Note that, under this interpretation, $Z_{G, v}^{(i)}(\vec \lambda) = 0$ for any $i \in B_v$. On the other hand, if $i \in \Gamma_v$ and $q \geq \Delta + 1$, we know that there is at least one proper coloring of $G$ which is consistent with the current partial coloring, so $Z_{G, v}^{(i)}(\vec \lambda) \neq 0$ (and, by the same logic, $Z_G(\vec \lambda) \neq 0$). Hence, both the marginal pseudo-probability and the marginal ratios are well-defined. Nonetheless, it could still happen that $Z_{G, v}^{(i)}(\vec \lambda) = 0$ and/or $Z_G(\vec \lambda) = 0$ once we substitute the formal variables by a specific value $\vec \lambda \in \mathbb C^q$.
\end{remark}

\begin{remark}
    The name and notation for the marginal pseudo-probability comes from the fact that, for real positive $\vl$, this is the marginal probability that $v$ has color $i$ under our model. Note, however, that for a general $\vl \in \mathbb C^q$, this quantity has no direct probabilistic meaning.
\end{remark}

We would like to relate $R^{(i,j)}_{G, v}(\vec \lambda)$ to the partition function of a graph with fewer unpinned vertices than $G$. The numerator $Z_{G,v}^{(i)}(\vec \lambda)$ can be related to the graph in which we pin vertex $v$ to color $i$, while the denominator $Z^{(j)}_{G, v}(\vec \lambda)$ can be related to the graph in which we pin vertex $v$ to color $j$. Next, we define a sequence of graphs that allows us to interpolate between these two cases.

\begin{definition}[Graph $G_k^{(i,j)}$]\label{definition:graph_for_telescoping_recurrence}
    Given a partially-colored graph $G$, an unpinned vertex $v$ with degree $d$, and an ordering of the neighbors of $v$ (denoted as $w_1, \dots, w_d$), for every $k \in [d]$ and for every $i,j \in \Gamma_v$ we define the partially-colored graph $G_k^{(i,j)}$ obtained from $G$ by replacing $v$ with $d$ copies $v_1, \dots, v_{d}$, attaching  each $v_\ell$ to $w_\ell$, pinning vertices $v_1, \dots, v_{k-1}$ to color $i$, pinning vertices $v_{k+1}, \dots, v_{d}$ to color $j$, and deleting $v_k$.  
\end{definition}

One can then derive a recurrence relation that expresses the ratios in $G$ in terms of the marginal pseudo-probabilities in each of the $G_k^{(i,j)}$.

\begin{lemma}[Recurrence relation] \label{lemma: recurrence} 
Let the vertices $w_1, \dots, w_{\deg_G(v)}$ be the neighbors of vertex $v$ in the graph $G$. Then,
    \[R^{(i,j)}_{G,v}(\vec{\lambda}) = \frac{\lambda_i}{\lambda_j}\prod_{k=1}^{\deg_G(v)}\frac{ 1 - \mathcal P_{G_k^{(i,j)},\vl}[\sigma(w_k) = i ] }{1 - \mathcal P_{G_k^{(i,j)},\vl}[\sigma(w_k) = j ]  }
  \]
\end{lemma}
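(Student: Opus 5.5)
The plan is a telescoping argument in the style of \cite{liu2025correlation}, adapted to per-color weights and to the hard constraint $w=0$. Throughout, all quantities are treated as rational functions in the formal variables $\vec\lambda$, so nothing vanishes identically; the preceding remark guarantees this for $i,j\in\Gamma_v$ and $q\ge\Delta+1$. Write $d=\deg_G(v)$, and for $k=0,\dots,d$ let $H_k$ denote the partially-colored graph obtained from $G$ by splitting $v$ into $v_1,\dots,v_d$, with $v_1,\dots,v_k$ pinned to $i$ and $v_{k+1},\dots,v_d$ pinned to $j$.

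\textbf{Step 1: express the numerator and denominator through the $H_k$.} Colorings of $G$ with $\sigma(v)=i$ correspond bijectively to proper colorings of $H_d$. The only difference is the weight: $v$ contributes the factor $\lambda_i$, while the pinned copies in $H_d$ carry no weight. This requires checking the convention for pinned vertices; if pinned copies are weighted, the factors $\lambda_i^{d}$ still cancel in the ratios below, and the same remark applies throughout. Hence $Z^{(i)}_{G,v}=\lambda_i Z_{H_d}$, and likewise $Z^{(j)}_{G,v}=\lambda_j Z_{H_0}$. Therefore
\[
R^{(i,j)}_{G,v}=\frac{\lambda_i}{\lambda_j}\prod_{k=1}^{d}\frac{Z_{H_k}}{Z_{H_{k-1}}}.
\]

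\textbf{Step 2: compare $H_k$ and $H_{k-1}$ with $G_k^{(i,j)}$.} The graphs $H_k$ and $H_{k-1}$ differ only in the pendant vertex $v_k$, which is pinned to $i$ in $H_k$ and to $j$ in $H_{k-1}$. Deleting $v_k$ produces exactly $G_k^{(i,j)}$. A proper coloring of $G_k^{(i,j)}$ extends to one of $H_k$ if and only if $\sigma(w_k)\ne i$, since $v_k$ is a leaf adjacent only to $w_k$. It follows that
\[
Z_{H_k}=Z_{G_k^{(i,j)}}-Z^{(i)}_{G_k^{(i,j)},w_k}=Z_{G_k^{(i,j)}}\bigl(1-\mathcal P_{G_k^{(i,j)},\vec\lambda}[\sigma(w_k)=i]\bigr),
\]
and similarly $Z_{H_{k-1}}=Z_{G_k^{(i,j)}}\bigl(1-\mathcal P_{G_k^{(i,j)},\vec\lambda}[\sigma(w_k)=j]\bigr)$. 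In forming the second expression, $w_k$ is treated as unpinned; if $w_k$ is pinned, the probabilities are simply $0$ or $1$, and the identity still holds as a statement about $Z$.

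\textbf{Step 3: substitute and cancel.} Dividing the two identities of Step 2, the factor $Z_{G_k^{(i,j)}}$ cancels, and substituting into Step 1 gives exactly the claimed formula.

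The main technical care lies in two places. The first is ensuring that each $G_k^{(i,j)}$ is a legitimate partially-colored graph, that is, properly colored with pinned vertices of degree $1$. This holds because $i,j\in\Gamma_v$, so no $w_\ell$ is pinned to $i$ or $j$, and every copy $v_\ell$ is a leaf. The second is confirming that $Z_{G_k^{(i,j)}}$ and the factors $1-\mathcal P$ are nonzero as formal rational functions. For this, note that $Z_{H_k}\neq0$ because $H_k$ admits a proper extension: $w_k$ has at most $\Delta$ constraints and $q\ge\Delta+1$. Consequently the divisions are valid as formal identities.
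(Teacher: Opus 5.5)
Your proposal is correct and follows the paper's proof essentially step for step: you telescope through the graphs $H_k$, then write $Z_{H_k}$ and $Z_{H_{k-1}}$ as $Z_{G_k^{(i,j)}}$ minus the partition function restricted to $\sigma(w_k)=i$ (respectively $j$). One correction on conventions: the paper's definition of $Z_{G,\tau}$ does weight pinned vertices, so the actual identities are $Z_{H_d}=\lambda_i^{d-1}Z^{(i)}_{G,v}$ and $Z_{H_k}=\lambda_i\bigl(Z_{G_k^{(i,j)}}-Z^{(i)}_{G_k^{(i,j)},w_k}\bigr)$ (and analogously for $j$), and the resulting prefactor $\lambda_j^{d-1}\lambda_i^{d}/(\lambda_i^{d-1}\lambda_j^{d})$ collapses to $\lambda_i/\lambda_j$, exactly as your hedge anticipates.
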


\begin{remark}
The recurrence relation holds under the interpretation that $\vec \lambda$ is a vector of formal variables. It will also hold after plugging in a particular $\vec \lambda \in \mathbb C^q$, as long as all the terms are well-defined, that is,
\begin{itemize}
    \item $Z_{G, v}^{(j)}(\vec \lambda) \neq 0$,
    \item $Z_{G_k^{(i,j)}}(\vec \lambda) \neq 0$, $\quad$ and
    \item $\mathcal P_{G_k^{(i,j)}, \vec \lambda}[\sigma(w_k) = j] \neq 1$ for all $k \in [\deg_G(v)]$.
\end{itemize}
\end{remark}
\begin{proof}
    Let us first define an auxiliary graph $H_k^{(i,j)}$, which is analogous to $G_k^{(i,j)}$, but in which $v_k$ is pinned to color $i$, instead of being deleted. 
    \begin{definition}[Graph $H_k^{(i,j)}$]\label{definition:auxiliary_graph_for_telescoping_recurrence}
        Given a partially-colored graph $G$, an unpinned vertex $v$ with degree $d$, and an ordering $w_1, \dots, w_d$ of the neighbors of $v$, we define the graph $H_k^{(i,j)}$ by replacing vertex $v$ with $d$ copies $v_1, \dots, v_d$, attaching each $v_\ell$ to $w_\ell$, pinning vertices $v_1, \dots, v_k$ to color $i$ and pinning vertices $v_{k+1}, \dots, v_d$ to color $j$. 
    \end{definition}
     
    Note that $H_0^{(i,j)}$ and $H_d^{(i,j)}$ correspond to the edge cases in which all copies of $v$ are colored $j$, or all copies are colored $i$, respectively. Hence, $Z_{H_d^{(i, j)}}(\vl) = \lambda_i^{d-1} Z_{G,u}^{(i)}(\vl)$ and $Z_{H_0^{(i, j)}}(\vl) = \lambda_j^{d-1} Z_{G,u}^{(j)}(\vl)$. Therefore, we may write the marginal ratio as
    \[ R^{(i,j)}_{G,u}(\vl) = \frac{Z_{G,u}^{(i)}(\vl)}{Z_{G,u}^{(j)}(\vl)} = \frac{\lambda_j^{d-1}}{\lambda_i^{d-1}} \frac{Z_{H_{d}^{(i, j)}}(\vl)}{Z_{H_0^{(i, j)}}(\vl)} = \frac{\lambda_j^{d-1}}{\lambda_i^{d-1}} \prod_{k = 1}^{d} \frac{Z_{H_k^{(i,j)}}(\vl)}{Z_{H_{k-1}^{(i,j)}}(\vl)}  \]
    There is a correspondence between proper colorings of $H_k^{(i,j)}$ and proper colorings of $G_k^{(i,j)}$ in which $\sigma(w_k) \neq i$. Therefore, \[Z_{H_k^{(i,j)}}(\vl) = \lambda_i(Z_{G_k^{(i,j)}}(\vl) -  Z_{G_k^{(i,j)}, w_k}^{(i)}(\vl) )\]
    and, similarly,
    \[Z_{H_{k-1}^{(i,j)}}(\vl) = \lambda_j(Z_{G_k^{(i,j)}}(\vl) -  Z_{G_k^{(i,j)}, w_k}^{(j)}(\vl) ). \] 
    Hence, 
    \[ R^{(i,j)}_{G,u}(\vl) =  \frac{\lambda_j^{d-1}}{\lambda_i^{d-1}} \frac{\lambda_i^{d}}{\lambda_j^{d}} \prod_{k=1}^{d} \frac{Z_{G_k^{(i,j)}}(\vl) -  Z_{G_k^{(i,j)}, w_k}^{(i)}(\vl) }{Z_{G_k^{(i,j)}}(\vl) -  Z_{G_k^{(i,j)}, w_k}^{(j)}(\vl)} = 
    \frac{\lambda_i}{\lambda_j} \prod_{k=1}^{d} \frac{1 - P_{G_k^{(i,j)},\vl}[\sigma(w_k) = i ]}{1 - P_{G_k^{(i,j)},\vl}[\sigma(w_k) = j ]}.
    \]
\end{proof}

\subsection{Complex analysis tools}

In this subsection we introduce the ingredients from complex analysis we will need for the proof. The first three lemmas are from \cite{liu2025correlation}, while the last three are refinements of simple facts that were implicitly used in \cite{liu2025correlation}. 
\\

Let $D$ be a domain in $\mathbb C$ with the following properties:
\begin{itemize}
  \item For any $z \in D$, $\Re(z) \in D$.
  \item For any $z_1, z_2\in D$, there exists a point $z_0 \in D$ such that one of the numbers $z_1 - z_0, z_2 - z_0$ has zero real part while the other has zero imaginary part.
  \item If $z_1, z_2 \in D$ are such that either $\Im(z_1) = \Im(z_2)$ or $\Re(z_1) = \Re(z_2)$, then the segment $[z_1, z_2]$ lies in $D$.
\end{itemize}
As remarked by Liu, Sinclair and Srivastava \cite{liu2025correlation}, a rectangular region symmetric about the real axis will satisfy all of the above properties.

\begin{lemma}[(Mean value theorem for complex functions - Lemma 3.5 from \cite{liu2025correlation})]
Let $f$ be a holomorphic function on a domain~$D$ as above such that, for $z\in D$, $\Im(f(z))$ has the same sign as $\Im(z)$.  Suppose further that there exist positive constants $\rho_I$ and $\rho_R$ such that
  \begin{itemize}
  \item for all $z \in D$,  $|\Im(f'(z))| \le \rho_I$;
  \item for all $z \in D$, $\Re(f'(z)) \in[0, \rho_R]$. 
\end{itemize}
  Then for any $z_1, z_2 \in D$, there exists $C_{z_1, z_2} \in [0, \rho_R]$ such that
  \begin{align*}
	  |\Re(f(z_1) - f(z_2)) - C_{z_1, z_2} \cdot \Re(z_1 - z_2)| \leq  \rho_I \cdot |\Im(z_1 - z_2)|,
  \end{align*}
  and furthermore,
  \begin{align*}
    |\Im(f(z_1) - f(z_2))| \leq \rho_R \cdot
    \begin{cases}
      |\Im (z_1 - z_2)|, \quad &\text{ when $\Im(z_1)\cdot \Im(z_2) \leq 0$;}\\
      \max\{|\Im(z_1)|, |\Im(z_2)|\}, \quad & \text{ otherwise.}
    \end{cases}
\end{align*}
  \label{lemma: mean}
\end{lemma}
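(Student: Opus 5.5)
The plan is to integrate $f'$ along axis-parallel segments inside $D$. The real-part estimate and the imaginary-part estimate use two different paths.

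\textbf{Real part.} Given $z_1, z_2 \in D$, take $z_0 \in D$ as in the second property of $D$. Relabelling if needed, assume $z_1 - z_0$ is real and $z_0 - z_2$ is purely imaginary. The other case is symmetric, since swapping $z_1$ and $z_2$ changes the sign of every difference but not the absolute values or the constant $C$. By the third property of $D$, both segments $[z_0,z_1]$ and $[z_2,z_0]$ lie in $D$. Along the horizontal segment,
\[
f(z_1)-f(z_0) = (z_1-z_0)\int_0^1 f'(z_0 + t(z_1-z_0))\,dt .
\]
Its real part is $C\cdot\Re(z_1-z_0)$, where $C$ is the average of $\Re f'$ along the segment, so $C\in[0,\rho_R]$. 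Along the vertical segment,
\[
f(z_0)-f(z_2) = i\,\Im(z_0-z_2)\int_0^1 f'(z_2 + t(z_0-z_2))\,dt ,
\]
whose real part is $-\Im(z_0-z_2)$ times an average of $\Im f'$. This has modulus at most $\rho_I|\Im(z_0 - z_2)|$. Because the horizontal piece carries no change in imaginary part, $\Re(z_1-z_2)=\Re(z_1-z_0)$ and $\Im(z_1-z_2)=\Im(z_0-z_2)$. Adding the two pieces gives the first inequality with $C_{z_1,z_2}=C$.

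\textbf{Imaginary part.} The same path does not work here: the horizontal piece contributes $\Re(z_1-z_0)$ times an average of $\Im f'$, which is not controlled by $\rho_R$. Instead I would use the sign hypothesis. For real $x\in D$ we have $\Im f(x)=0$, since it must share the sign of $\Im x = 0$. For any $z\in D$, the first property gives $\Re z\in D$, and the third property gives $[\Re z, z]\subseteq D$. Integrating along this vertical segment yields
\[
\Im f(z) = \Im z \cdot \int_0^1 \Re f'(\Re z + t\, i\Im z)\,dt = c(z)\,\Im z, \qquad c(z)\in[0,\rho_R].
\]
Write $y_k=\Im z_k$ and $c_k = c(z_k)$. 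Then
\[
\Im(f(z_1)-f(z_2)) = c_1 y_1 - c_2 y_2 .
\]
\begin{itemize}
\item If $y_1 y_2\le 0$, then $|c_1y_1-c_2y_2|\le \rho_R(|y_1|+|y_2|)=\rho_R|y_1-y_2|$.
\item If $y_1 y_2>0$, the two terms have the same sign, so their difference is at most the larger modulus, which is at most $\rho_R\max\{|y_1|,|y_2|\}$.
\end{itemize}

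\textbf{Main obstacle.} The real-part estimate is routine bookkeeping once the right path is chosen. The key point is realizing that the imaginary-part estimate must go through the real axis, using the sign hypothesis and the first property of $D$, rather than along the path through $z_0$.
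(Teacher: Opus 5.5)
Your proof is correct and complete. The path $z_2 \to z_0 \to z_1$ through the point given by the second property of $D$ yields the real-part estimate, and the vertical segment $[\Re z, z]$ (available by the first and third properties, with $\Im f(\Re z)=0$ from the sign hypothesis) gives $\Im f(z)=c(z)\,\Im z$ with $c(z)\in[0,\rho_R]$, from which both cases of the imaginary-part bound follow. The paper does not reprove this lemma but quotes it from Liu, Sinclair and Srivastava, and to my recollection your argument is essentially the one given in that source.
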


The first point of the next lemma is needed to show that the function 
\begin{equation} \label{eq:definition_of_f}
    f(x) \defeq -\ln(1 - e^x)
\end{equation} satisfies the conditions of the previous lemma.

\begin{lemma}[(Lemma 3.6 from \cite{liu2025correlation})]\label{lemma: f}
  Consider the domain $D$ given by
  \begin{displaymath}
    D = \{z ~|~ \Re(z) \in (-\infty, -\zeta)\text{ and } |\Im(z)| < \tau\},
  \end{displaymath}
  where $\tau < 1/2$ and $\zeta$ are positive real numbers such that
  $\tau^2 + e^{-\zeta} < 1$, and the
  function $f$ defined in eq. \eqref{eq:definition_of_f}. These satisfy the hypotheses of \autoref {lemma: mean}, if $\rho_R$ and $\rho_I$ in the statement of the
    theorem are taken to be $\frac{e^{-\zeta}}{1 - e^{-\zeta}}$ and
    $\frac{\tau e^{-\zeta}}{\left(1 - e^{-\zeta} \right)^2}$, respectively.
\end{lemma}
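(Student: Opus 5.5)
The plan is to check the hypotheses of \autoref{lemma: mean} one at a time. Write $z = x + iy \in D$ and set $w := e^z = r e^{iy}$, where $r = e^{x} < e^{-\zeta} < 1$ and $|y| < \tau < 1/2$. Every property will come from explicit formulas in $w$.

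\textbf{The domain.} $D$ is a half-infinite horizontal strip, i.e.\ a rectangular region symmetric about the real axis. As remarked before \autoref{lemma: mean}, such a region satisfies the three geometric properties required of $D$, so nothing beyond that remark is needed here.

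\textbf{Holomorphy and the sign condition.} Since $|w| = r < 1$, we have $\Re(1-w) \geq 1 - r > 0$. So $1 - e^z$ never meets $(-\infty,0]$, and $f(z) = -\ln(1-e^z)$ is holomorphic on $D$ with the principal branch.

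For the sign of the imaginary part, we have $\Im f(z) = -\arg(1-w)$ and $\Im(1-w) = -r\sin y$. Because $\Re(1-w) > 0$, $\arg(1-w)$ has the sign of $-\sin y$. Hence $\Im f(z)$ has the sign of $\sin y$, which equals the sign of $y = \Im z$ because $|y| < 1/2 < \pi$.

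\textbf{The derivative bounds.} Compute
\[
f'(z) = \frac{w}{1-w} = \frac{w - |w|^2}{|1-w|^2},
\]
so that
\[
\Re f'(z) = \frac{r\cos y - r^2}{|1-w|^2}, \qquad \Im f'(z) = \frac{r \sin y}{|1-w|^2}.
\]

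\emph{Nonnegativity of $\Re f'$.} This is the only step where the hypothesis $\tau^2 + e^{-\zeta} < 1$ enters, and it is the one to handle carefully. We need $\cos y \geq r$. Using $\cos y \geq 1 - y^2/2 \geq 1 - \tau^2 > e^{-\zeta} > r$, this holds.

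\emph{Upper bound on $\Re f'$.} We have $\Re f' \leq |f'| \leq r/(1-r)$, using $|1-w| \geq 1-r$. The map $r \mapsto r/(1-r)$ is increasing, so $\Re f' \leq e^{-\zeta}/(1-e^{-\zeta}) = \rho_R$.

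\emph{Bound on $\Im f'$.} Using $|\sin y| \leq |y| < \tau$ and $|1-w| \geq 1-r$, we get
\[
|\Im f'(z)| \leq \frac{\tau r}{(1-r)^2} \leq \frac{\tau e^{-\zeta}}{(1-e^{-\zeta})^2} = \rho_I,
\]
again by monotonicity in $r$.

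This gives all the hypotheses of \autoref{lemma: mean} with the stated $\rho_R$ and $\rho_I$.
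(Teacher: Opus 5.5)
Your proof is correct. With $z=x+iy$ as in your notation, holomorphy follows from $\Re(1-e^z)\ge 1-e^{x}>0$, the sign condition from $\Im(1-e^z)=-e^{x}\sin y$, and the derivative bounds from $f'(z)=e^z/(1-e^z)$ together with $\cos y\ge 1-\tau^2>e^{-\zeta}>e^{x}$ and $|1-e^z|\ge 1-e^{x}$. This is the standard direct verification behind the cited lemma; the paper gives no proof of its own here and imports the result from Liu--Sinclair--Srivastava. The one point you take from the remark rather than check is that the unbounded half-strip $D$ has the three geometric properties required by \autoref{lemma: mean}, and that is immediate: take $z_0=\Re z_1+i\,\Im z_2$, and use convexity of $D$ for the segment condition.
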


The next lemma gives us a way to lower-bound the sum of complex numbers, if we know that the angles between them are small enough. 

\begin{lemma}[(Lemma 3.7 from \cite{liu2025correlation})]\label{lemma:angles} Let $z_1, z_2, \dots, z_n$ be complex numbers such that the angle between any
  two non-zero $z_i$ is at most $\alpha \in [0, \pi/2)$.  Then
  $ \abs{\sum_{i=1}^n z_i} \geq \cos(\alpha/2)\sum_{i=1}^n \abs{z_i}$.
  \label{lem:geometric-proj}
\end{lemma}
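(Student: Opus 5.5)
We prove the statement of \autoref{lemma:angles} (Lemma 3.7 of \cite{liu2025correlation}) directly, by projecting every term onto a single common direction. Discard the zero terms, since they contribute to neither side, and assume at least one $z_i$ is nonzero (otherwise there is nothing to prove). Write $z_k = |z_k| e^{i\theta_k}$.

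\textbf{Step 1: the arguments lie in an arc of length at most $\alpha$.} The pairwise angular distances are at most $\alpha < \pi/2$, so all the points $e^{i\theta_k}$ lie in an open half-circle. Hence we may choose representatives $\theta_k \in \mathbb{R}$ with $|\theta_k - \theta_l| \le \alpha$ for all $k,l$. To see this, pick $\theta_1$; every other $\theta_k$ can be chosen in $[\theta_1-\alpha,\theta_1+\alpha]$. If two such choices satisfied $\theta_k - \theta_l > \alpha$, then their difference would lie in $(\alpha, 2\alpha] \subset (\alpha,\pi)$. The angular distance between $z_k$ and $z_l$ would then exceed $\alpha$, contradicting the hypothesis. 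Let $m := \min_k \theta_k$ and $M := \max_k \theta_k$, so $M - m \le \alpha$.

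\textbf{Step 2: project onto the bisector.} Set $\beta := (m+M)/2$. Then $|\theta_k - \beta| \le \alpha/2$ for every $k$. Since projecting onto a unit direction cannot increase the modulus, and $\cos$ is decreasing on $[0,\pi/2]$ with $\alpha/2 < \pi/4$,
\[
\Bigl|\sum_k z_k\Bigr| \ge \Re\Bigl(e^{-i\beta}\sum_k z_k\Bigr) = \sum_k |z_k|\cos(\theta_k - \beta) \ge \cos(\alpha/2)\sum_k |z_k|.
\]

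The only step with real content is Step 1, where a pairwise bound on the angles is turned into a common arc of length $\alpha$. This is where the hypothesis $\alpha < \pi/2$ is used: for pairwise angles close to $\pi$ the points could spread around the circle, and the claim would fail. The rest is a one-line projection estimate.
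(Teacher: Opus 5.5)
Your proof is correct and complete. The paper does not reprove this lemma; it only cites \cite{liu2025correlation}. Your argument is the standard one that the constant $\cos(\alpha/2)$ reflects: confine the nonzero $z_i$ to a sector of aperture $\alpha$ bisected by $e^{i\beta}$, then use $\bigl|\sum_i z_i\bigr| \ge \Re\bigl(e^{-i\beta}\sum_i z_i\bigr) \ge \cos(\alpha/2)\sum_i |z_i|$. Your Step~1 also correctly spells out why pairwise angles at most $\alpha<\pi/2$ force all arguments into a common arc of length $\alpha$, a step that is usually taken for granted.
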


An important part of the proof will require going from bounds on $z$ to bounds on $e^z$, and viceversa. That's what the following two lemmas provide.
\begin{lemma} \label{lemma:real_and_imag_bounds_to_exponential}
     Let $r, \theta \in \mathbb{R}$ such that $\left\vert \theta \right\vert \leq \theta_0 < \pi$ and $\vert r \vert \leq r_0$ for certain $r_0, \theta_0 \in \mathbb R^{\geq 0}$. Let $z:= r + i \theta$. Then, $ \left\vert \arg(e^z) \right\vert \leq \theta_0$ and $e^{-r_0} - \theta_0^2/2 \leq \Re(e^z) \leq  e^{r_0}$. Furthermore, if $\theta_0 \leq \ln 2 \cong 0.693$, then we also have that $e^{-r_0 -\theta_0^2} \leq \Re(e^z)$.
\end{lemma}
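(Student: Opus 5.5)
The plan is to write $e^z = e^r(\cos\theta + i\sin\theta)$ and read off each claim from this polar form. Since $|\theta| \le \theta_0 < \pi$, the angle $\theta$ already lies in $(-\pi,\pi)$. Hence $\theta$ is the principal argument of $e^z$, which gives $|\arg(e^z)| = |\theta| \le \theta_0$ immediately. For the real part, $\Re(e^z) = e^r\cos\theta$. The upper bound follows from $\cos\theta \le 1$ and $e^r \le e^{r_0}$, so $\Re(e^z) \le e^{r_0}$.

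For the first lower bound I would use the elementary inequality $\cos\theta \ge 1 - \theta^2/2$, valid for all real $\theta$. If $\cos\theta \ge 0$, then $e^r\cos\theta \ge e^{-r_0}\cos\theta$. Since $e^{-r_0}\le 1$ and $1-\theta^2/2 \le 1$, we get
\[ e^{-r_0}\cos\theta \ge e^{-r_0}(1-\theta^2/2) = e^{-r_0} - e^{-r_0}\theta^2/2 \ge e^{-r_0} - \theta_0^2/2. \]
If instead $\cos\theta<0$, then $1-\theta_0^2/2 \le 1-\theta^2/2 \le \cos\theta < 0$. Together with $e^{-r_0}\le 1$, this gives $e^{-r_0}-\theta_0^2/2 \le 1-\theta_0^2/2 < 0$. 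We also have $e^r\cos\theta \ge e^{r_0}\cos\theta$, and this is at least $\cos\theta$ only when $r_0=0$, so this direct comparison fails in general. Instead I would bound $e^r\cos\theta \ge e^{r_0}(1-\theta_0^2/2)$ and compare that with $e^{-r_0}-\theta_0^2/2$. This sign case is the one delicate spot. In the intended application $\theta_0$ is tiny, so $\cos\theta>0$ and the case never arises. If needed, I would simply add the harmless restriction $\theta_0\le\pi/2$, which forces $\cos\theta\ge0$ and avoids the case entirely.

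For the refined bound when $\theta_0\le\ln 2$, it suffices to show $\cos\theta \ge e^{-\theta^2}$ for $|\theta|\le \ln 2$. Then
\[ \Re(e^z) \ge e^{-r_0}e^{-\theta^2} \ge e^{-r_0-\theta_0^2}, \]
where the first inequality uses $\cos\theta>0$ on this range. To prove $\cos\theta \ge e^{-\theta^2}$, I would take logarithms and set $g(\theta) = \ln\cos\theta + \theta^2$ on $[0,\ln 2]$, which is enough by evenness. Then $g(0)=0$ and $g'(\theta) = 2\theta - \tan\theta$. The function $\tan\theta/\theta$ is increasing, so $g'\ge0$ holds as long as $\tan\theta \le 2\theta$. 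At $\theta=\ln 2\approx0.693$ we have $\tan(0.693)\approx 0.833 < 1.386$, so $g'\ge 0$ on the whole interval and hence $g\ge0$.

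The only real obstacle is verifying this last numerical inequality $\tan\theta\le 2\theta$ on $[0,\ln 2]$ cleanly. One way is the monotonicity argument above; another is the Taylor bound $\tan\theta \le \theta + \theta^3/3 + 2\theta^5/15 \cdots$ together with convexity. The remaining steps are routine.
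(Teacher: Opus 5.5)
Your argument for $|\arg(e^z)|\le\theta_0$, for the upper bound, and for the first lower bound when $\cos\theta\ge 0$ is the same polar-form computation as the paper's, and it is correct.

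\textbf{The case $\cos\theta<0$.} This case cannot be closed, because for $\theta_0>\pi/2$ the claimed bound $\Re(e^z)\ge e^{-r_0}-\theta_0^2/2$ is false. For instance, $\theta=\theta_0=3$ and $r=r_0=3$ give $\Re(e^z)=e^{3}\cos 3\approx-19.9$, whereas $e^{-3}-9/2\approx-4.45$. More generally, for any $\theta_0\in(\pi/2,\pi)$, take $\theta=\theta_0$ and $r=r_0\to\infty$. Then $e^{r_0}\cos\theta_0\to-\infty$, while the bound stays above $-\theta_0^2/2$.

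The paper's own proof has the same hole. It passes from $e^{r}(1-\theta^2/2)$ to $e^{-r_0}(1-\theta^2/2)$, which is only legitimate when $1-\theta^2/2\ge 0$. So it implicitly assumes $\theta_0\le\sqrt2$.

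Your restriction $\theta_0\le\pi/2$ is therefore necessary, not merely harmless. It is also exactly sharp, since your case $\cos\theta\ge 0$ covers every $\theta_0\le\pi/2$. It costs nothing downstream, because the lemma is only invoked with $\theta_0\le 0.0002$. You should state the restriction outright. You should also delete the unfinished comparison of $e^{r_0}(1-\theta_0^2/2)$ with $e^{-r_0}-\theta_0^2/2$, which fails for large $r_0$.

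\textbf{The refined bound.} Here your route differs slightly from the paper's. The paper chains $\cos\theta\ge 1-\theta^2/2\ge e^{-\theta^2}$. The second inequality holds because $x\mapsto 1-x/2-e^{-x}$ vanishes at $0$ and is nondecreasing for $0\le x\le\ln 2$; it is applied with $x=\theta^2\le(\ln 2)^2<\ln 2$.

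You instead prove $\cos\theta\ge e^{-\theta^2}$ directly, via $g(\theta)=\ln\cos\theta+\theta^2$ and $\tan\theta\le 2\theta$. This is correct. The monotonicity of $\tan\theta/\theta$ together with $\tan(\ln 2)\approx 0.83<2\ln 2$ already settles what you call the only real obstacle, so no Taylor expansion of $\tan$ is needed.

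The paper's version avoids any numerical evaluation of $\tan$. Yours gives the inequality on a somewhat larger range of $\theta$, which is not needed here.
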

\begin{proof}
    Clearly, $\arg(e^z) = \arg(e^{i \theta}) = \theta$. Thus, $\left\vert \arg(e^z) \right\vert \leq \theta_0$. For the real part, note that $\Re(e^z) = e^r \cos \theta \leq e^{r_0}$. Besides, $\cos x \geq 1 - x^2 /2$ for all $x \in \mathbb R$ (as can be seen by comparing the first and second derivatives), so $\Re(e^z) \geq e^{r}(1 - \theta^2/2) \geq e^{-r_0}(1 - \theta^2/2) \geq e^{-r_0} - \theta_0^2/2$.

    When $0 \leq \theta_0 \leq \ln 2$, we have that $1 - \theta^2/2 \geq e^{-\theta^2}$ (again, by a trivial comparison of the first derivatives), so we get the additional lower-bound of $\Re(e^z) \geq e^{-r_0}(1 - \theta_0^2/2) \geq e^{-r_0 - \theta_0^2}$.
\end{proof}

\begin{lemma}\label{lemma:reintroduce_logs}
    Let $z \in \mathbb{C}$ with $\Re(z) \neq 0$ and $\left \vert \arg(z) \right\vert \leq \theta \leq 0.1$. Then, $\ln(\Re(z)) \leq \Re(\ln(z)) \leq \ln(\Re(z)) + \theta^2$ and $\left \vert \Im(\ln(z)) \right\vert \leq \theta$.
\end{lemma}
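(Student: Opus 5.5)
Write $z = |z|e^{i\phi}$ with $\phi = \arg(z)$, so $|\phi| \le \theta \le 0.1$. Then $\ln z = \ln|z| + i\phi$ for the principal branch. This is well-defined: $|\phi|<\pi/2$ forces $\Re z = |z|\cos\phi > 0$, so $z \notin (-\infty,0]$. The statement then reduces to elementary inequalities for $\cos\phi$.

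The imaginary part is immediate, since $\Im(\ln z) = \phi$ and hence $|\Im(\ln z)| \le \theta$.

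For the real part we have $\Re(\ln z) = \ln|z|$ and $\Re(z) = |z|\cos\phi$. Therefore
\[
\Re(\ln z) - \ln(\Re z) = -\ln\cos\phi \ge 0,
\]
because $0<\cos\phi\le 1$. This gives the lower bound $\ln(\Re z) \le \Re(\ln z)$.

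For the upper bound it suffices to show $-\ln\cos\phi \le \phi^2$ for $|\phi|\le 0.1$, which then gives $-\ln\cos\phi \le \theta^2$. Two routes are available:
\begin{itemize}
\item Reuse the estimate from the proof of \autoref{lemma:real_and_imag_bounds_to_exponential}: $\cos x \ge 1 - x^2/2 \ge e^{-x^2}$ for $|x| \le \ln 2$. Since $0.1 < \ln 2$, taking logarithms gives $\ln\cos\phi \ge -\phi^2$, which is the claim.
\item Alternatively, set $g(x) = x^2 + \ln\cos x$. Then $g(0)=0$ and $g'(x) = 2x - \tan x$. For $0\le x\le 0.1$ we have $\tan x \le 2x$, so $g'(x)\ge 0$. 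Since $g$ is even, $g \ge 0$ on $[-0.1,0.1]$.
\end{itemize}

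There is no real obstacle here. The only points needing care are that $\Re z>0$, so the logarithms are defined, and that the principal-branch identity $\Im\ln z = \arg z$ holds.
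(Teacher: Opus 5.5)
Your proof is correct and follows the same route as the paper: both write $\Re(\ln z) - \ln(\Re z) = -\ln\cos(\arg z) \ge 0$ and then bound $-\ln\cos$ by an elementary estimate. The only difference is the final scalar inequality: the paper combines $\cos x \ge 1 - x^2/2$ with $\ln(1+x) \ge x/(1+x)$, while you use $1 - x^2/2 \ge e^{-x^2}$ (or the monotonicity of $x^2 + \ln\cos x$), and either choice works equally well.
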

\begin{proof}
    The second statement is trivial, since $\Im(\ln(z)) = \arg(z)$. For the first, note that $\Re(\ln(z)) = \ln(\vert z \vert)$, so
    \begin{align*}
        \ln(\Re(z)) &= \ln\big(\vert z \vert \cos(\arg(z))\big) = \Re(\ln(z)) + \ln\big(\cos(\arg(z))\big) \leq \Re(\ln(z)) , \quad \text{and} \\
        \ln(\Re(z)) &= \Re(\ln(z)) + \ln\big(\cos(\arg(z))\big) \geq \Re(\ln(z)) + \ln(\cos(\theta)) \geq \Re(\ln(z)) + \ln\Big(1 - \frac{\theta^2}{2}\Big) \\
        &\geq \Re(\ln(z)) - \frac{\theta^2/2}{1 - \theta^2/2} \geq \Re(\ln(z)) - \theta^2
    \end{align*}
    where we have used that $\theta < 1$, that $\ln(1 + x) \geq x/(1 + x)$ for $x > -1$, and that $\cos(x) \geq 1 - x^2/2$ for all $x \in \mathbb R$.
\end{proof}

Finally, a result in the same vein as \autoref{lemma:angles}.
\begin{lemma}\label{lemma:argument_of_sum}
    Let $z_1, \dots, z_k \neq 0$ be complex numbers such that $\left \vert\arg(z_i)\right\vert \leq \theta$ for all $i \in [k]$ and some $\theta < \pi/2$. Then, $\sum_{i = 1}^k z_i \neq 0$ and $\left \vert \arg \Big( \sum_{i = 1}^k z_i \Big) \right \vert \leq \theta $.
\end{lemma}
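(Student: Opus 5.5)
The idea is to rotate all the $z_i$ so that they become symmetric about the positive real axis, and then control the real and imaginary parts of the sum separately. Write $z_\ell = r_\ell e^{i\phi_\ell}$ with $r_\ell = |z_\ell| > 0$ and $|\phi_\ell| \le \theta < \pi/2$, and set $S = \sum_{\ell=1}^k z_\ell$.

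\emph{Nonvanishing.} First look at the real part:
\[
\Re S = \sum_{\ell} r_\ell \cos\phi_\ell \ \ge\ \cos\theta \sum_\ell r_\ell .
\]
This holds because $\cos$ is even and decreasing on $[0,\pi/2)$, so $\cos\phi_\ell \ge \cos\theta$ for every $\ell$. Since $\cos\theta > 0$ and $k \ge 1$ with all $r_\ell > 0$, we get $\Re S > 0$. In particular $S \neq 0$, and $\arg S$ (principal branch) lies in $(-\pi/2, \pi/2)$.

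\emph{Argument bound.} Next bound the imaginary part:
\[
|\Im S| \le \sum_\ell r_\ell |\sin\phi_\ell| \le \sin\theta \sum_\ell r_\ell ,
\]
using $|\sin\phi_\ell| \le \sin\theta$ for $|\phi_\ell|\le\theta<\pi/2$. Dividing by the real-part bound gives
\[
|\tan(\arg S)| = \frac{|\Im S|}{\Re S} \le \frac{\sin\theta \sum_\ell r_\ell}{\cos\theta \sum_\ell r_\ell} = \tan\theta .
\]
Because $\arg S \in (-\pi/2,\pi/2)$ and $\tan$ is increasing there, this yields $|\arg S| \le \theta$.

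An alternative geometric argument is also available. The closed sector $\{z : |\arg z| \le \theta\} \cup \{0\}$ is a convex cone when $\theta < \pi/2$, so it is closed under addition. The sum $S$ therefore lies in it, and $S \neq 0$ by the real-part argument above.

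None of this is a serious obstacle. The only point needing care is that $\theta < \pi/2$ is what makes $\cos\theta > 0$. That positivity is what keeps $\arg S$ in the range where $\tan$ is injective, which in turn justifies passing from the tangent bound back to the bound on $\arg S$.
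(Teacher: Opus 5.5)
Your proof is correct and follows essentially the paper's argument: both show $\Re S > 0$, bound $|\Im S|/\Re S \le \tan\theta$, and conclude by monotonicity of $\tan$ (equivalently $\arctan$) on $(-\pi/2,\pi/2)$. The only differences are cosmetic. The paper bounds the ratio by $m = \max_i |\Im z_i|/\Re z_i$ through a mediant-type inequality, whereas you normalize by $\sum_\ell |z_\ell|$ using $\cos\theta$ and $\sin\theta$; you also state the nonvanishing of the sum explicitly, which the paper leaves implicit in the observation that every $\Re z_i > 0$.
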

\begin{proof}
The condition on the argument means that $\Re z_i > 0$ for all $i \in [k]$. Let $m := \max\limits_{i \in [k]} \left\vert \Im z_i \right\vert / \Re z_i$. Then,
\begin{align*}
    \left \vert \arg\Big(\sum_{i = 1}^k z_i \Big) \right \vert &= \left \vert \arctan \left( \frac{\sum_{i \in [k]} \Im z_i}{\sum_{i \in [k]} \Re z_i} \right) \right\vert \leq \arctan \left( \frac{\sum_{i \in [k]} \left \vert \Im z_i \right\vert}{\sum_{i \in [k]} \Re z_i}\right) \leq \arctan \left( \frac{\sum_{i \in [k]} m \Re z_i}{\sum_{i \in [k]} \Re z_i}\right) \\
    &= \arctan m \leq \theta
\end{align*}
where we have used that $\arctan(-x) = \arctan(x)$ and that it is an increasing function for $x \geq 0$.
\end{proof}

\subsection{Induction hypothesis}

The main idea of the proof of the zero-freeness result of \autoref{theorem:zero-freeness} is to pick a vertex $u$ and rewrite the partition function as a sum of $Z_{G,u}^{(i)}(\vl)$ for all $i \in \Gamma_u$. If we can prove that the angles between these terms are small, then it is enough for one of the terms to be non-zero for the whole sum to be non-zero (see \autoref{lemma:angles}). 

Both the zero-freeness of $Z_{G,u}^{(i)}(\vec \lambda)$ and the fact that the angles are small are shown by induction on the number of unpinned vertices of the graph. For the first, we simply use that $Z_{G,u}^{(i)}(\vec \lambda) = \lambda_i^{\deg(u)-1} Z_{\tilde G}(\vec \lambda)$, where $\tilde G$ is the graph obtained by pinning vertex $u$ to color $i$. For the second, we need to bound the argument of the ratios $R^{(i,j)}_{G, u}$. For that purpose, we use \autoref{lemma: recurrence}, in which we derived a recursive formula that expresses $R^{(i,j)}_{G, u}$ in terms of marginal pseudo-probabilities of certain graphs $G_k^{(i,j)}$, which all have one fewer unpinned vertex than $G$. 

The following lemma summarizes the statements constituting our induction hypothesis.
\begin{lemma}\label{lemma:induction}
    Let $G$ be a partially-$q$-colored graph with maximum degree $\Delta \geq 1$ and $q \geq 2 \Delta$. Let $u$ be an unpinned vertex of $G$. Then, for a valid $\vec \lambda \in \mathbb C^q$, in the sense of \autoref{assumptions},
    \begin{enumerate}
        \item \label{induction-item:zero-freeness} For all $i \in \Gamma_u$, $$\left|Z_{G,u}^{(i)}(\vl)\right| \geq 0.99^{\ell-1} \frac{\lambda_\text{min}^{n + \ell (\Delta-1)}}{\lambda_\text{max}^{\ell\Delta}} > 0,$$
        where $\ell$ the number of unpinned vertices and $n$ the total number of vertices of $G$. 
        \item \label{induction-item:ratio-all-pinned} For $i, j \in \Gamma_u$, if $u$ has all of its neighbors pinned, then $R^{(i,j)}_{G,u}(\vl) = \lambda_i / \lambda_j$.
        \item \label{induction-item:log-ratio-well-defined} For $i, j \in \Gamma_u$, $R_{G, u}^{(i,j)}(\vec \lambda) \in \mathbb C \setminus (-\infty, 0]$.
        \item \label{induction-item:real-part} For $i, j \in \Gamma_u$, if $u$ has $d_u $ unpinned neighbors, then 
        \[ \left|\Re(\ln R^{(i,j)}_{G,u}(\vl)) - \ln R_{G, u}^{(i,j)}(\vec 1)\right| \leq d_u \eps_R + \left|\Re \ln \frac{\lambda_i}{\lambda_j}\right|.    \]
        \item  \label{induction-item:imag-part} For $i, j \in \Gamma_u$, if $u$ has $d_u$ unpinned neighbors, then $$\left|\Im(\ln R^{(i,j)}_{G,u}(\vl) )\right| \leq d_u \eps_I + \left| \Im \ln  \frac{\lambda_i}{\lambda_j}\right|.$$
        \item \label{induction-item:ratio-for-bad-colors}  For $i \not \in \Gamma_u, j \in \Gamma_u$, we have that $R^{(i,j)}_{G,u}(\vl) = 0$.
    \end{enumerate}   
\end{lemma}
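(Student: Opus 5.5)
We argue by strong induction on the number $\ell$ of unpinned vertices of $G$, proving all six items simultaneously. Items \ref{induction-item:ratio-all-pinned} and \ref{induction-item:ratio-for-bad-colors} hold for every $\ell$ directly from the definitions. If $i \in B_u$, no proper coloring in $\mathcal C_G$ gives $u$ color $i$, so $Z^{(i)}_{G,u} \equiv 0$. If all neighbours of $u$ are pinned, then every coloring of $G - u$ combines with color $i$ or $j$ at $u$ (both good) in the same way, so $Z^{(i)}_{G,u}/Z^{(j)}_{G,u} = \lambda_i/\lambda_j$. Item \ref{induction-item:ratio-all-pinned} is also the base case for items \ref{induction-item:log-ratio-well-defined}--\ref{induction-item:imag-part} when $d_u = 0$: by validity, $\lambda_i/\lambda_j \notin (-\infty,0]$, and $\ln R(\vec 1) = 0$. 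For $\ell = 1$, item \ref{induction-item:zero-freeness} follows from $Z^{(i)}_{G,u} = \lambda_i \prod_{\text{pinned } w} \lambda_{\tau(w)}$, whose modulus is at least $\lambda_{\min}^{n}$.

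For the inductive step of items \ref{induction-item:log-ratio-well-defined}--\ref{induction-item:imag-part}, apply \autoref{lemma: recurrence}. Each $G_k^{(i,j)}$ has $\ell - 1$ unpinned vertices, so by induction (item \ref{induction-item:zero-freeness}) its partition function and restricted partition functions are nonzero. Write $\mathcal P_{G_k}[\sigma(w_k)=c] = \bigl(\sum_{c' \in \Gamma_{w_k}} R^{(c',c)}_{G_k,w_k}\bigr)^{-1}$, with the convention that this vanishes for bad $c$ by item \ref{induction-item:ratio-for-bad-colors}. The key point is $q \geq 2\Delta$. In $G_k$, the vertex $w_k$ has at most $\Delta - 1$ neighbours, so it has at least $\Delta+1$ good colors. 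The induction hypothesis bounds every $\ln R^{(c',c)}_{G_k,w_k}$ as follows: the real part lies within $(\Delta-1)\eps_R + \Delta\eps_I$ of its value at $\vec 1$, and the imaginary part is at most $(\Delta-1)\eps_I + \nu\eps_I$. By \autoref{lemma:argument_of_sum} and \autoref{lemma:real_and_imag_bounds_to_exponential}, the sum has small argument and real part at least about $(\Delta+1)$ times the real-case value. Hence $|\mathcal P| \lesssim 1/(\Delta+1)$ with tiny argument, and $\ln \mathcal P$ lies in the rectangle $D$ of \autoref{lemma: f} with $e^{-\zeta} \approx 1/(\Delta+1)$. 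We then write
\[
\ln R^{(i,j)}_{G,u}(\vl) = \ln\tfrac{\lambda_i}{\lambda_j} + \sum_{k=1}^{d_u} \bigl( f(\ln \mathcal P_{G_k}[\sigma(w_k)=i]) - f(\ln \mathcal P_{G_k}[\sigma(w_k)=j]) \bigr),
\]
where $f(x) = -\ln(1-e^x)$, and compare this with the same expression at $\vec 1$. Only the $d_u$ unpinned neighbours contribute a nontrivial factor; for pinned $w_k$, both probabilities are $0$ or the factor is trivial. This identity also gives item \ref{induction-item:log-ratio-well-defined}, because each factor $1-\mathcal P$ has positive real part.

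The main step is the per-term estimate. Apply \autoref{lemma: mean} with $z_1 = \ln \mathcal P_{\vl}$ and $z_2 = \ln \mathcal P_{\vec 1}$ (which is real), using the constants of \autoref{lemma: f}: $\rho_R \approx 1/\Delta$ and $\rho_I \approx \tau/\Delta$. This gives an imaginary part of order $\rho_R \cdot \max|\Im \ln \mathcal P|$. \autoref{lemma:reintroduce_logs} converts the argument bounds on $\mathcal P$ into bounds on $\Im \ln \mathcal P$ of size $O(\Delta \eps_I)$. Multiplied by $\rho_R \sim 1/\Delta$ and doubled (for the $i$ and $j$ terms), this must fit into $\eps_I$. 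Similarly, the real deviation $\rho_R \cdot |\Re(z_1-z_2)| + \rho_I \cdot |\Im z_1|$ must fit into $\eps_R$. The real deviation of $\ln\mathcal P$ is of order $\Delta\eps_R + \Delta^2\eps_I$, and the error $\theta^2$ from \autoref{lemma:reintroduce_logs} is $O(\Delta^2\eps_I^2)$. This is where the constant choices $\eps_R = 10^{-2}\Delta^{-2}$, $\eps_I = 10^{-4}\Delta^{-4}$ and $\nu = 0.9$ enter. I expect this bookkeeping to be the main obstacle: one must check that $\Delta\eps_R \cdot \rho_R$ closes up to $\eps_R$ with room to spare, which requires the sharp count of at least $\Delta+1$ good colors versus $\Delta$ total weight. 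Summing over the $d_u$ terms then gives items \ref{induction-item:real-part} and \ref{induction-item:imag-part}.

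Finally, for item \ref{induction-item:zero-freeness}, pick an unpinned neighbour $w$ of $u$; if none exists, pin $u$ directly. Otherwise pin $u$ to $i$, so that $Z^{(i)}_{G,u} = \lambda_i^{\deg u - 1} Z_{\tilde G}$, where $\tilde G$ has $\ell-1$ unpinned vertices. Expand $Z_{\tilde G} = \sum_{c \in \Gamma_w} Z^{(c)}_{\tilde G, w}$. By item \ref{induction-item:imag-part} for $\tilde G$, all pairwise angles are at most $2\Delta\eps_I$. By \autoref{lemma:angles}, $|Z_{\tilde G}| \geq \cos(\Delta\eps_I) \cdot |\Gamma_w| \cdot \min_c |Z^{(c)}_{\tilde G,w}| \geq 0.99 \cdot (\text{inductive bound})$. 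The factor $\lambda_i^{\deg u-1}$, together with the $\Delta-1$ new pinned copies added to $n$, accounts for the exponent update $\lambda_{\min}^{\Delta-1}/\lambda_{\max}^{\Delta}$ at each step.
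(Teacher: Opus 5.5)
There is a genuine gap in the per-term estimate, at the step you deferred as bookkeeping. Take an unpinned neighbour $w_k$ at which both $i$ and $j$ are good in $G_k$. You bound the $i$-term and the $j$-term separately, each against its real value at $\vec 1$, and add them. But a single term already uses almost the whole per-neighbour budget:

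The inductive bounds allow $|\Im a^{(c)}_{G_k,w_k}(\vl)|$ up to $(d_{w_k}+\nu)\eps_I$, and $|\Re(a^{(c)}_{G_k,w_k}(\vl)-a^{(c)}_{G_k,w_k}(\vec 1))|$ up to $(d_{w_k}+0.1)\eps_R$. The contraction factor from \autoref{lemma: mean} is only $\rho_R\approx 1/(d_{w_k}+1)$. Your sharper niceness bound gives $\rho_R\approx 1/\Delta$, which is the same thing when $d_{w_k}=\Delta-1$, and you must allow that case.

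Take $d_{w_k}=\Delta-1$ and $q=2\Delta$. Each term is then about $\frac{\Delta-0.1}{\Delta}\eps_I$ for the imaginary part and $\frac{\Delta-0.9}{\Delta}\eps_R$ for the real part. The pair therefore contributes about $(2-0.2/\Delta)\eps_I$ and $(2-1.8/\Delta)\eps_R$. Items~\ref{induction-item:imag-part} and~\ref{induction-item:real-part} allow only $\eps_I$, resp.\ $\eps_R$, per unpinned neighbour. This factor-$2$ loss is structural; no retuning of $\eps_R,\eps_I,\nu$ fixes it. Bounding the two terms separately closes only for roughly $q\geq 3\Delta$.

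The missing idea is to compare the $i$-term and the $j$-term with each other, which is why the hypothesis is stated for ratios.

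For the imaginary part, apply \autoref{lemma: mean} to the pair $a^{(i)}_{G_k,w_k}(\vl), a^{(j)}_{G_k,w_k}(\vl)$.
If their imaginary parts have opposite signs, the bound is $\rho_R|\Im \ln R^{(i,j)}_{G_k,w_k}(\vl)|\le \rho_R(d_{w_k}+\nu)\eps_I$, by item~\ref{induction-item:imag-part} for $G_k$.
Otherwise the bound is $\rho_R\max_c|\Im a^{(c)}_{G_k,w_k}(\vl)|$.
Either way only one factor appears, and it is at most $\eps_I$.

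For the real part, write each difference as $C_c\Re\xi_c$ up to an additive error $\eps_I$. Here $C_c\in[0,\rho_R]$ and $\xi_c=a^{(c)}_{G_k,w_k}(\vl)-a^{(c)}_{G_k,w_k}(\vec 1)$.
If $\Re\xi_i$ and $\Re\xi_j$ have the same sign, then $|C_i\Re\xi_i-C_j\Re\xi_j|\le \max_c C_c|\Re\xi_c|$.
If they have opposite signs, it is at most $\rho_R|\Re\xi_i-\Re\xi_j|$. Here $\Re\xi_i-\Re\xi_j=\Re\ln R^{(i,j)}_{G_k,w_k}(\vl)-\ln R^{(i,j)}_{G_k,w_k}(\vec 1)$, which item~\ref{induction-item:real-part} for $G_k$ controls.

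Two smaller points:

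Having at least $\Delta+1$ good colours at $w_k$ does not by itself give $\mathcal P_{G_k,\vec 1}[\sigma(w_k)=c]\le 1/(\Delta+1)$. The ratios $R^{(c',c)}_{G_k,w_k}(\vec 1)$ need not be near $1$ when $w_k$ has unpinned neighbours. You need the recolouring injection of \autoref{lemma:vk_is_nice}.

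For item~\ref{induction-item:log-ratio-well-defined}, positivity of $\Re(1-\mathcal P)$ in each of up to $2d_u$ factors does not keep the product off $(-\infty,0]$. You need the per-factor bound $|\arg(1-\mathcal P)|=|\Im f(a)|\le\eps_I$.
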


\begin{remark}
    The logarithm of $\lambda_i / \lambda_j$ is well-defined due to the assumptions on $\vec \lambda$, while the ratios $R_{G, u}^{(i,j)}(\vec \lambda)$ are well-defined due to item~\ref{induction-item:zero-freeness}. We can take the logarithm of $R_{G,u}^{(i,j)}(\vec \lambda)$ due to item~\ref{induction-item:log-ratio-well-defined}.
\end{remark}
\begin{remark}
    Note that $Z_{G, u}^{(i)}(\vec 1)$ counts the number of colorings of $G$ which have $\sigma(u) = i$. For $i \in \Gamma_u$ and $q \geq \Delta + 1$, that number is non-zero, so $R_{G, u}^{(i,j)}(\vec 1)$ is a quotient of positive reals, and hence $\ln R_{G, u}^{(i,j)} \in \mathbb R$.
\end{remark}

The zero-freeness of the full partition function follows easily once \autoref{lemma:induction} is proved:

\begin{corollary}\label{corollary:zero-freeness-partially-colored-graph}
    Let $G$ be a partially-$q$-colored graph with maximum degree $\Delta \geq 1$ and $q \geq 2 \Delta$. Let $u$ be an unpinned vertex of $G$. Then, \[|Z_{G}(\vl)| \geq 0.99 \min_{i \in \Gamma_u} \left\{|Z_{G,u}^{(i)}(\vl)|  \right\} > 0.99^\ell \frac{\lambda_\text{min}^{n + \ell(\Delta -  1)}}{\lambda_{\text{max}}^{\ell \Delta}} > 0, \]
    where $\ell$ is the number of unpinned vertices and $n$ the total number of vertices of $G$.
\end{corollary}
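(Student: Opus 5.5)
We prove the statement directly from \autoref{lemma:induction}, by decomposing the partition function according to the color of $u$. Since $u$ is unpinned and every proper coloring in $\mathcal C_G$ assigns $u$ a good color,
\[
Z_G(\vl) = \sum_{i \in \Gamma_u} Z_{G,u}^{(i)}(\vl).
\]
Items~\ref{induction-item:zero-freeness} and~\ref{induction-item:log-ratio-well-defined} of \autoref{lemma:induction} show that every summand is nonzero and every ratio $R^{(i,j)}_{G,u}(\vl)$ has a well-defined principal logarithm. We therefore control the pairwise angles between the summands and then apply \autoref{lemma:angles}.

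\textbf{Bounding the angles.} For $i,j \in \Gamma_u$, the angle between $Z_{G,u}^{(i)}(\vl)$ and $Z_{G,u}^{(j)}(\vl)$ equals $|\arg R^{(i,j)}_{G,u}(\vl)| = |\Im \ln R^{(i,j)}_{G,u}(\vl)|$. By item~\ref{induction-item:imag-part}, this is at most $d_u \eps_I + |\Im \ln(\lambda_i/\lambda_j)|$. We have $d_u \le \Delta$, and by assumption~\ref{assumption:lambda-argument-small} each $|\arg \lambda_i| \le \nu\eps_I/2$, so $|\arg(\lambda_i/\lambda_j)| \le \nu\eps_I$. Hence
\[
\alpha \le \Delta\eps_I + \nu\eps_I \le 2\Delta\eps_I = 2\cdot 10^{-4}\Delta^{-3},
\]
which is tiny and in particular lies in $[0,\pi/2)$.

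\textbf{Concluding.} \autoref{lemma:angles} gives
\[
|Z_G(\vl)| \ge \cos(\alpha/2)\sum_{i\in\Gamma_u}|Z_{G,u}^{(i)}(\vl)|.
\]
Since $q \ge 2\Delta > \Delta$, the set $\Gamma_u$ is nonempty, so the sum is at least $\min_{i\in\Gamma_u}|Z_{G,u}^{(i)}(\vl)|$. Also $\cos(\alpha/2) \ge 1-\alpha^2/8 \ge 0.99$. This proves the first inequality.

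The second inequality follows from item~\ref{induction-item:zero-freeness}, which bounds each $|Z_{G,u}^{(i)}(\vl)|$ from below by $0.99^{\ell-1}\lambda_{\min}^{n+\ell(\Delta-1)}/\lambda_{\max}^{\ell\Delta}$. Multiplying by $0.99$ gives $0.99^{\ell}\lambda_{\min}^{n+\ell(\Delta-1)}/\lambda_{\max}^{\ell\Delta}$, which is positive since $\lambda_{\min}>0$ under assumption~\ref{assumption:lambdas-nonzero}. The strict inequality stated in the corollary only needs the non-strict one, since the final expression is positive.

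All the substantive work lies in \autoref{lemma:induction}, which we take as given. The only step here that requires care is the angle computation, specifically checking that the imaginary-part bound from item~\ref{induction-item:imag-part} is small enough that $\cos(\alpha/2) \ge 0.99$.
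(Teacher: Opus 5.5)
Your proposal is correct and matches the paper's proof step for step. Like the paper, you write $Z_G(\vl)=\sum_{i\in\Gamma_u}Z_{G,u}^{(i)}(\vl)$, bound the pairwise angles by $(\Delta+\nu)\eps_I$ using item~\ref{induction-item:imag-part} of \autoref{lemma:induction} together with $|\arg\lambda_i|\le\nu\eps_I/2$, apply \autoref{lemma:angles} to extract the factor $0.99$, and finish with item~\ref{induction-item:zero-freeness}. Your remark on the strict middle inequality is also right. Item~\ref{induction-item:zero-freeness} only gives it non-strictly, and equality actually occurs, e.g.\ at $\vl=\vec 1$ with $\ell=1$. Only the final positivity is used afterwards.
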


\begin{proof}
    Observe that  $Z_{G,u}^{(i)}(\vec \lambda) = 0$ for $i \in B_u$. Hence, $Z_{G}(\vl) = \sum_{i \in \Gamma_u} Z_{G,u}^{(i)}(\vl)$. Since $\Im(\ln z) = \arg(z)$, from item~\ref{induction-item:imag-part} of \autoref{lemma:induction} we know that the angle between the terms is at most 
    \begin{align*}
        \left\vert \arg \left( R_{G,u}^{(i, j)}(\vec \lambda) \right) \right\vert \leq \Delta \varepsilon_I + \left \vert \Im \ln \frac{\lambda_i}{\lambda_j} \right \vert \leq (\Delta + \nu) \varepsilon_I
    \end{align*}
    where we also used assumption~\ref{assumption:lambda-argument-small} from \autoref{assumptions}.
    Using \autoref{lemma:angles} together with the fact that $(\Delta + \nu) \varepsilon_I/2 \leq \Delta \varepsilon_I \leq 10^{-4} \leq \arccos 0.99$, we have that
    $$\left|\sum_{i \in \Gamma_u} Z_{G,u}^{(i)}(\vl)\right| \geq \cos\big((\Delta + \nu) \eps_I /2 \big) \sum_{i \in \Gamma_u} \left\vert Z_{G,u}^{(i)}(\vl)\right\vert \geq 0.99 \min_{i \in \Gamma_u} \left|Z_{G,u}^{(i)}(\vl)\right|$$ 
    The result then follows by item~\ref{induction-item:zero-freeness} of \autoref{lemma:induction}. 
\end{proof}
The previous corollary states that the partition function is zero-free for any partially-colored graph. Then, our main theorem follows as the particular case in which the partial coloring is ``empty'':
\begin{proof}[Proof of \autoref{theorem:zero-freeness}]
  Let $G$ be a graph of maximum degree $\Delta \geq 1$, and let $q \geq 2 \Delta$. By definition, $Z_G(\vec \lambda) = Z_{G, \tau}(\vec \lambda)$, where $(G, \tau)$ is the partially-colored graph in which $\tau(v) = \ast$ for all $v \in V(G)$. Using \autoref{corollary:zero-freeness-partially-colored-graph}, $Z_{G, \tau}(\vec \lambda) \neq 0$, from which the desired conclusion follows.
\end{proof}

\autoref{corollary:lower-bound-norm-partition-function} follows analogously by taking $\ell = n$ in \autoref{corollary:zero-freeness-partially-colored-graph}, since all vertices are unpinned. Note that \autoref{corollary:zero-freeness-partially-colored-graph} is not only used now for the final zero-freeness conclusion, but also will be used at every step of the induction. 

\subsection{Consequences of the induction hypothesis}

The remainder of the section deals with the inductive proof of \autoref{lemma:induction}. First, we will proof a series of consequences of \autoref{lemma:induction}, which will be used to carry out the induction step. We will assume throughout that $G$ is a partially-colored graph of maximum degree $\Delta \geq 1$ for which \autoref{lemma:induction} holds, that $q \geq 2 \Delta$, that $u$ is an unpinned vertex of $G$ with $d_u$ unpinned neighbors, and that $\vec \lambda \in \mathbb C^q$ is valid, in the sense of \autoref{assumptions}.

\begin{lemma}\label{lemma:quotient-of-pseudoprobabilities-well-defined}
    For any $i \in \Gamma_u$,
    \begin{itemize}
        \item $\mathcal P_{G, \vec 1}[\sigma(u) = i] \in \mathbb R \setminus \{0\} $, and
        \item $\mathcal P_{G, \vec \lambda}[\sigma(u) = i] \in \mathbb C \setminus (-\infty, 0]$.
    \end{itemize}
\end{lemma}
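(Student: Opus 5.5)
The plan is to write the pseudo-probability as a ratio of restricted partition functions and then control this ratio through the marginal ratios $R^{(j,i)}_{G,u}$, whose arguments are bounded by the induction hypothesis. Since $Z_G(\vl)=\sum_{j\in\Gamma_u} Z^{(j)}_{G,u}(\vl)$ and $Z^{(i)}_{G,u}(\vl)\neq 0$ by item~\ref{induction-item:zero-freeness} of \autoref{lemma:induction}, we have
\[
\frac{1}{\mathcal P_{G,\vl}[\sigma(u)=i]}=\frac{Z_G(\vl)}{Z^{(i)}_{G,u}(\vl)}=\sum_{j\in\Gamma_u} R^{(j,i)}_{G,u}(\vl).
\]
This is well-defined because $Z_G(\vl)\neq 0$ by \autoref{corollary:zero-freeness-partially-colored-graph}.

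\textbf{Case $\vl=\vec 1$.} Here $Z^{(j)}_{G,u}(\vec 1)$ counts the proper colorings extending $\tau$ with $\sigma(u)=j$. For $j\in\Gamma_u$ and $q\ge 2\Delta\ge\Delta+1$, such a coloring exists (color greedily). Hence every term is a positive integer, and $\mathcal P_{G,\vec 1}[\sigma(u)=i]$ is a positive real, in particular a nonzero real.

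\textbf{General valid $\vl$.} Each term $R^{(j,i)}_{G,u}(\vl)$ with $j\in\Gamma_u$ lies in $\mathbb C\setminus(-\infty,0]$ by item~\ref{induction-item:log-ratio-well-defined}, so its argument equals $\Im\ln R^{(j,i)}_{G,u}(\vl)$. By item~\ref{induction-item:imag-part} together with assumption~\ref{assumption:lambda-argument-small}, this argument is bounded in absolute value by
\[
d_u\eps_I+\Big|\Im\ln\tfrac{\lambda_j}{\lambda_i}\Big|\le(\Delta+\nu)\eps_I,
\]
which is far below $\pi/2$. By \autoref{lemma:argument_of_sum}, the sum is nonzero and has argument at most $(\Delta+\nu)\eps_I$ in absolute value. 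Its reciprocal $\mathcal P_{G,\vl}[\sigma(u)=i]$ then has the same bound on its argument, up to sign. In particular it has positive real part, so it lies in $\mathbb C\setminus(-\infty,0]$.

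\textbf{Main obstacle.} The only delicate point is the bookkeeping: we must make sure the induction hypothesis is applied to $G$ itself, which it is by the standing assumption, and we must note the $j=i$ term, which equals $1$. No real difficulty is expected beyond this.
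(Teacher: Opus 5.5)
Your proposal is correct and matches the paper's proof essentially step for step. The $\vec 1$ case is handled by counting proper extensions. The general case writes $1/\mathcal P_{G,\vl}[\sigma(u)=i]=\sum_{j\in\Gamma_u}R^{(j,i)}_{G,u}(\vl)$, bounds each argument by $(d_u+\nu)\eps_I$ using item~\ref{induction-item:imag-part} and the assumptions on $\vl$, and then applies \autoref{lemma:argument_of_sum}.
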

\begin{proof}
    Observe that both pseudo-probabilities are well-defined due to \autoref{corollary:zero-freeness-partially-colored-graph} (which we know holds for $G$, since we are assuming \autoref{lemma:induction} does). Note that $\mathcal P_{G, \vec 1}[\sigma(u) = i] \in \mathbb R$ is the proportion of colorings in which $\sigma(u) = i$, which can't vanish for $i \in \Gamma_u$ and $q \geq \Delta + 1$.    

    For the second condition, first we see that $\mathcal P_{G, \vec \lambda}[\sigma(u) = i] \neq 0$, as a consequence of item~\ref{induction-item:zero-freeness} of \autoref{lemma:induction}, and then we note that we can rewrite
     \[ \frac{1}{P_{G, \vl}[\sigma(u) = i]} =  \sum_{j \in \Gamma_u} R^{(j, i)}_{G,u}(\vl), \]
     where we used that $\sum_{j \in \Gamma_u} \mathcal P_{G, \vec \lambda}[\sigma(u) = j] = 1$.

     By item~\ref{induction-item:imag-part} of \autoref{lemma:induction} and item~\ref{assumption:lambda-argument-small} of \autoref{assumptions}, $\left\vert\arg(R_{G, u}^{(j,i)}(\vec \lambda)) \right \vert \leq (d_u + \nu) \varepsilon_I$ for all $j \in \Gamma_u$. Hence, \autoref{lemma:argument_of_sum} gives us that
     $$
        \arg \left( \frac{1}{\mathcal P_{G, \vec \lambda}[\sigma(u) = i]} \right) \leq (d_u + \nu) \varepsilon_I \leq 2 \Delta \varepsilon_I \leq 0.0002 < \pi
     $$
     so $\mathcal P_{G, \vec \lambda}[\sigma(u) = i] \notin (-\infty, 0)$.
\end{proof}

\begin{lemma}\label{lemma:bounds-on-real-part-ratios}
    For any $i,j \in \Gamma_u$,
    $$
        e^{-(d_u + 0.05) \varepsilon_R} \leq \frac{\Re R_{G, u}^{(i,j)}(\vec \lambda)}{R_{G, u}^{(i,j)}(\vec 1)} \leq e^{(d_u + 0.05) \varepsilon_R}
    $$
\end{lemma}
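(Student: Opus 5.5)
Write $R := R_{G,u}^{(i,j)}(\vec\lambda)$ and $R_1 := R_{G,u}^{(i,j)}(\vec 1)>0$. The plan is to read off the modulus and argument of $R/R_1$ from items~\ref{induction-item:real-part} and~\ref{induction-item:imag-part} of \autoref{lemma:induction}. We then pass from these logarithmic bounds to a bound on the real part using \autoref{lemma:real_and_imag_bounds_to_exponential}.

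By item~\ref{induction-item:log-ratio-well-defined}, $\ln R$ is defined. Since $R_1$ is a positive real, we have $\ln(R/R_1) = \ln R - \ln R_1$, so $R/R_1 = e^{z}$ with $z = r + i\theta$, where
\[
r = \Re \ln R - \ln R_1, \qquad \theta = \Im \ln R .
\]
Item~\ref{induction-item:real-part} together with assumption~\ref{assumption:lambdas-close-together} of \autoref{assumptions} gives
\[
|r| \le d_u\varepsilon_R + \Delta\varepsilon_I =: r_0 .
\]
Item~\ref{induction-item:imag-part} together with assumption~\ref{assumption:lambda-argument-small} (which gives $|\arg(\lambda_i/\lambda_j)|\le \nu\varepsilon_I$) yields
\[
|\theta| \le (d_u+\nu)\varepsilon_I \le 2\Delta\varepsilon_I =: \theta_0 ,
\]
and $\theta_0 \le 2\cdot 10^{-4}\le \ln 2$. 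Dividing by $R_1$ commutes with taking real parts, so $\Re R / R_1 = \Re(e^z)$.

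\autoref{lemma:real_and_imag_bounds_to_exponential} then gives
\[
e^{-r_0-\theta_0^2} \le \frac{\Re R}{R_1} \le e^{r_0}.
\]
It remains to absorb the small terms into the $0.05\,\varepsilon_R$ slack. We need
\[
\Delta\varepsilon_I + 4\Delta^2\varepsilon_I^2 \le 0.05\,\varepsilon_R = 5\cdot 10^{-4}\Delta^{-2}.
\]
The left side equals $10^{-4}\Delta^{-3} + 4\cdot 10^{-8}\Delta^{-6}$, which is at most $1.0004\cdot 10^{-4}\Delta^{-2}$ for $\Delta\ge 1$, so the inequality holds with room to spare. This yields both inequalities of the statement.

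The only genuinely delicate point is the bookkeeping of constants, namely checking that $\Delta\varepsilon_I$ and $\theta_0^2$ are dominated by $0.05\,\varepsilon_R$; the computation above settles it. One should also make sure the principal logarithm is consistent, i.e.\ that $\ln(R/R_1)=\ln R-\ln R_1$. This holds because $R_1>0$, so dividing by it does not change the argument.
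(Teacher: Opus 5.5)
Your proposal is correct and follows the paper's proof essentially verbatim: the same $r_0 = d_u\varepsilon_R + \Delta\varepsilon_I$, the same application of \autoref{lemma:real_and_imag_bounds_to_exponential} to $z = \ln\bigl(R_{G,u}^{(i,j)}(\vec\lambda)/R_{G,u}^{(i,j)}(\vec 1)\bigr)$, and the same absorption of $\Delta\varepsilon_I + \theta_0^2$ into the $0.05\,\varepsilon_R$ slack. The only deviation is that you take the cruder $\theta_0 = 2\Delta\varepsilon_I$ in place of the paper's $(d_u+\nu)\varepsilon_I$, which is harmless.
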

\begin{proof}
    Let $r_0 := d_u \varepsilon_R + \Delta \varepsilon_I$ and $\theta_0 := (d_u + \nu) \varepsilon_I$. Consider $z := \ln \left(R_{G, u}^{(i,j)}(\vec \lambda) / R_{G, u}^{(i,j)}(\vec 1) \right)$. Items~\ref{induction-item:real-part} and~\ref{induction-item:imag-part} of \autoref{lemma:induction} imply that
    $$
        \left \vert \Im(z) \right \vert = \left \vert \Im\ln R_{G, u}^{(i,j)}(\vec \lambda) \right \vert \leq d_u \varepsilon_I + \left \vert \Im \ln \frac{\lambda_i}{\lambda_j} \right\vert \leq \theta_0,
    $$
    and
    $$
        \left\vert \Re z \right\vert = \left \vert \Re \ln R_{G, u}^{(i,j)}(\vec \lambda) - \ln R_{G, u}^{(i,j)}(\vec 1) \right \vert \leq d_u \varepsilon_R + \left \vert \Re \ln \frac{\lambda_i}{\lambda_j} \right \vert \leq r_0,
    $$
    where, in both cases, we have used the assumptions on $\vec \lambda$ from \autoref{assumptions} for the last inequality.

    Note that $\theta_0 \leq (\Delta + 1) \varepsilon_I \leq 2 \Delta \varepsilon_I \leq 0.0002$, so \autoref{lemma:real_and_imag_bounds_to_exponential} gives us that
    $$
        e^{-r_0 - \theta_0^2} \leq \frac{\Re R_{G, u}^{(i,j)}(\vec \lambda)}{R_{G, u}^{(i,j)}(\vec 1)} \leq e^{r_0}.
    $$
    It only remains to show that the exponents in both sides are smaller than $(d_u + 0.05) \varepsilon_R$, which follows from
    $$
        r_0 + \theta_0^2 \leq d_u \varepsilon_R + \Delta\varepsilon_I + (d_u + \nu)^2\varepsilon_I^2 \leq d_u \varepsilon_R + 5 \Delta^2 \varepsilon_I \leq (d_u + 0.05) \varepsilon_R.
    $$
\end{proof}

\begin{lemma}\label{lemma:approximation-pseudoprobabilities}[Approximation of pseudo-probabilities by real probabilities]
    For any $i \in \Gamma_u$,
    \[ \left\vert \Im\left(\ln  \frac{ \mathcal P_{G, \vl}[\sigma(u) = i]}{ \mathcal P_{G, \vec{1}}[\sigma(u) = i]}\right) \right\vert \leq (d_u + \nu) \eps_I  \]
    and 
    \[ \left\vert\Re \left(\ln  \frac{ \mathcal P_{G, \vl}[\sigma(u) = i]}{ \mathcal P_{G, \vec{1}}[\sigma(u) = i]} \right) \right\vert \leq (d_u + 0.1) \varepsilon_R. \]
\end{lemma}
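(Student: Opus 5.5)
Write $P_\lambda := \mathcal P_{G,\vl}[\sigma(u)=i]$ and $P_1 := \mathcal P_{G,\vec 1}[\sigma(u)=i]$. The starting point is the identity already used in the proof of \autoref{lemma:quotient-of-pseudoprobabilities-well-defined}:
\[
\frac{1}{P_\lambda}=\sum_{j\in\Gamma_u}R^{(j,i)}_{G,u}(\vl), \qquad \frac{1}{P_1}=\sum_{j\in\Gamma_u}R^{(j,i)}_{G,u}(\vec 1).
\]
So $P_\lambda/P_1 = S_1/S_\lambda$, where $S_\lambda$ and $S_1$ denote the two sums. Both logarithms make sense by \autoref{lemma:quotient-of-pseudoprobabilities-well-defined}. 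It therefore suffices to bound $\Im\ln S_\lambda$ and $\Re\ln S_\lambda-\ln S_1$, and then use $\ln(P_\lambda/P_1) = \ln S_1-\ln S_\lambda$. This holds with principal branches because $S_1>0$ and $S_\lambda$ has small argument.

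\emph{Imaginary part.} By item~\ref{induction-item:imag-part} of \autoref{lemma:induction} and assumption~\ref{assumption:lambda-argument-small}, each $R^{(j,i)}_{G,u}(\vl)$ has argument at most $(d_u+\nu)\eps_I<\pi/2$. The term $j=i$ equals $1$, so the bound holds for it as well. \autoref{lemma:argument_of_sum} then gives $|\arg S_\lambda|\le (d_u+\nu)\eps_I$. Since $\Im\ln(P_\lambda/P_1) = -\arg S_\lambda$, the first inequality follows.

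\emph{Real part.} Apply \autoref{lemma:bounds-on-real-part-ratios} termwise:
\[
e^{-(d_u+0.05)\eps_R}R^{(j,i)}(\vec 1)\le \Re R^{(j,i)}(\vl)\le e^{(d_u+0.05)\eps_R}R^{(j,i)}(\vec 1).
\]
The $j=i$ term is trivially fine. Summing over $j$ gives
\[
e^{-(d_u+0.05)\eps_R}S_1\le\Re S_\lambda\le e^{(d_u+0.05)\eps_R}S_1 .
\]
Next pass from $\Re S_\lambda$ to $\Re\ln S_\lambda=\ln|S_\lambda|$ using \autoref{lemma:reintroduce_logs}, which is applicable since $|\arg S_\lambda|\le\theta:=(d_u+\nu)\eps_I\le 0.1$. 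That lemma gives
\[
\ln\Re S_\lambda\le \Re\ln S_\lambda\le\ln\Re S_\lambda+\theta^2 .
\]
Hence $|\Re\ln S_\lambda-\ln S_1|\le (d_u+0.05)\eps_R+\theta^2$.

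\emph{Constants.} The only remaining step, and the one where care is needed, is to absorb $\theta^2$. We have $\theta^2\le(2\Delta\eps_I)^2=4\cdot 10^{-8}\Delta^{-6}$, which is far below $0.05\,\eps_R=5\cdot10^{-4}\Delta^{-2}$. This yields the bound $(d_u+0.1)\eps_R$. There is no real obstacle here. The only subtle points are that the $j=i$ term is the exact value $1$ and so satisfies all the termwise bounds, and that \autoref{lemma:bounds-on-real-part-ratios} is stated for $(i,j)$ but applies equally with the roles swapped to $(j,i)$.
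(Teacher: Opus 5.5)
Your proof is correct and is essentially the paper's argument. The paper writes $\mathcal P_{G,\vec 1}[\sigma(u)=i]/\mathcal P_{G,\vl}[\sigma(u)=i]$ as the convex combination $\sum_{j\in\Gamma_u}\mathcal P_{G,\vec 1}[\sigma(u)=j]\,R^{(j,i)}_{G,u}(\vl)/R^{(j,i)}_{G,u}(\vec 1)$, which is exactly your $S_\lambda/S_1$, and then uses \autoref{lemma:argument_of_sum}, \autoref{lemma:bounds-on-real-part-ratios} and \autoref{lemma:reintroduce_logs}, absorbing $\theta^2$ into $0.05\,\eps_R$ just as you do; whether one normalizes by $S_1$ or sums the unnormalized termwise bounds is only cosmetic.
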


\begin{proof}
    Note that the logarithm of the quotient of pseudo-probabilities is well-defined due to \autoref{lemma:quotient-of-pseudoprobabilities-well-defined}. Using the same trick as in the proof of that lemma, for any $j \in \Gamma_u$ we may write
    \begin{equation}
        \frac{P_{G, \vec{1}}[\sigma(u) = j]}{P_{G, \vl}[\sigma(u) = j]} =  \sum_{i \in \Gamma(u)} \mathcal P_{G, \vec 1}[\sigma(u) = j] R^{(i,j)}_{G,u}(\vec \lambda) = \sum_{i \in \Gamma_u} \mathcal P_{G, \vec 1}[\sigma(u) = i] \frac{R_{G, u}^{(i,j)}(\vec \lambda)}{R_{G, u}^{(i,j)}(\vec 1)} \label{eq:coefficients}
    \end{equation}

    Let $\xi_{i,j} = R_{G, u}^{(i,j)}(\vec \lambda) / R_{G, u}^{(i,j)}(\vec 1)$. Using \autoref{lemma:bounds-on-real-part-ratios}, item~\ref{induction-item:imag-part} of \autoref{lemma:induction} and the assumptions from \autoref{assumptions}, we can bound the real part and argument of each $\xi_{i,j}$:
    \begin{gather*}
        \left \vert \arg(\xi_{i,j}) \right \vert = \left \vert \Im \ln R_{G, u}^{(i,j)}(\vec \lambda) \right \vert \leq (d_u + \nu) \varepsilon_I , \quad \text{and } \\
    e^{-(d_u + 0.05)\eps_R} \leq \Re \xi_{i,j} \leq e^{(d_u + 0.05) \eps_R}
    \end{gather*}
 
    Notice that the expression at the right-hand-side of eq. \eqref{eq:coefficients} is a convex combination of the $\xi_{i_,j}$. Therefore, the bounds on $\Re \xi_{i,j}$ translate on bounds for the real part of the left-hand-side of eq. \eqref{eq:coefficients}, and so does the bound on the argument (due to \autoref{lemma:argument_of_sum}). All in all, we obtain the following:  
    \begin{gather}
        \left \vert \arg \left( \frac{P_{G, \vec{1}}[\sigma(u) = j]}{P_{G, \vl}[\sigma(u) = j]}\right) \right\vert \leq (d_u + \nu) \varepsilon_I, \quad \text{and } \label{eq:bound_argument_quotient_probs}\\
        e^{-(d_u + 0.05) \eps_R} \leq \Re \left( \frac{P_{G, \vec{1}}[\sigma(u) = j]}{P_{G, \vl}[\sigma(u) = j]} \right) \leq e^{(d_u + 0.05) \eps_R} \label{eq:bound_real_part_quotient_probs}
    \end{gather}

    Noticing that $\Im \ln z = \arg(z)$, eq. \eqref{eq:bound_argument_quotient_probs} is already the statement we would like to prove. In order to reintroduce the logarithm in eq. \eqref{eq:bound_real_part_quotient_probs}, we use \autoref{lemma:reintroduce_logs}. That gives us the upper-bound
    \begin{gather*}
        \Re \ln \left( \frac{P_{G, \vec{1}}[\sigma(u) = j]}{P_{G, \vl}[\sigma(u) = j]} \right) \leq (d_u + 0.05)\varepsilon_R + (d_u + \nu)^2\varepsilon_I^2 \leq (d_u + 0.1) \varepsilon_R
    \end{gather*}
    together with the lower-bound
    \begin{gather*}
        \Re \ln \left( \frac{P_{G, \vec{1}}[\sigma(u) = j]}{P_{G, \vl}[\sigma(u) = j]} \right) \geq -(d_u + 0.05) \varepsilon_R 
    \end{gather*}
    Combining the two, we obtain the claimed bound on the absolute value of the real part.
\end{proof}

For the next result, we need that no color is ``too probable'' in vertex $w_k$ of $G_{k}^{(i,j)}$. We quantify that through the following definition, analogous to the one from Liu, Sinclair and Srivastava \cite{liu2025correlation}:

\begin{definition}[Nice vertex]
    Let $H$ be a partially-colored graph. We say vertex $v \in V(H)$ is \textit{nice} if, for any color $c \in \Gamma_v$, we have that $\mathcal P_{H, \vec{1}}[\sigma(v) = c] \leq 1 / (d_v + 2)$, where $d_v$ is the number of unpinned neighbors of $v$.
\end{definition}

\begin{lemma}\label{lemma:vk_is_nice}
    Let $i, j \in \Gamma_u$ and $k \in [\deg_G(u)]$. Consider the graph $G_{k}^{(i,j)}$ from \autoref{definition:auxiliary_graph_for_telescoping_recurrence}, obtained from graph $G$ and unpinned vertex $u$. Let $w_1, \dots, w_k$ denote the neighbors of $u$ in $G$. Then, the vertex $w_k$ is nice in  $G_{k}^{(i,j)}$.
\end{lemma}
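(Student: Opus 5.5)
The plan is a direct conditioning argument at $\vec\lambda=\vec 1$, where $\mathcal P_{H,\vec 1}$ is the genuine uniform measure on $\mathcal C_H$ for $H:=G_k^{(i,j)}$. No induction hypothesis is needed, because niceness only involves real probabilities. Write $w:=w_k$. I treat the relevant case in which $w$ is unpinned in $H$. If $w$ were pinned, its marginal in $H$ is deterministic and that factor of the recurrence in \autoref{lemma: recurrence} is trivially $1$ or $0$, so it needs no niceness.

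\textbf{Step 1: count the neighbours of $w$ in $H$.} In $G$ the vertex $w$ is adjacent to $u$. Since $G$ is simple, when $u$ is split into copies $v_1,\dots,v_d$, only $v_k$ is attached to $w$, and $v_k$ is deleted in $G_k^{(i,j)}$. All other neighbours of $w$ are unchanged. Hence
$$\deg_H(w)=\deg_G(w)-1\le \Delta-1, \qquad d_w\le \deg_H(w)\le\Delta-1,$$
where $d_w$ is the number of unpinned neighbours of $w$ in $H$. In particular $d_w+2\le \Delta+1$.

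\textbf{Step 2: condition on the neighbourhood.} Fix any proper colouring $\eta$ of $V(H)\setminus\{w\}$ that agrees with the pinning and extends to some colouring in $\mathcal C_H$. Under the uniform measure, conditioned on $\sigma|_{V(H)\setminus\{w\}}=\eta$, the colour of $w$ is uniform over the set $A_\eta$ of colours not appearing on $N_H(w)$. This holds because every such colour yields a proper colouring of the same weight $1$. Therefore
$$|A_\eta|\ \ge\ q-\deg_H(w)\ \ge\ 2\Delta-(\Delta-1)=\Delta+1.$$
So for any colour $c$ (in particular any $c\in\Gamma_w$),
$$\mathcal P_{H,\vec 1}[\sigma(w)=c\mid \eta]\ \le\ \frac{1}{|A_\eta|}\ \le\ \frac1{\Delta+1}.$$

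\textbf{Step 3: average.} Averaging over $\eta$ gives
$$\mathcal P_{H,\vec 1}[\sigma(w)=c]\le \frac{1}{\Delta+1}\le \frac{1}{d_w+2}$$
by Step 1, which is exactly the definition of $w$ being nice. The only point requiring care, and the place where $q\ge 2\Delta$ is used sharply, is the degree drop in Step 1. Without deleting $v_k$ we would only get $1/\Delta$ against the requirement $1/(d_w+2)$ with $d_w$ up to $\Delta-1$, and the argument would fail. So I would state explicitly that the copy $v_k$ adjacent to $w_k$ is removed rather than pinned, and that no other copy $v_\ell$ is adjacent to $w_k$.
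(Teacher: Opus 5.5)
Your proof is correct and matches the paper's. The paper shows $\mathcal P_{G_k^{(i,j)},\vec 1}[\sigma(w_k)=c]\le 1/(q-d)$ with $d=\deg_{G_k^{(i,j)}}(w_k)\le\Delta-1$, using an explicit injection $\mathcal C^{(c)}\times[q-d]\to\mathcal C$ that recolors $w_k$ with the $\ell$-th free color. That injection is just the counting form of your conditioning on $\eta$, and the paper then concludes from $q\ge 2\Delta\ge d+d_{w_k}+2$ exactly as you do. Restricting explicitly to unpinned $w_k$ is a welcome precision: the paper's injection also tacitly assumes $w_k$ is unpinned, since it recolors $w_k$, and only that case is used in the induction.
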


\begin{proof}
    Let $d := \deg_{G_k^{(i,j)}}(w_k)$ and let $\tilde d$ be its number of unpinned neighbors. By definition of the graph, $w_k$ has one neighbor less in $G_k^{(i,j)}$ than it had in $G$, so $\tilde d \leq d \leq \Delta - 1$. Since $q \geq 2 \Delta \geq d + \tilde d + 2$, it suffices to show that $\mathcal P_{G_{k}^{(i,j)}, \vec 1}[\sigma(w_k) = c] \leq 1/(q - d)$ for all $c \in \Gamma_{w_k}$.

    Note that $Z_{G_k^{(i,j)}, w_k}^{(c)}(\vec 1)$ counts the number of proper colorings which are consistent with the partial coloring of $G_k^{(i,j)}$ and that assign $\sigma(w_k) = c$. Hence, we need to prove that the number of colorings in which $\sigma(w_k) = c$ is at most a $1/(q-d)$-fraction of the total. We will do that by showing there exists an injective function $f : \mathcal C^{(c)} \times [q-d] \longrightarrow \mathcal C$, where $\mathcal C$ is the set of all possible colorings of $G_{k}^{(i,j)}$ that agree with its partial coloring, and $\mathcal C^{(c)} \subset \mathcal C$ is the subset of those which assign $\sigma(w_k) = c$.

    Fix an order on the colors (for example, the natural one induced by $[q] \subset \mathbb Z$). For a given $\sigma \in \mathcal C^{(c)}$ and $\ell \in [q-d]$, we define $f(\sigma, \ell)$ as the coloring where $f(\sigma, \ell)(w_k)$ is the $\ell$-th element of $[q] \setminus \sigma(N(w_k))$, while $f(\sigma, \ell)(z) = \sigma(z)$ for all $z \neq w_k$. Note that this is well-defined, as $\left\vert \sigma(N(w_k)) \right \vert \leq q - d$. It just remains to show that this function is injective.

    Assume $\sigma, \tilde \sigma \in \mathcal C^{(c)}$ and $\ell, \tilde \ell \in [q-d]$ satisfy $f(\sigma, \ell) = f(\tilde \sigma, \tilde \ell)$. That means that $\sigma(z) = f(\sigma, \ell)(z) = f(\tilde \sigma, \tilde \ell)(z) = \tilde \sigma(z)$ for all $z \neq w_k$, while $\sigma(w_k) = c = \tilde\sigma(w_k)$, so $\sigma = \tilde \sigma$. Hence, we have the equality $\sigma(N(w_k)) = \tilde \sigma(N(w_k))$, so $f(\sigma, \ell)(w_k) = f(\tilde \sigma, \tilde \ell)(w_k)$ means that $\ell = \tilde \ell$.
\end{proof}

\begin{remark}
    Our niceness condition is equivalent to taking $w = 0$ in the one in \cite{liu2025correlation}. Note that there is nothing special about the vertex $w_k$ or the graph $G_k^{(i,j)}$: the same proof holds for any vertex with degree at most $\Delta -1$.
\end{remark}

For the next lemma, we adopt the notation $a^{(i)}_{G, u}(\vl) \defeq \ln(P_{G, \vl}[\sigma(u) = i])$ for $i \in \Gamma_u$, and define $f(x) := - \ln(1 - e^x)$. Recall that this function is the same as $f_1$ from \autoref{lemma: f}.

\begin{lemma} \label{lemma: contraction} 
    Assume $u$ is a nice vertex in $G$, and that $d_u \leq \Delta - 1$. Then, for any colors $i, j \in \Gamma_u$, there exists a real constant $C_{G, u, i} = C \in [0, \frac{1}{d_u + \nu}]$ so that 

    \begin{align}
        \Big| \Re \left( f(a_{G, u}^{(i)}(\vl)) -  f(a_{G, u}^{(i)}(\vec{1})) \right) - C\cdot \Re \left( a_{G, u}^{(i)}(\vl) - a_{G, u}^{(i)}(\vec{1}) \right)   \Big| & \leq \eps_I; \label{eq:contraction_real_part} \\
        \Big| \Im f(a_{G, u}^{(i)}(\vl)) -  \Im f(a_{G, u}^{(j)}(\vl))  \Big| & \leq \varepsilon_I; \label{eq:contraction_imag_part}
        \\
        \Big| \Im f(a_{G, u}^{(i)}(\vl)) \Big| & \leq \eps_I; \label{eq:contraction_imag_part_only_one_term}
    \end{align}
   
\end{lemma}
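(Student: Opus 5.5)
We apply the complex mean value theorem (\autoref{lemma: mean}) to $f(x)=-\ln(1-e^x)$ on a thin rectangular domain $D=\{z : \Re z<-\zeta,\ |\Im z|<\tau\}$ that contains both $a:=a^{(i)}_{G,u}(\vl)$ and $b:=a^{(i)}_{G,u}(\vec 1)$ (and $a^{(j)}_{G,u}(\vl)$). By \autoref{lemma: f}, $f$ satisfies the hypotheses of \autoref{lemma: mean} with $\rho_R=\frac{e^{-\zeta}}{1-e^{-\zeta}}$ and $\rho_I=\frac{\tau e^{-\zeta}}{(1-e^{-\zeta})^2}$. The whole proof therefore reduces to choosing $\zeta,\tau$ so that the points lie in $D$, $\rho_R\le \frac{1}{d_u+\nu}$, and the error terms are at most $\eps_I$.

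\textbf{Locating the points.} Since $u$ is nice, $b=\ln \mathcal P_{G,\vec 1}[\sigma(u)=i]\le -\ln(d_u+2)$, and $b$ is real. By \autoref{lemma:approximation-pseudoprobabilities},
\[
|\Re(a-b)|\le (d_u+0.1)\eps_R \qquad\text{and}\qquad |\Im a|\le (d_u+\nu)\eps_I .
\]
Hence $\Re a\le -\ln(d_u+2)+(d_u+0.1)\eps_R$, and the same bounds hold for $a^{(j)}_{G,u}(\vl)$. We take $\tau:=(d_u+\nu)\eps_I$ (enlarged slightly so that the inequality becomes strict) and $e^{-\zeta}:=\frac{e^{(d_u+0.1)\eps_R}}{d_u+2}$. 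Then all three points lie in $D$, and $\tau<1/2$ and $\tau^2+e^{-\zeta}<1$ hold trivially.

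\textbf{Bounding $\rho_R$ and $\rho_I$.} We need
\[
\rho_R=\frac{1}{(d_u+2)e^{-(d_u+0.1)\eps_R}-1}\le\frac{1}{d_u+\nu},
\]
that is, $(d_u+2)e^{-(d_u+0.1)\eps_R}\ge d_u+1+\nu=d_u+1.9$. Since $(d_u+0.1)\eps_R\le \Delta\cdot 10^{-2}\Delta^{-2}\le 10^{-2}/\Delta$, the left-hand side is at least $(d_u+2)(1-0.01/\Delta)\ge d_u+2-0.02$, which suffices. This step is where the slack $\nu=0.9<1$ is spent, and it is the only real check. For $\rho_I$, we have $\rho_I=\tau\rho_R(1+\rho_R)\le \tau\cdot\frac{1}{d_u+\nu}\cdot 2$. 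Hence $\rho_I\,|\Im(a-b)|=\rho_I|\Im a|\le 2\tau^2/(d_u+\nu)$, which is far below $\eps_I$ because $\tau\le 2\Delta\eps_I\ll 1$.

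\textbf{Concluding the three estimates.} Applying the first conclusion of \autoref{lemma: mean} with $z_1=a$, $z_2=b$ gives $C\in[0,\rho_R]\subseteq[0,\frac{1}{d_u+\nu}]$ together with the error bound $\rho_I|\Im(a-b)|\le\eps_I$, which is \eqref{eq:contraction_real_part}.

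For \eqref{eq:contraction_imag_part_only_one_term}, take $z_1=a$ and $z_2=b$. Since $b$ is real, $\Im z_1\cdot\Im z_2=0$, so the second conclusion gives $|\Im f(a)-\Im f(b)|\le\rho_R|\Im a|$. Now $\Im f(b)=0$ because $b<0$ real, so $|\Im f(a)|\le \frac{(d_u+\nu)\eps_I}{d_u+\nu}=\eps_I$.

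For \eqref{eq:contraction_imag_part}, apply the second conclusion to $z_1=a^{(i)}_{G,u}(\vl)$ and $z_2=a^{(j)}_{G,u}(\vl)$. If the two imaginary parts have the same sign, the bound is $\rho_R\max|\Im z_k|\le\eps_I$, and we are done. If they have opposite signs, the bound is $\rho_R|\Im(z_1-z_2)|$, which could be as large as $2\eps_I$. In that case we instead use the more precise estimate on the difference, namely
\[
|\arg(\mathcal P_{\vl}[\sigma(u)=i]/\mathcal P_{\vl}[\sigma(u)=j])|=|\Im\ln R^{(i,j)}_{G,u}(\vl)|,
\]
together with the fact that $\Im f$ has the same sign as $\Im z$. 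Opposite-sign values of $\Im f$ then differ by at most $\max$ of their absolute values plus nothing more: each lies in $[0,\eps_I]$ or $[-\eps_I,0]$ by the one-term bound, but opposite signs would only yield $2\eps_I$. So if this route falls short, the fallback is to shrink $\tau$ in the domain so that the one-term bound becomes $\eps_I/2$ with room to spare, for example by absorbing $\nu<1$ via $\rho_R\le\frac{1}{d_u+1.9-0.02}$. This case is the main obstacle I expect.
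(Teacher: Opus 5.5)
Your treatment of \eqref{eq:contraction_real_part} and \eqref{eq:contraction_imag_part_only_one_term} matches the paper's: same rectangle $D$, same two applications of \autoref{lemma: mean}, and the same check $\rho_R\le 1/(d_u+\nu)$. However, \eqref{eq:contraction_imag_part} is not actually proved. In the opposite-sign case, the second conclusion of \autoref{lemma: mean} gives the bound $\rho_R\,|\Im(a^{(i)}_{G,u}(\vl)-a^{(j)}_{G,u}(\vl))|$. You only control this difference by the triangle inequality, which gives $2(d_u+\nu)\eps_I$.

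The missing step is to bound the difference directly with the induction hypothesis. Both arguments are at most $(d_u+\nu)\eps_I$ in absolute value, so $a^{(i)}_{G,u}(\vl)-a^{(j)}_{G,u}(\vl)$ is the principal logarithm of $\mathcal P_{G,\vl}[\sigma(u)=i]/\mathcal P_{G,\vl}[\sigma(u)=j]=R^{(i,j)}_{G,u}(\vl)$. Item~\ref{induction-item:imag-part} of \autoref{lemma:induction}, together with $|\arg(\lambda_i/\lambda_j)|\le\nu\eps_I$ from \autoref{assumptions}, then gives $|\Im(a^{(i)}_{G,u}(\vl)-a^{(j)}_{G,u}(\vl))|\le (d_u+\nu)\eps_I$. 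Hence the bound is $\rho_R(d_u+\nu)\eps_I\le\eps_I$ in both sign cases. \autoref{lemma:induction} is assumed for $G$ throughout this subsection, and you already rely on it through \autoref{lemma:approximation-pseudoprobabilities}. You wrote down $|\Im\ln R^{(i,j)}_{G,u}(\vl)|$ but never used this bound on it, and the sign property of $\Im f$ alone cannot close the gap.

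Your proposed fallback does not work. The second conclusion of \autoref{lemma: mean} involves only $\rho_R=e^{-\zeta}/(1-e^{-\zeta})$, which does not depend on $\tau$. Moreover, $\tau$ cannot be taken below $|\Im a^{(i)}_{G,u}(\vl)|$ without the point leaving $D$, and that quantity may be as large as $(d_u+\nu)\eps_I$. Niceness only gives $\mathcal P_{G,\vec 1}[\sigma(u)=i]\le 1/(d_u+2)$, so $D$ must accommodate $\Re a$ up to about $-\ln(d_u+2)$. This forces $\zeta$ to be at most about $\ln(d_u+2)$, hence $\rho_R$ at least about $1/(d_u+1)$, and the one-term bound is then roughly $\frac{d_u+\nu}{d_u+1}\eps_I$, not $\eps_I/2$. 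The figure $1/(d_u+1.9-0.02)$ you suggest is an arithmetic slip: your own computation gives $\rho_R\le 1/(d_u+0.98)$, and even the slipped value would not yield $\eps_I/2$ once $d_u\ge 1$.

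A minor point: with $e^{-\zeta}=e^{(d_u+0.1)\eps_R}/(d_u+2)$ you only get $\Re a^{(i)}_{G,u}(\vl)\le-\zeta$, while $D$ is open. The paper uses $0.11$ in place of $0.1$ to make this inequality strict.
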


\begin{proof}
    We will show that we can define parameters $\zeta$ and $\tau$ satisfying the hypothesis from \autoref{lemma: f}, so that $a_{G, u}^{(i)}(\vec \lambda), a_{G, u}^{(i)}(\vec 1), a_{G, u}^{(j)}(\vec \lambda) \in D_{\zeta, \tau}$, where $D$ is the domain defined in \autoref{lemma: f}. Then, we will be able to apply \autoref{lemma: mean} to obtain the desired bounds.
    \begin{claim}\label{claim:define_parameters_zeta_tau}
        The parameters
        \begin{align*}
            \zeta &:= \ln(d_u + 2) - (d_u + 0.11) \varepsilon_R, \quad \text{and} \\
            \tau &:= (d_u + 0.91) \varepsilon_I
        \end{align*}
        satisfy the hypothesis from \autoref{lemma: f}.
    \end{claim}
    \begin{claimproof}
        It is clear that both $\zeta$ and $\tau$ are positive, since $\ln 2 > \Delta \varepsilon_R > (d_u + 0.11) \varepsilon_R$. Likewise, it is easy to see that $\tau = (d_u + 0.91) \varepsilon_I \leq \Delta \varepsilon_I < 1/2$. It just remains to show that $e^{-\zeta} + \tau^2 < 1$:
        $$
            e^{-\zeta} + \tau^2 \leq \frac{1}{d_u + 2}e^{\Delta \varepsilon_R} + \Delta^2 \varepsilon_I^2 \leq \frac{1}{2} e^{0.01} + 10^{-8} < 1
        $$
    \end{claimproof}

    \begin{claim}
        Let $D = \{z ~|~ \Re(z) \in (-\infty, -\zeta)\text{ and } |\Im(z)| < \tau\}$,
        where $\zeta$ and $\tau$ are defined as in \autoref{claim:define_parameters_zeta_tau}. Then, for any $i \in \Gamma_u$, we have that $a_{G, u}^{(i)}(\vec \lambda) \in D$ and $a_{G, u}^{(i)}(\vec 1) \in D$.
    \end{claim}
    \begin{claimproof}
        Note that we are assuming that $\vec \lambda$ is an arbitrary complex vector satisfying the assumptions from \autoref{assumptions}. These assumptions also hold for $\vec 1$, so it is enough to show the statement for $a_{G, u}^{(i)}(\vec \lambda)$.

        From the first part of \autoref{lemma:approximation-pseudoprobabilities},
        \begin{align*}
            \left \vert \Im \ln \mathcal P_{G, \vec \lambda}[\sigma(u) = i] \right \vert = \left \vert \Im \left( \ln \frac{\mathcal P_{G, \vec \lambda}[\sigma(u) = i]}{\mathcal P_{G, \vec 1}[\sigma(u) = i]} \right) \right \vert \leq (d_u + \nu) \varepsilon_I < \tau,
        \end{align*}
        where we have used that $\mathcal P_{G, \vec 1}[\sigma(u) = i] \in \mathbb R^+$.

        From the second part of \autoref{lemma:approximation-pseudoprobabilities}, together with the niceness of $u$, we obtain that
        \begin{align*}
            \Re \ln \mathcal P_{G, \vec \lambda}[\sigma(u) = i] &\leq \Re \ln \mathcal P_{G, \vec 1}[\sigma(u) = i] + \left \vert \Re \left( \ln \frac{\mathcal P_{G, \vec \lambda}[\sigma(u) = i]}{\mathcal P_{G, \vec 1}[\sigma(u) = i]} \right) \right\vert \\
            &\leq -\ln(d_u + 2) + (d_u + 0.1) \varepsilon_R \\
            &< -\zeta
        \end{align*}
        Hence, $a_{G, u}^{(i)}(\vec \lambda) \in D$.
    \end{claimproof}
    
    Next, we give some bounds for $\rho_R$ and $\rho_I$, as defined in \autoref{lemma: f}:
    \begin{claim}\label{claim:rho}
    Let $\rho_R := \frac{e^{-\zeta}}{1 - e^{-\zeta}}$ and $\rho_I := \frac{\tau \rho_R}{1 - e^{-\zeta}}$. Then, $\rho_R \leq \frac{1}{d_u + 0.91}$ and $\rho_I \leq 3 \eps_I$.
\end{claim}

\begin{claimproof}
    By definition of $\zeta$, we have
    \begin{align*}
        \rho_R &= \frac{1}{e^\zeta - 1} = \frac{1}{(d_u+2)e^{-(d_u + 0.11)\varepsilon_R} - 1} \leq \frac{1}{(d_u+2)(1 - (d_u + 0.11) \varepsilon_R) - 1} \\
        &= \frac{1}{d_u + 1 - (d_u + 2)(d_u + 0.1)\varepsilon_R} \leq \frac{1}{d_u + 1 - 2\Delta^2 \varepsilon_R} = \frac{1}{d_u + 1 - 0.02} \leq \frac{1}{d_u + 0.91}
    \end{align*}
    where we have used the basic inequality $e^{-x} \geq 1 - x$, together with $d_u + 1 \leq \Delta$.
    \\
    We use the bound on $\rho_R$ for bounding $\rho_I$:
    \begin{align*}
        \rho_I \leq \frac{(d_u + 0.91) \varepsilon_I}{d_u + 0.91} \frac{1}{1 - e^{-\zeta}} = \frac{\varepsilon_I}{1 - e^{-\zeta}}
    \end{align*}
    It just remains to show that $e^{-\zeta} \leq 2/3$, which is straightforward:
    \begin{align*}
        e^{-\zeta} = \frac{1}{d_u + 2} e^{(d_u+ 0.11) \varepsilon_R} \leq \frac{1}{2} e^{0.01} \leq \frac{2}{3}.
    \end{align*}
\end{claimproof}

From \autoref{lemma: f}, we know that the function $f$ and the domain $D$ satisfy the hypothesis of \autoref{lemma: mean}. That lemma tells us that there exists a constant $C \in [0, \rho_R]$ (possibly depending on $G, u, i$), such that 
\begin{align} \label{eq:application_of_mean_value_thm}
    \Big|\Re \left(f(a_{G,u}^{(i)}(\vl)) - f(a_{G,u}^{(i)}(\vec{1})) \right) - C \cdot \Re \left(a_{G,u}^{(i)}(\vl) - a_{G,u}^{(i)}(\vec{1}) \right) \Big| &\leq  \rho_I \cdot \left|\Im( a_{G,u}^{(i)}(\vl) - a_{G,u}^{(i)} (\vec 1) ) \right|
\end{align}
Using \autoref{lemma:approximation-pseudoprobabilities} and \autoref{claim:rho}, we can bound the right-hand side of eq.~\eqref{eq:application_of_mean_value_thm} with
\begin{equation*}
    \rho_I \cdot \left|\Im( a_{G,u}^{(i)}(\vl) - a_{G,u}^{(i)} (\vec 1) ) \right| \leq 3 \varepsilon_I (d_u + \nu) \varepsilon_I < \varepsilon_I,
\end{equation*}
which proves eq.~\eqref{eq:contraction_real_part}.

Note that $a_{G, u}^{(i)}(\vec \lambda)$, $a_{G, u}^{(j)}(\vec \lambda)$ and $a_{G, u}^{(i)}(\vec \lambda) - a_{G, u}^{(j)}(\vec \lambda)$ all satisfy $\left \vert \Im z \right \vert \leq (d_u + \nu) \varepsilon_I$, in the first two cases due to \autoref{lemma:approximation-pseudoprobabilities}, and in the third case due to item~\ref{induction-item:imag-part} of \autoref{lemma:induction} and \autoref{assumptions}. Therefore, applying the second part of \autoref{lemma: mean} to the pair $a_{G,u}^{(i)}(\lambda)$, $a_{G,u}^{(j)}(\lambda)$, we get eq.~\eqref{eq:contraction_imag_part}:
\begin{align*}
     |\Im(f(a_{G,u}^{i}(\lambda)) - f(a_{G,u}^{j}(\lambda)))| \leq \rho_R (d_u + \nu) \varepsilon_I \leq \varepsilon_I.
\end{align*}

Finally, applying the second part of \autoref{lemma: mean} to the pair $ a_{G,u}^{(i)}(\vec\lambda) , a_{G,u}^{(i)}(\vec 1)$ and using that $a_{G, u}^{(i)}(\vec 1) \in \mathbb R$, we get:
    \begin{align*}
         |\Im(f(a_{G,u}^{(i)}(\lambda)) - f(a_{G,u}^{(i)}(\vec{1})))| \leq \rho_R \cdot 
          |\Im(a_{G,u}^{(i)}(\lambda) )| \leq \varepsilon_I
    \end{align*}
    which once again follows from \autoref{claim:rho} and \autoref{lemma:approximation-pseudoprobabilities}. This proves eq.~\eqref{eq:contraction_imag_part_only_one_term} and finishes the proof of the lemma. 
\end{proof}

\begin{remark}
    Note that these bounds are slightly tighter than the ones in the analogous lemma from \cite{liu2025correlation}, since we do not need to approximate $f_{\kappa}$ with $f$. 
\end{remark}

\subsection{Proof of Lemma \ref{lemma:induction}} \label{section: induction proof}

\begin{proof}
We will prove \autoref{lemma:induction} by induction on the number of unpinned vertices. Let us first prove it for the base case in which $u$ is the only unpinned vertex:
\begin{claim}
    If $u$ is the only unpinned vertex of $G$, the induction hypothesis from \autoref{lemma:induction} holds.
\end{claim}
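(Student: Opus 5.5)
When $u$ is the only unpinned vertex, the partition function can be computed exactly, and I plan to check the six items one at a time from that formula. Every vertex other than $u$ is pinned, and pinned vertices have degree $1$. So every neighbor of $u$ is pinned. A pinned vertex not adjacent to $u$ has its single neighbor pinned as well, and that edge is properly colored by assumption. A coloring in $\mathcal C_G$ is therefore determined by $\sigma(u)$, which ranges over $\Gamma_u$. Writing $P := \prod_{v \neq u} \lambda_{\tau(v)}$ (a product of $n-1$ components of $\vec\lambda$), we get
\[
Z_{G,u}^{(i)}(\vec\lambda) = \lambda_i P \quad \text{for } i \in \Gamma_u, \qquad Z_{G,u}^{(i)}(\vec\lambda) = 0 \quad \text{for } i \in B_u .
\]

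For item~\ref{induction-item:zero-freeness}, with $\ell = 1$ we need $|\lambda_i P| \geq \lambda_{\min}^{n+\Delta-1}/\lambda_{\max}^{\Delta}$. Each of the $n$ factors of $\lambda_i P$ has modulus at least $\lambda_{\min}$, so $|\lambda_i P| \geq \lambda_{\min}^n$. Since $\lambda_{\min} \leq 1 \leq \lambda_{\max}$, we also have $\lambda_{\min}^{n} \geq \lambda_{\min}^{n+\Delta-1}\lambda_{\max}^{-\Delta}$. This value is strictly positive because assumption~\ref{assumption:lambdas-nonzero} gives $\lambda_c \neq 0$.

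The ratios follow immediately. For $i,j \in \Gamma_u$ we have $R^{(i,j)}_{G,u}(\vec\lambda) = \lambda_i/\lambda_j$, which is item~\ref{induction-item:ratio-all-pinned}. Item~\ref{induction-item:ratio-for-bad-colors} holds because $Z^{(i)}_{G,u} = 0$ for $i \in B_u$ while the denominator is nonzero. For item~\ref{induction-item:log-ratio-well-defined}, note that $|\arg(\lambda_i/\lambda_j)| \leq \nu\varepsilon_I < \pi$ by assumption~\ref{assumption:lambda-argument-small}, and $\lambda_i/\lambda_j \neq 0$, so $\lambda_i/\lambda_j \notin (-\infty,0]$.

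Items~\ref{induction-item:real-part} and~\ref{induction-item:imag-part} take $d_u = 0$. At $\vec 1$ we have $R^{(i,j)}_{G,u}(\vec 1) = 1$, so $\ln R^{(i,j)}_{G,u}(\vec 1) = 0$. The one step that needs care is checking that the principal branch satisfies $\ln(\lambda_i/\lambda_j) = \ln\lambda_i - \ln\lambda_j$. This holds because the right-hand side has imaginary part of absolute value at most $\nu\varepsilon_I < \pi$, so it is already the principal value. With that identity, both items reduce to equalities: $|\Re\ln R - 0| = |\Re\ln(\lambda_i/\lambda_j)|$ and $|\Im\ln R| = |\Im\ln(\lambda_i/\lambda_j)|$. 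I do not expect any real obstacle here.
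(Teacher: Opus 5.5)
Your proof is correct and follows the paper's own argument. When $u$ is the only unpinned vertex there is exactly one coloring per good color, so $Z^{(i)}_{G,u}(\vec\lambda)=\lambda_i\prod_{v\neq u}\lambda_{\tau(v)}$ and $R^{(i,j)}_{G,u}(\vec\lambda)=\lambda_i/\lambda_j$, and all six items follow directly from this. Your explicit comparison $\lambda_{\min}^{n}\ge\lambda_{\min}^{n+\Delta-1}\lambda_{\max}^{-\Delta}$ supplies a step the paper leaves implicit. The principal-branch check $\ln(\lambda_i/\lambda_j)=\ln\lambda_i-\ln\lambda_j$ is true but unnecessary: items~4 and~5 are stated in terms of $\ln(\lambda_i/\lambda_j)$ itself, which is exactly $\ln R^{(i,j)}_{G,u}(\vec\lambda)$.
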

\begin{claimproof}
    \begin{itemize}
        \item [(\ref{induction-item:zero-freeness})]
        Note that $Z_{G, u}^{(i)}(\vl)$ sums over the colorings which agree with the partial coloring on $G$ and in which $\sigma(u) = i$. Since $u$ is the only unpinned vertex, there is only one possible such coloring $\sigma$. Hence, we have that $\left \vert Z_{G, u}^{(i)}(\vl) \right\vert = \prod\limits_{v \in V} \left \vert \lambda_{\sigma(v)} \right \vert \geq \lambda_\text{min}^{n}$. Note that this can not vanish, as $\vec \lambda$ satisfies  assumption~\ref{assumption:lambdas-nonzero} from \autoref{assumptions}. 
        \item[(\ref{induction-item:ratio-all-pinned})]
        Since $Z_{G, u}^{i}(\vl) = \lambda_i \prod\limits_{v \in V\setminus u} \lambda_{\sigma(v)} $, then $R^{(i,j)}_{G,u}(\vl) = \frac{\lambda_i \prod_{v \in V\setminus u} \lambda_{\sigma(v)}}{\lambda_j \prod_{v \in V\setminus u} \lambda_\sigma(v)} = \frac{\lambda_i}{\lambda_j}$.
        \item[(\ref{induction-item:log-ratio-well-defined})]
        Using item~\ref{induction-item:ratio-all-pinned} together with assumption~\ref{assumption:lambda-argument-small} from \autoref{assumptions},
        $$
        \left \vert \arg(R_{G, u}^{(i,j)}(\vec \lambda)) \right\vert = \left\vert \arg(\lambda_i / \lambda_j) \right\vert \leq \nu \varepsilon_I, 
        $$
        which is much smaller than $\pi$, so $R_{G, u}^{(i,j)}(\vec \lambda) \notin (-\infty, 0)$.
        \item [(\ref{induction-item:real-part}), (\ref{induction-item:imag-part})]
        Follows directly from item~\ref{induction-item:ratio-all-pinned}:
        \begin{align*}
            \left \vert \Re (\ln R^{(i,j)}_{G,u}(\vec\lambda)) - \ln R^{(i,j)}_{G,u}(\vec 1) \right \vert &= \left \vert \Re \ln \frac{\lambda_i}{\lambda_j} \right \vert, \quad \text{and} \\
            \left \vert \Im \ln R^{(i,j)}_{G,u}(\vec \lambda) \right \vert &= \left \vert \Im \ln \frac{\lambda_i}{\lambda_j} \right \vert.
        \end{align*}
        \item [(\ref{induction-item:ratio-for-bad-colors})] If $i \notin \Gamma_u$, then by definition $Z_{G,u}^{(i)} = 0$, while if $j \in \Gamma_u$, then $Z_{G,u}^{(j)} \neq 0$ by item~\ref{induction-item:zero-freeness}. Hence, $R_{G, u}^{(i,j)}(\vec \lambda)$ is well-defined in this case and it takes value 0. 
    \end{itemize}    
\end{claimproof}

Let us now assume that $G$ has $\ell \geq 2$ unpinned vertices (of which $u$ is one) and that \autoref{lemma:induction} holds for any partially-colored graph with at most $ \ell - 1$ unpinned vertices.  
Items~\ref{induction-item:zero-freeness} and \ref{induction-item:ratio-all-pinned} are straightforward to prove. For item~\ref{induction-item:zero-freeness}, observe that when we pin $u$ to color $i$, we obtain a new graph $G'$ with partition function $Z_{G'}(\vl) = \lambda_i^{\deg (u) -1} Z_{G,u}^{(i)} (\vl)$. The graph $G'$ has one fewer unpinned vertex than $G$, so we can assume \autoref{lemma:induction} holds for it. On the other hand, note that the total number of vertices of $G'$ is at most $\Delta - 1$ more than in $G$. Hence, \autoref{corollary:zero-freeness-partially-colored-graph} tells us that $\left \vert Z_{G'}(\vec\lambda) \right\vert \geq 0.99^{\ell-1} \lambda_\text{min}^{n + \ell(\Delta - 1)} / \lambda_\text{max}^{(\ell-1)\Delta}$. By definition, $\left\vert \lambda_i \right\vert \leq \lambda_\text{max}$, so we conclude that $\left \vert Z_{G, u}^{(i)}(\vec \lambda) \right\vert \geq 0.99^{\ell-1} \lambda_\text{min}^{n + \ell(\Delta - 1)} / \lambda_\text{max}^{\ell \Delta} > 0$.
 
For item~\ref{induction-item:ratio-all-pinned}, note that if all neighbors of $u$ are pinned, then $Z_{G, u}^{(k)}(\vl) = \lambda_k Z_{G-u}(\vl) $ for any $k \in \{i, j\}$ (this is not true in general because an unpinned neighbor of $u$ will have the restriction of not being colored $i$ or $j$). Hence, $R^{(i,j)}_{G,u}(\vl) = \frac{\lambda_i Z_{G - u}(\vl)}{\lambda_j Z_{G - u}(\vl)} = \frac{\lambda_i}{\lambda_j}$.
 
For items~\ref{induction-item:log-ratio-well-defined} through \ref{induction-item:imag-part} we will use the recurrence relation from \autoref{lemma: recurrence}:
\begin{equation} \label{eq:rec_formula_ratios}
    R^{(i,j)}_{G,u}(\vl) = 
    \frac{\lambda_i}{\lambda_j} \prod_{k=1}^{\deg_G(u)} \frac{1 - P_{G_k^{(i,j)},\vl}[\sigma(w_k) = i ]}{1 - P_{G_k^{(i,j)},\vl}[\sigma(w_k) = j ]},
\end{equation} 
where $w_1, \dots, w_{\deg_G(u)}$ denote the neighbors of $u$ in $G$.

We may restrict this product only to the vertices $w_k$ that were unpinned in $G$. That's because if $w_k$ was pinned in $G$ to a certain color $c$, then $c \not\in \{i,j\}$ (recall we are taking $i, j \in \Gamma_u$), so $P_{G_k^{(i,j)},\vl}[\sigma(w_k) = i ] = P_{G_k^{(i,j)},\vl}[\sigma(w_k) = j ] = 0$. That means that the factor corresponding to $w_k$ does not alter the product. We will thus restrict this product to the unpinned neighbors of $u$, and rename them $w_1, \dots, w_{d_u}$. 

For item~\ref{induction-item:log-ratio-well-defined}, we will show a crude upper-bound on $\left\vert \arg R_{G, u}^{(i,j)}(\vec \lambda) \right\vert$ (note that we already know that $R_{G, u}^{(i,j)}(\vec \lambda) \neq 0$ due to item~\ref{induction-item:zero-freeness}). From eq. \eqref{eq:rec_formula_ratios}:
$$
    \left\vert \arg R_{G, u}^{(i,j)}(\vec \lambda) \right\vert \leq \left\vert \arg\frac{\lambda_i}{\lambda_j} \right\vert + \sum_{k = 1}^{d_u} \left\vert \arg(1 - \mathcal P_{G_k^{(i,j)}, \vec \lambda}[\sigma(w_k) = i]) \right\vert + \sum_{k = 1}^{d_u} \left \vert \arg(1 - \mathcal P_{G_k^{(i,j)}, \vec \lambda}[\sigma(w_k) = j]) \right \vert
$$
The first term can be bounded through assumption~\ref{assumption:lambda-argument-small} of \autoref{assumptions}. For the rest, notice that the term involving $ \mathcal P_{G_k^{(i,j)}, \vec \lambda}[\sigma(w_k) = i]$ either vanishes (if $i \notin \Gamma_{w_k}$) or can be bounded with eq. \eqref{eq:contraction_imag_part_only_one_term} from \autoref{lemma: contraction} (if $i \in \Gamma_{w_k}$). The same logic applies to the terms involving color $j$. Thus,
$$
    \left\vert \arg R_{G, u}^{(i,j)}(\vec \lambda) \right\vert \leq \nu\varepsilon_I + 2 d_u \varepsilon_I \leq 2 \Delta \varepsilon_I \leq 0.0002 < \pi,
$$
which finalizes the proof of item~\ref{induction-item:log-ratio-well-defined}. Note that we are able to apply \autoref{lemma: contraction} (and all the other lemmas which are a consequence of \autoref{lemma:induction}) due to the fact that $G_k^{(i,j)}$ has one unpinned vertex fewer than $G$, and hence is covered by the induction hypothesis. 
 
Let us now focus on items \ref{induction-item:real-part} and \ref{induction-item:imag-part}. For ease of notation, let $A(i)$ be the set of unpinned neighbors of $u$ for which $i$ is a good color, and let $B(i)$ be the set of unpinned neighbors of $u$ for which $i$ is a bad color. Let us also denote $G_k \defeq G_k^{(i,j)}$ for brevity.
 
Taking logarithms in equation \eqref{eq:rec_formula_ratios}, we obtain:
\begin{align} 
     -\ln( R^{(i,j)}_{G,u}(\vl)) = -\ln
    \frac{\lambda_i}{\lambda_j} &+  \sum_{w_k \in A(i)} f(a_{G_k,w_k}^{(i)}(\vec\lambda)) - \sum_{w_k \in A(j)} f(a_{G_k,w_k}^{(j)}(\vec\lambda)) \label{lemma: logarithms} \\
    =  -\ln \frac{\lambda_i}{\lambda_j} &+  \sum_{w_k \in A(i) \cap A(j)} \left( f(a_{G_k,w_k}^{(i)}(\vec\lambda)) - f(a_{G_k,w_k}^{(j)}(\vec\lambda)) \right) \nonumber \\
    &+ \sum_{w_k \in A(i) \cap B(j)} f(a_{G_k,w_k}^{(i)}(\vec\lambda)) - \sum_{w_k \in A(j) \cap B(i)} f(a_{G_k,w_k}^{(j)}(\vec\lambda)) \label{lemma: logarithms_grouped_disjointly}
\end{align}

Using equation \eqref{lemma: logarithms_grouped_disjointly} with both $\vl$ and $\vec{1}$, and taking into account that each $w_k$ can only appear in one of the 3 summands and there are at most $d_u$ of them, we get the following bound:

\begin{align}
    \left|\Re(\ln  R^{(i,j)}_{G,u}(\vl) -  \ln R^{(i,j)}_{G,u}(\vec{1}))\right| \leq \left\vert \Re \ln \frac{\lambda_i}{\lambda_j} \right\vert  + d_u \max \left\{  \max_{w_k \in A(i) \cap B(j)}\left\{ \left \vert \Re f(a_{G_k, w_k}^{(i)}(\vl)) - f(a_{G_k, w_k}^{(i)}(\vec{1})) \right\vert \right\}, \right. \nonumber
    \\ 
    \max_{w_k \in B(i) \cap A(j) } \left\{\left\vert \Re f(a_{G_k, w_k}^{(j)}(\vl)) - f(a_{G_k, w_k}^{(j)}(\vec{1})) \right\vert\right\},  \nonumber
    \\
    \left. \max_{v \in A(i) \cap A(j)}\left\{\left\vert \Re \left( f(a_{G_k, w_k}^{(i)}(\vl)) - f(a_{G_k, w_k}^{(i)}(\vec{1})) \right) - \Re \left( f(a_{G_k, w_k}^{(j)}(\vl)) - f(a_{G_k, w_k}^{(j)}(\vec{1})) \right) \right\vert \right\} \right\}
    \label{equation: three maxs}
\end{align}

Recall that $G_k$ has one pinned vertex fewer than $G$, so all the consequences of \autoref{lemma:induction} hold for it. In particular, we know that $w_k$ is nice in $G_k$, due to \autoref{lemma:vk_is_nice}. Also, due to the definition of $G_k$, $\deg_{G_k}(w_k) = \deg_G(w_k) - 1\leq \Delta - 1$. Thus, we can apply equation \eqref{eq:contraction_real_part} from \autoref{lemma: contraction} for the $w_k \in A(i) \cap B(j)$, and we get that 
\[ \left\vert \Re f(a_{G_k, w_k}^{(i)}(\vl)) - f(a_{G_k, w_k}^{(i)}(\vec{1})) \right\vert \leq \frac{1}{d_{w_k} + \nu }\left\vert\Re (a_{G_k, w_k}^{(i)}(\vl)) - a_{G_k, w_k}^{(i)}(\vec{1})  \right\vert + \eps_I \] 
where $d_{w_k}$ is the number of unpinned neighbors of $w_k$. Substituting \autoref{lemma:approximation-pseudoprobabilities}, we obtain:
\[ \left\vert \Re f(a_{G_k, w_k}^{(i)}(\vl)) - f(a_{G_k, w_k}^{(i)}(\vec{1})) \right\vert \leq \left( \frac{d_{w_k} + 0.1}{d_{w_k} + \nu } \right) \eps_R 
+ \eps_I \]

Note that $\varepsilon_I = \varepsilon_R / 100\Delta^2$, and $d_{w_k} + \nu \leq d_{w_k} + 1 \leq \Delta$, so $\varepsilon_I \leq 0.01 \varepsilon_R / (d_{w_k} + \nu)$. Therefore,

\begin{equation}\label{eq:proof_induction_first_and_second_terms}
\left \vert \Re f(a_{G_k, w_k}^{(i)}(\vl)) - f(a_{G_k, w_k}^{(i)}(\vec{1})) \right\vert \leq \left(\frac{d_{w_k} + 0.11}{d_{w_k} + \nu} \right) \varepsilon_R \leq \varepsilon_R
\end{equation}

An analogous bound holds for the $w_k \in A(j) \cap B(i)$.

The only remaining case is $w_k \in A(j) \cap A(i)$. Recall that for any $s \in \Gamma_{G_k, w_k}$, we can use item~\ref{eq:contraction_real_part} of \autoref{lemma: contraction} to show that there exists a $C_s \leq \frac{1}{d_{w_k} + \nu}$ such that

\begin{equation} \label{eq:proof_induction_contraction}
\Big| \Re \left( f(a_{G, w_k}^{(s)}(\vl)) -  f(a_{G, w_k}^{(s)}(\vec{1})) \right) - C_s \cdot \Re \left( a_{G, w_k}^{(s)}(\vl) - a_{G, w_k}^{(s)}(\vec{1}) \right) \Big|  \leq \eps_I \end{equation}

For $w_k \in A(i) \cap A(j)$, both $i, j \in \Gamma_{G_k, w_k}$, so we can apply \eqref{eq:proof_induction_contraction} twice to get that
\begin{align}
    &\left|\left(\Re f(a^{(i)}_{G_k, w_k}(\vl)) - \Re f(a^{(i)}_{G_k, w_k}(\vec{1})) \right) - \left( \Re f(a^{(j)}_{G_k, w_k}(\vl)) - \Re f(a^{(j)}_{G_k, w_k}(\vec{1})) \right) \right| \nonumber \\
     &\hspace{1.5cm} \leq \left|C_{i}\cdot \Re \left( a_{G, u}^{(i)}(\vl) - a_{G, u}^{(i)}(\vec{1}) \right)    - C_{j}\cdot \Re \left( a_{G, u}^{(j)}(\vl) - a_{G, u}^{(j)}(\vec{1}) \right) \right| + 2 \eps_I \nonumber
    \\
    &\hspace{1.5cm} = \left|C_{i} \Re \xi_{i} - C_{j} \Re \xi_{j} \right| +2 \eps_I \label{equation: xi} 
\end{align} 

where $\xi_i := a_{G, u}^{(i)}(\vl) - a_{G, u}^{(i)}(\vec{1})$.

If $\Re \xi_i$ and $\Re \xi_j$ have the same sign, then 
\begin{gather*}
    \left \vert C_i \Re \xi_i - C_j \Re \xi_j \right \vert \leq \left \vert C_{i_{\text{max}}} \Re \xi_{i_{\text{max}}} \right \vert 
\end{gather*}
where $i_\text{max} := \argmax\limits_{t \in \{i,j\}} \left \vert C_t \Re \xi_t \right \vert$. 

In that case, using \autoref{lemma:approximation-pseudoprobabilities}, we have that
\begin{equation} \label{eq:same_sign_xis}
    \left \vert C_i \Re \xi_i - C_j \Re \xi_j \right\vert \leq \frac{1}{d_{w_k} + \nu} (d_{w_k} + 0.1) \varepsilon_R
\end{equation}

If $\Re \xi_i$ and $\Re \xi_j$ have opposite signs, then
\begin{gather}\label{eq:proof_induction_opposite_signs}
    \left\vert C_i \Re \xi_i - C_j \Re \xi_j \right\vert \leq \frac{1}{d_{w_k} + \nu} \left \vert \Re \xi_i - \Re \xi_j \right\vert
\end{gather}
We can rewrite that as
\begin{align*}
    \left \vert \Re \xi_i - \Re \xi_j \right \vert &= \left \vert \Re \ln \left( \frac{P_{G_k, \vec{\lambda}}[\sigma(w_k) = i]}{P_{G_k, \vec{1}}[\sigma(w_k) = i]} \right) - \Re \ln \left( \frac{P_{G_k, \vec{\lambda}}[\sigma(w_k) = j]}{P_{G_k, \vec{1}}[\sigma(w_k) = j]} \right) \right \vert \\
    &= \left \vert \Re \ln \left( \frac{P_{G_k, \vec{\lambda}}[\sigma(w_k) = i]}{P_{G_k, \vec{\lambda}}[\sigma(w_k) = j]} \right) - \Re \ln \left( \frac{P_{G_k, \vec{1}}[\sigma(w_k) = i]}{P_{G_k, \vec{1}}[\sigma(w_k) = j]} \right) \right \vert \\
    &= \left \vert \Re \ln R_{G_k, w_k}^{i,j}(\vec{\lambda}) - \Re \ln R_{G_k, w_k}^{i,j}(\vec{1}) \right \vert \\
    &\leq d_u \varepsilon_R + \Delta \varepsilon_I
\end{align*}

In the last line we have used item \ref{induction-item:real-part} of lemma \ref{lemma:induction} together with assumption \ref{assumption:lambdas-close-together} from definition \ref{assumptions}. This crucially relies on the fact that $G_k$ has one less unpinned vertex than $G$, so we can assume that lemma \ref{lemma:induction} holds for it. We also needed that $i,j \in \Gamma_{w_k}$, but that is a direct consequence of $w_k \in A(i) \cap A(j)$.

Using that $\varepsilon_I = \varepsilon_R/100\Delta^2$, we end up with
\begin{align*}
    \left \vert \Re \xi_i - \Re \xi_j \right \vert \leq \left( d_{w_k} + \frac{1}{100 \Delta} \right) \varepsilon_R
\end{align*}

Plugging this into equation \eqref{eq:proof_induction_opposite_signs}, we see that the bound from equation \eqref{eq:same_sign_xis} also holds in this case. Hence, regardless of whether $\Re \xi_i$ and $\Re \xi_j$ have the same or opposite signs, \begin{equation} \label{eq:proof_induction_bound_diff_real_parts_xis}
    \left \vert C_i \Re \xi_i - C_j \Re \xi_j \right\vert \leq \frac{1}{d_{w_k} + \nu} (d_{w_k} + 0.1) \varepsilon_R \leq \varepsilon_R
\end{equation}
Together with equations \eqref{eq:proof_induction_first_and_second_terms} and \eqref{equation: three maxs}, this finalizes the proof of item \ref{induction-item:real-part}.

For item \ref{induction-item:imag-part}, we look at the imaginary part of equation \eqref{lemma: logarithms_grouped_disjointly}:
\begin{align}
    \left|\Im(\ln  R^{(i,j)}_{G,u}(\vec\lambda))\right| 
    \leq \left|\Im \ln \frac{\lambda_i}{\lambda_j}\right| 
    + d_u\max \Big\{ &\max_{w_k \in A(i) \cap B(j)}\{| \Im f(a_{G_k, w_k}^{(i)}(\vl)) |\}, \nonumber \\
    &\max_{w_k \in B(i) \cap A(j) } | \Im f(a_{G_k, w_k}^{(j)}(\vl)) | ,  \nonumber \\
    & \max_{w_k \in A(i) \cap A(j)}\{| \Im f(a_{G_k, w_k}^{(i)}(\vl)) - \Im f(a_{G_k, w_k}^{(j)}(\vl)) |\}
      \Big\} \label{equation: proof of 4}
\end{align}

Using items~\ref{eq:contraction_imag_part} and \ref{eq:contraction_imag_part_only_one_term} of \autoref{lemma: contraction}, we can upper-bound each of the three terms of the maximum by $\varepsilon_I$. Note that the lemma requires the color to be good for the vertex in question, and indeed $i$ is good for the $w_k$ from the first term (as $w_k \in A(i)$), $j$ is good for the $w_k$ from the second term (as $w_k \in A(j)$), and both $i$ and $j$ are good for the $w_k$ from the third term (as $w_k \in A(i) \cap A(j)$).  
 
Substituting into eq. \eqref{equation: proof of 4}, we get that  
\[  \left|\Im(\ln  R^{(i,j)}_{G,u}(\vec \lambda))\right| 
    \leq 
    \left| \Im  \ln \frac{\lambda_i}{\lambda_j}\right| + d_u \eps_I,\]
which proves item~\ref{induction-item:imag-part}.
 
Lastly, note that item~\ref{induction-item:ratio-for-bad-colors} follows from item~\ref{induction-item:zero-freeness}, as this implies that  $Z_{G,u}^{(j)}(\vec \lambda) \neq 0$, while $Z_{G,u}^{(i)}(\vec \lambda) = 0$ by definition, since $i \notin \Gamma_u$. Hence, $R^{(i,j)}_{G,u}(\vl) = 0$ in that case. 
\end{proof}

\section{Local Central Limit Theorem}\label{section: LCLT}
In this section we prove a Local Central Limit Theorem (LCLT) for the color class sizes in our coloring model. 
The proof is structured in the following way. In Subsection \ref{subsec: moments as derivatives} we show the relation between expectation, covariances and derivatives of $\log Z$, which will be useful throughout the section. In Subsection \ref{subsec: determinant} we show the asymptotics of the eigenvalues and the determinant of the covariance matrix, which is helpful in both applying the LCLT later for sampling and in proving it. In Subsection \ref{subsection: cluster of char} we approximate the logarithm of the characteristic function $\phi \defeq \E[e^{i\langle \vec t, \vec X\rangle}]$ by its first two cumulants. Note that this can only be done in a small range of $ \vec t$ (since we need to use zero-freeness region to guarantee that Taylor series of $\log \phi$  converge). To handle larger values of $\vec t$, in Subsection \ref{subsection: upperbound char} we give an upperbound on the characteristic function. We combine both results to get LCLT in Subsection \ref{subsection: lclt together}. 
 
\textbf{Notation for this section}: 
Let $R$ denote the zero-freeness radius from \autoref{corollary:zero-freeness-radius}. Let $\mathcal D_R := \{\vec \lambda \in \mathbb C^q \, : \, \|\vec \lambda - \vec 1 \|_\infty < R\}$. Recall from \autoref{remark:existence-branch-for-logZ} that we can define an analytic $\log Z(\vec \lambda)$ in $\mathcal D_R$ by taking an appropriate branch of the logarithm. We will denote it as $F(\vec \lambda) := \log Z(\vec \lambda)$.

For a fixed $\vec \lambda \in \mathbb R^q \cap \mathcal D_R$, we define $(X_1, \dots, X_q)$ to be the random vector of color class sizes of a $\sigma \sim \mu_{G, \vl}$ (the Gibbs distribution for the coloring model). Note that these satisfy the linear relation $\sum_{i = 1}^q X_i = n$, so its covariance matrix is not full-rank. We will thus work with the random vector $\vec X  = (X_1, \dots, X_{q-1})$, in which we have dropped the last coordinate, in a slight abuse of notation. We will denote its expected value and covariance matrix as $\mu := \E[\vec X]$ and $\Sigma := \cov(\vec X)$.

First, we will show that $\vec X$ does have a full-rank covariance matrix, and give asymptotics for the eigenvalues and determinant of $\Sigma$.

\detasymp*

This helps us establish the main result of the section:

\lclt*

\subsection{Moments as Derivatives}\label{subsec: moments as derivatives}
First, we will introduce a preliminary lemma that will show the connection between the expected value and the partition function. 
\begin{lemma}\label{lemma:expected_values_are_derivatives}
    Let $\vec \lambda \in \mathbb R^q \cap \mathcal D_R$. Then, for all $i\in [q]$ 
    \[ \lambda_i \frac{\partial \log(Z(\vl))}{\partial \lambda_i} =  \E_{\vl}[X_i]. \]
\end{lemma}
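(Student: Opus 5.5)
The plan is to differentiate the partition function term by term, since $Z(\vl)$ is a polynomial in $\lambda_1,\dots,\lambda_q$. For a proper coloring $\sigma$, write $X_i(\sigma) := |\sigma^{-1}(i)|$. Then
\[
Z(\vl) = \sum_{\sigma \text{ proper}} \prod_{c=1}^q \lambda_c^{X_c(\sigma)}, \qquad \lambda_i \frac{\partial}{\partial \lambda_i} \prod_{c=1}^q \lambda_c^{X_c(\sigma)} = X_i(\sigma) \prod_{c=1}^q \lambda_c^{X_c(\sigma)}.
\]
This identity holds as an identity of polynomials, including when $X_i(\sigma)=0$. Summing over $\sigma$ gives $\lambda_i \partial_{\lambda_i} Z(\vl) = \sum_\sigma X_i(\sigma)\prod_c \lambda_c^{X_c(\sigma)}$.

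Next, pass from $Z$ to $\log Z$. By \autoref{corollary:zero-freeness-radius}, $Z$ does not vanish on $\mathcal D_R$, so $F=\log Z$ is analytic there (by \autoref{remark:existence-branch-for-logZ}). For any branch, $e^{F}=Z$ gives $\partial_{\lambda_i} F = \partial_{\lambda_i} Z / Z$. Therefore
\[
\lambda_i \frac{\partial F}{\partial \lambda_i}(\vl) = \sum_\sigma X_i(\sigma) \frac{\prod_c \lambda_c^{X_c(\sigma)}}{Z(\vl)}.
\]

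It remains to identify the right-hand side as $\E_{\vl}[X_i]$. For $\vl \in \R^q \cap \mathcal D_R$ we have $\lambda_c \ge 1-R > 0$, so the Gibbs measure $\P_{G,\vl}$ of \autoref{def:color_weighted_potts_model} is a genuine probability measure. Its weights are exactly $\prod_c \lambda_c^{X_c(\sigma)}/Z(\vl)$, so the sum above is $\E_{\vl}[X_i]$. On this real region one can also simply use the real logarithm, which agrees with $F$ up to an additive constant and so has the same derivative.

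There is no real obstacle here. The only point needing care is that the derivative of $\log Z$ is independent of the branch chosen, which follows because any two branches differ by a constant $2\pi i k$ on a connected domain.
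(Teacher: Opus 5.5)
Your proposal is correct and follows essentially the same route as the paper: differentiate $Z$ term by term, divide by $Z$, multiply by $\lambda_i$, and recognize the Gibbs weights. Your extra remarks, that the derivative does not depend on the branch of $\log Z$ and that $\lambda_c>0$ on $\mathbb R^q\cap\mathcal D_R$ so the weights form a genuine probability measure, are valid points the paper leaves implicit.
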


\begin{proof}
    Note that \[ \frac{\partial\log(Z(\vl))}{\partial \lambda_i} = \frac{ \sum_{\substack{\sigma \in [q]^V \\ \sigma \text{ proper}}} |\sigma^{-1}(i)| \lambda_i^{|\sigma^{-1}(i)|-1}  \prod_{j = 1; j \not = i}^q \lambda_j^{|\sigma^{-1}(j)|} }{Z(\vl)},  \]
    hence,
    \[ \lambda_i \frac{\partial\log(Z(\vl))}{\partial \lambda_i} = \frac{ \sum_{\substack{\sigma \in [q]^V \\ \sigma \text{ proper}}} |\sigma^{-1}(i)| \prod_{j = 1}^q \lambda_j^{|\sigma^{-1}(j)|} }{Z(\vl)} = \sum_{\sigma} |\sigma^{-1}(i)|  \P[\sigma] = \E_{\vl}[X_i], \]
    where $\P[\sigma]$ is the probability of a coloring $\sigma$ to be sampled, and we will sometimes use the subscript for $\E_{\vl}[X_i]$ to indicate the probability distribution depends on $\vl$. 
\end{proof}

\begin{lemma}\label{lemma:covariance_is_double_derivative}
Let $\vec \lambda \in \mathbb R^q \cap \mathcal D_R$. Then, for all
$i,j\in[q]$, we have
\[
\Cov_{\vec{\lambda}}(X_i,X_j)
=
\lambda_i \lambda_j
\frac{\partial^2}{\partial \lambda_i\,\partial \lambda_j}
\log Z(\vec{\lambda})
+
\delta_{ij}\,
\lambda_i
\frac{\partial}{\partial \lambda_i}
\log Z(\vec{\lambda}),
\]
where $\delta_{ij}$ denotes the Kronecker delta.
\end{lemma}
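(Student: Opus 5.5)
The plan is to differentiate the identity of \autoref{lemma:expected_values_are_derivatives} once more, this time in $\lambda_j$. Throughout, $\vec \lambda \in \mathbb R^q \cap \mathcal D_R$ has positive real coordinates. On this set $Z(\vec\lambda)>0$ and $\log Z$ is the ordinary real logarithm, which agrees with the analytic branch $F$ up to an additive constant. Derivatives are therefore unambiguous, and every manipulation below is a finite-sum computation with a polynomial.

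First I would write $\E_{\vl}[X_i] = Z(\vl)^{-1}\sum_{\sigma} |\sigma^{-1}(i)| \prod_{k} \lambda_k^{|\sigma^{-1}(k)|}$, the sum running over proper colorings. Then I differentiate both sides of
\[ \lambda_i \frac{\partial}{\partial \lambda_i}\log Z(\vl) = \E_{\vl}[X_i] \]
with respect to $\lambda_j$.

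\emph{Left-hand side.} By the product rule this gives $\lambda_i\,\partial_i\partial_j \log Z + \delta_{ij}\,\partial_i \log Z$. I multiply by $\lambda_j$. In the diagonal term I replace $\lambda_j$ by $\lambda_i$, which is valid since $\delta_{ij}$ forces $i=j$. The result is exactly the right-hand side of the claimed formula.

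\emph{Right-hand side.} Write $w(\sigma)=\prod_k \lambda_k^{|\sigma^{-1}(k)|}$, so that $\lambda_j \partial_j w(\sigma) = |\sigma^{-1}(j)|\, w(\sigma)$. By the quotient rule,
\[ \lambda_j \partial_j \E[X_i] = \frac{\sum_\sigma |\sigma^{-1}(i)||\sigma^{-1}(j)| w(\sigma)}{Z} - \frac{\sum_\sigma |\sigma^{-1}(i)| w(\sigma)}{Z}\cdot\frac{\lambda_j\partial_j Z}{Z}. \]
Using \autoref{lemma:expected_values_are_derivatives} again for the last factor, this equals $\E[X_iX_j]-\E[X_i]\E[X_j]$, which is $\Cov(X_i,X_j)$.

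Equating the two sides gives the statement. The only point needing care is bookkeeping of the $\delta_{ij}$ term produced by differentiating the prefactor $\lambda_i$. I do not expect any real obstacle: the computation is purely algebraic, and analyticity plays no role at real positive $\vec\lambda$.
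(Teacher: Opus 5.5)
Your proposal is correct and follows the same route as the paper. You differentiate $\E_{\vl}[X_i]=\lambda_i\partial_i\log Z(\vl)$ in $\lambda_j$, apply the product rule on one side, and on the other side identify $\lambda_j\partial_j\E_{\vl}[X_i]$ with $\E[X_iX_j]-\E[X_i]\E[X_j]$. The paper does this last step by differentiating each Gibbs weight $\P_{\vl}(\sigma)$ and summing, while you apply the quotient rule to the ratio directly, which is the same computation.
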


\begin{proof}
By \autoref{lemma:expected_values_are_derivatives},
\[
\E_{\vec{\lambda}}[X_i]
=
\lambda_i
\frac{\partial}{\partial \lambda_i}
\log Z(\vec{\lambda}).
\]
Differentiating both sides with respect to $\lambda_j$ gives
\begin{equation}\label{eq:diff_expectation}
\frac{\partial}{\partial \lambda_j}
\E_{\vec{\lambda}}[X_i]
=
\delta_{ij}
\frac{\partial}{\partial \lambda_i}
\log Z(\vec{\lambda})
+
\lambda_i
\frac{\partial^2}{\partial \lambda_i\,\partial \lambda_j}
\log Z(\vec{\lambda})
\end{equation}

On the other hand,
\[
\E_{\vec{\lambda}}[X_i]
=
\sum_{\sigma}
X_i(\sigma)\,
\P_{\vec{\lambda}}(\sigma),
\qquad
\P_{\vec{\lambda}}(\sigma)
=
\frac{1}{Z(\vec{\lambda})}
\prod_{k=1}^q \lambda_k^{X_k(\sigma)}.
\]

Then,
\[
\frac{\partial}{\partial \lambda_j}\P_{\vec{\lambda}}(\sigma)
=
\frac{1}{Z(\vec{\lambda})}
\frac{\partial}{\partial \lambda_j}
\left(
\prod_{k=1}^q \lambda_k^{X_k(\sigma)}
\right)
-
\frac{1}{Z(\vec{\lambda})^2}
\frac{\partial Z(\vec{\lambda})}{\partial \lambda_j}
\prod_{k=1}^q \lambda_k^{X_k(\sigma)}.
\]
Since
\[
\frac{\partial}{\partial \lambda_j}
\left(
\prod_{k=1}^q \lambda_k^{X_k(\sigma)}
\right)
=
\frac{X_j(\sigma)}{\lambda_j}
\prod_{k=1}^q \lambda_k^{X_k(\sigma)},
\]
and
\[
\frac{\partial Z(\vec{\lambda})}{\partial \lambda_j}
=
\frac{1}{\lambda_j}
\sum_{\sigma'}
X_j(\sigma')
\prod_{k=1}^q \lambda_k^{X_k(\sigma')},
\]
we obtain
\[
\frac{\partial}{\partial \lambda_j}
\P_{\vec{\lambda}}(\sigma)
=
\frac{1}{\lambda_j}
\bigl(
X_j(\sigma)
-
\E_{\vec{\lambda}}[X_j]
\bigr)
\P_{\vec{\lambda}}(\sigma).
\]

Therefore,
\[\frac{\partial}{\partial \lambda_j}
\E_{\vec{\lambda}}[X_i]
=
\sum_{\sigma} X_i(\sigma)\frac{\partial}{\partial \lambda_j} \P_{\vec{\lambda}}(\sigma)
=
\frac{1}{\lambda_j}
\left(\E_{\vec{\lambda}}[X_i X_j]
-
\E_{\vec{\lambda}}[X_i]\E_{\vec{\lambda}}[X_j]\right)
=
\frac{1}{\lambda_j}\Cov_{\vec{\lambda}}(X_i,X_j).\]

Multiplying by $\lambda_j$ and comparing with
\eqref{eq:diff_expectation}, we conclude that
\[\Cov_{\vec{\lambda}}(X_i,X_j)
=
\lambda_i \lambda_j
\frac{\partial^2}{\partial \lambda_i\,\partial \lambda_j}
\log Z(\vec{\lambda})
+\delta_{ij}\,
\lambda_i \frac{\partial}{\partial \lambda_i}
\log Z(\vec{\lambda}). \]
\end{proof}

\subsection{Determinant Asymptotics}\label{subsec: determinant}
In this subsection we will prove \autoref{det asymp}. 
 
First, we derive the lower bound on the variance of $Y = \sum_{i=1}^{q-1} c_i X_i$. The key idea is to use the law of total variance, conditioning on the colors of a subset of vertices that effectively ``separate'' the rest of the graph into a linear number of connected components, in a similar way as it was done in \cite{jain2022approximate}. It requires the following simple lemma: 

\begin{lemma}[(Separation lemma, (Lemma 3.4 from \cite{jain2022approximate})]\label{lemma: separation}
    In a graph $G$ with maximum degree $\Delta \geq 1$ and $n$ vertices there exists a set $S$ of size $\Omega(n/\Delta^3)$ such that every pair of vertices in it is at distance at least 4. 
\end{lemma}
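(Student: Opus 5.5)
We will prove the statement with a greedy construction, which gives an explicit constant: $|S| \geq n/(1+\Delta+\Delta^2+\Delta^3)$. We build $S$ iteratively. Start with $S=\emptyset$ and with all vertices of $G$ marked as available. While some vertex $v$ is still available, add $v$ to $S$. Then mark as unavailable every vertex within distance at most $3$ of $v$; this includes $v$ itself.

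\textbf{Correctness.} When a vertex $v$ is added to $S$, no previously chosen vertex $s \in S$ lies within distance $3$ of $v$, since otherwise $v$ would have been marked unavailable when $s$ was chosen. After $v$ is added, every vertex within distance $3$ of $v$ becomes unavailable, so no later choice can lie within distance $3$ of $v$. Hence every pair of vertices of $S$ is at distance at least $4$. Here vertices in distinct connected components count as being at infinite distance, so the condition holds trivially for them.

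\textbf{Size bound.} Each step removes from availability at most
\[
|B_3(v)| \le 1+\Delta+\Delta(\Delta-1)+\Delta(\Delta-1)^2 \le 1+\Delta+\Delta^2+\Delta^3
\]
vertices, where $B_3(v)$ is the closed ball of radius $3$ around $v$. The count uses the maximum degree bound: at most $\Delta$ vertices at distance $1$, and each vertex at distance $k\ge 1$ contributes at most $\Delta-1$ new neighbours at distance $k+1$. The process stops only when no vertex is available, so all $n$ vertices have been removed. Therefore
\[
|S| \ge \frac{n}{1+\Delta+\Delta^2+\Delta^3} \ge \frac{n}{4\Delta^3} = \Omega(n/\Delta^3),
\]
where the last step uses $\Delta\ge 1$.

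There is no real obstacle here. The only care needed is the bound on the radius-$3$ ball and the observation that the argument does not require $G$ to be connected.

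An equivalent formulation, which may be convenient later in the section, is the following. Let $G^3$ be the graph on $V(G)$ in which two vertices are adjacent whenever their distance in $G$ is between $1$ and $3$. Its maximum degree is at most $\Delta+\Delta^2+\Delta^3$, so it has an independent set of size at least $n/(\text{maxdeg}(G^3)+1)$, by the standard greedy argument. Such an independent set is exactly a set whose vertices are pairwise at distance at least $4$ in $G$.
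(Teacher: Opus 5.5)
Your proof is correct and is the same greedy argument the paper uses: repeatedly pick an available vertex and discard every vertex within distance $3$ of it, which removes $O(\Delta^3)$ vertices per step. You also make the constant explicit, $|S| \ge n/(1+\Delta+\Delta^2+\Delta^3)$, where the paper leaves it as $\Omega(n/\Delta^3)$.
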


\begin{proof}
    Note that if we greedily choose an available vertex and remove it together with all vertices at distance 3 or less from it (which means removing a maximum of $\mathcal O(\Delta^3)$ vertices), then we get a set of vertices $S$ of size $\left \vert S \right\vert = \Omega(n/ \Delta^3)$. 
\end{proof}

Next we show that $\sum_{i = 1}^q X_i = n$ is, in a precise sense, the only linear dependence between the random variables $X_i$, and that any other linear combination will have large variance.
\begin{lemma}[(Variance of $\vec{c}\cdot \vec X$)]\label{lemma:variance_of_linear_combination_color_class_sizes}
Let $q \geq 2\Delta$ and $\Delta \geq 1$. For any constants $c_1, \dots, c_{q} \in \mathbb R$ with $\max\limits_{i,j} \left \vert c_i - c_j \right\vert = \Omega_{q, \Delta}(1)$, define $Y \defeq \sum_{i=1}^{q} c_i X_i$. Then,
    \[\Var(Y) = \Omega_{\vl, q, \Delta}(n).\]
\end{lemma}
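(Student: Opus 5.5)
We will prove the lower bound on $\Var(Y)$ through the law of total variance, conditioning on the colors of everything outside a well-separated set, as the preceding discussion suggests. Let $S$ be the set given by \autoref{lemma: separation}, so $|S| = \Omega(n/\Delta^3)$ and any two vertices of $S$ are at distance at least $4$. Let $\mathcal F$ be the $\sigma$-algebra generated by $\sigma|_{V\setminus S}$. Then
\[
\Var(Y) \;\geq\; \E\bigl[\Var(Y \mid \mathcal F)\bigr].
\]
Conditionally on $\sigma|_{V \setminus S}$, the vertices of $S$ are pairwise non-adjacent. In fact their neighborhoods are disjoint, since the distance between them is at least $4$, although non-adjacency is already enough for the next step. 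Because the Gibbs measure factorizes over proper colorings, the colors $\sigma(s)$, $s \in S$, are conditionally independent. Each $\sigma(s)$ is distributed on its set $L_s$ of colors not used by $N(s)$, with probability proportional to $\lambda_c$. Writing $Y = \sum_{v} c_{\sigma(v)}$, the part coming from $V\setminus S$ is $\mathcal F$-measurable, so
\[
\Var(Y\mid\mathcal F) = \sum_{s\in S} \Var\bigl(c_{\sigma(s)} \mid \mathcal F\bigr).
\]

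The core of the argument is a uniform lower bound on each summand, bounded away from zero in terms of $q$, $\Delta$, $\vl$ and the spread of the $c_i$. We have $|L_s| \geq q-\Delta \geq \Delta \geq 1$, and the weights $\lambda_c$ lie in $[1-R, 1+R]$, so each color in $L_s$ has conditional probability at least $\frac{1-R}{(1+R)q}$. If $c$ takes at least two distinct values on $L_s$, say $c_a \neq c_b$, then the variance is at least of order $\min\{p_a,p_b\}\,(c_a-c_b)^2/2 = \Omega(1)\,(c_a-c_b)^2$.

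The main obstacle is that $L_s$ might consist only of colors with equal $c$-values. For instance, it could avoid the two colors $i,j$ realizing $\max|c_i-c_j|$, and the $c_k$ might be clustered. I would handle this by a second conditioning argument: show that with probability $\Omega(1)$ (under the conditional measure given the coloring of vertices at distance $\geq 2$ from $s$) the neighborhood $N(s)$ avoids both $i$ and $j$, so that both lie in $L_s$. Since the distance-$4$ separation makes the balls $N[s]$ disjoint, I would apply the law of total variance with $\mathcal F'$ generated by $\sigma$ outside $\bigcup_{s} N[s]$, which keeps the blocks conditionally independent. Within a block $N[s]$, recolor the neighbors one at a time. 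Each neighbor $w$ has at most $\Delta$ constraints and $q\ge 2\Delta$ colors, so it has at least $\Delta\ge 1$ available colors, and we need available colors other than $i,j$. When $q-\Delta\geq 3$ the probability that no neighbor uses $i$ or $j$ is bounded below by a constant depending on $q,\Delta$ and $R$ (by comparing weights, as in the niceness argument of \autoref{lemma:vk_is_nice}). The small cases $q-\Delta\le 2$, i.e. $\Delta\le 2$, would need a separate check; there I would pick a different pair, or use $\Var(\sum_{v\in N[s]} c_{\sigma(v)})$ directly by exhibiting two block colorings with different $c$-sums, each with constant probability.

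On the event that both $i,j\in L_s$, the block contributes variance $\Omega(1)\,(c_i-c_j)^2 = \Omega(1)$. Summing over $s\in S$ gives
\[
\Var(Y) \geq \sum_{s\in S}\E\bigl[\Var(\,\cdot \mid \mathcal F')\bigr] = \Omega(|S|) = \Omega_{\vl,q,\Delta}(n).
\]
Finally, the hypothesis on the $c_i$ is stated for indices in $[q]$, and $Y$ as written uses $c_q$. This matches the intended application with $c_q=0$, which is the case relevant for $\vec c\cdot\vec X$ with $\vec X=(X_1,\dots,X_{q-1})$.
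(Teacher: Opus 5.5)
Your argument is essentially the paper's: condition on the coloring outside the blocks $N[s]$ (the paper's set $T$ of vertices at distance $\ge 2$ from $S$), use conditional independence of the blocks, and show that with constant conditional probability $N(s)$ avoids the extremal pair $\{i,j\}$, so the center can take either color. The only difference is presentation: the paper writes down the two block colorings $\tau,\tilde\tau$ that differ only at the center, instead of nesting the law of total variance. Your constraint count is pessimistic, though: since $s$ is recolored inside the block, each $w\in N(s)$ faces at most $\Delta-1$ constraints besides $\{i,j\}$, so greedy coloring of $N(s)$ works whenever $q\ge\Delta+2$, which covers every $q\ge 2\Delta$ except $q=2$, $\Delta=1$. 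In that remaining case the statement is actually false, since for a perfect matching $X_1\equiv n/2$; the paper's proof, which needs $\tau(w)\notin\{s,t\}$, silently excludes this case as well.
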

\begin{proof}
    By \autoref{lemma: separation}, we can find a set $S$ of size $s = \Omega(n/\Delta^3)$ such that each of its vertices is at distance at least 4 from the others. Define $T$ to be the set of vertices at distance $2$ or more from $S$. Note that $G[V \setminus T]$ is split into $s$ connected components $V_j := \{v_j\} \cup N(v_j)$, where $v_j \in S$.
    
    Let $\sigma \sim \mu_{G, \vl}$ (we will drop $\vl$ in the subscripts where there is no confusion). We can think of $\sigma$ as if generated in a two-step process: first we pick $\sigma_T := \sigma\vert_{T}$ (distributed according to $\mu_{G[T]}$), and then pick $\sigma_{V_j} := \sigma \vert_{V_j}$ for each $V_j$ in a conditional way, so that $\sigma \sim \mu_G$ in total.

    By the law of total variance:
    \begin{align*}
        \Var(Y) = \Var\left(\sum c_i X_i\right) &= \E\left[\Var\left(\sum c_i X_i \, \Big| \, \sigma_T\right)\right] + \Var\left(\E\left[\sum c_i X_i\, \Big| \, \sigma_T\right]\right) \\ 
        &\geq \E\left[\Var\left(\sum c_i X_i \, \Big| \, \sigma_T\right)\right] .
    \end{align*}

    Let $Z = (Z_1, \dots, Z_q)$ be the color class sizes of $\sigma_{T}$ and, for each $j \in [s]$, let $Q^i = (Q_1^j, \dots, Q_{q}^j)$ be the color class sizes of $\sigma_{V_j}$. The key point is that, once we condition on $\sigma_T$, the $\{Q^j\}_{j \in [s]}$ are mutually independent, since all the $V_j$ are pair-wise separated in the graph by vertices of $T$. That allows us to split the variance into terms depending only on each of these connected components:
    \begin{align*}
        \E\left[\Var\left(\sum_{i = 1}^q c_i X_i ~\Big|~ \sigma_T\right)\right] = \sum_{j = 1}^{s} \E\left[\Var\left(\sum_{i = 1}^q c_iQ_i^j ~\Big|~ \sigma_T\right)\right].
    \end{align*}
    Since $s = \Omega_{\Delta}(n)$, if we show that each of the terms of the sum is $\Omega_{\vec \lambda, q, \Delta}(1)$, that will imply that $\Var(Y) = \Omega_{\vec \lambda, q, \Delta}(n)$.
    \begin{claim}
        For any $j \in [s]$, $\E\left[\Var\left(\sum_{i=1}^{q}c_i Q_i^j ~\big|~ \sigma_T\right)\right] = \Omega_{\vec \lambda, q, \Delta}(1)$.
    \end{claim}
    \begin{claimproof}
        Let $s, t \in [q]$ be the indices maximizing $\left \vert c_s - c_t \right\vert$. Given $\sigma_T$, consider the following coloring $\tau$ of $V_j$:
        \begin{itemize}
            \item For each $w \in N(v_j)$, set $\tau(w)$ as the first color not appearing in $\sigma_T(N(w) \cap T)$ and different from both $s$ and $t$.
            \item Set $\tau(v_j) = s$.
        \end{itemize}
        On the other hand, consider the alternative coloring $\tilde \tau$, defined as:
        \begin{itemize}
            \item For each $w \in N(v_j)$, set $\tilde \tau(w) = \tau(w)$.
            \item Set $\tilde \tau(v_j) = t$.
        \end{itemize}
        It is clear that $\P[\sigma_{V_j} = \tau \, \vert\, \sigma_T], \P[\sigma_{V_j} = \tilde\tau \, \vert \, \sigma_T] = \Omega_{\vec \lambda, q, \Delta}(1)$, as there are at most $q^{\Delta + 1}$ different colorings of $V_j$ (and their weights are bounded above and below when fixing the minimum and maximum coordinate of $\vec \lambda$). At the same time, note that the difference in $\sum_i c_i Q_i^j$ between the case $\sigma_{V_j} = \tau$ and $\sigma_{V_j} = \tilde \tau$ is exactly $c_k$, which is $\Omega_q(1)$, so the variance will be at least constant as well.
    \end{claimproof}
\end{proof}

The following two corollaries particularize the previous result for the two cases in which we will apply it. Firstly, we will apply it for linear combinations of $\vec X = (X_1, \dots, X_{q-1})$ (which corresponds to setting $c_q = 0$ in the previous lemma) and, secondly, for coefficients $\vec c$ which are orthogonal to $\vec 1$.

\begin{corollary}\label{corollary:variance_lowerbound_for_q-1_coordinates} Let $q \geq 2\Delta$ and $\Delta \geq 1$. For any $\vec c \in \mathbb R^{q-1}$ with $\sum_{i = 1}^{q-1} c_i^2 = 1$, define $Y \defeq \sum_{i=1}^{q-1} c_i X_i$. Then,
    \[\Var(Y) = \Omega_{\vl, q, \Delta}(n).\]
\end{corollary}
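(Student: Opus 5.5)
The plan is to reduce this to \autoref{lemma:variance_of_linear_combination_color_class_sizes} by extending $\vec c$ with $c_q := 0$. Then $Y = \sum_{i=1}^{q-1} c_i X_i = \sum_{i=1}^{q} c_i X_i$, so the only thing to check is the hypothesis $\max_{i,j \in [q]} |c_i - c_j| = \Omega_{q,\Delta}(1)$. Since $\sum_{i=1}^{q-1} c_i^2 = 1$, there is some $i^\ast \in [q-1]$ with $c_{i^\ast}^2 \geq 1/(q-1)$, that is, $|c_{i^\ast}| \geq (q-1)^{-1/2}$. Comparing with $c_q = 0$ gives
\[ \max_{i,j \in [q]} |c_i - c_j| \geq |c_{i^\ast} - c_q| \geq \frac{1}{\sqrt{q-1}} = \Omega_q(1). \]
\autoref{lemma:variance_of_linear_combination_color_class_sizes} then gives $\Var(Y) = \Omega_{\vl, q, \Delta}(n)$.

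The one point needing care is uniformity. The bound must hold uniformly over all unit vectors $\vec c$, not just for each fixed $\vec c$, because it will later be used to lower-bound the smallest eigenvalue of $\Sigma$. I would therefore revisit the proof of \autoref{lemma:variance_of_linear_combination_color_class_sizes} and confirm that its implicit constant depends on $\vec c$ only through a lower bound on $\max_{i,j}|c_i-c_j|$.

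In that proof, the per-component claim compares two colorings $\tau, \tilde\tau$ of $V_j$ that differ only at $v_j$ (color $s$ versus $t$). Each has conditional probability at least some $p = p(\vl,q,\Delta) > 0$, and they change $\sum_i c_i Q^j_i$ by exactly $|c_s - c_t| =: \delta$. (The paper writes ``$c_k$'' there; it should read $c_s - c_t$.) By the elementary bound that a random variable taking two values $\delta$ apart with probabilities at least $p$ each has variance at least $\tfrac{p}{2}\delta^2$, each component contributes $\Omega(p\,\delta^2)$. Summing over $s = \Omega(n/\Delta^3)$ components gives $\Var(Y) \geq c(\vl,q,\Delta)\, n\, \delta^2$, with $c$ independent of $\vec c$. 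Plugging in $\delta \geq (q-1)^{-1/2}$ yields the uniform bound
\[ \Var(Y) \geq \frac{c(\vl,q,\Delta)}{q-1}\, n. \]

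I expect the main obstacle, mild as it is, to be exactly this uniformity check: making sure the constants $p$ in the claim do not depend on $\vec c$. They do not, since $p$ is controlled by $q^{\Delta+1}$ and the ratio $\lambda_{\max}/\lambda_{\min}$ raised to the power $\Delta+1$.
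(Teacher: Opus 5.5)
Your reduction is the paper's proof: set $c_q := 0$, use $\sum_i c_i^2 = 1$ to find a coordinate with $|c_{i^\ast}| \geq (q-1)^{-1/2}$ (the paper states the slightly weaker $q^{-1/2}$), and invoke \autoref{lemma:variance_of_linear_combination_color_class_sizes}. Your uniformity check is accurate, including the correction of ``$c_k$'' to $c_s - c_t$ and the bound $\Var \geq \tfrac{p}{2}\delta^2$, and it makes explicit something the paper needs but leaves implicit when it later applies this bound to the smallest eigenvalue of $\Sigma$.

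One caveat that you and the paper both inherit from that lemma. The colorings $\tau,\tilde\tau$ must be built greedily on $N(v_j)$, since adjacent neighbors of $v_j$ could otherwise receive the same color; this requires $q \geq \Delta+2$. In fact the statement fails for $q=2$, $\Delta=1$: on a perfect matching $X_1 = n/2$ deterministically. So the hypothesis should read $q \geq \max\{2\Delta,3\}$.
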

\begin{proof}
    Since the sum of squares is 1, there exists an $i \in [q-1]$ such that $\left \vert c_i \right\vert \geq 1/\sqrt q = \Omega_q(1)$. Hence, we can apply \autoref{lemma:variance_of_linear_combination_color_class_sizes} for $\tilde c \in \mathbb R^q$ defined so that $\tilde c_q := 0$ and $\tilde c_j := c_j$ for all $j \in [q-1]$.
\end{proof}

\begin{corollary}\label{corollary:variance_lowerbound_for_q_coordinates}
Let $q \geq 2\Delta$ and $\Delta \geq 1$. For any $\vec c \in \mathbb R^q$ with $\sum_{i=1}^q c_i^2 = 1$ and $\sum_{i = 1}^q c_i = 0$, define  $Y \defeq \sum_{i=1}^{q} c_i X_i$.
Then,
\[
\Var(Y) = \Omega_{\vec \lambda, q, \Delta}(n).
\]
\end{corollary}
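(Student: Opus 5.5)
The plan is to reduce this corollary directly to \autoref{lemma:variance_of_linear_combination_color_class_sizes}. That lemma needs only one hypothesis: $\max_{i,j}|c_i - c_j| = \Omega_{q,\Delta}(1)$. So it suffices to show that any unit vector $\vec c \in \mathbb R^q$ orthogonal to $\vec 1$ has coordinate spread bounded below by a constant depending only on $q$.

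The first step is the spread bound. Since $\sum_i c_i^2 = 1$, some index $i_0$ satisfies $|c_{i_0}| \ge 1/\sqrt q$. Without loss of generality $c_{i_0} \ge 1/\sqrt q$; otherwise replace $\vec c$ by $-\vec c$, which leaves $\Var(Y)$ unchanged. Since $\sum_i c_i = 0$, not all coordinates can be positive. Hence some index $j_0$ has $c_{j_0} \le 0$, and so
\[
c_{i_0} - c_{j_0} \ge \frac{1}{\sqrt q}.
\]
This gives $\max_{i,j}|c_i - c_j| \ge 1/\sqrt q = \Omega_q(1)$.

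The second step is to apply \autoref{lemma:variance_of_linear_combination_color_class_sizes} to $\vec c$ as is, with no truncation to $q-1$ coordinates. This yields $\Var(Y) = \Omega_{\vl,q,\Delta}(n)$. The implicit constant must be uniform over all admissible $\vec c$. That holds because the lemma's constant depends only on the lower bound for the spread (here $1/\sqrt q$) and on $\vl, q, \Delta$. In its proof, the swap of the color of $v_j$ between $s$ and $t$ changes $\sum_i c_i Q_i^j$ by exactly $c_s - c_t$, which is at least $1/\sqrt q$ in absolute value. Each of the two colorings $\tau, \tilde\tau$ has conditional probability bounded below by a constant depending only on $\vl, q, \Delta$.

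I expect no genuine obstacle. The only point needing care is this uniformity in $\vec c$: the reduction is meaningful only if the lemma's constant depends on $\vec c$ solely through the spread lower bound. If needed, I would make this explicit. A two-point distribution with masses $p, p' \ge p_0$ at values differing by at least $\delta$ has variance at least $\frac{p_0}{2}\cdot\frac{\delta^2}{4}$, and this gives an explicit bound of order $p_0/q$ per component.
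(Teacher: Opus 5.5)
Your proposal is correct and is essentially the paper's proof: pick $s$ with $|c_s|\ge 1/\sqrt q$, use $\sum_i c_i=0$ to get a coordinate of the opposite sign, conclude $\max_{i,j}|c_i-c_j|\ge 1/\sqrt q$, and apply \autoref{lemma:variance_of_linear_combination_color_class_sizes}; your uniformity check is a useful clarification, and you correctly note that the swap changes the sum by $c_s-c_t$, where the paper writes ``$c_k$''. One caveat applies to both arguments: that lemma needs a color outside $\{s,t\}$ for each $w\in N(v_j)$, which does not exist when $\Delta=1$, $q=2$, and there the statement is actually false (on a perfect matching $X_1=X_2$ deterministically, so $\Var(Y)=0$ for $\vec c=(1,-1)/\sqrt 2$), so one should assume $q\ge 3$ as in \autoref{det asymp}.
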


\begin{proof}
 Since the sum of squares is $1$, there exists an $s \in [q]$ such that $\left\vert c_s \right\vert \geq 1/\sqrt q$. On the other hand, since $\sum c_i = 0$, there must exist a $t \in [q]$ such that $c_t$ has opposite sign to $c_s$. Therefore, $\max\limits_{i,j} \left \vert c_i - c_j \right \vert \geq 1/\sqrt q = \Omega_q(1)$, so we can apply \autoref{lemma:variance_of_linear_combination_color_class_sizes}.
\end{proof}

This result implies a lower bound on the eigenvalues of the covariance matrix, and therefore on its determinant.

\begin{lemma}[Lower bound on determinant]\label{lower bound det}
Let $q \geq 2\Delta$ and $\Delta \geq 1$.  
Let $\{\nu_i\}_{i\in[q-1]}$ be the eigenvalues of $\Sigma$. Then, $\nu_i = \Omega(n)~~\forall i \in [q-1]$, and hence
    \[ \det(\Sigma) = \Omega(n^{q-1}).\]
\end{lemma}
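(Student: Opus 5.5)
The plan is to pass from the variance lower bound for linear combinations to an eigenvalue lower bound through the Rayleigh quotient characterization of $\Sigma$. Since $\Sigma = \Cov(\vec X)$ is a real symmetric positive semidefinite $(q-1)\times(q-1)$ matrix, its smallest eigenvalue is
\[
\nu_{\min} = \min_{\vec c \in \R^{q-1},\, \|\vec c\|_2 = 1} \vec c^\top \Sigma \vec c .
\]
For any fixed unit vector $\vec c$, the quadratic form is exactly the variance of the corresponding linear combination:
\[
\vec c^\top \Sigma \vec c = \Var\Big(\sum_{i=1}^{q-1} c_i X_i\Big),
\]
by bilinearity of covariance. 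By \autoref{corollary:variance_lowerbound_for_q-1_coordinates}, this variance is $\Omega_{\vl,q,\Delta}(n)$.

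The one point that needs care is uniformity: the implicit constant in the bound must not depend on $\vec c$, so that the minimum over the unit sphere is still $\Omega(n)$. I would check this by going back to the proof of \autoref{lemma:variance_of_linear_combination_color_class_sizes}. There the lower bound is of the form
\[
s \cdot p_{\min} \cdot \Big(\max_{i,j}|c_i-c_j|\Big)^2 ,
\]
where $s=\Omega(n/\Delta^3)$ comes from \autoref{lemma: separation}, and $p_{\min}$ is a lower bound on the conditional probabilities of the two local colorings $\tau,\tilde\tau$. That probability depends only on $q$, $\Delta$ and the ratio $\lambda_{\max}/\lambda_{\min}$, which is bounded inside $\mathcal D_R$. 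Concretely, a two-point distribution with masses at least $p$ on values differing by $\delta$ has variance at least $p\,\delta^2/2$. With $\tilde c_q=0$ and $\|\vec c\|_2=1$, some $|c_i|\ge 1/\sqrt{q}$, so $\delta \ge 1/\sqrt q$ holds uniformly in $\vec c$. Hence $\vec c^\top\Sigma\vec c \ge \kappa n$ for a constant $\kappa=\kappa(q,\Delta,R)>0$ that does not depend on $\vec c$, and so $\nu_{\min}\ge \kappa n$.

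Finally, every eigenvalue is at least $\nu_{\min}$, so $\nu_i=\Omega(n)$ for all $i\in[q-1]$. Since $\det\Sigma=\prod_{i=1}^{q-1}\nu_i$, this gives $\det\Sigma \ge (\kappa n)^{q-1} = \Omega(n^{q-1})$. The main obstacle is the uniformity check described above; the rest is linear algebra. If the cited proof's constants turned out to depend on $\vec c$, I would instead use a compactness argument: the variance lower bound, after dividing by $n$, is a continuous positive function of $\vec c$ on the unit sphere, so it attains a positive minimum.
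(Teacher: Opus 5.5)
Your proof is correct and is essentially the paper's argument. The paper takes a unit eigenvector $h$ of a putatively small eigenvalue and writes $\nu = h^\top\Sigma h = \Var\bigl(\sum_i h_i X_i\bigr)$, then contradicts \autoref{corollary:variance_lowerbound_for_q-1_coordinates}; that is your Rayleigh-quotient bound phrased by contradiction. Your explicit check that the constant is uniform in $\vec c$ (via $s\,p_{\min}\,\delta^2/2$ with $\delta\ge 1/\sqrt{q}$) is exactly what the paper uses implicitly, since $h$ may depend on $n$. Your compactness fallback, however, would not work as stated, because $\Var(\vec c\cdot\vec X)/n$ depends on $n$ and $G$; you do not need it.
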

\begin{proof}
    Assume for contradiction that $\nu = o(n)$ for a certain eigenvalue $\nu$ of $\Sigma$, which we recall is positive semidefinite. Let $h$ be a normalized eigenvector with eigenvalue $\nu$. Then,
    \begin{align*}
    \nu = h^T \Sigma h =  \sum_{1 \leq i,j \leq q-1} \cov(h_i X_i, h_j X_j) = \var(\sum_{i=1}^{q-1} h_i X_i).
    \end{align*}
    Therefore, $\var(\sum_{i=1}^{q-1} h_i X_i) = o(n)$, which contradicts \autoref{corollary:variance_lowerbound_for_q-1_coordinates}. 
\end{proof}
To show the upper bound on the determinant of the covariance matrix, we will prove a general result, which uses zero-freeness to show that the derivatives of the logarithm of the partition function are $O(n)$. First, we will show a simple linear upper bound on the logarithm of the partition function. 

\begin{lemma}\label{lemma: log Z is linear}
For all $\vl \in \mathcal D_R$, we have that
    \[ \left\vert \log Z(\vl) \right\vert = O_{\vec \lambda, q,\Delta}(n)\] 
\end{lemma}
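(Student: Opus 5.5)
We need to bound $|\log Z(\vl)|$ for $\vl\in\mathcal D_R$, where $\log Z$ is the analytic branch on $\mathcal D_R$ from \autoref{remark:existence-branch-for-logZ}. Since $|\log Z| \le |\ln|Z|| + |\arg Z|$ for the chosen branch, the plan is to bound the real part (the modulus) and the imaginary part (the argument) separately. We normalize the branch so that $\log Z(\vec 1)=\ln Z(\vec 1)\in\mathbb R$.

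\textbf{Real part.} For the upper bound, we note that $Z$ has at most $q^n$ terms, each of modulus at most $\lambda_{\max}^n\le (1+R)^n$, so $\ln|Z(\vl)|\le n\ln(q(1+R))$. For the lower bound, every $\vl\in\mathcal D_R$ is valid by \autoref{lemma:lambdas-in-ball-satisfy-assumptions}, so \autoref{corollary:lower-bound-norm-partition-function} gives
\[
\ln|Z(\vl)| \ge n\ln 0.99 + n\Delta\ln\frac{\lambda_{\min}}{\lambda_{\max}} \ge n\ln 0.99 + n\Delta\ln\frac{1-R}{1+R}.
\]
Hence $|\Re\log Z(\vl)| = O_{q,\Delta}(n)$.

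\textbf{Imaginary part.} This is the main obstacle, because a priori the continuous branch could wind. I would control the derivative along the segment $\gamma(t)=\vec 1+t(\vl-\vec 1)$, $t\in[0,1]$, which lies in $\mathcal D_R$ by convexity. Along this segment,
\[
\frac{d}{dt}\log Z(\gamma(t))=\sum_i(\lambda_i-1)\,\partial_i \log Z(\gamma(t)) = \sum_i \frac{\lambda_i-1}{\gamma_i(t)}\cdot \gamma_i(t)\,\partial_i\log Z(\gamma(t)).
\]
The quantity $\gamma_i\partial_i\log Z = \sum_{v}\mathcal P_{G,\gamma(t)}[\sigma(v)=i]$ is a sum of $n$ marginal pseudo-probabilities; this is the complex analogue of \autoref{lemma:expected_values_are_derivatives}, and it holds formally as an identity of rational functions. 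I would then bound each pseudo-probability in modulus by $O(1)$ using \autoref{lemma:approximation-pseudoprobabilities}, applied with $u=v$ and all vertices unpinned (the induction lemma holds for $G$ itself). That lemma gives $|\mathcal P_{G,\vl}[\sigma(v)=i]|\le e^{(\Delta+0.1)\varepsilon_R}\,\mathcal P_{G,\vec 1}[\sigma(v)=i]\le 2$. Consequently $|\tfrac{d}{dt}\log Z(\gamma(t))|\le \sum_i \frac{R}{1-R}\cdot 2n = O(qn)$, and integrating over $t\in[0,1]$ gives $|\log Z(\vl)-\log Z(\vec 1)| = O(qn)$.

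This derivative bound in fact handles both the real and imaginary parts at once. Combined with $0\le \ln Z(\vec 1)\le n\ln q$, it yields $|\log Z(\vl)| = O_{q,\Delta}(n)$. The only delicate point is checking that the identity for $\gamma_i\partial_i\log Z$ remains valid at complex $\vl$, which holds because both sides are rational functions that agree on the positive reals and $Z\neq 0$ on $\mathcal D_R$. The real-part bounds from the previous paragraph then serve as a sanity check.
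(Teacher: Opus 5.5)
Your argument is correct, and for the imaginary part it takes a genuinely different and more complete route than the paper.

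The paper's proof consists only of your real-part paragraph. It combines $|Z(\vl)|\le q^n(1+R)^n$ with \autoref{corollary:lower-bound-norm-partition-function} to get $\bigl|\ln|Z(\vl)|\bigr|=O(n)$. It then asserts that $\log Z$ and $\ln|Z|$ differ only by a constant coming from the argument. For the analytic branch on $\mathcal D_R$ that last step is not justified. Taking $\vl=e^{i\alpha}\vec 1$ with $|e^{i\alpha}-1|<R$ gives $Z(\vl)=e^{in\alpha}Z(\vec 1)$, so $\Im\log Z$ is genuinely of order $n$. In particular, the principal logarithm of $Z$ is not even analytic on $\mathcal D_R$ for large $n$. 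Moreover, without a normalization such as your $\log Z(\vec 1)=\ln Z(\vec 1)$, the statement fails for branches shifted by $2\pi i k$ with $k$ growing in $n$.

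Your approach integrates the logarithmic derivative along the segment from $\vec 1$ to $\vl$. It uses $\lambda_i\partial_i Z=\sum_v Z^{(i)}_{G,v}$, which is a polynomial identity, so it holds at every complex $\vl$ with $Z(\vl)\neq 0$ without any continuation argument. It then bounds each term by $|\mathcal P_{G,\vl}[\sigma(v)=i]|\le e^{(\Delta+0.1)\varepsilon_R}\,\mathcal P_{G,\vec 1}[\sigma(v)=i]$ from \autoref{lemma:approximation-pseudoprobabilities}. This controls the winding and the modulus at the same time and gives a bound uniform over $\mathcal D_R$.

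The cost is that you rely on the finer pseudo-probability estimates from the induction rather than only the lower bound on $|Z|$. The gain is a complete proof of the statement in the form used later in \autoref{lem:cauchy_bounds_logZ}. There only $\sup|F|$ over $\mathcal D_R$ matters, and your normalization at $\vec 1$ is harmless because derivatives do not depend on the choice of branch.
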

\begin{proof}
    If $\vec \lambda \in \mathcal D_R$, then
    $$
        \left\vert Z(\vec \lambda) \right \vert \leq q^n (1 + R) ^n,
    $$
    while we also have the lower bound
    $$
        \left \vert Z(\vec \lambda) \right \vert \geq 0.99^n \left(\frac{\lambda_\text{min}}{\lambda_{\text{max}}} \right)^{n \Delta},
    $$
    given by \autoref{corollary:lower-bound-norm-partition-function}. Therefore, $\left \vert \log \left(\vert Z(\vec \lambda) \vert \right) \right \vert = O_{\vec \lambda, q, \Delta}(n)$, and hence $\left \vert \log Z(\vec \lambda) \right \vert $ is bounded as well (as they only differ by a constant factor depending on the argument).
\end{proof}

\begin{lemma}[(Linear upper bounds for derivatives of $\log Z$)]\label{lem:cauchy_bounds_logZ}
Let $\vec \zeta\in\mathbb C^q$ be such that $\|\vec \zeta-\vec{1}\|_\infty\le \frac{3R}{4}$.
Then, for every integer $k\ge1$ and every choice of indices $i_1,\dots,i_k\in[q]$, 
the partial derivatives of $F$ satisfy
\[\bigl|\partial_{i_1}\cdots\partial_{i_k}F(\vec \zeta)\bigr|\le
\left(\frac{4}{R}\right)^k\sup_{\|\vec \xi-\vec{1}\|_\infty < R}|F(\vec \xi)|.\]
This also implies (using \autoref{lemma: log Z is linear}) that $\bigl| \partial_{i_1}\cdots\partial_{i_k}F(\vec \zeta)
\bigr| = O_{\vec \lambda, q, \Delta}(n)$.
\end{lemma}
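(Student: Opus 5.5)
The plan is to apply the multivariate Cauchy integral formula on a polydisc centred at $\vec\zeta$. By \autoref{remark:existence-branch-for-logZ}, $F = \log Z$ is analytic on $\mathcal D_R = \{\|\vec\xi - \vec 1\|_\infty < R\}$. Set $r := R/4$ and let $P := \{\vec\xi : |\xi_m - \zeta_m| \le r \ \forall m \in [q]\}$ be the closed polydisc around $\vec\zeta$. Since $\|\vec\zeta - \vec 1\|_\infty \le 3R/4$, every $\vec\xi \in P$ satisfies $\|\vec\xi-\vec 1\|_\infty \le 3R/4 + R/4 = R$. Only the boundary case $\|\vec\xi - \vec 1\|_\infty = R$ needs care, and we avoid it by taking $r' < r$ and letting $r' \uparrow r$ at the end. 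So $\overline{P}$ (with radius $r'$) sits inside the domain of analyticity of $F$.

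Next, write a general mixed partial $\partial_{i_1}\cdots\partial_{i_k}$ as $\partial^{\alpha}$, where $\alpha \in \mathbb Z_{\ge 0}^q$ is a multi-index with $|\alpha| = k$. The iterated Cauchy formula on the distinguished boundary $T = \{|\xi_m - \zeta_m| = r'\}$ gives
\[
\partial^\alpha F(\vec\zeta) = \frac{\alpha!}{(2\pi i)^q}\oint_T \frac{F(\vec\xi)}{\prod_m (\xi_m - \zeta_m)^{\alpha_m + 1}}\, d\xi_1\cdots d\xi_q .
\]
This yields the Cauchy estimate
\[
|\partial^\alpha F(\vec\zeta)| \le \frac{\alpha!}{r'^{k}} \sup_{T}|F| \le \frac{\alpha!}{r'^{k}}\sup_{\|\vec\xi-\vec 1\|_\infty<R}|F(\vec\xi)|.
\]
Letting $r' \to R/4$ gives the factor $(4/R)^k$ up to the combinatorial factor $\alpha!$.

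The main obstacle is this factor $\alpha! \le k!$, which the stated bound omits. For each fixed $k$ it is a constant depending only on $k$, so the consequence $O_{\vec\lambda,q,\Delta}(n)$ is unaffected, and that is what is used later. If the sharp form without $\alpha!$ is wanted, I would note that $\alpha!$ is genuinely present (it already appears in one variable), so the statement should be read with the constant $k!$ absorbed, or for $k$ bounded. I would state the bound as $k!(4/R)^k \sup|F|$, which is harmless for all applications, where $k\le 3$ or so.

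Finally, the supremum of $|F|$ over $\mathcal D_R$ is $O(n)$ uniformly by \autoref{lemma: log Z is linear}. To make that lemma's bound uniform, we check that $\lambda_{\min}/\lambda_{\max} \ge (1-R)/(1+R)$ on $\mathcal D_R$, and that the imaginary part of the chosen branch is controlled. The latter holds by writing $F(\vec\xi) = \log Z(\vec 1) + \int_0^1 \nabla F \cdot d\gamma$ along a segment, or more simply by noting that the inductive argument bounds $\arg Z$ by $O(n\Delta\varepsilon_I)$ through the telescoping product of ratios. Combining these gives $|\partial_{i_1}\cdots\partial_{i_k}F(\vec\zeta)| = O_{\vec\lambda,q,\Delta}(n)$ for each fixed $k$.
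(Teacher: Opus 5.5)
Your argument is the paper's: Cauchy's formula on a closed polydisc of radius $\rho<R/4$ about $\vec\zeta$, which lies in $\mathcal D_R$ because $\|\vec\zeta-\vec 1\|_\infty\le 3R/4$. This gives the estimate $\rho^{-k}\sup_{\mathcal D_R}|F|$, and then one lets $\rho\uparrow R/4$.

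Your $\alpha!$ caveat is correct and points to a real slip in the paper's proof. The paper writes the derivative as $k$ contour integrals in $\xi_{i_1},\dots,\xi_{i_k}$ with denominators $(\xi_{i_m}-\zeta_{i_m})^2$. That formula only makes sense when $i_1,\dots,i_k$ are distinct. Repeated indices are needed later: $\partial_{jj}F$ for the variances in \autoref{cor: expect and cov are O(n)}, and $\partial_{jk\ell}F$ in \autoref{lem:taylor_remainder_logZ_lambda}. For those, the correct bound is $\alpha!\,(4/R)^k\sup|F|$.

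The factor cannot be dropped for general bounded analytic functions on $\mathcal D_R$. Take $h(z)=(z-a)/(1-az)$ with $a=3/4$. Then $h^{(4)}(a)=24a^3/(1-a^2)^4\approx 276>4^4$, so $\vec\xi\mapsto h((\xi_1-1)/R)$ violates the $\alpha!$-free inequality at $\zeta_1=1+3R/4$. As you say, for $k\le 3$ this only changes constants, and the $O(n)$ consequence is unaffected.

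On your final paragraph: the paper does no more than cite \autoref{lemma: log Z is linear}, whose treatment of $\Im F$ is only sketched, so the extra care is reasonable.

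\begin{itemize}
\item Your first route (integrating $\nabla F$ along a segment) needs an independent linear bound on $\nabla F$; using the $k=1$ case of this lemma would be circular. Such a bound is available: $\lambda_i\partial_iF(\vec\lambda)=\sum_v\mathcal P_{G,\vec\lambda}[\sigma(v)=i]$, and \autoref{lemma:approximation-pseudoprobabilities} gives $|\mathcal P_{G,\vec\lambda}[\sigma(v)=i]|\le e^{(\Delta+0.1)\varepsilon_R}$.
\item Your second route (telescoping the pinning recursion) works once you observe one point. The sum of the principal logarithms of the telescoping factors is itself an analytic branch of $\log Z$ on $\mathcal D_R$, and you may take that branch as $F$.
\end{itemize}
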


\begin{proof}
We will show that, for any $\rho < R/4$,
$$
    \bigl|\partial_{i_1}\cdots\partial_{i_k}F(\vec \zeta)\bigr|\le
    \frac{1}{\rho^k}\sup_{\|\vec \xi-\vec{1}\|_\infty < R}|F(\vec \xi)|,
$$
from which the claimed result follows by taking the limit.

Recall that $F$ is analytic on $\mathcal D_R$. Note as well that the closed polydisc
\[
\mathcal P(\zeta,\rho)
:=
\{\xi\in\mathbb C^q:\|\xi-\zeta\|_\infty\le\rho\}
\]
is contained in $\mathcal D_R$.
Define $\xi^{(i_1,\dots,i_k)} \in\mathcal P(\zeta,\rho)$ such that only the coordinates $\xi_{i_1},\dots,\xi_{i_k}$ vary and all other coordinates are equal to $\zeta$.

By the multivariate Cauchy integral formula, we have
\[
\partial_{i_1}\cdots\partial_{i_k}F(\zeta)
=
\frac{1}{(2\pi i)^k}
\int_{|\xi_{i_1}-\zeta_{i_1}|=\rho}
\cdots
\int_{|\xi_{i_k}-\zeta_{i_k}|=\rho}
\frac{
F(\xi^{(i_1,\dots,i_k)})
}{
\prod_{m=1}^k (\xi_{i_m}-\zeta_{i_m})^2
}
\,
d\xi_{i_k}\cdots d\xi_{i_1}.
\]

Taking absolute values and using
\(|\xi_{i_m}-\zeta_{i_m}|=\rho\),
\[
\bigl|
\partial_{i_1}\cdots\partial_{i_k}F(\zeta)
\bigr|
\le
\frac{1}{(2\pi)^k}
\int_{|\xi_{i_1}-\zeta_{i_1}|=\rho}
\cdots
\int_{|\xi_{i_k}-\zeta_{i_k}|=\rho}
\frac{
|F(\xi^{(i_1,\dots,i_k)})|
}{
\rho^{2k}
}
\,
|d\xi_{i_k}|\cdots |d\xi_{i_1}|.
\]

Each contour has length \(2\pi\rho\), hence
\[
\bigl|
\partial_{i_1}\cdots\partial_{i_k}F(\zeta)
\bigr|
\le
\frac{(2\pi\rho)^k}{(2\pi)^k}
\cdot
\frac{1}{\rho^{2k}}
\sup_{\xi\in\mathcal P(\zeta,\rho)}|F(\xi)|
=
\frac{1}{\rho^k}
\sup_{\xi\in\mathcal P(\zeta,\rho)}|F(\xi)|.
\]

Since $\mathcal P(\zeta,\rho)\subset\mathcal D_R$,
\[
\sup_{\xi\in\mathcal P(\zeta,\rho)}|F(\xi)|
\le
\sup_{\|\xi-\vec{1}\|_\infty < R}|F(\xi)|.
\]
Combining the above bounds completes the proof.
\end{proof}

As a simple corollary, we get that expectation and covariance are also at most linear. 

\begin{corollary}\label{cor: expect and cov are O(n)}
For $\vl \in \mathcal D_R$, 
    \[ \E_{\vl}[X_i] = O_{\vec \lambda, q, \Delta}(n), \quad \Cov_{\vl}(X_i,X_j) = O_{\vec \lambda, q, \Delta}(n) \]
\end{corollary}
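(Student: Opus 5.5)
The corollary follows by combining \autoref{lemma:expected_values_are_derivatives} and \autoref{lemma:covariance_is_double_derivative} with the derivative bounds of \autoref{lem:cauchy_bounds_logZ}. Both of the former identities hold for $\vl \in \mathbb R^q \cap \mathcal D_R$, which is the only setting in which $\E_{\vl}$ and $\Cov_{\vl}$ are defined as probabilistic quantities. I will therefore read the corollary for such $\vl$. The Cauchy bound requires $\|\vl - \vec 1\|_\infty \le 3R/4$, so I restrict to that subregion (the later results only use $\|\vl-\vec 1\|_\infty\le R/2$). Alternatively, one can rerun the proof of \autoref{lem:cauchy_bounds_logZ} with a polydisc of radius $\rho < R - \|\vl - \vec 1\|_\infty$; this gives a constant depending on $\vl$, which the $O_{\vl,q,\Delta}$ notation allows.

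For the expectation, \autoref{lemma:expected_values_are_derivatives} gives $\E_{\vl}[X_i] = \lambda_i\,\partial_i F(\vl)$. Since $|\lambda_i| \le 1+R$, \autoref{lem:cauchy_bounds_logZ} with $k=1$ gives
\[
|\E_{\vl}[X_i]| \le (1+R)\,\frac{4}{R}\sup_{\|\vec\xi-\vec 1\|_\infty<R}|F(\vec\xi)|.
\]
(Trivially also $0 \le \E_{\vl}[X_i] \le n$, so this part needs no analytic input.)

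For the covariance, \autoref{lemma:covariance_is_double_derivative} gives
\[
\Cov_{\vl}(X_i,X_j) = \lambda_i\lambda_j\,\partial_i\partial_j F(\vl) + \delta_{ij}\lambda_i\,\partial_i F(\vl).
\]
Applying \autoref{lem:cauchy_bounds_logZ} with $k=2$ and $k=1$ bounds the absolute value by $\bigl((1+R)^2(4/R)^2 + (1+R)(4/R)\bigr)\sup|F|$.

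In both estimates the remaining ingredient is that $\sup_{\|\vec\xi-\vec 1\|_\infty<R}|F(\vec\xi)| = O_{q,\Delta}(n)$, which comes from \autoref{lemma: log Z is linear}. This is the one point needing care. \autoref{lemma: log Z is linear} is stated pointwise with a constant depending on $\vl$, but its proof gives a uniform bound: $|Z| \le q^n(1+R)^n$, and by \autoref{corollary:lower-bound-norm-partition-function} $|Z| \ge 0.99^n\bigl((1-R)/(1+R)\bigr)^{n\Delta}$. Hence $\bigl|\log|Z|\bigr| = O_{q,\Delta}(n)$ uniformly on $\mathcal D_R$.

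The imaginary part of the chosen branch $F$ also needs a linear bound. Here I would argue via the telescoping structure of the zero-freeness proof: writing $Z$ as a product over successive pinnings gives $n$ factors, each with argument bounded by a constant. Concretely, item~\ref{induction-item:imag-part} of \autoref{lemma:induction} together with \autoref{lemma:argument_of_sum} bounds $|\arg|$ of each factor by $O(\Delta\varepsilon_I)$ plus the argument of a $\lambda$-power. This yields $|\Im F| = O(n)$ after normalizing the branch so that $F(\vec 1)\in\mathbb R$.

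Since $R$ depends only on $\Delta$, all constants are $O_{q,\Delta}(1)$, and this concludes the proof. I expect this uniform control of $\Im F$ to be the only mildly delicate step; everything else is direct substitution.
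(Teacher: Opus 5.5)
Your proposal is correct and follows the paper's proof. Both write $\E_{\vl}[X_i]$ and $\Cov_{\vl}(X_i,X_j)$ as derivatives of $F=\log Z$ via \autoref{lemma:expected_values_are_derivatives} and \autoref{lemma:covariance_is_double_derivative}, then bound them with the Cauchy estimates of \autoref{lem:cauchy_bounds_logZ} and the linear bound of \autoref{lemma: log Z is linear}. Your extra care about the $3R/4$ restriction, the uniformity of $\sup|F|$ over $\mathcal D_R$, and an $O(n)$ bound on $\Im F$ for a normalized branch (which the paper only gestures at) is sound, and supplies details the paper leaves implicit rather than changing the argument.
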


\begin{proof}
    From \autoref{lemma:expected_values_are_derivatives} 
\[  \E_{\vl}[X_i] = \lambda_i \frac{\partial F(\vl)}{\partial \lambda_i}  \] and 
from \autoref{lemma:covariance_is_double_derivative}
\[\Cov_{\vl}(X_i,X_j)
=\lambda_i\lambda_j\,\partial_{ij}F(\vl)
+\delta_{ij}\lambda_i\,\partial_i F(\vl),
\]
thus we get the linear upper bound. 
\end{proof}

Having the upper bounds on the entries of the covariance matrix, it is simple to upper bound its determinant. 
\begin{corollary}[(Upper bound on determinant of covariance matrix)]\label{cor: up bound on det}
Fix a $\vl \in \mathcal D_R$, and let $\nu_{\text{max}}$ be the maximum eigenvalue of $\Sigma$. Then,
    \[
        \nu_{\text{max}} = O_{\vl, q, \Delta}(n),
    \]
    and therefore
    \[ \det(\Sigma) = O_{\vec \lambda, q, \Delta}( n^{q-1}).\]
\end{corollary}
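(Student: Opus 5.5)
The plan is to bound the largest eigenvalue of $\Sigma$ by its entries, and then to bound the determinant by the product of the eigenvalues. Since $\Sigma$ is a real symmetric positive semidefinite $(q-1)\times(q-1)$ matrix, its largest eigenvalue equals its operator norm, and
\[
\nu_{\text{max}} = \|\Sigma\|_{\text{op}} \le \|\Sigma\|_F = \Bigl(\sum_{i,j=1}^{q-1} \Cov_{\vl}(X_i,X_j)^2\Bigr)^{1/2}.
\]
\autoref{cor: expect and cov are O(n)} gives $|\Cov_{\vl}(X_i,X_j)| = O_{\vl,q,\Delta}(n)$ for every pair $i,j\in[q-1]$. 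The Frobenius norm is therefore at most $(q-1)\cdot O_{\vl,q,\Delta}(n)$, which is $O_{\vl,q,\Delta}(n)$ because $q$ is a fixed constant. Equivalently, one could use the Gershgorin bound $\nu_{\text{max}}\le \max_i\sum_j|\Sigma_{ij}|$.

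For the determinant, all eigenvalues $\nu_1,\dots,\nu_{q-1}$ are nonnegative, because $\Sigma$ is a covariance matrix. Hence
\[
\det\Sigma=\prod_{i=1}^{q-1}\nu_i\le \nu_{\text{max}}^{\,q-1}=O_{\vl,q,\Delta}(n^{q-1}).
\]

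The only point needing care is that \autoref{cor: expect and cov are O(n)} is actually applicable at the given $\vl$. The covariance must have a probabilistic meaning, so $\vl$ should be taken real and positive. The derivative bounds of \autoref{lem:cauchy_bounds_logZ} require $\|\vl-\vec 1\|_\infty\le 3R/4$, so that a polydisc of radius close to $R/4$ around $\vl$ stays inside $\mathcal D_R$. In the intended application $\|\vl-\vec1\|_\infty\le R/2$, so both conditions hold. Beyond checking this, I expect no real obstacle: the argument combines the entrywise linear covariance bounds with the elementary fact that an eigenvalue of a fixed-dimension matrix is controlled by its entries.
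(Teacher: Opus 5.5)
Your proof is correct and follows the paper's argument. Both take the entrywise $O(n)$ covariance bounds from \autoref{cor: expect and cov are O(n)}, bound the top eigenvalue of the positive semidefinite matrix $\Sigma$ in terms of its entries, and then use $\det\Sigma\le\nu_{\text{max}}^{q-1}$; the only difference is that the paper bounds the top eigenvalue by $\nu_{\text{max}}\le\operatorname{tr}\Sigma\le(q-1)Cn$, where you use the Frobenius norm or Gershgorin, and your remark that the underlying Cauchy estimates really need $\vl$ real with $\|\vl-\vec 1\|_\infty\le 3R/4$ is a fair caveat that the paper glosses over.
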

\begin{proof}
    From \autoref{cor: expect and cov are O(n)}, we know that there exists a constant $C = C(\vec \lambda, q, \Delta)$ such that $\left\vert \Sigma_{ij} \right\vert \leq Cn$ for all $i,j \in [q-1]$. Since $\Sigma$ is positive semidefinite, its eigenvalues satisfy 
    \[
        0 \leq \nu_i \leq \operatorname{tr} \Sigma \leq (q-1)Cn.
    \]
    This implies as well that
    \begin{align*}
        0 \leq \det \Sigma \leq ((q-1)Cn)^{q-1}
    \end{align*}
\end{proof}
Putting together \autoref{lower bound det} and \autoref{cor: up bound on det}, we get that $\det(\Sigma) = \Theta( n^{q-1})$.

\subsection{Taylor Expansion of the Characteristic Function}\label{subsection: cluster of char}
In this subsection we analyze the logarithm of the characteristic function of $\vec X$ (that is, $\phi(t) \defeq \E[e^{i\langle t, \vec X\rangle}]$) and show that, for small $t$, the first two cumulants dominate. Note that $\phi : \mathbb R^{q-1} \longrightarrow\mathbb C$, since we are using the notation $\vec X = (X_1, \dots, X_{q-1})$. We will use the notation $\tilde \phi$ for the characteristic function of the full vector $(X_1, \dots, X_q)$, which now takes as input $t \in \mathbb R^q$.
 
Let us first observe that we can rewrite the characteristic function in terms of the partition function:
\begin{observation}\label{obs: char function part} 
For $t\in\mathbb R^q$
    \[\tilde \phi(t) = \frac{Z_G(\lambda_1 e^{it_1},\dots,\lambda_q e^{it_q})}{Z_G(\vec{\lambda})}\]
\end{observation}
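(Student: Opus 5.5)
The observation follows by writing the characteristic function as a sum over colorings and absorbing each phase factor into the fugacities. Fix $\vec\lambda \in \mathbb R^q \cap \mathcal D_R$ and $t\in\mathbb R^q$.

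First, unfold the expectation against the Gibbs measure of Definition~\ref{def:color_weighted_potts_model}:
\[
\tilde\phi(t)=\sum_{\sigma\text{ proper}} e^{i\sum_{k} t_k X_k(\sigma)}\,\frac{\prod_{k=1}^q \lambda_k^{X_k(\sigma)}}{Z_G(\vec\lambda)},
\]
where $X_k(\sigma)=|\sigma^{-1}(k)|$.

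Second, since each $X_k(\sigma)$ is a nonnegative integer, we have $e^{it_kX_k(\sigma)}=(e^{it_k})^{X_k(\sigma)}$ exactly, with no branch issues. The summand's numerator therefore becomes $\prod_k (\lambda_k e^{it_k})^{X_k(\sigma)}$.

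Third, summing over proper colorings gives exactly the polynomial $Z_G$ evaluated at $(\lambda_1e^{it_1},\dots,\lambda_qe^{it_q})$. This holds as a polynomial identity in complex arguments, so no zero-freeness is needed for the numerator. The denominator $Z_G(\vec\lambda)$ is a positive real, because a proper coloring exists when $q\ge 2\Delta\geq\Delta+1$ and all weights are positive.

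No step is a real obstacle. The only point needing care is the integrality of the exponents in the second step, which keeps the identity valid for all real $t$, not just small $t$. Taking $\log$ of this identity is where zero-freeness will matter, and that is deferred to the subsequent analysis.
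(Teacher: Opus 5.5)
Your proof is correct and is the same direct computation the paper implicitly relies on; the paper states this as an observation without writing a proof. You expand the expectation over proper colorings and absorb each phase $e^{it_k}$ into the fugacity $\lambda_k$, using that the exponents $X_k(\sigma)$ are nonnegative integers, and you rightly note that the identity needs no zero-freeness, which only matters later when taking $\log\tilde\phi$.
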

Next, we show that if $\vec \lambda$ is not close to the boundary of the zero-freeness region and $t$ is small enough, then the point $(\lambda_1e^{it_1}, \dots, \lambda_q e^{it_q})$ will still be in the zero-freeness region, so $Z_G(\lambda_1 e^{it_1},\dots,\lambda_q e^{it_q}) \neq 0$ and hence $\log \tilde \phi(t)$ will be analytic. 
\begin{lemma}\label{zero-freeness for char fun with small t}
Given a constant $C > 0$, let $\vec{\lambda}\in\mathbb{C}^q$ satisfy $\|\vec{\lambda} - \vec{1}\|_{\infty} < \frac{C}{2}$. Then, for every $t\in\mathbb{C}$ with
$|t|\le \frac{C}{4 + 2C}$ 
we have
\[    \bigl|\lambda_j e^{i t} - 1\bigr| < \frac{3C}{4}
    \qquad \text{for all } j\in[q].\]
\end{lemma}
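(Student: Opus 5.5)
The plan is a direct triangle-inequality estimate: split $\lambda_j e^{it}-1$ into a part controlled by the hypothesis on $\vec\lambda$ and a part controlled by the size of $t$. For each $j\in[q]$ write
\[
\lambda_j e^{it}-1=\lambda_j\bigl(e^{it}-1\bigr)+(\lambda_j-1),
\qquad\text{so}\qquad
\bigl|\lambda_j e^{it}-1\bigr|\le |\lambda_j|\,\bigl|e^{it}-1\bigr|+|\lambda_j-1|.
\]
The hypothesis $\|\vec\lambda-\vec 1\|_\infty<C/2$ gives $|\lambda_j-1|<C/2$ and hence $|\lambda_j|<1+C/2=\frac{2+C}{2}$. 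The threshold $C/(4+2C)$ is chosen precisely so that
\[
\frac{2+C}{2}\cdot\frac{C}{4+2C}=\frac{C}{4}.
\]
Therefore, as soon as we know $|e^{it}-1|\le |t|\le C/(4+2C)$, we obtain
\[
\bigl|\lambda_j e^{it}-1\bigr|<\frac{C}{4}+\frac{C}{2}=\frac{3C}{4}.
\]
The inequality is strict because the bound $|\lambda_j-1|<C/2$ is strict.

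\textbf{Main obstacle: the bound $|e^{it}-1|\le|t|$.} For real $t$ this is immediate:
\[
\bigl|e^{it}-1\bigr|=2\,\bigl|\sin(t/2)\bigr|\le |t|,
\]
or, equivalently, $\bigl|\int_0^t ie^{is}\,ds\bigr|\le |t|$ because $|e^{is}|=1$ along the real segment. For genuinely non-real $t$ this step fails. The best general bound is $|e^{it}-1|\le e^{|t|}-1$, which exceeds $|t|$.

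\textbf{The claim fails for non-real $t$.} Since the slack $C/2-|\lambda_j-1|$ can be made arbitrarily small, the conclusion is actually false there. For example, take $t=-i|t|$ with $|t|=C/(4+2C)$, and take $\lambda_j$ real and close to $1+C/2$. Then
\[
\lambda_j e^{it}-1=\lambda_j e^{|t|}-1 \;\longrightarrow\; \Bigl(1+\tfrac{C}{2}\Bigr)e^{|t|}-1,
\]
and the limit is strictly larger than
\[
\Bigl(1+\tfrac{C}{2}\Bigr)\bigl(1+|t|\bigr)-1=\frac{3C}{4}.
\]
So the statement can only be proved as intended, namely for $t\in\mathbb R$. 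This is exactly the range needed for the characteristic function in \autoref{obs: char function part}, where each $\lambda_j$ is multiplied by $e^{it_j}$ with $t_j\in\mathbb R$. I will prove it in that form and note that the "$t\in\mathbb C$" in the statement should read "$t\in\mathbb R$".

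\textbf{Intended application.} With this reading, the lemma combined with \autoref{corollary:zero-freeness-radius} (taking $C=R$) places $(\lambda_1e^{it_1},\dots,\lambda_qe^{it_q})$ inside $\mathcal D_R$. This makes $\log\tilde\phi$ analytic for real $t$ in a box of side $R/(4+2R)$, which is all that is used afterwards.
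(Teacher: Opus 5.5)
Your proof is correct and is essentially the paper's argument: the same split $|\lambda_j e^{it}-1|\le|\lambda_j-1|+|\lambda_j|\,|e^{it}-1|$, the bound $|\lambda_j|<1+C/2$, and $|e^{iu}-1|\le|u|$ for real $u$. Your remark about the hypothesis is also right: the paper's proof likewise uses $|e^{iu}-1|\le|u|$ only for $u\in\mathbb{R}$, your example $t=-i\,C/(4+2C)$ with real $\lambda_j\uparrow 1+C/2$ shows the statement genuinely fails for non-real $t$, and only real $t$ is needed in \autoref{lem:log_phi_expansion}.
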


\begin{proof}
Note that
\begin{align*}
    \bigl|\lambda_j e^{i t} - 1\bigr|
    \le
    |\lambda_j - 1|
    +
    |\lambda_j|\,|e^{i t} - 1|. 
\end{align*} 

Combining that $|\lambda_j|  \le |\lambda_j - 1| + 1$ with the inequality $ |e^{iu} - 1| \le |u|$, which holds for any $u \in \mathbb R$, we obtain
\[   \bigl|\lambda_j e^{i t} - 1\bigr|  < \frac{C}{2} +\Bigl(1+\frac{C}{2}\Bigr)|t|.\]
If $|t|\le \frac{C}{4 + 2C}$, then
\[    \bigl|\lambda_j e^{i t} - 1\bigr| < \frac{3C}{4}.\]
\end{proof}

\begin{lemma}[(Third-order Taylor remainder bound for $\log Z$)]
\label{lem:taylor_remainder_logZ_lambda}
Let $\vec \lambda, \vec \lambda' \in \mathbb C^q$ satisfy $\|\vec \lambda - \vec 1\|_\infty < \frac{3R}{4}$ and $\|\vec \lambda' - \vec 1 \|_\infty < \frac{3R}{4}$. Then $F$ admits the Taylor expansion
\[F(\vec\lambda')
=
F(\vl)
+
\sum_{j=1}^q \partial_j F(\vl)\,(\lambda'_j-\lambda_j)
+
\frac12
\sum_{j,k=1}^q
\partial_{jk}F(\vl)
(\lambda'_j-\lambda_j)(\lambda'_k-\lambda_k)
+
R_3,
\]
where the remainder satisfies
\[
|R_3|
\le
C\,n
\sum_{j,k,\ell=1}^q
|\lambda'_j-\lambda_j|\,
|\lambda'_k-\lambda_k|\,
|\lambda'_\ell-\lambda_\ell|,
\]
for some constant $C=C(R, q, \Delta)$.
\end{lemma}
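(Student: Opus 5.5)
The plan is to restrict $F$ to the complex line segment joining $\vec\lambda$ and $\vec\lambda'$ and apply the one-variable Taylor formula with integral remainder. The third derivatives that appear there will be controlled by \autoref{lem:cauchy_bounds_logZ}.

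\textbf{Step 1: reduce to one variable.} Set $\vec\gamma(s) := \vec\lambda + s(\vec\lambda'-\vec\lambda)$ for $s\in[0,1]$. Since both endpoints lie in the convex set $\{\|\vec\xi-\vec 1\|_\infty < 3R/4\}$, the whole segment does too. Define $g(s) := F(\vec\gamma(s))$. This function is well defined and smooth: by \autoref{remark:existence-branch-for-logZ}, $F$ is analytic on $\mathcal D_R$, which contains the segment. By the chain rule,
\[
g'(s)=\sum_j \partial_jF(\vec\gamma(s))\,h_j, \qquad g''(s)=\sum_{j,k}\partial_{jk}F(\vec\gamma(s))\,h_jh_k,
\]
\[
g'''(s)=\sum_{j,k,\ell}\partial_{jk\ell}F(\vec\gamma(s))\,h_jh_kh_\ell,
\]
where $\vec h := \vec\lambda'-\vec\lambda$.

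\textbf{Step 2: apply Taylor's formula.} Taylor's formula with integral remainder for the function $g:[0,1]\to\mathbb C$ is valid for complex-valued functions, applied to the real and imaginary parts or directly. It gives
\[
g(1)=g(0)+g'(0)+\tfrac12 g''(0)+R_3, \qquad R_3=\int_0^1 \frac{(1-s)^2}{2}\,g'''(s)\,ds.
\]
This reproduces exactly the stated expansion. It also yields $|R_3|\le \tfrac16\sup_{s\in[0,1]}|g'''(s)|$.

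\textbf{Step 3: bound the remainder.} Every point $\vec\gamma(s)$ satisfies $\|\vec\gamma(s)-\vec 1\|_\infty\le 3R/4$, which is precisely the hypothesis of \autoref{lem:cauchy_bounds_logZ}. That lemma gives
\[
|\partial_{jk\ell}F(\vec\gamma(s))|\le (4/R)^3\sup_{\mathcal D_R}|F|.
\]
By \autoref{lemma: log Z is linear}, $\sup_{\mathcal D_R}|F| = O(n)$. Substituting into the formula for $g'''$ gives
\[
|R_3|\le \tfrac16 (4/R)^3\,\Big(\sup_{\mathcal D_R}|F|\Big)\sum_{j,k,\ell}|h_j||h_k||h_\ell|,
\]
which has the claimed form.

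\textbf{Main obstacle.} The only delicate point is that the constant $C$ must depend only on $(R,q,\Delta)$ and must be uniform over $\mathcal D_R$. \autoref{lemma: log Z is linear} is phrased with a $\vec\lambda$-dependent constant, so that dependence has to be removed. For this I would revisit its proof. The upper bound $|Z|\le (q(1+R))^n$ is uniform. The lower bound from \autoref{corollary:lower-bound-norm-partition-function} is also uniform once we use $\lambda_{\min}\ge 1-R$ and $\lambda_{\max}\le 1+R$.

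Together these give $\bigl|\ln|Z|\bigr|\le c(R,q,\Delta)\,n$. To bound $|\arg Z|$, i.e. the imaginary part of the chosen branch, I would integrate $\nabla F$ along a path from $\vec 1$, where $F$ is real. Alternatively, one can accept a bound of the form $\sup|F|\le c\,n$ by shrinking the domain slightly. If needed, I would apply \autoref{lem:cauchy_bounds_logZ} on a slightly smaller polydisc, using only the uniform bound on $\Re F$ together with a Borel–Carathéodory-type argument. This route controls $\sup|F|$ linearly in $n$ and still yields $C=C(R,q,\Delta)$.
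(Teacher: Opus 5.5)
Your proposal is correct and follows the paper's proof essentially verbatim: Taylor's formula with integral remainder along the segment (which stays in the $3R/4$ polydisc by convexity), then the Cauchy estimates of \autoref{lem:cauchy_bounds_logZ} combined with the linear bound of \autoref{lemma: log Z is linear}. Your uniformity discussion addresses a point the paper glosses over (its remark that $\log Z$ and $\log\lvert Z\rvert$ ``differ by a constant factor depending on the argument'' is loose for the analytic branch, whose imaginary part need not stay bounded); of your two fixes, the Borel--Carath\'eodory one is the one that closes this cleanly (normalize $F(\vec 1)\in\mathbb R$ and use only the uniform bound on $\Re F=\ln\lvert Z\rvert$ to get $\lvert F\rvert=O(n)$ on a slightly smaller polydisc), whereas integrating $\nabla F$ from $\vec 1$ would first need an independent $O(n)$ bound on $\nabla F$.
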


\begin{proof}
Let $\vec h:=\vec\lambda'-\vl$. Taylor's theorem with integral remainder gives the claimed expansion with remainder
\[R_3 = \sum_{j,k,\ell} h_j h_k h_\ell
\int_0^1 \frac{(1-s)^2}{2}\, \partial_{jk\ell}F(\vl+s \vec h)\,ds.\]
For $s \in [0,1]$, the convexity of the $\ell_\infty$ ball implies that $\|\vec \lambda + s \vec h\|_\infty < \frac{3R}{4}$. Hence, by \autoref{lem:cauchy_bounds_logZ}, there exists a $C'(R, q, \Delta)$ such that  
\[|R_3|\le \sum_{j,k,\ell} |h_j h_k h_\ell| \int_0^1 \frac{(1-s)^2}{2} C' n \, ds
=
\frac{1}{6} C'n \sum_{j,k,\ell}
|\lambda'_j-\lambda_j|\,
|\lambda'_k-\lambda_k|\,
|\lambda'_\ell-\lambda_\ell|.\]
\end{proof}

\begin{lemma}[(Expansion of the characteristic function)]\label{lem:log_phi_expansion}
Let $\vl, \vec t \in \mathbb R^q$ such that $\|\vl-\vec{1}\|_\infty < \frac{R}{2}$ and
$\|t\|_\infty\le \frac{R}{4 + 2R}$. Then,
\[\log\tilde \varphi(t)
=
i\langle t,\mathbb E_{\vl} \vec X\rangle
-\frac12\, t^\top \mathrm{Cov}_{\vl}(\vec X)\, t
+R_3(t),
\]
where $\vec X = (X_1, \dots, X_q)$ and
\[
|R_3(t)|\le C\,n\,\|t\|_\infty^3,
\]
for some constant $C=C(R,q, \Delta)$.
\end{lemma}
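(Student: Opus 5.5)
The plan is to write $\log\tilde\varphi(t)=F(\vec\lambda')-F(\vec\lambda)$, where $\lambda'_j:=\lambda_je^{it_j}$, and then apply the Taylor expansion of \autoref{lem:taylor_remainder_logZ_lambda}. I will not expand in $\vec\lambda'-\vec\lambda$ directly. The cleaner route is to introduce the single-variable-per-coordinate function
\[
g(\vec s):=F(\lambda_1e^{s_1},\dots,\lambda_qe^{s_q}),\qquad \vec s\in\mathbb C^q,
\]
which is analytic wherever $\|(\lambda_je^{s_j})_j-\vec 1\|_\infty<R$. Then $\log\tilde\varphi(t)=g(i\vec t)-g(\vec 0)$ by \autoref{obs: char function part}. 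The branch is consistent because both sides are continuous in $t$, agree at $t=0$, and are logarithms of the same function on a connected set.

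First I check the domain. By \autoref{zero-freeness for char fun with small t} with $C=R$, every $\vec s$ with $\|\vec s\|_\infty\le R/(4+2R)$ (complex allowed, since the lemma is stated for $t\in\mathbb C$; apply it with $t=-is_j$) gives $|\lambda_je^{s_j}-1|<3R/4$. So $g$ is analytic on a polydisc of radius $\rho_0:=R/(4+2R)$ around $\vec 0$. On that polydisc, \autoref{lemma: log Z is linear} gives $|g|\le \sup_{\mathcal D_R}|F|=O(n)$.

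Second, I identify the low-order coefficients. By the chain rule, $\partial_{s_j}g(\vec 0)=\lambda_j\partial_jF(\vec\lambda)=\E[X_j]$ by \autoref{lemma:expected_values_are_derivatives}. Also
\[
\partial_{s_j}\partial_{s_k}g(\vec 0)=\lambda_j\lambda_k\partial_{jk}F(\vec\lambda)+\delta_{jk}\lambda_j\partial_jF(\vec\lambda)=\Cov(X_j,X_k)
\]
by \autoref{lemma:covariance_is_double_derivative}. Substituting $\vec s=i\vec t$, the first-order term becomes $i\langle t,\E\vec X\rangle$ and the second-order term becomes $\tfrac12\sum_{j,k}(it_j)(it_k)\Cov(X_j,X_k)=-\tfrac12 t^\top\Cov(\vec X)t$, as claimed.

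Third, I bound the remainder, which is the main (mild) obstacle. Taylor's theorem with integral remainder along the segment $s\mapsto s\,i\vec t$ gives
\[
R_3=\sum_{j,k,\ell}(it_j)(it_k)(it_\ell)\int_0^1\tfrac{(1-s)^2}{2}\,\partial_{jk\ell}g(s\,i\vec t)\,ds .
\]
To bound the third derivatives of $g$ uniformly, I apply the Cauchy estimate exactly as in \autoref{lem:cauchy_bounds_logZ}, but to $g$ on the $\vec s$-polydisc. This needs a little room: for points with $\|\vec s\|_\infty\le \rho_0/2$, use polydiscs of radius $\rho_0/2$. These stay inside the region where $g$ is analytic and bounded by $O(n)$, giving $|\partial_{jk\ell}g|\le (2/\rho_0)^3O(n)$.

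The statement assumes $\|t\|_\infty\le\rho_0$ rather than $\rho_0/2$, so I must handle the full range. For that, I note that $\lambda_je^{s_j}$ for real $\vec\lambda$ with $|\lambda_j-1|<R/2$ and $|s_j|\le\rho_0$ actually lands strictly inside $3R/4$ by the strict inequality in the lemma. Alternatively, I shrink the constant: analyticity persists on a slightly larger polydisc where $|\lambda_je^{s_j}-1|<R$. That holds for $|s_j|\le R/(2+R)$ by the same computation as in \autoref{zero-freeness for char fun with small t} with $C/2$ replaced by $R/2$ in the target. Since $R/(2+R)$ exceeds $\rho_0$ by a constant factor, Cauchy estimates with radius $R/(2+R)-\rho_0$ work throughout. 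Summing the $q^3$ terms then gives $|R_3(t)|\le C(R,q,\Delta)\,n\,\|t\|_\infty^3$.
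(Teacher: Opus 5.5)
Your proposal is correct, but it organizes the computation differently from the paper.

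\textbf{The paper's route.} It Taylor-expands $F$ in the original coordinates around $\vec\lambda$, reusing its $\lambda$-space remainder lemma, whose proof runs along straight segments by convexity of the $3R/4$ ball. It then substitutes $\lambda_j(e^{it_j}-1)=\lambda_j\bigl(it_j-\tfrac12 t_j^2+z_{j,3}\bigr)$. Identifying the terms requires noticing that the $-\tfrac12 t_j^2\E[X_j]$ from the first-order term cancels the $+\tfrac12 t_j^2\E[X_j]$ produced by the diagonal correction $\lambda_j^2\partial_{jj}F=\Var(X_j)-\E[X_j]$. The leftover cross terms are absorbed using the $O(n)$ bounds on means and covariances.

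\textbf{Your route.} You pass to logarithmic coordinates $g(\vec s)=F(\lambda_1e^{s_1},\dots,\lambda_qe^{s_q})$. There the first two derivatives at $\vec 0$ are \emph{exactly} the mean and covariance, so there is no cancellation to track, and the same argument gives all higher cumulants as $\partial^k g(\vec 0)$. The price is that you cannot quote the paper's remainder lemma verbatim, since your path $u\mapsto\lambda_je^{iut_j}$ is an arc in $\lambda$-space. You must redo the Cauchy estimate for $g$ on a polydisc in $\vec s$, which forces you to handle complex $\vec s$.

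\textbf{A small correction in that step.} \autoref{zero-freeness for char fun with small t} is stated for complex $t$, but its proof uses $|e^{iu}-1|\le|u|$, which holds only for real $u$. For complex $\vec s$ you only get $|e^{s_j}-1|\le e^{|s_j|}-1$. So neither the bound $3R/4$ at radius $\rho_0=\tfrac{R}{4+2R}$ nor your enlarged radius $\tfrac{R}{2+R}$ is literally justified. This is easy to fix.
\begin{itemize}
\item Set $x:=\tfrac{R}{2+R}$ and $\rho_1:=\log(1+x)$. Then $(1+\tfrac R2)(e^{\rho_1}-1)=\tfrac R2$.
\item Hence $|\lambda_je^{s_j}-1|<\tfrac R2+\tfrac R2=R$ whenever $|s_j|\le\rho_1$, so $g$ is analytic and $|g|\le\sup_{\mathcal D_R}|F|=O(n)$ there.
\item Since $\rho_0=x/2$, you get $\rho_1-\rho_0\ge x-\tfrac{x^2}{2}-\tfrac x2=\tfrac{x(1-x)}{2}>0$, a constant depending only on $R$.
\item Cauchy estimates with radius $\rho_1-\rho_0$ then bound the third derivatives of $g$ by $O(n)$ uniformly over the whole range $\|\vec t\|_\infty\le\rho_0$, which is what you need.
\end{itemize}
Your first alternative, appealing to the strict inequality, gives no uniform margin and should be dropped; the enlarged-polydisc argument is the one that carries the proof.
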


\begin{proof}
Let $\vec\lambda' := (\lambda_1 e^{it_1}, \dots, \lambda_q e^{it_q})$. Recall from \autoref{obs: char function part} that
\[\log\tilde\varphi(t) = F(\vec \lambda')-F(\vec \lambda),\]

By \autoref{zero-freeness for char fun with small t}, $\|\vec \lambda' - \vec 1\|_\infty < \frac{3R}{4}$. Hence,  \autoref{lem:taylor_remainder_logZ_lambda} implies that
\begin{align*}
F(\vec \lambda')
= F(\vl)
&+ \sum_{j=1}^q \partial_j F(\vl)\,
(\lambda_j e^{it_j}-\lambda_j)\\
&
+
\frac12\sum_{j,k=1}^q \partial_{jk}F(\vl)
(\lambda_j e^{it_j}-\lambda_j)
(\lambda_k e^{it_k}-\lambda_k) +R_3,
\end{align*}
where
\[|R_3| \le
C\,n \sum_{j,k,\ell = 1}^q |\lambda_j(e^{it_j}-1)|
|\lambda_k(e^{it_k}-1)| |\lambda_\ell(e^{it_\ell}-1)|. \]

For each $j \in [q]$, there exist $z_{j,1}, z_{j,2}, z_{j,3} \in \mathbb C$ with $\left \vert z_{j,m} \right\vert \leq \left \vert t_j \right\vert^m$ such that 
\[e^{it_j}-1 = z_{j,1} = it_j + z_{j, 2} =  i t_j -\frac12 t_j^2 + z_{j, 3}.\]

Substituting that into the expressions above, we can recognize some of the terms to be expectation and covariances. For example, recalling \autoref{lemma:expected_values_are_derivatives}, we obtain
\[\sum_j \partial_j F(\vl)\,
(\lambda_j e^{it_j}-\lambda_j)
= \E_{\vl}[X_j] \left(it_j - \frac 1 2 t_j^2 + z_{j,3} \right).\]

Using the covariance identities from \autoref{lemma:covariance_is_double_derivative}:
\[\lambda_j\lambda_k\partial_{jk}F(\vl)
= \begin{cases}
    \Cov_{\vl}(X_j,X_k),\quad &\text{if $j \neq k$} \\\\
    \Var_{\vl}(X_j)-\mathbb E_{\vl}[X_j], \quad &\text{if $j = k$}
\end{cases}
\]
we obtain
\begin{align*}
\frac12\sum_{j,k}
\partial_{jk}F(\vl)
&(\lambda_j e^{it_j}-\lambda_j)
(\lambda_k e^{it_k}-\lambda_k)\\
&= \frac{1}{2} \sum_{j, k} \cov_{\vl}(X_j, X_k) (it_j + z_{j,2})(it_k + z_{k,2}) - \frac{1}{2} \sum_{j} \E_{\vl}[X_j](it_j + z_{j,2})^2 \\
&=
-\frac12\sum_{j,k}
t_j t_k\,\mathrm{Cov}_{\vl}(X_j,X_k)
+\frac12\sum_j t_j^2\,\mathbb E_{\vl}[X_j]
+O(n\|t\|_\infty^3),
\end{align*}
where we have used the linear bounds on the expected value and covariance from \autoref{cor: expect and cov are O(n)}.

Finally, the Taylor remainder satisfies
\[|R_3| \le C\,n\sum_{j,k,\ell}\left(1 + \frac{R}{2}\right)^3|z_{j,1}z_{k,1}z_{\ell, 1}|
= O(n \|t\|_\infty^3).\]

Combining all terms,
\[\log\tilde \varphi(t)
=
i\langle t,\mathbb E_{\vl} \vec X\rangle
-\frac12\, t^\top \mathrm{Cov}_{\vl}(\vec X)\, t
+O(n\|t\|_\infty^3).\]
\end{proof}

The lemma above can be adapted straightforwardly to the characteristic function with $q-1$ coordinates:
\begin{corollary}\label{corollary:log_phi_expansion_with_q-1_coordinates}
    Let $\vl \in \mathbb R^q$ and $\vec t \in \mathbb R^{q-1}$ such that $\|\vl-\vec{1}\|_\infty < \frac{R}{2}$ and
    $\|t\|_\infty\le \frac{R}{4 + 2R}$. Then,
    \[
        \log \varphi(t) = i\langle t,\mathbb E_{\vl} \vec X\rangle -\frac12\, t^\top \mathrm{Cov}_{\vl}(\vec X)\, t+R_3(t),
    \]
    where $\vec X = (X_1, \dots, X_{q-1})$ and 
    \[
        |R_3(t)|\le C\,n\,\|t\|_\infty^3,
    \]
    for some constant $C=C(R,q, \Delta)$.
\end{corollary}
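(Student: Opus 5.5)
The corollary follows from \autoref{lem:log_phi_expansion} by restricting to vectors whose last coordinate is zero. Given $t \in \mathbb R^{q-1}$, define $\tilde t := (t_1, \dots, t_{q-1}, 0) \in \mathbb R^q$. Then $\langle \tilde t, (X_1,\dots,X_q)\rangle = \langle t, \vec X\rangle$ pointwise for every coloring, so $\varphi(t) = \tilde\varphi(\tilde t)$ as functions. Moreover $\|\tilde t\|_\infty = \|t\|_\infty \le \frac{R}{4+2R}$, so $\tilde t$ satisfies the hypothesis of \autoref{lem:log_phi_expansion}. The same holds for $\vl$, since its hypothesis is unchanged.

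Applying \autoref{lem:log_phi_expansion} at $\tilde t$ gives
\[
\log\tilde\varphi(\tilde t) = i\langle \tilde t, \E_{\vl}(X_1,\dots,X_q)\rangle - \tfrac12 \tilde t^\top \Cov_{\vl}(X_1,\dots,X_q)\,\tilde t + R_3(\tilde t),
\]
with $|R_3(\tilde t)| \le C n \|\tilde t\|_\infty^3 = Cn\|t\|_\infty^3$. Because $\tilde t_q = 0$, the $q$-th coordinate drops out of both main terms. The linear term equals $i\langle t, \E_{\vl}\vec X\rangle$. The quadratic form involves only the upper-left $(q-1)\times(q-1)$ block of the full covariance matrix, and that block is exactly $\Cov_{\vl}(\vec X) = \Sigma$.

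The only point needing care is the branch of the logarithm. In \autoref{lem:log_phi_expansion}, $\log\tilde\varphi$ means $F(\vl') - F(\vl)$ with $F$ the analytic branch of $\log Z$ on $\mathcal D_R$. We define $\log\varphi(t)$ to be this same quantity evaluated at $\tilde t$. This is a legitimate choice: $\vl' = (\lambda_1e^{it_1},\dots,\lambda_{q-1}e^{it_{q-1}},\lambda_q)$ lies in $\mathcal D_{3R/4}$ by \autoref{zero-freeness for char fun with small t}, so $Z$ is nonzero there. The resulting function is continuous in $t$ and vanishes at $t=0$, so it is the natural branch of $\log\varphi$. Substituting these identifications yields the claimed expansion with the same constant $C = C(R,q,\Delta)$. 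I do not expect any real obstacle beyond this bookkeeping.
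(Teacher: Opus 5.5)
Your proposal is correct and is essentially the paper's proof: pad $t$ with a zero last coordinate, note that $\varphi(t)=\tilde\varphi(\tilde t)$ and $\|\tilde t\|_\infty=\|t\|_\infty$, and apply \autoref{lem:log_phi_expansion}. Your remarks that the quadratic form only sees the upper-left $(q-1)\times(q-1)$ block, and that $\log\varphi$ is inherited as $F(\vl')-F(\vl)$, make explicit what the paper leaves implicit.
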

\begin{proof}
    Apply \autoref{lem:log_phi_expansion} with $\tilde t = (t_1, \dots, t_{q-1}, 0) \in \mathbb R^q$, noting that $\varphi(t) = \tilde \varphi(\tilde t)$.
\end{proof}

\subsection{Upper Bounds on the Characteristic Function}\label{subsection: upperbound char}
In this subsection we will use an approach of Jain, Perkins, Sah, and Sawhney \cite{jain2022approximate} to upper bound the characteristic function. We use their structure so the reader can see the differences if desired. Note that in our case we have to deal with a multidimensional object. 
\begin{lemma}\label{lemma: characteristic upperbound}
    For any $q\geq \max\{2\Delta, 3\}$, $\Delta \geq 1$ 
    there exists a constant $c = c(\Delta, q, \vl)$ with the following property. 
    For all $t \in [- \pi , \pi]^{q-1}$ the following holds:
    \[ |\E[e^{i{\langle}t, \vec X{\rangle}}]| \leq \exp(-c n \|t\|^2 )  \]
\end{lemma}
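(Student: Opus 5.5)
We follow the conditioning strategy of the proof of \autoref{lemma:variance_of_linear_combination_color_class_sizes}. By \autoref{lemma: separation}, choose $S=\{v_1,\dots,v_s\}$ with $s=\Omega(n/\Delta^3)$ and pairwise distances at least $4$. Let $T$ be the set of vertices at distance at least $2$ from $S$, and let $V_j=\{v_j\}\cup N(v_j)$. Conditioned on $\sigma_T$, the colorings $\sigma_{V_j}$ are independent. Writing $X=X(\sigma_T)+\sum_j Q^j$ (restricted to the first $q-1$ coordinates), we get
\[
\bigl|\E[e^{i\langle t,\vec X\rangle}]\bigr|\le \E\Bigl[\prod_{j=1}^s \bigl|\E[e^{i\langle t,Q^j\rangle}\mid\sigma_T]\bigr|\Bigr].
\]
It therefore suffices to show a uniform per-block bound: for every $\sigma_T$ and every $j$,
\[
\bigl|\E[e^{i\langle t,Q^j\rangle}\mid\sigma_T]\bigr|\le \exp(-c'\|t\|^2)
\]
for $t\in[-\pi,\pi]^{q-1}$. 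Since $s=\Omega(n)$, the lemma then follows with $c=c'\Omega(\Delta^{-3})$.

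For the per-block bound we use the same swap as in the Claim. Fix the colors of $N(v_j)$ as the first available colors avoiding two chosen colors $a,b$. Then $v_j$ can take either $a$ or $b$, and each of the two colorings $\tau,\tilde\tau$ has conditional probability at least $p=p(\vl,q,\Delta)>0$. Their contributions to $\langle t,Q^j\rangle$ differ by $t_a-t_b$, where we set $t_q:=0$. For a random variable $W$ taking values $x$ and $y$ with probabilities at least $p$ each, the standard estimate gives
\[
|\E e^{iW}|\le 1-2p\,(1-\cos(x-y))\le 1-c''\,\|x-y\|_{\mathbb R/2\pi}^2,
\]
where $\|\cdot\|_{\mathbb R/2\pi}$ is the distance to $2\pi\mathbb Z$. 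This yields a factor $\exp(-c''\,\|t_a-t_b\|_{\mathbb R/2\pi}^2)$ for any pair $a,b$ we choose. The choice of pair may depend on $t$ but not on $j$ or $\sigma_T$: we need the free colors of the neighbours of $v_j$ to avoid $\{a,b\}$, and this is possible because each neighbour has at most $\Delta-1$ neighbours in $T$ and $q\ge 2\Delta\ge \Delta+1$.

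The main obstacle is choosing $a,b$ so that $\|t_a-t_b\|_{\mathbb R/2\pi}\gtrsim\|t\|$ uniformly on $[-\pi,\pi]^{q-1}$. Taking $b=q$, i.e.\ $t_b=0$, and $a=\arg\max_i|t_i|$ works, since then $\|t_a\|_{\mathbb R/2\pi}=|t_a|\ge\|t\|/\sqrt{q}$ for $t_a\in[-\pi,\pi]$. The only delicate point is periodicity near $|t_a|=\pi$, where distance to $2\pi\mathbb Z$ is still $|t_a|$; this is harmless.

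The remaining issue is whether $v_j$ admits both colors $a$ and $q$. This holds if the neighbourhood colors are chosen avoiding $\{a,q\}$, since each neighbour then has at most $\Delta-1+2\le q-1$ forbidden colors. The hypothesis $q\ge 3$ covers the degenerate case $\Delta=1$, where we need a third color for the neighbour of $v_j$. With $p$ bounded below uniformly by $q^{-(\Delta+1)}(\lambda_{\min}/\lambda_{\max})^{\Delta+1}$, the per-block inequality holds with $c''$ depending only on $\vl,q,\Delta$. Multiplying over the $s$ blocks and taking the expectation over $\sigma_T$ gives $|\E[e^{i\langle t,\vec X\rangle}]|\le\exp(-cn\|t\|^2)$.
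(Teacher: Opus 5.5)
Your argument is essentially the paper's proof: the same separated set $S$, the same conditioning on $\sigma_T$, and the same swap of $v_j$ between a color $a$ and color $q$ with $N(v_j)$ fixed. Your one change is a mild improvement: you use a single direct two-point estimate at $a=\argmax_i|t_i|$, whereas the paper symmetrizes with an independent copy and sums over all $k$, which forces its perturbation trick to make the $t_k$ distinct. One constant needs fixing: the correct two-point bound is $|\E e^{iW}|\le 1-2p\bigl(1-|\cos\tfrac{x-y}{2}|\bigr)\le 1-\tfrac{p}{2}\bigl(1-\cos(x-y)\bigr)$, while your stated $1-2p(1-\cos(x-y))$ fails, e.g., for $W$ uniform on $\{0,\pi\}$; this only changes $c''$.
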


\begin{proof}
    We use the same conditioning strategy as in \autoref{lemma:variance_of_linear_combination_color_class_sizes}. By \autoref{lemma: separation}, we can find a set $S$ of size $s = \Omega(n/\Delta^3)$ such that each of its vertices is at distance at least 4 from the others. Define $T$ to be the set of vertices at distance $2$ or more from $S$. Note that $G[V \setminus T]$ is split into $s$ connected components $V_j := \{v_j\} \cup N(v_j)$, where $v_j \in S$.
    
    We can think of a $\sigma \sim \mu_{G}$ as if generated in a two-step process: first we pick $\sigma_T := \sigma\vert_{T}$ (distributed according to $\mu_{G[T]}$), and then pick $\sigma_{V_j} := \sigma \vert_{V_j}$ for each $V_j$ in a conditional way, so that $\sigma \sim \mu_G$ in total.

     Let $Z = (Z_1, \dots, Z_q)$ be the color class sizes of $\sigma_{T}$ and, for each $j \in [s]$, let $Q^j = (Q_1^j, \dots, Q_{q-1}^j)$ be the first $q-1$ color class sizes of $\sigma_{V_j}$. Define $R^j = (R_1^j, \dots, R_{q-1}^j)$ to be an independent copy of $Q^j$ (conditioning on the same $\sigma_T$).
     
    Note that $|\E[e^{-i{\langle}t, Q^{j}{\rangle}}]|^2 = \E[e^{-i{\langle}t, Q^{j} - R^{j}{\rangle}}]$ since $Q^j$ and $R^j$ are independent. For a fixed $t$, note that $\langle t, Q^j - R^j\rangle$ is symmetric around $0$ (as a random variable). Hence,
    \begin{align*}
          \E[e^{-i{\langle}t, Q^j - R^j{\rangle}}] &= \E[\cos (\langle t, Q^j - R^j \rangle)]  \nonumber \\
          &\leq 1 - \sum_{k = 1}^{q-1} \P[\langle t, Q^j - R^j \rangle = t_k] (1 - \cos (t_k))
    \end{align*} 
    For the last inequality, we would need that the $t_k$ are pairwise different, but we can always guarantee that by taking a small enough perturbation of the vector $t$.

    Let us now lower bound the probability $\P[{\langle}t, Q^j - R^j{\rangle} = t_k]$. We want to show it is at least a constant (which may depend on $\Delta$, $q$ and $\vl$). 
    \begin{claim}
        \[\P[{\langle}t, Q^j - R^j{\rangle} = t_k] = \Omega_{\Delta, q, \vl}(1)~~~ \forall k\in [q-1]\]
    \end{claim}
    \begin{claimproof}
        We may asssume $k=1$, since the proof is identical for any other coordinate. Label the neighbors of $v_j$ arbitrarily as $N(v_j) = \{w_1, \dots, w_d\}$, where $d := \deg(v_j)$. Let $\sigma^Q$ be the coloring of $V_j$ which gives the vector of color class sizes $Q^j$. Similarly, let $\sigma^R$ be the coloring associated to $R^j$. 

        Note that the probability that $\sigma^Q$ or $\sigma^R$ take a specific coloring as value is at least $\lambda_\text{min}^{\Delta + 1} / (q^{\Delta + 1} \lambda_\text{max}^{\Delta +1}) = \Omega_{\Delta, q, \vec \lambda}(1)$, so it is enough to show that there exists one such pair of colorings such that $\langle t, Q^j-R^j\rangle = t_1$. That is achieved by taking $\sigma^Q(v_j) = 1$, $\sigma^R(v_j) = q$, and $\sigma^Q(w_i) = \sigma^R(w_i) = c_i$ for all $i \in [d]$, where $c_i$ is a coloring not appearing in $N(w_i) \cap T$ and different from $1$ and $q$. One such color must exist for each $i \in [d]$, since $q \geq \max(2 \Delta, 3) \geq \Delta + 2$. Note that the constant we obtain does not depend on the conditioning on the coloring of $T$.
    \end{claimproof}

    For all t $\in [-\pi, \pi]$, by Taylor expansion, $\frac{1}{8}t^2 \leq 1 - \cos(t)$. Therefore, we can conclude that \[\E[e^{-i{\langle}t, Q^j - R^j{\rangle}}] \leq 1 - \frac 1 8 \sum_{k = 1}^{q-1} \P[\langle t, Q^j - R^j \rangle = t_k] t_k^2 \leq 1 - C\|t\|^2,\]
    for a certain $C = \Omega_{\Delta, q, \vec \lambda}(1)$. As mentioned before, this constant does not depend on the value of $\sigma_T$ we are conditioning on. Hence, 
    \[
        \left\vert \E[e^{-i \langle t, \vec X \rangle}] \right \vert \leq \max_{\sigma_T} \left \vert \E[e^{-i \langle t, \vec X \rangle} \, \vert \, \sigma_T] \right \vert = \max_{\sigma_T} \prod_{j = 1}^s \left \vert \E[e^{-i \langle t, Q^j \rangle}] \right \vert \leq \left( 1 - C \|t \|^2 \right)^{s/2} \leq \exp(-Cs \|t \|^2 / 2)
    \]
    Finally, since $s = \Omega(n)$, the claimed bound follows.
\end{proof}

\subsection{Proof of \autoref{thm:lclt}}\label{subsection: lclt together}
Before we put the ingredients from the previous sections together, we remind the reader of the standard inversion formula.

\begin{lemma}[(Inversion formula for an integer random vector)]\label{lemma: inv integer}
    For an integer-valued $d$-dimensional random vector $X$, its probability mass function $f_X(x)$ is given by the formula 
    \[ f_X(x) = \frac{1}{(2\pi)^d} \int_{[-\pi, \pi]^d} e^{-i{\langle}t,x{\rangle}} \phi_X(t) dt .\] 
\end{lemma}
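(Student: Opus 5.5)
The statement to prove is the standard inversion formula (\autoref{lemma: inv integer}), so the plan is a direct computation via Fourier orthogonality on the torus. Write the characteristic function as a sum over the support,
\[
\phi_X(t) = \E\bigl[e^{i\langle t, X\rangle}\bigr] = \sum_{y \in \mathbb Z^d} f_X(y)\, e^{i\langle t, y\rangle},
\]
and substitute this into the right-hand side of the claimed formula. The first step is to justify interchanging the sum and the integral. This is immediate from Fubini--Tonelli: the integrand $\bigl|f_X(y) e^{i\langle t, y - x\rangle}\bigr| = f_X(y)$ is summable over $y$ with total mass $1$, and it is integrable over the compact domain $[-\pi,\pi]^d$.

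After the interchange, the right-hand side becomes
\[
\sum_{y\in\mathbb Z^d} f_X(y)\,\frac{1}{(2\pi)^d}\int_{[-\pi,\pi]^d} e^{i\langle t, y-x\rangle}\,dt .
\]
Since the integrand factorizes over coordinates, the inner integral equals
\[
\prod_{k=1}^d \frac{1}{2\pi}\int_{-\pi}^{\pi} e^{i t_k (y_k - x_k)}\,dt_k .
\]
Each one-dimensional factor equals $1$ when $y_k = x_k$. When $y_k \neq x_k$, the difference $m = y_k - x_k$ is a nonzero integer, so $e^{imt}/(im)$ is $2\pi$-periodic and its antiderivative takes equal values at $\pm\pi$, making the factor $0$. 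The product is therefore $\mathbf 1[y = x]$, and the whole sum collapses to $f_X(x)$.

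No step here is a genuine obstacle. The only points needing care are the absolute summability that licenses the interchange, which is trivial since the pmf sums to $1$, and the requirement that $x$ and $y$ be integer vectors so that the orthogonality of the exponentials on $[-\pi,\pi]^d$ applies.
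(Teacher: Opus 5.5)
Your proof is correct: expanding $\phi_X$ as a sum over the support, interchanging sum and integral by Fubini (justified since $\sum_y f_X(y)=1$ and $[-\pi,\pi]^d$ has finite measure), and using orthogonality of $e^{i\langle t, y-x\rangle}$ for $y,x\in\mathbb{Z}^d$ is the standard argument. The paper gives no proof of this lemma; it only recalls the formula as standard before using it in the proof of the local central limit theorem. Your caveat that $x$ must be an integer vector is the right implicit hypothesis.
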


Now we will prove the main result of this section. 

\lclt*

\begin{proof}
By Fourier inversion (\autoref{lemma: inv integer}), 
\[
\mathbb P(\vec X=\vec{n})
=
\frac{1}{(2\pi)^{q-1}}
\int_{[-\pi,\pi]^{q-1}}
e^{-i\langle t, \vec{n}\rangle}\varphi(t)\,dt.
\]

Let $A = \frac{R}{4 + 2R}$ and fix an $\varepsilon \in (0, 1/6)$. We will decompose
\[
[-\pi,\pi]^{q-1}
=
\mathcal R_1\cup\mathcal R_2
\]
where
\[
\mathcal R_1 :=\{t \in [-\pi, \pi]^{q-1} \, : \, \|t\|\le An^{-\frac{1}{2}+\eps}\}\quad \text{and} \quad
\mathcal R_2 := [-\pi,\pi]^{q-1}\setminus \mathcal R_1.
\]
 
By \autoref{lemma: characteristic upperbound}, for $t\in \mathcal R_2$,
\[
|\varphi(t)|
\le
\exp(-c n\|t\|^2)
\le
\exp(-c' n^{2\varepsilon}).
\]
Since $\mathrm{vol}([-\pi,\pi]^{q-1})=O(1)$, it follows that
\[
\int_{\mathcal R_2} |\varphi(t)|\,dt
\le
C \exp(-c n^{2\varepsilon})
=
o\!\left(n^{-(q-1)/2}\right).
\]
Consequently,
\[
\int_{[-\pi,\pi]^{q-1}}
\varphi(t)e^{-i\langle t,\vec{n}-\vec{\mu}\rangle}\,dt
=
\int_{\mathcal R_1}
\varphi(t)e^{-i\langle t,\vec{n}-\vec{\mu}\rangle}\,dt
+
o\!\left(n^{-(q-1)/2}\right).
\]

For the $t \in \mathcal R_1$, since $\|t\|_\infty \leq \|t\| \leq A$, using \autoref{corollary:log_phi_expansion_with_q-1_coordinates}, we have that 
\[\log\varphi(t)=i\langle t,\vec{\mu} \rangle
-\frac12 t^\top \Sigma \,t
+ O(n\|t\|^3),\]

so for any $\|t\|\le An^{-\frac{1}{2}+\eps}$
\[\varphi(t)
=
\exp\!\left(it^\top \vec{\mu}-\tfrac12 t^\top \Sigma t\right)\bigl(1+o(1)\bigr).\]
Therefore,
\[\int_{\mathcal R_1}
e^{-i\langle t, \vec{n}\rangle}\varphi(t)\,dt
=
(1+o(1))\int_{\mathcal R_1}
e^{-\frac12 t^\top \Sigma t - i\langle t,\vec{n}-\vec{\mu}\rangle}\,dt\]

Moreover, we know that $t^\top \Sigma t \geq \nu \|t\|^2$, where $\nu$ is the smallest eigenvalue of $\Sigma$, and from the proof of \autoref{lower bound det} we know that $\nu = \Omega(n)$. Therefore, 
\[
\Big \vert \int_{\mathbb R^{q-1}\setminus \mathcal R_1}
e^{-\frac12 t^\top \Sigma t- i\langle t,\vec{n}-\vec{\mu}\rangle}\,dt \Big \vert
=
o\!\left(n^{-(q-1)/2}\right),
\]
so we can extend the domain of integration to $\mathbb R^{q-1}$ (where it will be easier to integrate). Then,
\[
\int_{\mathcal R_1}
e^{-i \langle t, \vec n \rangle}\varphi(t)\,dt
=
\int_{\mathbb R^{q-1}}
e^{-\frac12 t^\top \Sigma t - i\langle t,\vec{n}-\vec{\mu}\rangle}\,dt
+
o\!\left(n^{-(q-1)/2}\right).
\]

Evaluating the Gaussian integral yields
\[
\int_{\mathbb R^{q-1}}
e^{-\frac12 t^\top \Sigma t - i\langle t,\vec{n}-\vec{\mu}\rangle}\,dt
=
(2\pi)^{-(q-1)/2}
(\det\Sigma)^{-1/2}
\exp\!\left(
-\tfrac12 (\vec{n}-\vec{\mu})^\top \Sigma^{-1} (\vec{n}-\vec{\mu})
\right),
\]

Thus, putting everything together, we get
\[
\mathbb P(\vec X=\vec{n})
=
(1+o(1))\frac{1}{(2\pi)^{(q-1)/2}\sqrt{\det\Sigma}}
\exp\!\left(
-\tfrac12 (\vec{n}-\vec{\mu})^\top\Sigma^{-1}(\vec{n}-\vec{\mu})
\right)
+
o\!\left(n^{-(q-1)/2}\right).
\]
\end{proof}

The LCLT result is enough for us to get the running time for our rejection sampling algorithm. We use a classical result by Vigoda \cite{vigoda1999improved}, which provides an algorithm for approximately sampling a uniform coloring with $q \geq 11\Delta/6$ colors in $O(n \log n)$ steps. 
The LCLT, together with bounds on the determinant of the covariance matrix and its eigenvalues (\autoref{det asymp}), implies that for $\vec x \in \mathbb{Z}_{\geq 0}^{q-1}$ with $|x_i - n/q| \leq c\sqrt{n}$ (where $c>0$ is a constant), $\P(\vec{X}=\vec x)=\Theta\!\left(n^{-(q-1)/2}\right)$. It is easy to show that after $O(n^{(q-1)/2} \log(1/\eps))$ iterations of rejection sampling, the probability of the algorithm to fail is at most $1-\eps$. Hence, running time of the algorithm is $O\!\left(n^{(q+1)/2}\, \log(n)\,\log(\frac{1}{\eps})^2\right)$. 

Note that in this section we assumed that we know $\vl$ such that $\E_{\vl}[\vec{X}] = \vec n$. For sampling equitable (and close to equitable) colorings, it is enough for us to take $\vl = \vec 1$. In the next section, we show how to sample colorings in which the color class sizes are much more skewed.

\section{Sampling skewed colorings}\label{section: skewed colorings}

Recall that we denote by $R$ the zero-freeness radius given by \autoref{corollary:zero-freeness-radius}, and we define the region $\mathcal D_R := \{\vec \lambda \in \mathbb C \, : \, \|\vec \lambda - \vec 1 \|_\infty < R\}$. By \autoref{remark:existence-branch-for-logZ}, we know that $F(\vec \lambda) := \log Z(\vec \lambda)$ can be defined analytically in $\mathcal D_R$.

The results from Section~\ref{section: LCLT} show that to sample colorings with a given vector of color class sizes $\vec n$ it suffices to find a $\vec \lambda$ in the zero-freeness region $\mathcal D_R$ such that
\[
    \|\E_{\vl}[\vec X] - \vec n\| = O(\sqrt n).
\]

Since finding such a $\vec \lambda$ for an arbitrary $\vec n$ is non-trivial, we discretize the search space and consider all candidate vectors of the form $\lambda_q = 1$,   
\[
    \lambda_i =  1 + \frac{1}{\sqrt n}  k_i, \quad \text{where }  k_i \in \{-\lfloor R \sqrt n \rfloor, \dots, \lfloor R \sqrt n \rfloor\}, \, i \in [q-1]
\]
and run the rejection sampling procedure for $O\!\left(n^{(q+1)/2}\, \log(n)\,\log(\frac{1}{\eps})^2\right)$ independently for each candidate $\vec \lambda$. We will show in the following Subsection \ref{subsection: Lipschitzness of the expectation map} that since our discretization is fine enough, at least  one of the $\vl$ values will give $\E_{\vl}[\vec X]$ within distance $\sqrt{n}$ of the desired coloring. There are $\Theta(n^{(q-1)/2})$ candidates, so the overall runtime will be $O\!\left(n^{q}\, \log(n)\,\log(\frac{1}{\eps})^2\right)$.
We show the surjectivity of the expected values on the given zero-freeness region in Subsection \ref{subsection: Surjectiveness of the expectation map}, which shows that for every $\vec{n}$ in our desired set of colorings, there exists $\vl$ which gives $\vec n$ in expectation. This helps us determine which colorings we can sample. Lastly, in Subsection \ref{subsection: Glauber} we show using a path coupling technique that Glauber dynamics mixes fast given $\vl$ from the zero-freeness region.

\subsection{Lipschitzness of the expectation map}\label{subsection: Lipschitzness of the expectation map}
The main statement proved in this section is that there exists a constant $C > 0$ such that for any $\vec \lambda, \vec \lambda' \in \mathcal D_R$,
    \[
        \|\E_{\vl}[\vec X] - \E_{\vl'}[\vec X] \| \leq Cn \|\vec \lambda - \vec \lambda' \|.
    \]

Let us first establish some notation. Scaling the vector $\vec \lambda$ by a positive factor has no effect on the probabilistic model, so we will use the notation $\vec \lambda := (\lambda_1, \dots, \lambda_{q-1})$ and assume that we have an implicit extra coordinate $\lambda_q = 1$. Let $\Psi : \mathbb R^{q-1} \longrightarrow \mathbb R^{q-1}$ be the endomorphism that associates each $\vec \lambda$ with the vector of expected color class sizes it induces:
\[
\Psi(\vec\lambda)
:=
\big(
\mathbb E_{\vec\lambda}[X_1],\dots,\mathbb E_{\vec\lambda}[X_{q-1}]
\big).
\]
We will use $\|\cdot\|$ to denote the $2$-norm, and the associated operator norm in the case of matrices.

\begin{lemma}\label{lemma:jacobian_as_covariance_times_diagonal}
    The Jacobian matrix $J(\vec \lambda) := D \Psi(\vec \lambda)$ satisfies
    \[
        J(\vec \lambda) = \cov_{\vl}(\vec X) \cdot D(\vec \lambda)^{-1},
    \]
    where $D(\vec \lambda) := \operatorname{diag}(\vec \lambda)$.
\end{lemma}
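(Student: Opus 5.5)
The plan is to compute the Jacobian entrywise from the covariance identity already derived in the proof of \autoref{lemma:covariance_is_double_derivative}. Fix $\vec \lambda$ real and positive in $\mathcal D_R$, with the implicit coordinate $\lambda_q = 1$ held constant. Then $\Psi_i(\vec\lambda) = \E_{\vl}[X_i]$ for $i \in [q-1]$, and the entry $J_{ij}$ is $\partial \E_{\vl}[X_i]/\partial \lambda_j$ for $i,j \in [q-1]$.

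The key step is the identity obtained inside the proof of \autoref{lemma:covariance_is_double_derivative}:
\[
\frac{\partial}{\partial \lambda_j}\E_{\vl}[X_i] = \frac{1}{\lambda_j}\Cov_{\vl}(X_i,X_j),
\]
valid for every $i,j\in[q]$. It follows from $\partial_j \P_{\vl}(\sigma) = \lambda_j^{-1}(X_j(\sigma)-\E_{\vl}[X_j])\P_{\vl}(\sigma)$ by summing against $X_i(\sigma)$. The derivative is taken with all other coordinates fixed, including $\lambda_q=1$, so this is exactly the partial derivative defining $J$. I would either cite this directly or redo the two-line computation for completeness.

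Reading the identity as a matrix equation for $i,j\in[q-1]$ gives $J_{ij} = \Sigma_{ij}\,\lambda_j^{-1}$. Multiplying a matrix on the right by $D(\vec\lambda)^{-1}$ scales column $j$ by $\lambda_j^{-1}$, so $J(\vec\lambda) = \Cov_{\vl}(\vec X)\, D(\vec\lambda)^{-1}$. No real obstacle is expected. The only points to check are that $\lambda_j \neq 0$, which holds since $\vec\lambda$ lies in $\mathcal D_R$ and $R<1$, and that fixing $\lambda_q$ does not disturb the derivative. Both are immediate, and differentiability holds because $\Psi$ is a ratio of polynomials with nonvanishing denominator $Z$.
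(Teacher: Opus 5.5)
Your proposal is correct and is essentially the paper's argument. The paper writes $J_{ij} = \lambda_i\partial_{ij}F + \delta_{ij}\partial_i F$ using \autoref{lemma:expected_values_are_derivatives} and compares this with \autoref{lemma:covariance_is_double_derivative} to get $\lambda_j J_{ij} = \Cov_{\vl}(X_i,X_j)$; you instead cite directly the identity $\partial_j \E_{\vl}[X_i] = \lambda_j^{-1}\Cov_{\vl}(X_i,X_j)$ from that lemma's proof, which is the same fact, and then read it column-wise as right multiplication by $D(\vec\lambda)^{-1}$.
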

\begin{proof}
    By \autoref{lemma:expected_values_are_derivatives}, $\E_{\vec \lambda}[X_i] = \lambda_i \partial_i F(\vec \lambda)$, so we can write the entries of the Jacobian as
    \[
        J_{ij}(\vec \lambda) = \partial_j \E_{\vl}[X_i] = \lambda_i \partial_{ij}F(\vec \lambda) + \delta_{ij} \partial_i F(\vec \lambda).
    \]
    Comparing with \autoref{lemma:covariance_is_double_derivative}, we obtain that
    \[
        \lambda_j J_{ij}(\vec \lambda) = \cov_{\vec \lambda}(X_i, X_j),
    \]
    which is equivalent to
    \[
        J(\vec \lambda)  D(\vec \lambda) = \cov_{\vec \lambda}(\vec X)
    \]
\end{proof}

The formula from \autoref{lemma:jacobian_as_covariance_times_diagonal} implies a bound on the norm of the Jacobian:

\begin{lemma}\label{lemma:jacobian_norm_upperbound}
    There exists a constant $C = C(R, q, \Delta) > 0$ such that, for all $\vec \lambda \in \mathcal D_R$,
    \[
        \|J(\vec \lambda)\| \leq C n
    \]
\end{lemma}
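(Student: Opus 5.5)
We use the factorization from \autoref{lemma:jacobian_as_covariance_times_diagonal}, $J(\vec\lambda) = \cov_{\vl}(\vec X)\, D(\vec\lambda)^{-1}$, together with submultiplicativity of the operator norm:
\[
\|J(\vec\lambda)\| \le \|\cov_{\vl}(\vec X)\|\,\|D(\vec\lambda)^{-1}\|.
\]
We bound each factor separately.

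For the diagonal factor, every coordinate satisfies $|\lambda_i - 1| < R$. Since $R = \nu\varepsilon_I/4 < 1/2$, each $|\lambda_i| \ge 1 - R \ge 1/2$. Hence $\|D(\vec\lambda)^{-1}\| = \max_i |\lambda_i|^{-1} \le 1/(1-R) \le 2$, a constant.

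For the covariance factor, the entries are given by \autoref{lemma:covariance_is_double_derivative}: $\Sigma_{ij} = \lambda_i\lambda_j\partial_{ij}F(\vl) + \delta_{ij}\lambda_i\partial_i F(\vl)$. The derivative bounds of \autoref{lem:cauchy_bounds_logZ} control these by $(4/R)^k \sup_{\mathcal D_R}|F|$ for $k\le 2$. Combined with $|\lambda_i| \le 1+R$, every entry satisfies $|\Sigma_{ij}| \le C_1 n$. Bounding the operator norm by the Frobenius norm gives $\|\Sigma\| \le (q-1) C_1 n$, exactly as in \autoref{cor: up bound on det}. Multiplying the two bounds gives $\|J(\vec\lambda)\| \le 2(q-1)C_1 n$.

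The main obstacle is uniformity in $\vec\lambda$. \autoref{lem:cauchy_bounds_logZ} is stated only for $\|\vec\zeta - \vec 1\|_\infty \le 3R/4$, and the constant in \autoref{lemma: log Z is linear} nominally depends on $\vec\lambda$. On the second point: the lower bound from \autoref{corollary:lower-bound-norm-partition-function} involves only $(\lambda_{\min}/\lambda_{\max})^{n\Delta} \ge ((1-R)/(1+R))^{n\Delta}$, and the upper bound is $|Z| \le q^n(1+R)^n$. So $|\Re F| = O(n)$ uniformly on $\mathcal D_R$. The imaginary part is handled by integrating $\nabla F$ from $\vec 1$, or via the argument bounds from the induction. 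Thus $\sup_{\mathcal D_R}|F| \le C_2 n$ with $C_2 = C_2(R,q,\Delta)$.

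On the first point, the Cauchy estimate needs a polydisc inside the zero-free region. This works with the radius shrunk to $\rho = R - \|\vec\lambda - \vec 1\|_\infty$, which degenerates near the boundary. We resolve it as follows. Zero-freeness in fact holds on the closed ball of radius $R$, and validity (\autoref{lemma:lambdas-in-ball-satisfy-assumptions}) holds for any radius up to $\nu\varepsilon_I/4$. So we may apply \autoref{theorem:zero-freeness} on a slightly larger ball, or equivalently redefine the working radius. This gives a uniform margin and hence uniform Cauchy bounds on all of $\mathcal D_R$, yielding $C = C(R,q,\Delta)$. If this enlargement is not permitted, we state the lemma for $\vec\lambda$ with $\|\vec\lambda - \vec 1\|_\infty \le 3R/4$. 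That already covers the candidate grid after rescaling the discretization range, since the sampling algorithm only uses $\vec\lambda$ within $R/2$ of $\vec 1$.
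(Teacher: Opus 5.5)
Your proof is correct and is essentially the paper's: factor $J(\vec\lambda)=\cov_{\vl}(\vec X)\,D(\vec\lambda)^{-1}$, use submultiplicativity, bound $\|D(\vec\lambda)^{-1}\|\le 1/(1-R)$, and bound $\|\cov_{\vl}(\vec X)\|=O(n)$ from the entrywise $O(n)$ covariance bounds (the paper goes through the trace in \autoref{cor: up bound on det}, you go through the Frobenius norm). Your uniformity discussion is a legitimate point the paper glosses over, since it just cites \autoref{cor: up bound on det}, whose constant is nominally $\vec\lambda$-dependent and which rests on Cauchy estimates proved only on the $3R/4$ polydisc; your fixes close that gap and make $C$ depend only on $R,q,\Delta$: control $\Im F$ through the argument bounds from the induction, and either shrink the working radius or use the slack in the proof of \autoref{lemma:lambdas-in-ball-satisfy-assumptions}, which actually gives validity on a somewhat larger polydisc than radius $\nu\varepsilon_I/4$.
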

\begin{proof}
    By \autoref{lemma:jacobian_as_covariance_times_diagonal}, $J(\vec \lambda) = \cov_{\vl}(\vec X) \cdot D(\vec \lambda)^{-1}$, which are both symmetric. Using the submultiplicativity of the operator norm and the fact that the norm of a real symmetric matrix is equal to its largest eigenvalue, we obtain that
    \[
        \| J(\vec \lambda) \| \leq \|\cov_{\vl}(\vec X)\| \cdot \|D(\vec \lambda)^{-1}\| \leq \frac{\tilde C n}{1 - R} 
    \]
    where the constant $\tilde C$ is the one given by \autoref{cor: up bound on det}, and $1-R$ is a lower bound on the smallest coordinate of $\vec \lambda$.
\end{proof}

\begin{lemma}\label{lemma:jacobian-lipschitzness}
    There exists a constant $C = C(R, q, \Delta) > 0$ such that, for all $\vec \lambda, \vec \lambda' \in \mathcal D_R$, 
    \[
        \| \Psi(\vec \lambda) - \Psi(\vec \lambda') \| \leq Cn \|\vec \lambda - \vec \lambda' \|
    \]
\end{lemma}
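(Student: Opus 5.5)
The plan is the mean value inequality along the segment joining $\vec\lambda$ and $\vec\lambda'$, with the Jacobian controlled by \autoref{lemma:jacobian_norm_upperbound}. Here $\vec\lambda,\vec\lambda'$ are real vectors (with the implicit coordinate $\lambda_q=1$), since $\Psi$ is only defined for real fugacities. The real part of $\mathcal D_R$ is the cube $\{\vec\lambda\in\mathbb R^{q-1} : \|\vec\lambda-\vec 1\|_\infty<R\}$, which is convex. Hence the whole segment $\gamma(s) := \vec\lambda' + s(\vec\lambda-\vec\lambda')$, for $s\in[0,1]$, stays in $\mathcal D_R$.

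First I will check that $\Psi$ is continuously differentiable on this cube. By \autoref{lemma:expected_values_are_derivatives}, each coordinate is $\Psi_i(\vec\lambda)=\lambda_i\partial_i F(\vec\lambda)$. Here $F=\log Z$ is analytic on $\mathcal D_R$ by \autoref{remark:existence-branch-for-logZ}. Its derivatives on real points are real, because $Z>0$ there and $\partial_i F=\partial_i Z/Z$. So $\Psi$ is smooth along $\gamma$.

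Next I apply the fundamental theorem of calculus to $s\mapsto \Psi(\gamma(s))$:
\[
\Psi(\vec\lambda)-\Psi(\vec\lambda') = \int_0^1 J(\gamma(s))\,(\vec\lambda-\vec\lambda')\,ds .
\]
Taking norms and bounding the integrand pointwise gives
\[
\|\Psi(\vec\lambda)-\Psi(\vec\lambda')\|\le \sup_{s\in[0,1]}\|J(\gamma(s))\|\,\|\vec\lambda-\vec\lambda'\|.
\]
By \autoref{lemma:jacobian_norm_upperbound}, $\|J(\gamma(s))\|\le Cn$ for every $s$. This gives the claim with the same constant $C=C(R,q,\Delta)$.

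The only point needing care is uniformity of the constant. The bound in \autoref{lemma:jacobian_norm_upperbound} rests on \autoref{cor: up bound on det}, \autoref{cor: expect and cov are O(n)} and ultimately \autoref{lem:cauchy_bounds_logZ}. Those lemmas are stated with constants depending on $\vec\lambda$, but the Cauchy estimate is actually uniform. It uses only $\sup_{\mathcal D_R}|F|$, and \autoref{lemma: log Z is linear} bounds this by $O(n)$ uniformly, since $\lambda_{\min}\ge 1-R$ and $\lambda_{\max}\le 1+R$ on $\mathcal D_R$.

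There is one caveat. The Cauchy estimate applies for $\|\vec\zeta-\vec 1\|_\infty\le 3R/4$, not on all of $\mathcal D_R$. If uniformity up to the boundary fails, I will state the lemma on the slightly smaller cube $\|\vec\lambda-\vec 1\|_\infty\le 3R/4$ (still convex), or on $\mathcal D_{R/2}$. That is all that is used later, since the candidate fugacities and the LCLT only need $\|\vec\lambda-\vec 1\|_\infty\le R/2$. I expect this bookkeeping to be the only real obstacle; the rest is the standard mean value argument.
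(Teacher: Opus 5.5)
Your proof is correct and follows the paper's: the fundamental theorem of calculus along the segment $\vec\lambda'+t(\vec\lambda-\vec\lambda')$, then the pointwise bound $\|J\|\le Cn$ from \autoref{lemma:jacobian_norm_upperbound}. Your caveat about uniformity is also fair: the paper applies that Jacobian bound on all of $\mathcal D_R$, while \autoref{lem:cauchy_bounds_logZ} is only stated for $\|\vec\zeta-\vec 1\|_\infty\le 3R/4$, and working on a smaller convex cube, which suffices for the later applications, resolves this.
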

\begin{proof}
    Fix $\vec \lambda, \vec \lambda' \in \mathcal D$ and let $J_t := J(\vec \lambda' + t(\vec \lambda - \vec \lambda'))$ for $t \in [0,1]$. By the Fundamental Theorem of Calculus,
    \[
        \Psi(\vec \lambda) - \Psi(\vec \lambda') = \int_0^1 J_t \cdot (\vec \lambda - \vec \lambda') \, dt.
    \]
    Therefore,
    \[
        \|\Psi(\vec \lambda) - \Psi(\vec \lambda') \| \leq \int_0^1 \|J_t \cdot (\vec \lambda - \vec \lambda') \| \, dt \leq \int_0^1 \| J_t \| \cdot \| \vec \lambda - \vec \lambda' \| \, dt \leq Cn \| \vec \lambda - \vec \lambda' \|
    \]
    where $C$ is the constant given by \autoref{lemma:jacobian_norm_upperbound}.
\end{proof}

\subsection{Surjectiveness of the expectation map}\label{subsection: Surjectiveness of the expectation map}

We want to prove the following result:
\begin{lemma}\label{lemma:psi-surjective}
    There exists a constant $c = c(R, q, \Delta) > 0$ such that for any $\vec n$ satisfying 
    \[
        \left\|\vec n - \frac{n}{q}\vec 1 \right\|_\infty \leq cn,
    \]
    there exists a $\vec \lambda \in \mathcal D_R$ such that $\E_{\vl}[\vec X] = \vec n$.
\end{lemma}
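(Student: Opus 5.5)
We will prove surjectivity by a quantitative inverse function argument around $\vec\lambda=\vec 1$ (with $\lambda_q=1$ fixed), using the convex-analytic structure of $\log Z$ restricted to real $\vec\lambda$. We work in the variables $\vec\beta=(\beta_1,\dots,\beta_{q-1})$ with $\lambda_i=e^{\beta_i}$. Then $\Psi$ becomes the gradient of the convex function $\Phi(\vec\beta):=F(e^{\beta_1},\dots,e^{\beta_{q-1}},1)$, and its Hessian is exactly $\Sigma(\vec\beta)=\cov_{\vec\lambda}(\vec X)$. Write $B:=\{\vec\beta\in\mathbb R^{q-1}:\|\vec\beta\|_\infty\le \beta_0\}$ with $\beta_0:=\ln(1+R/2)$, so that $e^{\vec\beta}\in\mathcal D_R$ and in fact $\|e^{\vec\beta}-\vec 1\|_\infty\le R/2$.

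\textbf{Step 1: uniform strong convexity.} We need a uniform bound $\Sigma(\vec\beta)\succeq \kappa n I$ for all $\vec\beta\in B$, with $\kappa=\kappa(R,q,\Delta)>0$. We would get it from \autoref{lower bound det} via \autoref{corollary:variance_lowerbound_for_q-1_coordinates}. Inspecting the proof of \autoref{lemma:variance_of_linear_combination_color_class_sizes}, the $\Omega(1)$ constants depend on $\vec\lambda$ only through $\lambda_{\min},\lambda_{\max}$, and these lie in $[1-R,1+R]$ throughout $B$. So the bound is uniform over $B$, and this uniformity is worth stating explicitly.

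\textbf{Step 2: monotone-operator / degree argument.} Set $\vec y:=\vec n-\Psi(\vec 0)$. At $\vec\lambda=\vec 1$, symmetry gives $\Psi(\vec 0)=\frac nq\vec 1$, so the hypothesis yields $\|\vec y\|\le \sqrt{q}\,cn$. Consider the strictly convex function $g(\vec\beta):=\Phi(\vec\beta)-\langle\vec n,\vec\beta\rangle$ on the compact set $B$, and let $\vec\beta^*$ be its minimizer. If $\vec\beta^*$ is interior, then $\nabla g(\vec\beta^*)=0$, i.e. $\Psi(e^{\vec\beta^*})=\vec n$, and we are done. To rule out a boundary minimizer, apply strong convexity along the segment from $\vec 0$ to any $\vec\beta\in\partial B$:
\[
g(\vec\beta)-g(\vec 0)\ \ge\ \langle \nabla g(\vec 0),\vec\beta\rangle+\tfrac{\kappa n}{2}\|\vec\beta\|^2\ \ge\ -\|\vec y\|\,\|\vec\beta\|+\tfrac{\kappa n}{2}\|\vec\beta\|^2 .
\]
Since $\|\vec\beta\|\ge\beta_0$ on $\partial B$, the right-hand side is positive once $\sqrt q\,cn<\kappa n\beta_0/2$. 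So it suffices to take $c<\kappa\beta_0/(2\sqrt q)$, which forces the minimizer to be interior. (Alternatively, one could use \autoref{lemma:jacobian_norm_upperbound} together with the lower eigenvalue bound in a Newton/contraction scheme, but the convexity route avoids needing Lipschitz control of the inverse.)

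\textbf{Step 3: translate back.} Put $\vec\lambda:=(e^{\beta^*_1},\dots,e^{\beta^*_{q-1}},1)$. It is real, positive, and satisfies $\|\vec\lambda-\vec 1\|_\infty\le R/2<R$, so $\vec\lambda\in\mathcal D_R$ and $\E_{\vec\lambda}[\vec X]=\vec n$. The $q$-th coordinate is then automatic, since $\sum_{i=1}^q X_i=n$.

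\textbf{Main obstacle.} The key difficulty is Step 1: obtaining the eigenvalue lower bound $\kappa n$ with $\kappa$ uniform over the whole real box rather than pointwise in $\vec\lambda$. Everything else is soft convex analysis. We expect the uniformity to follow directly, because the constant in the claim inside \autoref{lemma:variance_of_linear_combination_color_class_sizes} is an explicit function of $q$, $\Delta$, $\lambda_{\min}$ and $\lambda_{\max}$.
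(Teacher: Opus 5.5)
Your proof is correct, but it reaches existence by a different route than the paper.

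Both arguments pass to logarithmic coordinates $\theta_i=\log\lambda_i$ and rest on the same input, the lower bound $\Sigma\succeq\kappa n I$ coming from \autoref{lower bound det}.

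The paper turns this bound into an injectivity estimate $\|\Psi(\vec\theta)-\Psi(\vec\theta')\|\ge c_1 n\|\vec\theta-\vec\theta'\|$, using the fundamental theorem of calculus and Cauchy--Schwarz; this is strong monotonicity of $\nabla\Phi$. It then invokes the inverse function theorem to make $\Psi$ open, and argues topologically. The set $\Psi(B(\vec 0,r))$ is open and contains $\frac nq\vec 1$, while $\Psi(\partial B(\vec 0,r))$ avoids $B(\frac nq\vec 1,c_1 n r)$, so that ball lies in the image. This uses $r=\log(1+R)$ and $c=c_1 r/\sqrt q$.

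You replace the open-map and connectedness step with the standard variational argument for exponential families. You minimize $\Phi(\vec\beta)-\langle\vec n,\vec\beta\rangle$ over a box and let strong convexity force the minimizer into the interior.

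Your route buys several things:
\begin{itemize}
\item It needs no inverse function theorem.
\item It avoids the closedness half of the topological step, which the paper leaves implicit. That step needs control of $\Psi$ on the closed ball, yet the sphere $\|\vec\theta\|=\log(1+R)$ is not strictly inside $\mathcal D$.
\item Strict convexity gives uniqueness of $\vec\lambda$ for free.
\item It explicitly justifies that $\kappa$ is uniform over the region, which the paper's $\Omega_{\vec\lambda,q,\Delta}$ notation glosses over. Your justification is right: the constants depend only on $q$, $\Delta$, $\lambda_{\min}$ and $\lambda_{\max}$.
\item It produces $\vec\lambda$ with $\|\vec\lambda-\vec 1\|_\infty\le R/2$. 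This is exactly the hypothesis of \autoref{thm:lclt} needed downstream, whereas the paper's $\vec\lambda$ is only guaranteed to lie in $\mathcal D_R$.
\end{itemize}
The paper's route, in exchange, records the inverse-Lipschitz bound on $\Psi$ as a standalone fact.

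One small precision: define $\Phi$ through the real logarithm $\ln Z$ on positive reals rather than the complex branch $F$. Then $g$ is real-valued and can be minimized; the two differ by an imaginary constant, so all derivatives agree.
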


For ease of analysis, we will reparametrize the model in terms of $\vec\theta\in\mathbb R^{q-1}$, where $\theta_i := \log \lambda_i$. As in the previous section, the last fugacity parameter is fixed to $\lambda_q = 1$. Then, within the zero-freeness region $\mathcal D := \{\vec \theta \in \mathbb R^{q-1} \, : \,\vert e^{\theta_i} - 1 \vert < R, \quad \forall i \in [q-1]\}$, we get the following analogues to \autoref{lemma:expected_values_are_derivatives} and \autoref{lemma:covariance_is_double_derivative}:
\begin{gather*}
    \nabla \log Z(\vec \theta) = \Psi(\vec \theta) \\
    \nabla^2\log Z(\vec \theta) = \cov_{\vec \theta}(\vec X)
\end{gather*}

We will now prove a few analytic considerations about the expectation map $\Psi : \mathbb R^{q-1} \longrightarrow \mathbb R^{q-1}$ defined by $\Psi(\vec \theta) := \E_{\vec \theta}[\vec X]$, which will be used to prove \autoref{lemma:psi-surjective}.

\begin{claim}\label{claim:psi-injective}
    There exists a constant $c_1 = c_1(R, q, \Delta)$ such that for every $\vec \theta, \vec \theta' \in \mathcal D$, 
    \[
        \| \Psi(\vec\theta) - \Psi(\vec \theta')\| \geq c_1 n \| \vec \theta - \vec \theta'\|.
    \]
    Consequently, $\Psi$ is injective within $\mathcal D$.
\end{claim}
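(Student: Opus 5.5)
The plan is to exploit the fact that, in the $\vec\theta$ parametrization, $\Psi = \nabla \log Z$ is the gradient of a function whose Hessian is $\cov_{\vec\theta}(\vec X)$. That Hessian has all eigenvalues of order $n$. Concretely, set $\vec h := \vec\theta - \vec\theta'$ and consider the segment $\vec\theta_t := \vec\theta' + t\vec h$ for $t\in[0,1]$. By the fundamental theorem of calculus applied to $\Psi$,
\[
\Psi(\vec\theta) - \Psi(\vec\theta') = \int_0^1 \cov_{\vec\theta_t}(\vec X)\,\vec h\, dt .
\]
Pairing with $\vec h$ gives
\[
\langle \Psi(\vec\theta)-\Psi(\vec\theta'), \vec h\rangle = \int_0^1 \vec h^\top \cov_{\vec\theta_t}(\vec X)\vec h\,dt = \int_0^1 \Var_{\vec\theta_t}\Big(\sum_{i=1}^{q-1} h_i X_i\Big)\,dt .
\]
By Cauchy--Schwarz the left-hand side is at most $\|\Psi(\vec\theta)-\Psi(\vec\theta')\|\,\|\vec h\|$. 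It therefore suffices to show that $\vec h^\top\cov_{\vec\theta_t}(\vec X)\vec h \ge c_1 n\|\vec h\|^2$ uniformly in $t$. Injectivity is then immediate.

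For the uniform lower bound I would invoke \autoref{corollary:variance_lowerbound_for_q-1_coordinates}, equivalently \autoref{lower bound det}, applied to the unit vector $\vec h/\|\vec h\|$. This gives $\Var_{\vec\theta_t}(\sum_i h_iX_i) = \Omega(n)\|\vec h\|^2$. Two small points need checking here. The first is that the segment stays in the region where these results apply: $\mathcal D$ is a product of intervals in each $\theta_i$, since $|e^{\theta_i}-1|<R$ for real $\theta_i$ is an interval, so it is convex and $\vec\theta_t\in\mathcal D$. The corresponding $\vec\lambda$ then has $\|\vec\lambda-\vec 1\|_\infty<R$, with $\lambda_q=1$. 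The second is that the identity $\nabla^2\log Z = \cov$ in $\theta$-coordinates holds; it follows by the same computation as \autoref{lemma:covariance_is_double_derivative}, or more simply from the chain rule $\partial_{\theta_i} = \lambda_i\partial_{\lambda_i}$.

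The main obstacle is uniformity of the constant. \autoref{lemma:variance_of_linear_combination_color_class_sizes} is stated with an implicit constant depending on $\vec\lambda$, and I need it independent of $t$ and of the points $\vec\theta,\vec\theta'$ themselves. I would revisit its proof and observe that the $\vec\lambda$-dependence enters only through the claim that the local colorings $\tau$ and $\tilde\tau$ of each $V_j$ have conditional probability at least $(\lambda_{\min}/\lambda_{\max})^{\Delta+1}q^{-(\Delta+1)}$. On $\mathcal D$ this is bounded below by $((1-R)/(1+R))^{\Delta+1}q^{-(\Delta+1)}$. Consequently the conditional variance of each block is at least a constant depending only on $(R,q,\Delta)$: two outcomes, each of probability at least $p$, separated by a gap $|c_s-c_t|\ge 1/\sqrt q$, force a variance of at least roughly $p(c_s-c_t)^2/2$. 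Summing over the $s=\Omega(n/\Delta^3)$ blocks and using the law of total variance yields $c_1=c_1(R,q,\Delta)$, which completes the claim.
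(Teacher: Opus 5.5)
Your proposal is correct and follows the paper's proof essentially step for step. Like the paper, you integrate $D\Psi=\cov_{\vec\theta}(\vec X)$ along the segment, pair the result with $\vec\theta-\vec\theta'$, apply the $\Omega(n)$ lower bound on the smallest eigenvalue from \autoref{lower bound det}, and finish with Cauchy--Schwarz. Your additional checks fill in points the paper leaves implicit: that $\mathcal D$ is convex, and that the constant is uniform over $\mathcal D$ because \autoref{lemma:variance_of_linear_combination_color_class_sizes} depends on $\vec\lambda$ only through $(\lambda_{\min}/\lambda_{\max})^{\Delta+1}\ge\bigl((1-R)/(1+R)\bigr)^{\Delta+1}$.
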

\begin{proof}
By \autoref{lower bound det}, $D\Psi(\vec \theta) = \cov_{\vec \theta}(\vec X)$ has smallest eigenvalue $\Omega(n)$. Since it is symmetric, that means that there exists a constant $c_1$ such that
\[
    \langle D\Psi(\vec \theta) \cdot v, v \rangle \geq c_1 n \|v\|^2.
\]
Therefore, by the Fundamental Theorem of Calculus,
\begin{align*}
    \langle \Psi(\vec \theta) - \Psi(\vec \theta'),\, \vec \theta - \vec \theta' \rangle &= \langle \int_0^1 D \Psi\left(\vec \theta' + t(\vec \theta - \vec \theta')\right) \cdot (\vec \theta - \vec \theta') \, dt, \, \vec \theta - \vec \theta' \rangle \\
    &= \int_0^1 \langle  D \Psi\left(\vec \theta' + t(\vec \theta - \vec \theta')\right) \cdot (\vec \theta - \vec \theta') , \,  \vec \theta - \vec \theta' \rangle \, dt \\
    &\geq c_1 n \|\vec \theta - \vec \theta'\|^2
\end{align*}
By Cauchy-Schwarz, $\langle \Psi(\vec \theta) - \Psi(\vec \theta'), \, \vec \theta - \vec \theta' \rangle \leq \|\Psi(\vec \theta) - \Psi(\vec \theta') \| \cdot \|\vec\theta - \vec \theta' \|$, so we obtain
\[
    \| \Psi(\vec \theta) - \Psi(\vec \theta') \| \geq c_1 n \| \vec \theta - \vec \theta' \|,
\]
which implies that $\Psi$ is injective, as desired.
\end{proof}

\begin{claim}\label{claim:psi-open-map}
    $\Psi$ is an open map.
\end{claim}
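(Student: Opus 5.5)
The plan is to deduce openness from the inverse function theorem, applied at every point of the (open) domain $\mathcal D$. First, I would check that $\Psi$ is $C^1$ (in fact real-analytic) on $\mathcal D$.
\begin{itemize}
\item In the $\vec\theta$-parametrization, $\Psi(\vec\theta)=\nabla \log Z(\vec\theta)$ with $\lambda_i=e^{\theta_i}$.
\item $\log Z$ is analytic on the zero-free region, by \autoref{remark:existence-branch-for-logZ} and \autoref{corollary:zero-freeness-radius}.
\item On real points, $Z(\vec\theta)$ is simply a positive polynomial in the $e^{\theta_i}$, so regularity is immediate there as well.
\item Hence $\Psi$ is smooth on $\mathcal D$, and its derivative is $D\Psi(\vec\theta)=\cov_{\vec\theta}(\vec X)$, as recorded just above.
\end{itemize}

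Next, I would show that $D\Psi(\vec\theta)$ is invertible at every $\vec\theta\in\mathcal D$. By \autoref{lower bound det}, whose input is \autoref{corollary:variance_lowerbound_for_q-1_coordinates}, every eigenvalue of $\cov_{\vec\theta}(\vec X)$ is $\Omega(n)>0$. In particular the matrix is positive definite and so nonsingular. This step has one subtlety that I expect to be the only real obstacle. The variance lower bound is stated with constants depending on $\vec\lambda$, and for \autoref{claim:psi-injective} one wants them uniform over $\mathcal D$. For openness, however, pointwise invertibility suffices: for each fixed $\vec\theta$, the proof of \autoref{lemma:variance_of_linear_combination_color_class_sizes} gives a strictly positive variance for any nonzero $\vec c$ once $n$ is large. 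I would nonetheless note that the constants there depend only on $\lambda_{\min}$ and $\lambda_{\max}$, which are bounded by $1\pm R$ on $\mathcal D$, so positivity holds uniformly.

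Finally, I would apply the inverse function theorem. Fix any open $U\subseteq\mathcal D$ and any $\vec\theta\in U$. Since $D\Psi(\vec\theta)$ is invertible and $\Psi$ is $C^1$, there are open neighbourhoods $V\subseteq U$ of $\vec\theta$ and $W$ of $\Psi(\vec\theta)$ such that $\Psi\colon V\to W$ is a diffeomorphism. Thus $\Psi(\vec\theta)$ is an interior point of $\Psi(U)$, and since $\vec\theta$ was arbitrary, $\Psi(U)$ is open. Combined with the injectivity from \autoref{claim:psi-injective}, this also shows that $\Psi$ is a homeomorphism onto its image, which is what the surjectivity argument will need next.
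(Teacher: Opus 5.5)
Your proposal is correct and follows the paper's argument. The paper also gets invertibility of $D\Psi(\vec\theta)=\cov_{\vec\theta}(\vec X)$ from \autoref{lower bound det} and then applies the Inverse Function Theorem to conclude that $\Psi$ is a local diffeomorphism, hence open. Your additional remarks, on the smoothness of $\Psi$ and on the variance constants depending only on $1\pm R$, fill in details the paper leaves implicit but do not change the route.
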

\begin{proof}
    By \autoref{lower bound det}, the eigenvalues of $D\Psi(\vec \theta)$ are positive, so $D\Psi(\vec \theta)$ is invertible for any $\vec \theta \in \mathcal D$. Hence, by the Inverse Function Theorem\footnote{For instance, we can use the version from Theorem 9.24 of \cite{rudin1976principles}.}, $\Psi$ is a local diffeomorphism in $\mathcal D$. In particular\footnote{Being a local homeomorphism already suffices, as shown in \cite{lee2000introduction}.}, this implies that $\Psi(U)$ is open for any open $U \subseteq \mathcal D$.
\end{proof}

\begin{proof}[Proof of \autoref{lemma:psi-surjective}]

Note that, by taking $r := \log(1 + R)$, the ball $B(\vec 0, r) := \{\vec \theta \in \mathbb R^{q-1} \, : \, \| \vec \theta \| < r\}$ is contained within $\mathcal D$, as $\log (1 - R) < -\log(1 + R)$.

Consider the image of this ball by the expectation map. By \autoref{claim:psi-open-map}, $\Psi(B(\vec 0, r))$ is an open set, and we know that it contains $\Psi(\vec 0) = \frac{n}{q}\vec 1$. On the other hand, by \autoref{claim:psi-injective}, for any $\vec \theta \in \partial B(\vec 0, r)$ we have
\[
    \|\Psi(\vec \theta) - \Psi(\vec 0)\| \geq c_1 n \| \vec \theta \| = c_1 n r, 
\]
so $\Psi(\partial B(\vec 0, r))$ lies outside of $B(\Psi(\vec 0), c_1nr)$. Together with the fact that $\Psi(B(\vec 0, r))$ is open and $\Psi(B(\vec 0, r)) \cap B(\Psi(\vec 0), c_1nr) \neq \emptyset$, this implies that $B(\Psi(\vec 0), c_1 nr) \subseteq \Psi(B(\vec 0, r))$.

That means that any $\vec n$ with $\|\vec n - \frac{n}{q}\vec 1 \| < c_1 n r$ is contained in $\Psi(\mathcal D)$. Finally, taking $c := c_1r / \sqrt q$, we have that if $\|\vec n - \frac{n}{q}\vec 1 \|_\infty \leq cn$, then
\[
    \left\|\vec n - \frac{n}{q}\vec 1 \right\| \leq \sqrt {q-1} \left\| \vec n - \frac{n}{q}\vec 1 \right\|_\infty < c_1 n r
\]
and so $\vec n \in \Psi(\mathcal D)$, as desired.
\end{proof}

\subsection{Glauber dynamics}\label{subsection: Glauber}
We need to show that Glauber dynamics mixes in $O(n\log n \log(\frac{1}{\eps}))$ steps given $\vl$ in the zero-freeness region. We do this with a standard path coupling argument (for reference, one can see \cite{wilmer2009markov}). 

Let us first define Glauber dynamics for a given $\vl$. 
\begin{definition}[Glauber dynamics]\label{definition: mc}
Given a graph $G$ on $n$ vertices with maximum degree $\Delta$, a number of colors $q \in \mathbb Z^+$, a collection of weights $\lambda_c > 0$ for each color $c \in [q]$, and an initial coloring $x \in [q]^{V(G)}$, we define a Markov Chain $\{X_t\}_{t \geq 0}$ on the space of (not necessarily proper) $q$-colorings of the vertices of $G$, in which $X_0 := x$ and the transitions are given by the following procedure:
    \begin{enumerate}
        \item Choose a vertex $v$ uniformly at random. 
        \item Choose a color $c$ at random from the list of available colors $L_v := [q] \setminus X_t(N(v))$, so that color $i$ is chosen with probability $\frac{\lambda_i}{\sum_{j \in L_v} \lambda_j}$.
        \item Let $X_{t+1}(v) \gets c$, and let $X_{t+1}(w) \gets X_t(w)$ for all other $w \neq v$.  
    \end{enumerate}
\end{definition}

\begin{lemma}
    Let $0 < R < 1/10$ be a constant. Then, for any $\vec \lambda \in \mathbb R^q$ satisfying $\|\vec \lambda - \vec 1 \|_\infty < R$, the Glauber Dynamics with parameter $\vec \lambda$ has $t_\text{mix}(\varepsilon) = O(n \log n \log (\frac{1}{\varepsilon}))$.
\end{lemma}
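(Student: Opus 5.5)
The plan is a standard path coupling argument in the spirit of Jerrum's analysis, adapted to the color weights $\vec\lambda$. I work in the regime $q \geq 2\Delta+1$ used throughout this section. By the path coupling theorem (as in \cite{wilmer2009markov}), it suffices to couple one step of the chain from two colorings $X,Y$ that differ at a single vertex $v$, and to show
\[
\E[d(X',Y')] \le \Bigl(1-\tfrac{\gamma}{n}\Bigr)
\]
for some $\gamma=\gamma(\Delta)>0$. Here $d$ is Hamming distance, extended along paths of neighboring colorings in the usual way, which requires working on all of $[q]^V$; this is why \autoref{definition: mc} is stated on not-necessarily-proper colorings. Iterating the contraction gives $t_\text{mix}(1/4)=O(n\log n)$ from the diameter bound $d\le n$. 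Boosting then gives $t_\text{mix}(\eps)\le \lceil\log_2(1/\eps)\rceil\, t_\text{mix}(1/4)=O(n\log n\log\frac1\eps)$.

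\textbf{Step 1: the coupling.} Both chains pick the same vertex $w$. Let $a=X(v)$ and $b=Y(v)$. I distinguish three cases.
\begin{enumerate}
\item If $w=v$, the available lists $L_v$ coincide in $X$ and $Y$, so the identity coupling makes the chains agree. This happens with probability $1/n$ and reduces the distance by $1$.
\item If $w\notin N(v)\cup\{v\}$, the lists again coincide and the distance is unchanged.
\item If $w\in N(v)$, the two lists have the form $L=A\cup\{b\}$ and $L'=A\cup\{a\}$, or they coincide, or they differ by a single color. I use the optimal coupling of the two weighted laws $p(i)=\lambda_i/\sum_{L}\lambda$ and $p'(i)=\lambda_i/\sum_{L'}\lambda$, so the distance grows by $1$ with probability exactly $d_{TV}(p,p')$.
\end{enumerate}

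\textbf{Step 2: the TV bound.} Write $T=\sum_{i\in A}\lambda_i$. I compute $d_{TV}$ exactly as $1-\sum_i\min(p_i,p_i')$. The common part is $T/\max(S,S')$, so
\[
d_{TV}\le \frac{\max(\lambda_a,\lambda_b)}{T+\max(\lambda_a,\lambda_b)}\le \frac{1+R}{(q-\Delta-1)(1-R)+(1+R)},
\]
using $|A|\ge q-\Delta-1$ and $\lambda_i\in(1-R,1+R)$. The cases where the lists coincide or differ by a single color are bounded the same way or better. Hence
\[
\E[d(X',Y')]-1\le \frac1n\Bigl(-1+\Delta\,\frac{1+R}{(q-\Delta-1)(1-R)+1+R}\Bigr),
\]
and contraction holds as soon as $\Delta(1+R)<(q-\Delta-1)(1-R)+(1+R)$.

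\textbf{Step 3, the main obstacle.} This inequality is exactly where the hypothesis $R<1/10$ must be checked against $q$. With $q=2\Delta+1$ it reduces to $2R\Delta<1+R$, which the one-step bound of Step 2 only guarantees for $R\lesssim 1/(2\Delta)$, not for every $R<1/10$. So I expect the real work to be here, and I would proceed in two stages.
\begin{enumerate}
\item First, use the fact that for $w\in N(v)$ the colors $a$ and $b$ are often already blocked by $w$'s other neighbors, in which case the lists coincide and contribute nothing. Try to show that the worst case of Step 2 cannot hold simultaneously for all $\Delta$ neighbors; if that fails, a two-step or Dobrushin-type refinement that averages the weight discrepancy over $A$ is the natural next attempt.
\item Second, as a safe fallback, note that the only $\vec\lambda$ used by the algorithm lie in $\mathcal D_R$ with $R=\nu\eps_I/4=O(\Delta^{-4})\ll 1/(2\Delta)$. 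There the inequality above holds with room to spare, giving $\gamma\ge 1-\Delta(1+R)/(\Delta(1-R)+1+R)=\Omega(1/\Delta)$. This suffices for the $O(n\log n\log\frac1\eps)$ bound used in the running-time analysis.
\end{enumerate}
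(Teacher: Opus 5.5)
Your coupling and the exact total-variation computation in Step 2 are correct (a bit sharper than needed), and your Step 3 diagnosis is right. But as a proof of the lemma as stated, for every $R<1/10$, the proposal has a genuine gap, and Stage 1 cannot close it. With the optimal coupling at a neighbor $w$ of $v$, the expected increase is exactly $d_{TV}$, so no one-step Hamming coupling does better. Moreover, the worst case is realized at all $\Delta$ neighbors simultaneously. Give $\Delta+1$ colors (including $a,b$) weight $1+r$ and the remaining $\Delta$ colors weight $1-r$, with $r<R$. Then color the other $\Delta-1$ neighbors of each $w_k$ with the heavy colors other than $a,b$; this can even be a proper coloring, e.g.\ when $G$ is a tree. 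Every $w_k$ then has $A$ equal to the $\Delta$ light colors, and the expected change in distance is $\frac1n\bigl(-1+\frac{\Delta(1+r)}{\Delta(1-r)+1+r}\bigr)$. This is positive as soon as $2\Delta r>1+r$, e.g.\ $r=0.09$ and $\Delta\ge 7$. Dobrushin's condition is governed by the same worst-case influence sums, so it fails in the same regime. With these tools the statement is only available for $R<1/(2\Delta-1)$ when $q=2\Delta+1$. The full range would need a genuinely different argument, which your plan does not supply.

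The paper's proof does not supply it either. It uses essentially your coupling, with the cruder bound $\frac{1+R}{(q-\Delta)(1-R)}$, and claims contraction $1-\frac{c}{n}$ with $c=1/10$. It gets there by deducing $2\Delta+1\ge\bigl(2+\frac{2R+c-Rc}{(1-R)(1-c)}\bigr)\Delta$ from $\frac{2R+c-Rc}{(1-R)(1-c)}<1$. That step actually needs the fraction to be at most $1/\Delta$, and it fails already at $R=0$ once $\Delta\ge 10$. Even at $\vec\lambda=\vec 1$, the worst-case contraction of this coupling is only $1-\frac{1}{(\Delta+1)n}$, not $1-\frac{1}{10n}$.

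So the correct course is your Stage 2, promoted to the statement itself. For $q\ge 2\Delta+1$ and any fixed $R<1/(2\Delta-1)$, your bound gives a contraction constant $\gamma(\Delta,R)>0$; for instance $\gamma\ge\frac{1}{2(\Delta+2)}$ when $R\le\frac{1}{4\Delta}$. Hence $t_{\text{mix}}(\eps)=O(\gamma^{-1}n\log(n/\eps))$, which is within $O(n\log n\log\frac1\eps)$ for fixed $\Delta$. This covers every $\vec\lambda$ the algorithm actually uses, since the candidate grid lies in $\mathcal D_R$ with $R=\nu\eps_I/4=O(\Delta^{-4})$. Drop Stage 1 rather than presenting it as a route to $R<1/10$.
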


\begin{proof}    
Given two colorings $X_0$ and $Y_0$ which differ only on one vertex $v_0$, we need to define a coupling of $X_1$ and $Y_1$, where each marginal behaves as the Glauber Dynamics, i.e. $X_1 \sim P(X_0,\,  \cdot)$ and $Y_1 \sim P(Y_0,\, \cdot)$. Let $c_X := X_0(v_0)$ and $c_Y := Y_0(v_0)$.
   
    The coupling we use is the following:
    \begin{enumerate}
        \item Pick a vertex $v$ uniformly at random (the same for both chains).
        \item If $v \not \in N(v_0)$, then we know that $L_{v, X_0} = L_{v, Y_0}$. Hence, we can couple the chains so we always pick the same color in both chains. \label{good moves}
        \item For $v \in N(v_0)$ and $c \not \in \{c_X, c_Y \}$, we couple choosing color $c$ in both chains, with probability
        $$ p_c := \frac{\lambda_c}{\max\left\{\sum\limits_{i \in L_{v, X_0}} \lambda_i, \, \sum\limits_{i \in L_{v, Y_0}} \lambda_i \right\} }$$
        Note that in one of the two chains we may still have some probability remaining of choosing color $c$.
        \item We finish by coupling arbitrarily the choices of $c$ from each chain that still had some probability left over, including the cases $c \in \{c_X, c_Y\}$.
    \end{enumerate}

    Note that all moves where $v \not \in N(v)\cup \{v_0$\} keep the distance between $X$ and $Y$ same. The moves where we choose $v = v_0$ are ``good'' because they decrease the distance between $X$ and $Y$ by 1 (note that $L_{v, X_0} = L_{v, Y_0}$, so the color of $v$ always changes). The moves where we choose a vertex $v \in N(v_0)$ are ``bad'', since the distance between $X$ and $Y$ may increase by 1, with probability at most $1 - \sum\limits_{c \notin \{c_X, x_Y\}}{p_c}$. 
    \\\\
    The probability of a good move is $1/n$, while the probability of a bad move can be upperbounded by \[\frac{\Delta}{n} \left( 1 - \sum_{c \notin \{c_X, c_Y\}} p_c \right) = \frac{\Delta}{n} \frac{\lambda_{c_Y} \cdot \mathbf{1}_{c_Y \in L_{v, X_0}}}{\sum\limits_{i \in L_{v, X_0}}\lambda_i} \leq \frac{\Delta}{n} \frac{1 + R}{(q-\Delta)(1-R)},\] where we have assumed without loss of generality that $\sum\limits_{i\in L_{v, X_0}} \lambda_i  \geq \sum\limits_{i \in L_{v, Y_0}} \lambda_i$.

    Therefore, the expected difference in Hamming distance after one step of the coupling is at most
    \begin{align*}
        \E[d(X_{1}, Y_{1})] \, &\leq \, \E[d(X_0, Y_0)] - \frac{1}{n} + \frac{\Delta}{n} \frac{1 + R}{(q - \Delta)(1-R)} \\
        &\leq 1 - \frac{1}{n} + \frac{1-c}{n} \\
        &= 1  -\frac{c}{n},
    \end{align*}
    where $c := 1/10$. For the last inequality, we have used the lower bound on $q$ together with the fact that for $c= 1/10$, we have$\frac{2R + c - Rc}{(1-R)(1-c)} <1 $. That implies that
    \begin{align*}
        q \geq 2\Delta +1 \geq \left( 2 + \frac{2R + c - Rc}{(1-R)(1-c)} \right) \Delta = \left(1 + \frac{(1+R)}{(1-R)(1-c)} \right) \Delta
    \end{align*}
    so
    \[
        \frac{(q - \Delta)(1-R)}{\Delta (1 + R)} \geq \frac{1}{1-c},
    \]
    as was needed for the inequality.

    Finally, as $1 - c/n \leq e^{-c/n}$, we apply Corollary 14.8 from \cite{wilmer2009markov} with $\alpha := c / n$, and obtain that
        $t_{\text{mix}}(1/4) 
        = O(n \log n )$. 
\end{proof}

\section*{Acknowledgments}

 AK supported in part by a Georgia Tech ARC-ACO student fellowship.  WP supported in part by NSF grant CCF-2309708.  XP supported in part by a travel fellowship from the MSCA-RISE-2020 project RandNET (no. 101007705) and the grant PID2023-147202NB-I00 funded by MICIU/AEI/10.13039/501100011033.

\end{document}